\numberwithin{equation}{section}
\theoremstyle{plain}
\newtheorem{theorem}{Theorem}[section]
\newtheorem*{intuitive-theorem}{Summary of the results}
\newtheorem{corollary}[theorem]{Corollary}
\newtheorem{proposition}[theorem]{Proposition}
\newtheorem{lemma}[theorem]{Lemma}
\theoremstyle{remark}
\newtheorem{example}[theorem]{Example}
\newtheorem*{ack}{Acknowledgements}
\theoremstyle{definition}
\newtheorem{question}[theorem]{Question}
\newtheorem*{vague-question}{Question}
\newcommand{\HH}{\mathcal{H}}
\newcommand{\LL}{\mathcal{L}}
\newcommand{\MM}{\mathcal{M}}
\newcommand{\NN}{\mathcal{N}}
\newcommand{\II}{\mathcal{I}}
\newcommand{\CC}{\mathcal{C}}
\newcommand{\UU}{\mathcal{U}}
\newcommand{\RR}{\mathcal{R}}
\newcommand{\R}{\mathbb{R}}
\newcommand{\RP}{\mathbb{RP}^1}
\newcommand{\Q}{\mathbb{Q}}
\newcommand{\N}{\mathbb{N}}
\newcommand{\hhh}{\mathtt{h}}
\newcommand{\fff}{\mathtt{f}}
\newcommand{\iii}{\mathtt{i}}
\newcommand{\ddd}{\mathtt{d}}
\newcommand{\jjj}{\mathtt{j}}
\newcommand{\kkk}{\mathtt{k}}
\renewcommand{\lll}{\mathtt{l}}
\renewcommand{\ggg}{\mathtt{g}}
\newcommand{\eps}{\varepsilon}
\newcommand{\fii}{\varphi}
\newcommand{\roo}{\varrho}
\newcommand{\A}{\mathsf{A}}
\newcommand{\dd}{\,\mathrm{d}}
\renewcommand{\ge}{\geqslant}
\renewcommand{\le}{\leqslant}
\renewcommand{\geq}{\geqslant}
\renewcommand{\leq}{\leqslant}
\DeclareMathOperator{\udimloc}{\overline{dim}_{loc}}
\DeclareMathOperator{\ldimloc}{\underline{dim}_{loc}}
\DeclareMathOperator{\dimh}{dim_H}
\DeclareMathOperator{\dima}{dim_A}
\DeclareMathOperator{\diml}{dim_L}
\DeclareMathOperator{\dimsim}{dim_{sim}}
\DeclareMathOperator{\dimaff}{dim_{aff}}
\DeclareMathOperator*{\esssup}{ess\,sup}
\DeclareMathOperator{\Tan}{Tan}
\DeclareMathOperator{\linspan}{span}
\DeclareMathOperator{\dist}{dist}
\DeclareMathOperator{\diam}{diam}
\DeclareMathOperator{\diag}{diag}
\DeclareMathOperator{\proj}{proj}
\DeclareMathOperator{\conv}{conv}
\DeclareMathOperator{\spt}{spt}
\DeclareMathOperator{\rank}{rank}
\renewcommand{\atop}[2]{\genfrac{}{}{0pt}{}{#1}{#2}}
\DeclareMathOperator{\GL}{GL}
\DeclareMathOperator{\im}{im}
\begin{document}

\title{Finer geometry of planar self-affine sets}

\author{Bal\'azs B\'ar\'any}
\address[Bal\'azs B\'ar\'any]
        {Department of Stochastics \\
         Institute of Mathematics \\
         Budapest University of Technology and Economics \\
         M\H{u}egyetem rkp. 3 \\
         H-1111 Budapest,
         Hungary}
\email{barany.balazs@ttk.bme.hu}

\author{Antti K\"aenm\"aki}
\address[Antti K\"aenm\"aki]
        {Department of Physics and Mathematics \\
         University of Eastern Finland \\
         P.O.\ Box 111 \\
         FI-80101 Joensuu \\
         Finland}
\email{antti@kaenmaki.net}

\author{Han Yu}
\address[Han Yu]
        {College of Mathematics and Statistics, Center of Mathematics \\
        Chongqing University \\
        Chongqing, 401331 \\
        China}
\email{han.yu.2@cqu.edu.cn}

\subjclass[2000]{Primary 28A80; Secondary 37C45, 37L30.}
\keywords{Self-affine set, orthogonal projection, slice, tangent set, Hausdorff measure, Assouad dimension, lower dimension}
\date{\today}

\begin{abstract}
	For planar self-affine sets satisfying the strong separation condition, recent work of B\'ar\'any, Hochman, and Rapaport \cite{BHR} gives mild assumptions under which the Hausdorff dimension equals the affinity dimension. In this paper, we study dominated systems in that regime and ask which finer geometric properties can be characterized. In the range $\dimh(X) < 1$, we characterize Ahlfors regularity by equivalent conditions involving positivity of $\HH^s(X)$, control of projection fibers, and the identity $\diml(X)=\dimh(X)=\dima(X)$. In the range $\dimh(X) \ge 1$, we identify the maximal slice dimension as $\dima(X)-1$ in Furstenberg directions and provide examples showing that Marstrand-type all-slice bounds cannot hold in general. We also derive projection consequences for Assouad dimension and exhibit dominated irreducible examples with $\dimaff(X)<\dima(X)$.
\end{abstract}

\maketitle

\tableofcontents

\section{Introduction} \label{sec:intro}

\subsection{Context}

Let $X \subset \R^2$ be a self-affine set associated to a finite number of invertible affine contractions $\fii_i$ on $\R^2$. The defining property of $X$ is that it consists of affine copies $\fii_i(X)$ of itself. The strong separation condition requires the sets $\fii_i(X)$ to be pairwise disjoint. We write $\fii_i(x) = A_ix+v_i$ for all $x \in \R^2$, where $A_i \in \GL_2(\R)$ is the linear part and $v_i \in \R^2$ is the translation vector. Understanding geometric properties of $X$ is difficult even when the matrix entries are positive and the strong separation condition holds. The topic has attracted substantial attention in recent years; see e.g.\ B\'ar\'any \cite{Barany2015}, B\'ar\'any, K\"aenm\"aki, and Koivusalo \cite{BaranyKaenmakiKoivusalo2017}, Hueter and Lalley \cite{HueterLalley1995}, K\"aenm\"aki and Shmerkin \cite{KaenmakiShmerkin2009}, Morris and Shmerkin \cite{MorrisShmerkin2019}, and Rapaport \cite{Rapaport18} for dimension results, and B\'ar\'any and K\"aenm\"aki \cite{BaranyKaenmaki2017} and Falconer and Kempton \cite{FalconerKempton2017} for projection results.

A recent breakthrough is the article of B\'ar\'any, Hochman, and Rapaport \cite{BHR} where the authors proved that under the strong separation condition and mild assumptions on the linear parts, the Hausdorff dimension of $X$ equals the affinity dimension, a natural upper bound stemming from the definition of the self-affine set. The corresponding result for self-similar sets, a sub-class of self-affine sets where the linear parts are assumed to be constant times orthogonal matrices, was proved by Hutchinson \cite{Hutchinson1981}. He also showed that the strong separation condition implies the positivity of the Hausdorff measure. A folklore open question, explicitly stated in Falconer \cite[\S 2]{Falconer1992}, asks whether there exists a characterization or even sufficient conditions for the positivity and finiteness of the Hausdorff measure of self-affine sets.

\begin{vague-question}
  Is it possible to characterize the positivity and finiteness of the Hausdorff measure of a self-affine set?
\end{vague-question}

B\'ar\'any, Hochman, and Rapaport \cite{BHR} also improved what the classical Marstrand projection theorem gives for planar self-affine sets by showing that the Hausdorff dimension of the projection is either preserved for all directions or is equal to one. A complementary concept to projections is that of slices. The classical Marstrand slicing theorem gives that almost every slice has Hausdorff dimension at most the surplus dimension of the projection. Existing results therefore identify the almost-everywhere behavior, but they do not determine whether slicing bounds can hold for every slice in this non-carpet regime.

\begin{vague-question}
  Is the Hausdorff dimension of every slice of a self-affine set at most the surplus dimension of the projection?
\end{vague-question}

\subsection{Main contributions}

In this article, we study these questions, and related questions on finer geometry, for a large class of self-affine sets. A planar self-affine set $X$ is dominated and irreducible if the linear parts of the defining affine maps of $X$ have positive entries and they do not share a common invariant line. Some results below hold under weaker assumptions, but to keep the presentation readable we state the introductory summary in this dominated irreducible setting. The $s$-dimensional Hausdorff measure is denoted by $\HH^s$. Ahlfors $s$-regularity of $X$ means that the $\HH^s|_X$-measure of any ball of radius $r$ centered at $X$ is uniformly comparable to $r^s$. The lower and Assouad dimensions are $\diml$ and $\dima$, respectively, and the Hausdorff dimension is $\dimh$.

\begin{intuitive-theorem}
  Consider the family of dominated irreducible planar self-affine sets $X$ satisfying the strong separation condition. In the regime $\dimh(X)<1$,
  \begin{enumerate}
    \item there exist examples that are not Ahlfors regular, as well as examples that are,
    \item Ahlfors regularity of $X$ is equivalent to $\HH^s(X)>0$, where $s=\dimh(X)$, and to $\diml(X)=\dimh(X)=\dima(X)$, and also to very strong control of the multiplicity of preimages in orthogonal projections,
    \item if $X$ is not Ahlfors regular, then $\dimh(X)<1\le\dima(X)$ and, except for Hausdorff dimension zero set of directions, every orthogonal projection of $X$ has Assouad dimension one.
  \end{enumerate}
  In the regime $\dimh(X) \ge 1$,
  \begin{enumerate}[resume]
    \item Ahlfors regularity does not hold if $\dimh(X)>1$, because necessarily $\diml(X)=1$,
    \item $\max\dimh(X\cap(V+x))=\dima(X)-1<1$, where the maximum is over all $x \in X$ and all top Oseledets directions $V$ for the inverse linear action,
    \item there exist examples in which Marstrand's slicing theorem cannot be extended to all slices.
  \end{enumerate}
\end{intuitive-theorem}

The picture that emerges differs from what is known for self-similar sets and Bedford-McMullen carpets. For self-similar sets with strong separation all dimensions agree and the Hausdorff measure is always positive and finite. For Bedford-McMullen carpets, which are self-affine but defined by families of reducible linear maps, Hausdorff measure is generally infinite, and the dimensions do not agree.

We use a wide variety of methods to obtain the results. One important input is the recent work of B\'ar\'any, Hochman, and Rapaport \cite{BHR}, giving that every rank one linear image of $X$ has Hausdorff dimension $\min\{1,\dimh(X)\}$. A new observation is that Ahlfors regularity is equivalent to having very tight control over the fibers of projections. Another ingredient used repeatedly is that the behavior of weak tangent sets leads to information on the original set because weak tangent sets can be re-inserted to $X$ by linear maps which are often rank one. We also use the transfer operator to study the Hausdorff content of projections.

Let us now review some of the consequences of our main results. The first corollary, which essentially follows from the fact that the lower dimension is bounded above by one, completely characterizes the Ahlfors regularity.

\begin{corollary} \label{thm:ahlfors-equiv}
  If $X$ is a dominated irreducible planar self-affine set satisfying the strong separation condition, then $X$ is Ahlfors $s$-regular if and only if $0 \le s \le 1$ and $\HH^s(X)>0$.
\end{corollary}

We denote the collection of all lines through the origin by $\RP$. The classical Marstrand's projection theorem \cite{Marstrand1954} for Hausdorff dimension states that, given a Borel set $X \subset \R^2$, we have
\begin{equation*}
  \dimh(\proj_{V}(X)) = \min\{1,\dimh(X)\}
\end{equation*}
for Lebesgue almost all $V \in \RP$, where $\proj_V \colon \R^2 \to V$ is the orthogonal projection onto $V$. For a general class of self-affine sets, B\'ar\'any, Hochman, and Rapaport \cite{BHR} have extended the above result for all $V \in \RP$.

The Assouad dimension $\dima$ is the maximal dimension possible to obtain by looking at coverings and it serves as an upper bound for the Hausdorff dimension. Orponen \cite{Orponen2021} has shown a strong variant of Marstrand’s projection theorem for Assouad dimension. It states that, given a set $X \subset \R^2$, we have
\begin{equation} \label{eq:orponen}
  \dima(\proj_{V}(X)) \ge \min\{1,\dima(X)\}
\end{equation}
for all $V \in \RP \setminus E$, where the set $E \subset \RP$ satisfies $\dimh(E)=0$. It is worth pointing out that in general, besides proving the exceptional set $E$ countable, this result cannot be improved: Fraser and K\"aenm\"aki \cite{FraserKaenmaki2020} showed that for every upper semi-continuous function $f \colon \RP \to [0,1]$ there exists a compact set $X \subset \R^2$ with $\dima(X)=0$ such that $\dima(\proj_V(X)) = f(V)$ for all $V \in \RP$. The following corollary can therefore be considered as a manifestation of the rigid structure of self-affine sets. The set $X_F \subset \RP$ is the collection of all Furstenberg directions, i.e.\ the closure of the set of top Oseledets directions for the inverse linear action of the affine maps, and under the assumptions of the corollary, it satisfies $\dimh(X_F)>0$.

\begin{corollary} \label{thm:assouad-proj}
  If $X$ is a dominated irreducible planar self-affine set satisfying the strong separation condition such that $X$ is not Ahlfors regular, then
  \begin{equation*}
    \dima(\proj_{V^\bot}(X)) = \min\{1,\dima(X)\}
  \end{equation*}
  for all $V \in \RP \setminus E$, where the set $E \subset \RP$ satisfies $\dimh(E)=0$. If $X$ is Ahlfors regular, then the above holds for all $V \in X_F$.
\end{corollary}

Marstrand's projection theorem is a strong dimension conservation principle for Hausdorff dimension: if $\dimh(X) \le 1$, then the dimension of $X$ is conserved by almost every projection. If $\dim(X) > 1$, then the same cannot be true, as the projections have dimension at most one in every direction. This defect is resolved by the classical Marstrand's slicing theorem \cite{Marstrand1954}. It shows that almost every fiber of a projection do not store more dimension than what is the surplus. The theorem states that, given a Borel set $X \subset \R^2$ and $V \in \RP$, we have
\begin{equation*}
  \dimh(X \cap (V+x)) \le \max\{0,\dimh(X) - 1\}
\end{equation*}
for Lebesgue almost all $x \in V^\bot$. Furstenberg \cite{Furstenberg1970} conjectured that for the product of $\times 2$ and $\times 3$ invariant sets all fibers should be small. In our terminology, such sets appear as certain dynamically defined subsets of product-type Bedford-McMullen carpets. Furstenberg's conjecture was resolved by Shmerkin \cite{Shmerkin2019} and Wu \cite{Wu2019}. It is therefore interesting to ask whether the slices are small also on other self-affine sets. For a Bedford-McMullen carpet $X$ having logarithmically incommensurable contraction ratios, Algom \cite{Algom} proved that the Hausdorff dimension of any slice not parallel to the principal axes is bounded above by $\max\{0,\dima(X)-1\}$. This bound was recently improved by Algom and Wu \cite{AlgomWu-preprint}. Besides the following corollary, we are not aware of results of this type for general classes of non-carpet self-affine sets.

\begin{corollary} \label{thm:slices}
  If $X$ is a dominated irreducible planar self-affine set satisfying the strong separation condition such that $\dimh(X) \ge 1$, then
  \begin{equation*}
    \max_{\atop{x\in X}{V\in X_F}} \dimh(X \cap (V+x)) = \dima(X) - 1 < 1.
  \end{equation*}
\end{corollary}

In Example \ref{ex:dimaff-dima}, we exhibit a dominated irreducible planar self-affine set $X$ satisfying the strong separation condition such that $1 \le \dimh(X) < \dima(X)$. As the upper bound is attained in Corollary \ref{thm:slices}, we therefore see that it is not possible to extend the Marstrand slicing theorem to all slices of all self-affine sets.

The Hausdorff dimension of any self-affine set is bounded above also by the affinity dimension $\dimaff$, a number which is obtained by looking at the behavior of natural covers in the construction of the set. Prior to Example \ref{ex:dimaff-dima} and the following corollary, it was not known how the Assouad dimension compares to the affinity dimension outside self-affine carpets. It is worth recalling that on self-similar sets the strong separation condition implies the equality of the Assouad and affinity dimensions.

\begin{corollary} \label{thm:affinity-assouad}
  If $X$ is a dominated irreducible planar self-affine set satisfying the strong separation condition such that $\dimh(X) = s < 1$, then either $X$ is Ahlfors regular or $\dimaff(X) < 1 \le \dima(X)$. In particular, parametrized by the translation vectors and elements of the matrices, there exists a collection with non-empty interior of such planar Ahlfors $s$-regular self-affine sets and there exists an uncountable collection of such planar self-affine sets $X$ for which $\dimaff(X) < 1 \le \dima(X)$.
\end{corollary}

\subsection{Open problems}

We finish the introduction by formulating precise open problems motivated by the results above. The first question concerns positivity and finiteness of Hausdorff measure. If $X$ is a dominated irreducible planar self-affine set satisfying the strong separation condition and $s = \dimh(X) \le 1$, then Theorem \ref{thm:ahl} gives several characterizations for $\HH^s(X)>0$. If $s = \dimh(X)>1$, then Lemma \ref{thm:irreducible-dominated}, Theorem \ref{thm:diml}, and Lemma \ref{thm:ahlfors-implications} imply that $X$ is not Ahlfors regular. As Theorem \ref{thm:BHR} and Lemma \ref{thm:FK} show that $\HH^s(X)<\infty$, the following question is natural:

\begin{question}
  Let $X$ be a strictly affine strongly irreducible self-affine set satisfying the strong separation condition such that $s=\dimh(X)>1$. Is it possible to have $\HH^s(X)>0$?
\end{question}

Recall that if a Bedford-McMullen carpet is not Ahlfors regular, then, by Theorem \ref{thm:BM-carpet-regular}, it has infinite Hausdorff measure in the dimension. We remark that Przytycki and Urba\'nski \cite[Section~6 and Remark 13]{PrzUrb} have shown that the reducible planar self-affine set $X$ associated to $((x,y) \mapsto (x/2,\lambda y+1),(x,y) \mapsto ((x+1)/2,\lambda y-1))$ where $\lambda>1/2$ has $\HH^s(X)>0$ for $s=\dimh(X)>1$ if the canonical projection of the equidistributed Bernoulli measure onto the orthogonal complement of the sole element in $X_F$ is absolutely continuous and has $L^\infty$-density.

The second question asks whether all slices of self-affine sets are small. As already mentioned, Example \ref{ex:dimaff-dima} together with Corollary \ref{thm:slices} shows that one cannot hope to obtain the same upper bound as in the Marstrand slicing theorem for all slices. Since the example strongly relies on the rigid structure of Bedford-McMullen carpets, a class that is exceptional in several ways, the following question is natural:

\begin{question} \label{q:slices}
  For a ``typical'' strictly affine strongly irreducible self-affine set $X$ satisfying the strong separation condition, is it true that
  \begin{equation*}
    \sup_{\atop{x\in X}{V\in \RP}}\dimh(X\cap(V+x))\leq\max\{0,\dimh(X)-1\}?
  \end{equation*}
  In particular, is it true that $\dima(X)=\dimh(X)$ for a ``typical'' self-affine set $X$ with $\dimh(X)\geq1$?
\end{question}

We speculate that addressing the above question requires further development of the scenery-flow theory introduced by Furstenberg \cite{Furstenberg2008} for self-affine sets, possibly combined with methods introduced by Hochman and Rapaport \cite{HochmanRapaport2021}. The machinery developed here provides a starting point for this direction.

Garc\'ia \cite[Theorem 1.4]{Garcia2020} proved that if a planar self-similar set $X$ not satisfying the weak separation condition is not contained in a line, then $\dima(X) > 1$. Theorem \ref{thm:dima2} shows that if $X$ is a dominated planar self-affine set satisfying the strong separation condition such that $\HH^s(X)=0$ for $s = \dimh(X)$ and $X_F$ is not a singleton, then $\dima(X) \ge 1$. Note that if $X_F$ is not a singleton, then $X$ is not contained in a line. Therefore, the following question is natural:

\begin{question}\label{q:Ass}
  Let $X$ be a dominated planar self-affine set satisfying the strong separation condition such that $\HH^s(X)=0$ for $s = \dimh(X)$ and $X_F$ is not a singleton and $\dimh(X) \le 1$. Is $\dima(X) > 1$?
\end{question}

If for ``typical'' self-affine sets it is possible to obtain a positive answer in Question \ref{q:Ass}, then \cite[Proposition 2.4 and Theorem 5.2]{BaranyKaenmakiRossi2021} imply a negative answer to Question \ref{q:slices}.

The paper is organized as follows. In Section \ref{sec:main-results} we state the main theorems and derive the corollaries announced above. In Section \ref{sec:lower} we prove the lower dimension formula, and in Section \ref{sec:assouad-large} we prove the Assouad dimension formula in the regime $\dimh(X) \ge 1$. In Sections \ref{sec:assouad-small} and \ref{sec:hausdorff-measure} we prove the projective-separation criteria and complete the Ahlfors-regularity characterization. Finally, Section \ref{sec:assouad-affinity} proves the Baire-typical parameter statement.

\section{Preliminaries}

We introduce rigorous definitions and some preliminaries in this section. The reader familiar with the recent progress in the topic may skip the preliminaries and go directly to Section \ref{sec:main-results} where we exhibit the main results.

\subsection{Ahlfors regularity} \label{sec:A-regular}
We say that a Borel measure $\mu$ on $\R^2$ is \emph{Ahlfors $s$-regular} if there exists a constant $C \ge 1$ such that
\begin{equation} \label{eq:ahlfors-regularity-def}
	C^{-1}r^s \le \mu(B(x,r)) \le Cr^s
\end{equation}
for all $x \in \spt(\mu)$ and $0<r<\diam(\spt(\mu))$, where $B(x,r)$ is the closed ball centered at $x$ with radius $r$. A compact set $X \subset \R^2$ is \emph{Ahlfors $s$-regular} if it has a fully supported Ahlfors $s$-regular measure. We also say that a measure or a set is \emph{Ahlfors regular} if it is Ahlfors $s$-regular for some $s \ge 0$. Recall that the \emph{$s$-dimensional Hausdorff measure} $\HH^s$ of a set $X \subset \R^2$ is defined by
\begin{equation*}
  \HH^s(X) = \lim_{\delta \downarrow 0} \HH^s_\delta(X) = \sup_{\delta>0} \HH^s_\delta(X),
\end{equation*}
where
\begin{equation*}
  \HH^s_\delta(X) = \inf\biggl\{ \sum_i \diam(U_i)^s : X \subset \bigcup_i U_i \text{ and } \diam(U_i) \le \delta \biggr\}
\end{equation*}
is the \emph{$s$-dimensional Hausdorff $\delta$-content} of $X$. By \cite[Theorem 6.9]{Mattila1995}, an Ahlfors regular set $X \subset \R^2$ has positive and finite $s$-dimensional Hausdorff measure, $0<\HH^s(X)<\infty$, when $s = \dimh(X) = \inf\{s \ge 0 : \HH^s(X)<\infty\}$ is the Hausdorff dimension of $X$. In fact, a set $X \subset \R^2$ is Ahlfors $s$-regular if and only if there exists a constant $C \ge 1$ such that
\begin{equation*}
	C^{-1}r^s \le \HH^s(X \cap B(x,r)) \le Cr^s
\end{equation*}
for all $x \in X$ and $0<r<\diam(X)$.

If a Borel measure $\mu$ on $\R^2$ is Ahlfors $s$-regular, then we write $\dim(\mu)=s$ and say that $s$ is the \emph{dimension} of $\mu$. More generally, the \emph{upper and lower pointwise dimensions} of $\mu$ at $x \in \R^2$ are
\begin{align*}
\udimloc(\mu,x) &= \limsup_{r \downarrow 0} \frac{\log \mu(B(x,r))}{\log r}, \\
\ldimloc(\mu,x) &= \liminf_{r \downarrow 0} \frac{\log \mu(B(x,r))}{\log r},
\end{align*}
respectively. For basic properties of pointwise dimensions, we refer to the book of Falconer \cite[\S 10]{Falconer1997}. If there exists a constant $s$ such that $\udimloc(\mu,x) = \ldimloc(\mu,x) = s$ for $\mu$-almost all $x \in \R^2$, then we again write $\dim(\mu) = s$ and say that $\mu$ is \emph{exact dimensional}. We remark that in general, most measures do not satisfy this property. But since in the study of self-affine sets, basically all the measures involved are exact dimensional, we use the convention that writing $\dim(\mu)$ implicitly means that the measure $\mu$ is known to be exact dimensional; consult \cite{FengHu2009,BaranyKaenmaki2017,Feng2019-preprint,HochmanSolomyak2017,Rapaport2021,LessaLedrappier-preprint} for examples. Dimensions of measures provide a way to study the Hausdorff dimension of a given Borel set $X$: it is well known that
\begin{equation} \label{eq:variational-principle}
  \dimh(X) = \max\{\esssup_{x \sim \mu}\ldimloc(\mu,x) : \mu \text{ is a finite Borel measure on } X\};
\end{equation}
for example, see \cite[\S 3]{FalconerFraserKaenmaki2020-preprint}. Often with dynamically defined sets, such as in the case of self-affine sets, it is desirable for the approximating measures to adhere to some dynamical properties such as ergodicity and the behavior of entropy and Lyapunov exponents. In this case, the above formula is called the \emph{variational principle}; see \cite{Kaenmaki2004} for its proof in the self-affine case. Under further assumptions, the approximating measures can be seen to agree also with other properties; see \cite[Theorem 1.1]{MorrisShmerkin2019} and \cite[Proposition 2.4]{BaranyKaenmakiRossi2021}. These properties were crucial in \cite{BHR} where the authors were able to determine the Hausdorff dimension of planar self-affine sets in a very general setting; see Theorem \ref{thm:BHR}.

\subsection{Weak tangents}
Let $X \subset \R^2$ be compact. For each $x\in X$ and $r>0$ we define the \emph{magnification} $M_{x,r} \colon \R^2 \to \R^2$ by setting
\begin{equation*}
  M_{x,r}(z) = \frac{z-x}{r}
\end{equation*}
for all $z \in \R^2$. We say that a compact subset $T$ of $B(0,1)$ is a \emph{weak tangent set} of $X$ if there exist sequences $(x_n)_{n\in\N}$ of points in $X$ and $(r_n)_{n \in \N}$ of positive real numbers such that $\lim_{n \to \infty} r_n = 0$ and $M_{x_n,r_n}(X) \cap B(0,1) \to T$ in Hausdorff distance. We denote the collection of weak tangent sets of $X$ by $\Tan(X)$. Note that $0 \in T$ for all $T \in \Tan(X)$.

The \emph{Assouad dimension} of a set $X \subset \R^2$, denoted by $\dima(X)$, is the infimum of all $s \ge 0$ satisfying the following: There exists a constant $C \ge 1$ such that for every $x \in X$ and $0<r<R$ the set $X \cap B(x,R)$ can be covered by at most $C(R/r)^s$ balls of radius $r$ centered at $X$. It is easy to see that
\begin{equation*}
  \dimh(X) \le \dima(X)
\end{equation*}
for all sets $X \subset \R^2$. If $X \subset \R^2$ is compact, then $\dima(X) \ge \dimh(T)$ for all $T \in \Tan(X)$; see \cite[Proposition 6.1.5]{MackayTyson}. The following result of K\"aenm\"aki, Ojala, and Rossi \cite[Proposition 5.7]{KOR} shows that there exists a weak tangent set whose Hausdorff dimension attains the maximal possible value. The result introduces a way to calculate the Assouad dimension of a set by considering its weak tangents.

\begin{lemma} \label{thm:KOR}
  If $X \subset \R^2$ is compact, then $\dima(X) = \max\{\dimh(T) : T \in \Tan(X)\}$.
\end{lemma}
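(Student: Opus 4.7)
The plan is to prove both inequalities, with the nontrivial content being the existence of a weak tangent attaining the supremum. The inequality $\dima(X) \ge \sup\{\dimh(T) : T \in \Tan(X)\}$ is already cited just before the statement from Mackay and Tyson, so I would focus on exhibiting some $T \in \Tan(X)$ with $\dimh(T) \ge s := \dima(X)$.

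First I would extract magnification data at arbitrarily small scales. Fixing $\eps > 0$, the definition of the Assouad dimension supplies sequences $x_n \in X$ and $0 < r_n < R_n$ with $R_n \to 0$ and $r_n/R_n \to 0$ such that $X \cap B(x_n,R_n)$ admits an $r_n$-separated subset of cardinality at least $(R_n/r_n)^{s-\eps}$. The normalised sets $Y_n = M_{x_n,R_n}(X) \cap B(0,1)$ all lie in $B(0,1)$, so Blaschke's selection theorem lets me pass to a subsequence converging in Hausdorff distance to a weak tangent $T \in \Tan(X)$.

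The main obstacle is upgrading the separation at the single scale $r_n/R_n$ into a bound on $\dimh(T)$ rather than merely its box or Assouad dimension. To address this I would strengthen the choice of $(x_n,r_n,R_n)$ by a nested pigeonholing: applying the Assouad characterisation iteratively along a geometric sequence of intermediate scales, and relocating the base point at each step, I can arrange that in each $Y_n$ every ball of radius $\rho \in [r_n/R_n,1]$ centred at a point of $Y_n$ still contains a $\rho'$-separated subset of cardinality $\gtrsim (\rho/\rho')^{s-\eps}$ for every $\rho' \in [r_n/R_n,\rho]$. In the limit this uniform multi-scale structure persists on $T$, yielding a nested cell tree in which each cell at scale $\rho$ has roughly $(\rho/\rho')^{s-\eps}$ children at any finer scale $\rho'$. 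Distributing unit mass evenly through this tree produces a measure $\mu$ supported on $T$ satisfying $\mu(B(x,\rho)) \lesssim \rho^{s-\eps}$ uniformly, and the mass distribution principle yields $\dimh(T) \ge s-\eps$.

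Finally, running the construction with $\eps = 1/k$ and diagonalising over $k$ produces a single weak tangent $T \in \Tan(X)$ with $\dimh(T) \ge s$; combined with the cited reverse inequality this shows the maximum is attained and equals $\dima(X)$. The delicate part throughout is the nested pigeonhole argument: without it one only recovers box-counting or Assouad-type bounds on $T$, which are strictly weaker than the Hausdorff statement required here.
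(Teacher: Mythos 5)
The paper itself does not prove this lemma; it cites \cite[Proposition 5.7]{KOR}, and your outline reconstructs essentially the same strategy as that proof (single\mbox{-}scale witnesses of the Assouad dimension, a multi\mbox{-}scale pigeonhole to obtain uniform branching, a Frostman measure in the limit, mass distribution principle). However, the step you yourself flag as delicate is exactly where your proposal has a genuine gap, and as literally stated it would fail. You claim one can arrange that in $Y_n = M_{x_n,R_n}(X)\cap B(0,1)$ \emph{every} ball $B(z,\rho)$ with $z \in Y_n$ and $\rho\in[r_n/R_n,1]$ contains a $\rho'$-separated subset of cardinality $\gtrsim (\rho/\rho')^{s-\eps}$ for all $\rho'\in[r_n/R_n,\rho]$. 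No choice of $x_n,R_n$ achieves this: $Y_n$ is the entire magnified set in the unit window and may contain sparse or isolated portions, so you are in effect asserting a lower-dimension bound $\ge s-\eps$ for the whole window, which the Assouad hypothesis does not provide. What the nested pigeonhole genuinely yields is a selected \emph{subtree} of balls inside $Y_n$, each chosen ball having $\gtrsim(\rho/\rho')^{s-\eps}$ chosen children at the next scale; producing that subtree (upgrading richness at one pair of scales to all intermediate pairs, at the cost of $\eps$) is the technical heart of the cited proof, and in your write-up it is asserted rather than proved.

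The limiting step also needs more than ``the structure persists on $T$''. Hausdorff dimension is not lower semicontinuous under Hausdorff convergence, so you cannot pass to the limit set and only then build the measure, nor diagonalise over $\eps=1/k$ at the level of the tangent sets $T_{1/k}$ themselves. The clean repair, which is how the cited argument runs, is to distribute mass over the finite subtree inside $Y_n$, obtaining measures $\mu_n$ with $\mu_n(B(z,\rho))\le C\rho^{\,s-\eps_n}$ for all $\rho$ above the bottom scale and $\mu_n(B(0,1))$ bounded below, and then take a weak\mbox{-}$*$ limit along a subsequence for which $Y_n\to T$ and $\eps_n\downarrow 0$; the limit measure is supported in $T$, has positive mass, and satisfies $\mu(B(z,\rho))\le C'\rho^{s}$, so the mass distribution principle gives $\dimh(T)\ge s$ for a single weak tangent without any further diagonalisation. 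Two smaller points deserve a sentence: the magnification scales in the definition of $\Tan(X)$ must tend to zero, so you should justify that the Assouad witnesses can be taken with $R_n\to 0$ (true for bounded sets, since scales $R\ge R_0$ only affect the constant), and since $x_n\in X$ is sent to the origin you get $0\in T$, so $T$ meets the interior of $B(0,1)$ as the paper's definition of a weak tangent requires.
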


We remark that the first result in this direction is by Furstenberg; see \cite[Proposition 5.1]{Furstenberg2008}. Together with \cite[Proposition 3.13]{KaenmakiRossi2016} it shows that the Assouad dimension is realized as a Hausdorff dimension of a weak tangent set or a finite magnification. The above result is needed to guarantee that the Assouad dimension gets realized on a weak tangent. This is particularly important detail in the study of self-affine sets as such sets often undergo a metamorphosis in approaching the weak tangent; see \cite{BandtKaenmaki2013,KaenmakiKoivusaloRossi2017,KOR,BaranyKaenmakiRossi2021}.

Analogously, the \emph{lower dimension} of a set $X \subset \R^2$, denoted by $\diml(X)$, is the supremum of all $s \ge 0$ satisfying the following: There exists a constant $c>0$ such that for every $x \in X$ and $0<r<R<\diam(X)$ covering the set $X \cap B(x,R)$ requires at least $c(r/R)^{-t}$ balls of radius $r$ centered at $X$. It is easy to see that
\begin{equation*}
  \diml(X) \le \dimh(X)
\end{equation*}
for all $X \subset \R^2$. If $X \subset \R^2$ is compact, then $\diml(X) \le \dimh(T)$ for all $T \in \Tan(X)$; see \cite[Proposition 2.3]{FHKY2019}. The following result of Fraser, Howroyd, K\"aenm\"aki, and Yu \cite[Theorem 1.1]{KOR} shows that there exists a weak tangent set whose Hausdorff dimension attains the minimal possible value. The result introduces a way to calculate the lower dimension of a set by considering its weak tangents.

\begin{lemma} \label{thm:FHKY}
  If $X \subset \R^2$ is compact, then $\diml(X) = \min\{\dimh(T) : T \in \Tan(X)\}$.
\end{lemma}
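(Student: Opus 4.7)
The inequality $\diml(X) \le \min\{\dimh(T) : T \in \Tan(X)\}$ is immediate from the bound $\diml(X) \le \dimh(T)$ cited just above, which holds for every $T \in \Tan(X)$. My plan is therefore to exhibit a single weak tangent $T \in \Tan(X)$ satisfying $\dimh(T) \le \diml(X)$; combined with the cited inequality this forces equality and realizes the minimum. Since the Hausdorff dimension is bounded above by the upper Minkowski dimension for any bounded set, it suffices to construct $T$ with $\udimm(T) \le \diml(X)$.

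Fix $\eps > 0$ and set $s := \diml(X) + \eps$. The definition of the lower dimension guarantees that the exponent $s > \diml(X)$ admits no uniform constant witnessing the required lower covering bound; hence for each $n \in \N$ one finds $x_n \in X$ and scales $0 < r_n < R_n < \diam(X)$ such that $X \cap B(x_n, R_n)$ is covered by fewer than $\tfrac{1}{n}(R_n/r_n)^s$ balls of radius $r_n$ centered at $X$, and moreover $R_n/r_n \to \infty$ (otherwise the bound is vacuous for $n$ large). The main obstacle is that this furnishes sparseness at only one pair of scales, whereas a Minkowski bound on the limiting tangent requires sparseness throughout a whole geometric range. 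To bridge the gap, I would partition $[r_n, R_n]$ into $k_n$ geometric subintervals with $k_n \to \infty$ slowly and apply the sub-multiplicativity of covering numbers to the telescoping product: the resulting pigeonhole extracts a sub-range $[r_n', R_n'] \subseteq [r_n, R_n]$, still with $R_n'/r_n' \to \infty$, on which $X \cap B(x_n, R_n')$ can be covered by at most $C(R_n'/\rho)^{s + o(1)}$ balls of radius $\rho$ for \emph{every} intermediate $\rho \in [r_n', R_n']$.

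Now set $Y_n := M_{x_n, R_n'}(X) \cap B(0,1)$. By the compactness of the space of closed subsets of $B(0,1)$ in the Hausdorff metric, extract a subsequence with $Y_n \to T_\eps$ for some $T_\eps \in \Tan(X)$; the uniform covering bounds on $Y_n$ at all scales in $[r_n'/R_n', 1]$ transfer under the Hausdorff limit (since a cover of $Y_n$ by balls of radius $\rho$ inflates to a cover of $T_\eps$ by balls of radius $\rho + d_H(Y_n, T_\eps)$) and give $\udimm(T_\eps) \le s = \diml(X) + \eps$. Finally, taking $\eps = 1/k \to 0$ and performing a diagonal extraction on the underlying magnifications produces a single weak tangent $T \in \Tan(X)$ with $\udimm(T) \le \diml(X)$, hence $\dimh(T) \le \diml(X)$. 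This establishes the nontrivial direction and completes the proof of the lemma.
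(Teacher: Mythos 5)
The easy inequality and the general idea of transferring covering bounds to a Hausdorff limit are fine, but the pigeonhole step is a genuine gap, and it is the heart of the matter. The failure of the lower-dimension inequality only controls the single covering number $N_{r_n}(X\cap B(x_n,R_n))$, while your extraction keeps the centre $x_n$ and claims uniform bounds $N_\rho(X\cap B(x_n,R_n'))\le C(R_n'/\rho)^{s+o(1)}$ for \emph{all} intermediate $\rho$. What sub-multiplicativity and a maximal-deviation choice of $R_n'$ actually give is $N_\rho(X\cap B(x_n,R_n))\le N_{R_n'}(X\cap B(x_n,R_n))\,(R_n'/\rho)^{s'}$, and the prefactor $N_{R_n'}(X\cap B(x_n,R_n))$ (the covering number of the \emph{original} ball at the new top scale) is unbounded: relative to its own radius, the sub-ball can be far richer than exponent $s$ allows, because the poorness of the big window may come entirely from long flat stretches of the covering profile elsewhere in the scale range. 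Concretely, let $X\cap B(x,R)$ (say $x=0$, $R=1$) be $\{0\}$ together with clusters $C_m\subset[2^{-m-1},2^{-m}]$, $0\le m\le M$, where each $C_m$ is a finite pattern with $N_\rho(C_m)\approx\min\{(2^{-m}/\rho)^{1/2},D^{1/2}\}$ and $D=2^{2sM}$. Then $N_r(X\cap B(0,1))\approx M2^{sM}=o((1/r)^{s})$ for $r\approx2^{-M}/D$, so these windows do witness $\diml\le s$ with constants tending to $0$; yet for every $R'\in[r,1]$ the ball $B(0,R')$ contains a cluster of diameter comparable to $R'$, so $N_\rho(X\cap B(0,R'))\gtrsim(R'/\rho)^{1/2}$ for all $\rho\in[R'/D,R']$, and no same-centre sub-range $[r',R']$ with $R'/r'\to\infty$ can satisfy your bound with exponent near $s$. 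In such examples the small tangent must be found at different centres and scales (here, near an almost isolated cluster point), which your recipe does not locate.

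Two further points. Even granting uniform covering bounds with exponent $\diml(X)+\eps$ and an $\eps$-dependent constant, the closing step ``diagonal extraction produces a single weak tangent with $\udimm(T)\le\diml(X)$'' is not automatic: neither upper box nor Hausdorff dimension is upper semicontinuous under Hausdorff convergence, so you would have to carry the quantitative estimates (constants and the lengths of the admissible scale ranges) through the diagonalisation, exactly where the unbounded prefactors above bite. More fundamentally, your plan aims at the stronger assertion that some weak tangent has upper \emph{Minkowski} dimension at most $\diml(X)$; this is not established by the argument and is not what the cited result of Fraser, Howroyd, K\"aenm\"aki, and Yu provides. Their theorem concerns Hausdorff dimension precisely because the hypothesis supplies efficient covers only at isolated pairs of scales, and Hausdorff dimension of a limit set can be bounded using efficient covers along a sparse sequence of relative scales (obtained by nesting witnessing configurations inside the tangent), whereas a box-dimension bound requires control at all small scales. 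As it stands, the proposal does not prove the nontrivial inequality.
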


It is straightforward to see that if $X \subset \R^2$ is Ahlfors $s$-regular, then $\diml(X)=\dima(X)=s$; see \cite[\S 3]{KaenmakiLehrbackVuorinen2013}. The converse does not necessarily hold as the following example illustrates.

\begin{example}
  We construct a set $X \subset [0,1]$ with $\diml(X)=\dima(X)$ and $\HH^s(X)=0$ for $s = \dimh(X)$. Let $\lambda_n=(3n)^{-1}$ for all $n \in \N$. We start the construction from the unit interval $[0,1]$. First, we cut out the middle part of length $\lambda_1$. Next, for each of the two small intervals of length $(1-\lambda_1)/2 = 1/3$, we cut out their middle parts of length $\lambda_2/3$. At step $n$, we cut out the middle parts of relative length $\lambda_n$. We can perform this cutting procedure indefinitely and in the end we obtain a compact set $X$ similarly as with the middle third Cantor set. It is straightforward to see that $\diml(X)=1$ and $X$ has zero Lebesgue measure.
\end{example}

We collect the general implications of Ahlfors regularity in the following lemma:

\begin{lemma} \label{thm:ahlfors-implications}
  If $X \subset \R^2$ is Ahlfors $s$-regular, then $0<\HH^s(X)<\infty$ where $s = \diml(X) = \dimh(X) = \dima(X)$.
\end{lemma}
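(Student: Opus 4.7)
The plan is to read off all three conclusions directly from the volume estimate provided by Ahlfors $s$-regularity. Section~\ref{sec:A-regular} records the equivalent formulation
\begin{equation*}
  C^{-1} r^s \le \HH^s(X \cap B(x,r)) \le C r^s
\end{equation*}
valid for every $x \in X$ and $0 < r < \diam(X)$. Fixing any $x \in X$ and noting that $X \subseteq B(x,\diam(X))$, taking $r \uparrow \diam(X)$ in the upper bound gives $\HH^s(X) \le C \diam(X)^s < \infty$, while for any fixed $0 < r < \diam(X)$ the lower bound gives $\HH^s(X) \ge C^{-1} r^s > 0$. Since the Hausdorff dimension is characterized as the unique value at which $\HH^s(X)$ is positive and finite, we conclude $\dimh(X) = s$.

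For the Assouad upper bound, I would invoke a standard separation argument: given $x \in X$ and $0 < r < R$, let $\{y_i\}$ be a maximal $r$-separated subset of $X \cap B(x,R)$. The balls $B(y_i, r/2)$ are pairwise disjoint and contained in $B(x, 2R)$, so summing the Ahlfors lower bounds on $X \cap B(y_i, r/2)$ and comparing with the Ahlfors upper bound on $X \cap B(x,2R)$ yields $\#\{y_i\} \le C^{2} (4R/r)^s$. Maximality forces $\{B(y_i,r)\}$ to cover $X \cap B(x,R)$, whence $\dima(X) \le s$. For the lower dimension bound, any covering of $X \cap B(x,R)$ by $N$ balls of radius $r$ centered at $X$ must satisfy $C^{-1} R^s \le \HH^s(X \cap B(x,R)) \le N \cdot C r^s$, so $N \ge C^{-2}(R/r)^s$ and therefore $\diml(X) \ge s$. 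Combined with the general inequalities $\diml(X) \le \dimh(X) \le \dima(X)$ recorded in the preliminaries, we get $\diml(X) = \dimh(X) = \dima(X) = s$.

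No step presents any real obstacle; the whole statement is a textbook unwinding of the definitions, and the two covering estimates are already attributed in the paper to \cite[\S 3]{KaenmakiLehrbackVuorinen2013}. The only minor care needed is to verify that the covering balls can indeed be chosen to be centered at $X$, which is built into the definitions of both $\dima$ and $\diml$ given in the preliminaries.
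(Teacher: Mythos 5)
Your proof is correct in substance, and it is essentially the standard unwinding of the definitions; the difference from the paper is only that the paper does not write this argument out at all, but disposes of the lemma by citation: the statement $0<\HH^s(X)<\infty$ with $s=\dimh(X)$ is attributed in \S\ref{sec:A-regular} to \cite[Theorem 6.9]{Mattila1995}, and the identities $\diml(X)=\dima(X)=s$ to \cite[\S 3]{KaenmakiLehrbackVuorinen2013}. Your packing argument for $\dima(X)\le s$, the covering count for $\diml(X)\ge s$, and the deduction of $\dimh(X)=s$ from $0<\HH^s(X)<\infty$ are precisely the arguments those references contain, so what your version buys is self-containedness rather than a new route. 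Three small points to tidy, none of which is a real gap. First, letting $r\uparrow\diam(X)$ only controls $\HH^s$ of $X$ intersected with the \emph{open} ball of radius $\diam(X)$ about $x$, so points of $X$ at distance exactly $\diam(X)$ from $x$ escape that single limit; it is cleaner to cover the compact set $X$ by finitely many balls $B(x_i,\diam(X)/2)$ with $x_i\in X$ and sum the upper bounds. Second, in the Assouad estimate the upper regularity bound at radius $2R$ is only available for $2R<\diam(X)$; for larger $R$ note that $X\cap B(x,R)=X\cap B(x,\diam(X))$, so the count obtained at scales comparable to $\diam(X)$ (together with the finiteness of $\HH^s(X)$ already established) suffices. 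Third, the Hausdorff dimension is not in general ``the unique value at which $\HH^s(X)$ is positive and finite''---no such value need exist; what you actually use, and what is correct, is that $\HH^s(X)>0$ forces $\dimh(X)\ge s$ while $\HH^s(X)<\infty$ forces $\dimh(X)\le s$.
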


The important observation here is that if the lower and Assouad dimensions of $X$ differ, then $X$ cannot be Ahlfors regular. For other basic properties of the Assouad and lower dimensions, we refer to the book of Fraser \cite{Fraser2020}.

\subsection{Irreducibility} \label{sec:ProdMatrix}
If $A \in \GL_2(\R)$ is an invertible $2 \times 2$-matrix, then we denote the lengths of the major and minor axis of the ellipse $A(B(0,1))$ by $\alpha_1(A)$ and $\alpha_2(A)$, respectively. Note that $\alpha_1(A) = \|A\|$ and $\alpha_2(A) = \|A^{-1}\|^{-1}$ are the square roots of the non-negative real eigenvalues of the positive semidefinite matrix $A^\top A$. It is also well-known that $|\det(A)| = \alpha_1(A)\alpha_2(A)$ for all $A \in \GL_2(\R)$. Let $\mathbb{RP}^1$ be the real projective line, that is, the set of all lines through the origin in $\R^2$. If $A \in \GL_2(\R)$ and $W \in \RP$, we write $A|W$ for the restriction of $A$ to $W$ and $\|A|W\|$ for its operator norm. For $u = (u_1,u_2)$ and $v = (v_1,v_2)$ in $\R^2$, the wedge product is $u \land v = u_1v_2 - u_2v_1$. The angle between two lines $W_1,W_2 \in \RP$ is
\begin{equation*}
  \sphericalangle(W_1,W_2) = \arcsin\biggl(\frac{|w_1 \land w_2|}{|w_1||w_2|}\biggr),
\end{equation*}
where $w_i \in W_i \setminus \{0\}$ for $i \in \{1,2\}$. The angle $\sphericalangle$ defines a metric on $\RP$, and we write $B(V,r) = \{W \in \RP : \sphericalangle(V,W) < r\}$ for the open ball centered at $V \in \RP$ with radius $r > 0$. If $m$ is a Borel measure on $\RP$, then the push-forward measure $A_*m$ is defined by $(A_*m)(E) = m(A^{-1}E)$ for all Borel sets $E \subset \RP$. We denote the Dirac mass at $V \in \RP$ by $\delta_V$.

\begin{lemma}\label{lem:goto0}
  If $(A_n)_{n \in \N}$ is a sequence of $2\times 2$ matrices such that there exists a non-atomic measure $m$ on $\RP$ for which $\lim_{n \to \infty}(A_n)_*m = \delta_{V}$ in the weak$^*$ topology, then
  \begin{equation*}
    \frac{\alpha_1(A_n^\top)}{\alpha_2(A_n^\top)} = \frac{\alpha_1(A_n)}{\alpha_2(A_n)} \to \infty
  \end{equation*}
  as $n \to \infty$,
  \begin{equation*}
    \lim_{n\to\infty}\frac{\|A_n^\top|W\|}{\alpha_1(A_n^\top)}=|\cos(\sphericalangle(V,W))|
  \end{equation*}
  for all $W\in\RP$, and
  \begin{equation*}
    \sphericalangle(A_n^\top W_1, A_n^\top W_2) \to 0
  \end{equation*}
  as $n \to \infty$ for all $W_1,W_2\in\RP\setminus\{V^\perp\}$.
\end{lemma}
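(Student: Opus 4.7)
The plan is to analyze the sequence through its singular value decomposition $A_n = U_n \Sigma_n V_n^\top$, where $\Sigma_n = \diag(\alpha_1(A_n),\alpha_2(A_n))$ and $U_n,V_n$ are orthogonal. Since $A$ and $A^\top$ have the same singular values, the equality $\alpha_1(A_n^\top)/\alpha_2(A_n^\top) = \alpha_1(A_n)/\alpha_2(A_n)$ is immediate. To see that the common ratio tends to $\infty$, suppose for contradiction that along a subsequence it stays bounded by some $C$. Then the normalized matrices $B_n := A_n/\alpha_1(A_n)$ satisfy $\|B_n\|=1$ and $\|B_n^{-1}\| = \alpha_1(A_n)/\alpha_2(A_n) \le C$, so $\{B_n\}$ is precompact in $GL_2(\R)$. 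Passing to a further subsequence, $B_n \to B \in GL_2(\R)$. Since $B_n$ and $A_n$ induce the same map on $\RP$, we get $(A_n)_* m = (B_n)_* m \to B_* m$ in the weak$^*$ topology. But $B$ is a homeomorphism of $\RP$, so $B_* m$ is non-atomic, contradicting $(A_n)_* m \to \delta_V$.

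Next I locate the top singular direction. Let $V_n^* \in \RP$ denote the direction of the first column of $U_n$, equivalently the top eigendirection of $A_n A_n^\top$. By compactness, extract a subsequence along which $V_n^* \to V'$ and the direction of the second column of $V_n$ converges to some $W_0 \in \RP$. For every $W \in \RP \setminus \{W_0\}$, decomposing a unit vector $w \in W$ in the columns of $V_n$ gives a coefficient on the first column that is bounded below; after dividing $A_n w$ by $\alpha_1(A_n)$ and using $\alpha_2(A_n)/\alpha_1(A_n)\to 0$ from the previous paragraph, the direction of $A_n w$ tends to $V'$. Since $\{W_0\}$ has $m$-measure zero by non-atomicity, bounded convergence yields $(A_n)_* m \to \delta_{V'}$ along the subsequence, forcing $V' = V$. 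As the limit is independent of the subsequence, $V_n^* \to V$ for the full sequence.

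For the second assertion, take a unit $w\in W$ and set $\theta_n := \sphericalangle(V_n^*,W)$, which tends to $\sphericalangle(V,W)$ by the above. Since the columns of $U_n$ are orthonormal eigenvectors of $A_n A_n^\top$ with eigenvalues $\alpha_i(A_n)^2$,
\[
\|A_n^\top w\|^2 \;=\; w^\top A_n A_n^\top w \;=\; \alpha_1(A_n)^2 \cos^2 \theta_n + \alpha_2(A_n)^2 \sin^2 \theta_n,
\]
and dividing by $\alpha_1(A_n^\top)^2 = \alpha_1(A_n)^2$ delivers the claimed limit $|\cos\sphericalangle(V,W)|$. For the third assertion, fix $W \in \RP \setminus \{V^\perp\}$ and write a unit $w\in W$ as $d_{1,n} u_1^{(n)} + d_{2,n} u_2^{(n)}$ in the basis of columns of $U_n$. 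Since $V_n^* \to V$ and $W \ne V^\perp$, $|d_{1,n}|$ is bounded below by some $c>0$ for large $n$. Then $A_n^\top w = \alpha_1(A_n) d_{1,n} v_1^{(n)} + \alpha_2(A_n) d_{2,n} v_2^{(n)}$, so the angle between $A_n^\top W$ and the direction of $v_1^{(n)}$ is at most $\arctan\bigl((\alpha_2(A_n)/\alpha_1(A_n))\,|d_{2,n}/d_{1,n}|\bigr)$, which tends to $0$. The triangle inequality applied to $W_1,W_2$ completes the proof.

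I expect the main obstacle to be the second paragraph. The contracting direction $W_0$ of $A_n$ itself moves with $n$, so one cannot directly identify the limit of $V_n^*$ without first passing to a subsequence and invoking non-atomicity of $m$ in an essential way to discard the single exceptional direction; only the uniqueness of the limit $V$ then upgrades the subsequential convergence to convergence of the full sequence.
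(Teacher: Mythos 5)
Your proof is correct, but it takes a genuinely different route from the paper. The paper's proof is essentially a reduction to a cited result: it records the identity $\|\det(A_n)^{-1/2}A_n\|^2=\alpha_1(A_n)/\alpha_2(A_n)$ and the exact angle formula
\begin{equation*}
  |\sphericalangle(A_n^\top W_1,A_n^\top W_2)|=\arcsin\Bigl(\tfrac{|\det(A_n^\top)|}{\|A_n^\top|W_1\|\,\|A_n^\top|W_2\|}|\sin(\sphericalangle(W_1,W_2))|\Bigr),
\end{equation*}
and then invokes Proposition II.3.1 of Bougerol--Lacroix, which supplies the divergence of the singular value ratio and the limit $\|A_n^\top|W\|/\alpha_1(A_n^\top)\to|\cos\sphericalangle(V,W)|$; the third assertion then follows from the displayed identity since $|\det(A_n^\top)|/\alpha_1(A_n)^2=\alpha_2(A_n)/\alpha_1(A_n)\to 0$ while the two norms in the denominator are comparable to $\alpha_1$ for $W_i\neq V^\perp$. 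You instead reprove the content of that cited proposition from scratch via the singular value decomposition: the normalization-and-compactness contradiction for the ratio, the subsequence argument (using non-atomicity of $m$ to discard the moving contracting direction) to identify the top left-singular direction with $V$, and then direct coordinate computations in the singular bases for the second and third claims, the last one bypassing the $\arcsin$/determinant identity entirely. Your steps all check out — in particular the precompactness of $A_n/\alpha_1(A_n)$ in $GL_2(\R)$, the a.e. convergence plus dominated convergence giving $(A_n)_*m\to\delta_{V'}$, and the lower bound on the $u_1^{(n)}$-coefficient for $W\neq V^\perp$ are handled correctly. What the paper's route buys is brevity and consistency with the random-matrix-products machinery it cites elsewhere; what yours buys is a self-contained, elementary argument at the cost of essentially rederiving the Bougerol--Lacroix proposition.
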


\begin{proof}
  Since $\|\det(A_n)^{-1/2}A_n\|^2 = \alpha_1(A_n)/\alpha_2(A_n)$ and
  \begin{align*}
  |\sphericalangle(A_n^\top W_1, A_n^\top W_2)| &= \arcsin\biggl(\frac{|A_n^\top w_1 \land A_n^\top w_2|}{|A_n^\top w_1||A_n^\top w_2|}\biggr)\\
  &=\arcsin\biggl(\frac{|\det(A_n^\top)|}{\|A_n^\top|W_1\|\|A_n^\top|W_2\|}|\sin(\sphericalangle(W_1,W_2))|\biggr)
  \end{align*}
  for all $n \in \N$, where $w_1 \in W_1$ and $w_2 \in W_2$ are such that $|w_1|=1=|w_2|$, the lemma follows from \cite[Proposition~II.3.1]{BougerolLacroix1985}.
\end{proof}

We are primarily interested in semigroups generated by finite collections of matrices. In this context, it is rather standard practise to use separate alphabet to index the elements in the semigroup. Let $\Sigma = \{ 1,\ldots,N \}^\N$ be the collection of all infinite words obtained from integers $\{ 1,\ldots,N \}$. If $\iii = i_1i_2\cdots \in \Sigma$, then we define $\iii|_n = i_1 \cdots i_n$ for all $n \in \N$. If $\iii = i_1 \cdots i_n$, then we write $\overleftarrow{\iii} = i_n \cdots i_1$. The empty word $\iii|_0$ is denoted by $\varnothing$. Define $\Sigma_n = \{ \iii|_n : \iii \in \Sigma \}$ for all $n \in \N$ and $\Sigma_* = \bigcup_{n \in \N} \Sigma_n \cup \{ \varnothing \}$. Thus $\Sigma_*$ is the collection of all finite words. The length of $\iii \in \Sigma_* \cup \Sigma$ is denoted by $|\iii|$. The concatenation of two words $\iii \in \Sigma_*$ and $\jjj \in \Sigma_* \cup \Sigma$ is denoted by $\iii\jjj$ and the longest common prefix of $\iii$ and $\jjj$ by $\iii\wedge\jjj$. Thus $\jjj = (\iii\wedge\jjj)\jjj'$ for some $\jjj' \in \Sigma_* \cup \Sigma$.
If $\mathsf{A} = (A_1,\ldots,A_N) \in \GL_2(\R)^N$, then we write $A_\iii = A_{i_1} \cdots A_{i_n}$ and
\begin{align*}
  A_{\overleftarrow{\iii}}^\top &= (A_{\overleftarrow{\iii}})^\top = A_{i_1}^\top \cdots A_{i_n}^\top, \\
  A_{\overleftarrow{\iii}}^{-1} &= (A_{\overleftarrow{\iii}})^{-1} = A_{i_1}^{-1} \cdots A_{i_n}^{-1}
\end{align*}
for all $\iii = i_1 \cdots i_n \in \Sigma_n$ and $n \in \N$.

Let $\sigma$ be the left shift operator defined by $\sigma\iii = i_2i_3\cdots$ for all $\iii = i_1i_2\cdots \in \Sigma$. If $\iii \in \Sigma_n$ for some $n$, then we set $[\iii] = \{ \jjj \in \Sigma : \jjj|_n = \iii \}$. The set $[\iii]$ is called a \emph{cylinder set}. The \emph{shift space} $\Sigma$ is compact in the topology generated by the cylinder sets. Moreover, the cylinder sets are open and closed in this topology and they generate the Borel $\sigma$-algebra. Let $\MM_\sigma(\Sigma)$ be the collection of all $\sigma$-invariant Borel probability measures on $\Sigma$. We say that a measure $\mu$ on $\Sigma$ is \emph{fully supported} if every cylinder set has positive measure, $\mu([\iii])>0$ for all $\iii \in \Sigma_*$. If $(p_1,\ldots,p_N)$ is a probability vector, then the measure $\mu \in \MM_\sigma(\Sigma)$ for which
\begin{equation*}
  \mu([\iii])=p_{i_1}\cdots p_{i_{n}}
\end{equation*}
for all $\iii=i_1\cdots i_n\in\Sigma_n$ and $n \in \N$ is called a \emph{Bernoulli measure}. Note that a Bernoulli measure obtained from a probability vector $(p_1,\ldots,p_N)$ is fully supported if and only if $p_i>0$ for all $i \in \{1,\ldots,N\}$.

If $\A = (A_1,\ldots,A_N) \in \GL_2(\R)^N$ and $\mu \in \MM_\sigma(\Sigma)$ is a fully supported Bernoulli measure obtained from a probability vector $(p_1,\ldots,p_N)$, then the associated \emph{Furstenberg measure} is a Borel probability measure $\mu_F$ on $\RP$ satisfying
\begin{equation} \label{eq:furstenberg-measure-def}
  \mu_F=\sum_{i=1}^Np_i(A_i^{-1})_*\mu_F.
\end{equation}
We say that $\A$ is \emph{irreducible} if there does not exist $V \in \RP$ such that $A_iV=V$ for all $i \in \{ 1,\ldots,N \}$; otherwise $\A$ is \emph{reducible}. The tuple $\A$ is \emph{strongly irreducible} if there does not exist a finite set $\mathcal{V} \subset \RP$ such that $A_i\mathcal{V}=\mathcal{V}$ for all $i\in\{1,\ldots,N\}$. A matrix $A \in \GL_2(\R)$ is called \emph{proximal} if it has two real eigenvalues with different absolute values. We say that $\mathsf{A}$ is \emph{strictly affine} if there is $\iii \in \Sigma_*$ such that $A_\iii$ is proximal. If $\A$ is strictly affine, then the set of \emph{Furstenberg directions} is
\begin{equation*}
  X_F=\{A\R^2 \in \RP : A \in \overline{\{ cA_{\overleftarrow{\iii}}^{-1} : c \in \R \text{ and } \iii \in \Sigma_* \}} \text{ has rank one}\}.
\end{equation*}
The following lemma demonstrates the connection between the Furstenberg measure and directions.

\begin{lemma} \label{thm:furstenberg-delta}
  If $\A \in \GL_2(\R)^N$ is strictly affine and strongly irreducible and $\mu \in \MM_\sigma(\Sigma)$ is a fully supported Bernoulli measure, then the associated Furstenberg measure $\mu_F$ is unique and non-atomic with $\dim(\mu_F)>0$, the support of $\mu_F$ is $X_F$, and there exists a measurable function $\vartheta_2 \colon \Sigma \to X_F$ such that $\mu_F = \int_\Sigma \delta_{\vartheta_2(\iii)} \dd\mu(\iii) = (\vartheta_2)_*\mu$ and
  \begin{equation*}
    (A_{\overleftarrow{\iii|_n}}^{-1})_* \mu_F \to \delta_{\vartheta_2(\iii)}
  \end{equation*}
  in the weak$^*$ topology for $\mu$-almost all $\iii \in \Sigma$ as $n \to \infty$. In particular, for every $V \in X_F$ there exists a sequence $(n_k)_{k \in \N}$ of integers and for each $k \in \N$ there is a word $\jjj_k \in \Sigma_{n_k}$ such that
  \begin{equation*}
    (A_{\overleftarrow{\jjj_k}}^{-1})_* \mu_F \to \delta_V.
  \end{equation*}
  in the weak$^*$ topology as $k \to \infty$.
\end{lemma}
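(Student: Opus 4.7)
The plan is to split the statement into three ingredients and apply classical results on products of random matrices. \emph{Existence} of $\mu_F$ follows from Schauder--Tychonoff applied to the weak-$*$ continuous operator $\nu \mapsto \sum_{i=1}^N p_i (A_i^{-1})_*\nu$ on the compact convex space of Borel probabilities on $\RP$. \emph{Uniqueness, non-atomicity, and positive dimension} all follow from strong irreducibility together with strict affineness, which supplies a proximal element and so places us in the contracting irreducibility--proximality setting of Furstenberg's theorem: uniqueness and non-atomicity are \cite[Theorem III.4.3 and Proposition III.2.3]{BougerolLacroix1985}, while $\dim(\mu_F) > 0$ follows from Guivarc'h's H\"older regularity of stationary measures in the same setting, see \cite[Chapter VI]{BougerolLacroix1985}.

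To construct $\vartheta_2$, I would invoke the classical contraction principle \cite[Proposition III.3.2]{BougerolLacroix1985}: for $\mu$-almost every $\iii = i_1 i_2\cdots \in \Sigma$, the sequence $A_{\overleftarrow{\iii|_n}}^{-1} = A_{i_1}^{-1}\cdots A_{i_n}^{-1}$ satisfies $\alpha_1(A_{\overleftarrow{\iii|_n}}^{-1})/\alpha_2(A_{\overleftarrow{\iii|_n}}^{-1}) \to \infty$ and the pushforward $(A_{\overleftarrow{\iii|_n}}^{-1})_*\mu_F$ converges in the weak-$*$ topology to a Dirac mass at some $\vartheta_2(\iii) \in \RP$; measurability of $\vartheta_2$ is automatic as an almost-sure pointwise limit. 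The disintegration identity $\mu_F = (\vartheta_2)_*\mu$ is then obtained by iterating \eqref{eq:furstenberg-measure-def} to get
\begin{equation*}
  \mu_F = \sum_{\jjj \in \Sigma_n} \mu([\jjj]) (A_{\overleftarrow{\jjj}}^{-1})_*\mu_F
\end{equation*}
for every $n$, testing both sides against an arbitrary continuous $f \colon \RP \to \R$, and passing to the limit via dominated convergence.

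For $\spt(\mu_F) = X_F$ and the existence of approximating words, the inclusion $\spt(\mu_F) \subseteq X_F$ is immediate from the preceding step, as each $\vartheta_2(\iii)$ arises as the image of $\R^2$ under a rank-one limit of normalized matrices $c_n A_{\overleftarrow{\iii|_n}}^{-1}$. Conversely, given $V \in X_F$, choose $(\iii_k, c_k)$ with $c_k A_{\overleftarrow{\iii_k}}^{-1} \to A$ and $A\R^2 = V$; the rank of $A$ must then be one, which forces $\alpha_2(A_{\overleftarrow{\iii_k}}^{-1})/\alpha_1(A_{\overleftarrow{\iii_k}}^{-1}) \to 0$ with leading singular direction converging to $V$. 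Combined with non-atomicity of $\mu_F$, \cite[Proposition II.3.1]{BougerolLacroix1985} (the result underlying Lemma~\ref{lem:goto0}) then forces $(A_{\overleftarrow{\iii_k}}^{-1})_*\mu_F \to \delta_V$, simultaneously placing $V$ in $\spt(\mu_F)$ and producing the required sequence.

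The main hurdle is really just bookkeeping: matching the backward product convention $A_{\overleftarrow{\iii|_n}}^{-1} = A_{i_1}^{-1}\cdots A_{i_n}^{-1}$ of the paper with the one-sided random-walk convention of \cite{BougerolLacroix1985}, and confirming that strict affineness together with strong irreducibility entails the contracting irreducibility--proximality condition required there. Both are routine, and no new ideas beyond standard Furstenberg theory are needed.
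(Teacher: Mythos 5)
Your proposal is correct in substance, but it follows a partly different route from the paper. The paper's proof is largely by citation: uniqueness, non-atomicity and $\dim(\mu_F)>0$ come from Bougerol--Lacroix together with Hochman--Solomyak, the identity $\spt(\mu_F)=X_F$ is taken from the proof of Lemma 2.3 in B\'ar\'any--K\"aenm\"aki--Rossi, and the existence of $\vartheta_2$ from Bougerol--Lacroix; the only argument written out is the final claim, obtained by choosing $\iii_k$ with $\vartheta_2(\iii_k)\to V$ (possible since $\spt(\mu_F)=X_F$ and $\mu_F=(\vartheta_2)_*\mu$), then picking $n_k$ so that $(A_{\overleftarrow{\iii_k|_{n_k}}}^{-1})_*\mu_F$ is Wasserstein-close to $\delta_{\vartheta_2(\iii_k)}$, and finishing with the triangle inequality. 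You instead re-derive $\mu_F=(\vartheta_2)_*\mu$ by iterating \eqref{eq:furstenberg-measure-def} (correct: this is exactly the martingale identity behind the cited results), prove $\spt(\mu_F)\subseteq X_F$ directly via rank-one limits of the normalized backward products, and get the final claim straight from the definition of $X_F$: words $\iii_k$ with $c_kA_{\overleftarrow{\iii_k}}^{-1}$ converging to a rank-one matrix with image $V$, together with non-atomicity of $\mu_F$ and the (easy) converse direction of Lemma \ref{lem:goto0}, give $(A_{\overleftarrow{\iii_k}}^{-1})_*\mu_F\to\delta_V$. This is genuinely different from the paper's density-plus-diagonal argument; it buys a self-contained proof of $\spt(\mu_F)=X_F$ and produces the approximating words directly from the defining sequence of $V$, at the cost of having to supply the converse of Lemma \ref{lem:goto0} and one step you leave implicit: to conclude $V\in\spt(\mu_F)$ from $(A_{\overleftarrow{\iii_k}}^{-1})_*\mu_F\to\delta_V$ you should use the iterated stationarity identity $\mu_F\ge\mu([\iii_k])(A_{\overleftarrow{\iii_k}}^{-1})_*\mu_F$ together with $\mu([\iii_k])>0$ (full support of the Bernoulli measure), which is available in your argument but not stated. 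Two minor remarks: your Bougerol--Lacroix numberings (III.2.3, III.3.2, III.4.3) do not match those the paper uses (II.3.3, II.4.1, VI.4.2) and should be checked against the book; and since the paper's convention is that writing $\dim(\mu_F)$ presupposes exact dimensionality, the Guivarc'h-type H\"older regularity you invoke only gives positive lower pointwise dimension, which is why the paper additionally cites Hochman--Solomyak.
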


\begin{proof}
  By \cite[Theorem~II.4.1 and Corollary~VI.4.2]{BougerolLacroix1985} and \cite[Theorem 1.1]{HochmanSolomyak2017}, the Furstenberg measure $\mu_F$ is unique, non-atomic, and satisfies $\dim(\mu_F)>0$. The fact that the support of $\mu_F$ is $X_F$ follows from \cite[proof of Lemma 2.3]{BaranyKaenmakiRossi2021} and the existence of the function $\vartheta_2 \colon \Sigma \to X_F$ from \cite[Proposition II.3.3 and Theorem II.4.1]{BougerolLacroix1985}. To show the last claim, note that, by the definition of the support, there exists a sequence $(\iii_k)_{k \in \N}$ of words in $\Sigma$ such that $\vartheta_2(\iii_k)\to V$. Let $d_W$ be the Wasserstein distance (or any other metric inducing the weak$^*$ topology). For each $k \in \N$ choose $n_k \in \N$ such that
  \begin{equation*}
    d_W((A_{\overleftarrow{\iii_k|_{n_k}}}^{-1})_* \mu_F,\delta_{\vartheta_2(\iii_k)}) < \frac{1}{k},
  \end{equation*}
  where $\iii_k|_{n_k} \in \Sigma_{n_k}$. The choice of $n_k$ is justified by weak$^*$ convergence. Therefore,
  \begin{equation*}
    d_W((A_{\overleftarrow{\iii_k|_{n_k}}}^{-1})_* \mu_F,\delta_{V}) \le \frac{1}{k} + d_W(\delta_{\vartheta_2(\iii_k)},\delta_V) \to 0
  \end{equation*}
  as $k \to \infty$, and we can choose $\jjj_k$ to be $\iii_k|_{n_k}$.
\end{proof}

Let $R\colon \RP\to \RP$ be such that $R(V)=V^\perp$ for all $V\in\RP$ and write $\mu^\perp_F=R_*\mu_F$. Suppose that $\A \in \GL_2(\R)^N$ is strictly affine and strongly irreducible, and $\mu \in \MM_\sigma(\Sigma)$ is a fully supported Bernoulli measure. Then Lemma \ref{thm:furstenberg-delta} together with the facts that $R^{-1}=R$ and $A^\top V^\perp = (A^{-1}V)^\perp$ for all $A \in \GL_2(\R)$ and $V \in \RP$ implies that
\begin{equation} \label{eq:furstenberg-delta-perp}
  (A_{\overleftarrow{\iii|_n}}^\top)_* \mu_F^\perp \to \delta_{\vartheta_2(\iii)^\perp}
\end{equation}
in the weak$^*$ topology for $\mu$-almost all $\iii \in \Sigma$ as $n \to \infty$, where $\vartheta_2$ is as in Lemma \ref{thm:furstenberg-delta}. We remark that an explicit definition for the function $\vartheta_2$ can be found in \cite[\S 2.3]{BaranyKaenmakiRossi2021}.

Finally, let us analyse the defined reducibility conditions. The following lemma, which classifies the conditions, follows immediately from \cite[proof of Lemma 2.2]{BaranyKaenmakiRossi2021}.

\begin{lemma} \label{thm:A-classification}
  If $\A \in \GL_2(\R)^N$ is strictly affine, then precisely one of the following conditions hold:
  \begin{enumerate}
    \item $\A$ is strongly irreducible,
    \item $\A$ is irreducible but not strongly, i.e., the matrices in $\A$ are simultaneously diagonal or antidiagonal in some basis so that there is at least one antidiagonal matrix,
    \item $\A$ is reducible, i.e., the matrices in $\A$ are simultaneously upper triangular in some basis.
  \end{enumerate}
\end{lemma}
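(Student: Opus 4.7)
The three cases are mutually exclusive by construction (strong irreducibility implies irreducibility, while reducibility contradicts it), so the task is to show they are exhaustive. The plan is to first handle the reducible case, and then within the irreducible class separate the strongly irreducible from the merely irreducible.

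If $\A$ is reducible, pick a common invariant line $V\in\RP$ and a basis whose first vector spans $V$; in this basis every $A_i$ is upper triangular, giving~(3).

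Assume now that $\A$ is irreducible but not strongly irreducible. Then there exists a finite set $\mathcal{V}\subset\RP$ with $A_i\mathcal{V}=\mathcal{V}$ for every $i$, and irreducibility forces $|\mathcal{V}|\ge 2$. The key step is to show $|\mathcal{V}|=2$. Since $\A$ is strictly affine, there is $\iii\in\Sigma_*$ with $A_\iii$ proximal. The permutation action of $A_\iii$ on the finite set $\mathcal{V}$ has some power equal to the identity on $\mathcal{V}$, so $A_\iii^m V=V$ for every $V\in\mathcal{V}$ and some $m\ge 1$. A proximal matrix has exactly two invariant lines, namely its two real eigenspaces corresponding to eigenvalues of distinct absolute value; raising to a positive power preserves these eigenspaces and keeps the absolute values distinct, so $A_\iii^m$ still has exactly these two invariant lines. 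Consequently $\mathcal{V}$ is contained in the two eigenspaces $W_1,W_2$ of $A_\iii$, and $|\mathcal{V}|=2$.

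Choose a basis consisting of vectors spanning $W_1$ and $W_2$. Each $A_i$ permutes the two-element set $\{W_1,W_2\}$, so in this basis it is diagonal when it fixes both lines and antidiagonal when it swaps them. If every $A_i$ were diagonal, then $W_1$ would be a common invariant line, contradicting irreducibility. Hence at least one $A_i$ is antidiagonal, which is precisely~(2). The only nontrivial step is the reduction to $|\mathcal{V}|=2$; this is where strict affinity, via the rigidity of the invariant flag of a proximal matrix, does the actual work.
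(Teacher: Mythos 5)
Your proof is correct. The paper itself gives no argument for this lemma, deferring entirely to the citation of \cite[proof of Lemma 2.2]{BaranyKaenmakiRossi2021}, and your argument is essentially the one that reference uses: the reducible case is immediate from a common invariant line, and in the irreducible-but-not-strongly-irreducible case the proximal element $A_\iii$ (whose only invariant lines are its two eigenlines, a property inherited by its powers) forces the finite invariant set $\mathcal{V}$ to be exactly those two lines, after which the diagonal/antidiagonal dichotomy and the need for at least one antidiagonal matrix follow as you state. So the proposal supplies, correctly and along the expected route, the proof the paper only cites.
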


Note that if $\A$ consists only of antidiagonal matrices, then their second iterates are diagonal and hence reducible.

\subsection{Domination}
We say that $\mathsf{A} = (A_1,\ldots,A_N) \in \GL_2(\R)^N$ is \emph{dominated} if there exist constants $C>0$ and $0<\tau<1$ such that
\begin{equation}\label{eq:domconst}
	\alpha_2(A_\iii) \le C\tau^{|\iii|}\alpha_1(A_\iii)
\end{equation}
for all $\iii \in \Sigma_*$. By \cite[Corollary 2.4]{BaranyKaenmakiMorris2018}, a dominated tuple is strictly affine. We call a proper subset $\CC\subset\RP$ a \emph{multicone} if it is a finite union of closed projective intervals. We say that $\CC\subset\RP$ is a \emph{strongly invariant multicone} for $\mathsf{A}$ if it is a multicone and $A_i\CC\subset\CC^o$ for all $i \in \{1,\ldots,N\}$, where $\CC^o$ is the interior of $\CC$. For example, the first quadrant is strongly invariant for any tuple of positive matrices. By \cite[Theorem~B]{BochiGourmelon2009}, $\mathsf{A}$ has a strongly invariant multicone if and only if $\mathsf{A}$ is dominated. Furthermore, if $\CC\subset\RP$ is a strongly invariant multicone for $\A$, then $\overline{\RP \setminus \CC}$ and $\{V^\perp : V \in \overline{\RP \setminus \CC}\}$ are strongly invariant multicones for $\A^{-1} = (A_1^{-1},\ldots,A_N^{-1})$ and $\A^\top = (A_1^\top,\ldots,A_N^\top)$, respectively.

Observe that if $\A$ is dominated and $\CC\subset\RP$ is a strongly invariant multicone for $\mathsf{A}$, then the set of Furstenberg directions is the compact set
\begin{equation} \label{eq:furstenberg-directions-dominated}
  X_F = \bigcap_{n=1}^\infty \bigcup_{\iii \in \Sigma_n} A_{\overleftarrow{\iii}}^{-1} \overline{\RP \setminus \CC}.
\end{equation}
If $\Pi \colon \Sigma \to \RP$ is the \emph{canonical projection} defined by the relation
\begin{equation} \label{eq:furstenberg-directions-dominated2}
  \{\Pi(\iii)\} = \bigcap_{n=1}^\infty A_{\overleftarrow{\iii|_n}}^{-1} \overline{\RP \setminus \CC},
\end{equation}
then it is easy to see that $X_F = \bigcup_{\iii \in \Sigma} \Pi(\iii)$. The canonical projection is continuous by \cite[Lemma 2.4]{BaranyRams2018}. Note that the definition of $\Pi$ does not depend on the choice of the strongly invariant multicone $\CC$ and $X_F$ is perfect unless it is a singleton.

\begin{lemma} \label{thm:bochi-morris}
  If $\mathsf{A} \in \GL_2(\R)^N$ is dominated, then there exists a constant $D \ge 1$ such that
  \begin{equation*}
    \|A_\iii^\top|V^\perp\|\leq\alpha_1(A_\iii)=\alpha_1(A_\iii^\top)\leq D\|A_\iii^\top|V^\perp\|
  \end{equation*}
  for all $\iii\in\Sigma_*$ and $V\in X_F$. Furthermore, if $\iii \in \Sigma$ and $V = \Pi(\iii)$, then
  \begin{equation*}
    D^{-1}\|A_{\overleftarrow{\iii|_n}}|V\| \le \alpha_2(A_{\overleftarrow{\iii|_n}}) = \alpha_2(A_{\overleftarrow{\iii|_n}}^\top) \le \|A_{\overleftarrow{\iii|_n}}|V\|
  \end{equation*}
  for all $n \in \N$.
\end{lemma}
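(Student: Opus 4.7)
The two easy bounds $\|A_\iii^\top|V^\perp\|\leq\alpha_1(A_\iii)$ and $\|A_{\overleftarrow{\iii|_n}}|V\|\geq\alpha_2(A_{\overleftarrow{\iii|_n}})$ follow immediately from the operator-norm and minimal-singular-value characterizations. For the remaining two inequalities I would argue by compactness and invoke Lemma \ref{lem:goto0}; throughout, $\mathsf{u}_j(B),\mathsf{v}_j(B)$ for $j\in\{1,2\}$ denote the top ($j=1$) and bottom ($j=2$) left and right singular directions of $B\in GL_2(\R)$, and I use the identity $\mathsf{v}_1(B^\top)=\mathsf{u}_1(B)$ together with the projective identity $\sphericalangle(V^\perp,W)=\pi/2-\sphericalangle(V,W)$.

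Assume, for contradiction, that the first lower bound fails: there exist $(\iii_k)\subset\Sigma_*$ and $(V_k)\subset X_F$ with $\|A_{\iii_k}^\top|V_k^\perp\|/\alpha_1(A_{\iii_k})\to 0$. If $|\iii_k|$ is bounded, restricting to a constant subsequence $A_{\iii_k}=A$ and using compactness of $X_F$ produces some $V\in X_F$ with $\|A^\top|V^\perp\|=0$, contradicting invertibility of $A$. Otherwise, by passing to further subsequences, assume $V_k\to V$, $\mathsf{u}_1(A_{\iii_k})\to U$, and $\mathsf{v}_2(A_{\iii_k})\to V_2^*$. From \eqref{eq:furstenberg-directions-dominated} together with the strict invariance $A_i^{-1}\mathcal{D}\subset\mathcal{D}^o$ (where $\mathcal{D}=\overline{\RP\setminus\CC}$), one has $X_F\subset\bigcup_i A_i^{-1}\mathcal{D}\subset\mathcal{D}^o$, so $V\in\mathcal{D}^o$.

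Choose a compact set $K'\subset\CC^o\setminus\{V_2^*\}$ (non-empty since $\CC^o$ is open and $V_2^*$ is a single point). Every $W\in K'$ is uniformly bounded away from $\mathsf{v}_2(A_{\iii_k})$ for large $k$, so the SVD of $A_{\iii_k}$ together with $\alpha_2(A_{\iii_k})/\alpha_1(A_{\iii_k})\to 0$ by domination gives that the direction of $A_{\iii_k}W$ converges to $U$; since $A_{\iii_k}W\in\CC$, the limit $U\in\overline{\CC}=\CC$. Taking $m$ to be the uniform measure on $K'$ (non-atomic on $\RP$), one obtains $(A_{\iii_k})_*m\to\delta_U$ in the weak$^*$ topology, so Lemma \ref{lem:goto0} yields $\|A_{\iii_k}^\top|W\|/\alpha_1(A_{\iii_k})\to|\cos\sphericalangle(U,W)|$ for every $W\in\RP$, with the convergence uniform on $\RP$ (as $\mathsf{v}_1(A_{\iii_k}^\top)=\mathsf{u}_1(A_{\iii_k})\to U$ and $\alpha_2/\alpha_1\to 0$). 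Applying this along $V_k^\perp\to V^\perp$ with the failing assumption yields $|\cos\sphericalangle(U,V^\perp)|=\sin\sphericalangle(U,V)=0$, that is, $U=V$, contradicting $U\in\CC$ and $V\in\mathcal{D}^o=\RP\setminus\CC$.

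For the furthermore part, set $B=A_{\overleftarrow{\iii|_n}}$; by \eqref{eq:furstenberg-directions-dominated2} and strict invariance one has $V=B^{-1}W$ with $W=BV=A_{i_{n+1}}^{-1}(A_{\overleftarrow{\iii|_{n+1}}}\Pi(\iii))\in A_{i_{n+1}}^{-1}\mathcal{D}\subset\mathcal{D}^o$. The SVD of $B^{-1}$ gives
\begin{equation*}
\sphericalangle(V,\mathsf{v}_2(B))\leq(\alpha_2(B)/\alpha_1(B))\tan\sphericalangle(W,\mathsf{u}_2(B)),
\end{equation*}
and a compactness argument parallel to the first part, applied now to the pair $(W,\mathsf{u}_1(B))$ with $W\in\mathcal{D}^o$ (and exploiting that $\mathsf{u}_1(B)\in\CC$ as established above while $\CC\cap\mathcal{D}^o=\emptyset$), produces $\delta>0$ with $\sphericalangle(W,\mathsf{u}_1(B))\geq\delta$, hence $\sphericalangle(W,\mathsf{u}_2(B))\leq\pi/2-\delta$ and $\sphericalangle(V,\mathsf{v}_2(B))\leq(\alpha_2/\alpha_1)\cot\delta$. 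Inserting into the SVD identity for $B|V$ yields $\|B|V\|^2\leq\alpha_2(B)^2\csc^2\delta$, which is the desired upper bound. The main obstacle is cleanly implementing the compactness steps in both parts via Lemma \ref{lem:goto0}: in particular, ensuring that the limiting directions $U$ and $W$ lie in the correct open cones $\CC$ and $\mathcal{D}^o$ so that disjointness in $\RP$ forces the contradiction and the uniform bound $\delta>0$.
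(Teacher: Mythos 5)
Your first inequality is handled correctly, but by a genuinely different route than the paper: the paper simply observes that $X_F\subset\bigcup_{i}A_i^{-1}\overline{\RP\setminus\CC}$ and that $A_\iii^\top$ maps $\{V^\perp:V\in\overline{\RP\setminus\CC}\}$ into $\{V^\perp:V\in\bigcup_i A_i^{-1}\overline{\RP\setminus\CC}\}$, and then invokes the lemma of Bochi and Morris (\cite[Lemma 2.2]{BochiMorris2015}) twice, once for $\A^\top$ and once for $\A^{-1}$, to get both comparabilities in two lines. You instead reprove the needed estimate from scratch by a contradiction/compactness argument combined with the singular value decomposition (with Lemma \ref{lem:goto0} as a somewhat redundant intermediary, since the uniform convergence $\|A_{\iii_k}^\top|W\|/\alpha_1\to|\cos\sphericalangle(U,W)|$ already follows from $\mathsf{u}_1(A_{\iii_k})\to U$ and $\alpha_2/\alpha_1\to 0$). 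Modulo trivia (take $K'$ to be a nondegenerate closed interval so that the ``uniform measure'' is non-atomic), this part is correct and self-contained, which is a reasonable trade: you avoid the external citation at the cost of a longer argument.

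The furthermore part, however, has a genuine gap. You assert that $\mathsf{u}_1(B)\in\CC$ for $B=A_{\overleftarrow{\iii|_n}}$ ``as established above'', and use this against $W=B\Pi(\iii)\in\mathcal{D}^o$ to extract a uniform $\delta>0$ with $\sphericalangle(W,\mathsf{u}_1(B))\ge\delta$ over all finite words. What your first part actually establishes is only that accumulation points of $\mathsf{u}_1(A_{\iii_k})$ along sequences with $|\iii_k|\to\infty$ lie in $\CC$; for an individual finite product nothing of the sort holds. Indeed it is false in general: take $B=\begin{pmatrix}3&0\\4&1\end{pmatrix}$, whose attracting eigendirection is $\linspan((1,2))$, and let $\CC$ be the strongly invariant cone of directions with slope in $[1.9,2.1]$; the top left singular direction of $B$ has slope about $1.39$ and lies outside $\CC$. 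So the uniform angle bound, as you derive it, is unjustified (and can fail for short words), and with it the displayed chain $\tan\sphericalangle(V,\mathsf{v}_2(B))\le(\alpha_2/\alpha_1)\cot\delta$ collapses. The argument is repairable along the lines you already used in the first part: either run the same contradiction scheme on sequences $(B_k,W_k)$, treating bounded word lengths separately (there $\|B|V\|\le\alpha_1(B)\le C_{n_0}\alpha_2(B)$ trivially, since only finitely many words occur), or prove the quantitative statement $\dist(\mathsf{u}_1(B),\CC)=O(\alpha_2(B)/\alpha_1(B))$ and absorb the error; alternatively, follow the paper and apply \cite[Lemma 2.2]{BochiMorris2015} to $\A^{-1}$, for which $\overline{\RP\setminus\CC}$ is strongly invariant, using that $A_{\overleftarrow{\iii|_n}}\Pi(\iii)\in\bigcup_i A_i^{-1}\overline{\RP\setminus\CC}$. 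As written, though, this step does not stand.
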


\begin{proof}
  To prove the first claim, fix $V \in \bigcup_{i=1}^N A_i^{-1} \overline{\RP \setminus \CC}$ and let $v \in V^\bot$ be such that $|v|=1$. Notice that $X_F \subset \bigcup_{i=1}^N A_i^{-1} \overline{\RP \setminus \CC}$ and $A_\iii^\top \{V^\bot : V \in \overline{\RP \setminus \CC}\} \subset \{V^\bot : V \in \bigcup_{i=1}^N A_i^{-1}\overline{\RP \setminus \CC}\}$ for all $\iii \in \Sigma_* \setminus \{\varnothing\}$. By \cite[Lemma 2.2]{BochiMorris2015}, there exists a constant $D \ge 1$ such that
  \begin{equation*}
    \|A_\iii^\top|V^\bot\| = |A_\iii^\top v| \ge D^{-1}\|A_\iii^\top\| = D^{-1}\|A_\iii\|
  \end{equation*}
  for all $\iii \in \Sigma_*$.

  To show the second claim, let $V \in \RP$ be the only element in $\bigcap_{n=1}^\infty A_{\overleftarrow{\iii|_n}}^{-1}\overline{\RP \setminus \CC}$ and let $v \in V$ be such that $|v|=1$. Notice that $A_\jjj^{-1}\overline{\RP \setminus \CC} \subset \bigcup_{i=1}^N A_i^{-1}\overline{\RP \setminus \CC}$ for all $\jjj \in \Sigma_* \setminus \{\varnothing\}$. Fixing $n \in \N$ we see that $A_{\overleftarrow{\iii|_n}}v \in \bigcap_{k=1}^\infty A_{\overleftarrow{\sigma^n\iii|_k}}^{-1}\overline{\RP \setminus \CC} \subset \bigcup_{i=1}^N A_i^{-1}\overline{\RP \setminus \CC}$. Therefore, again by \cite[Lemma 2.2]{BochiMorris2015}, there exists a constant $D \ge 1$ such that
  \begin{equation*}
    1 = |v| = |A_{\overleftarrow{\iii|_n}}^{-1}A_{\overleftarrow{\iii|_n}}v| \ge D^{-1}\|A_{\overleftarrow{\iii|_n}}^{-1}\||A_{\overleftarrow{\iii|_n}}v| = D^{-1}\|A_{\overleftarrow{\iii|_n}}^{-1}\|\|A_{\overleftarrow{\iii|_n}}|V\|
  \end{equation*}
  which finishes the proof.
\end{proof}

The proof of the following lemma is a simple consequence of \cite[Lemma~5.2]{BaranyJordanKaenmakiRams2021}.

\begin{lemma}\label{lem:subsystem}
  If $\mathsf{A} \in \GL_2(\R)^N$ is strictly affine and strongly irreducible, then there exists a finite set $J\subset\Sigma_*$ such that the tuple $(A_{\iii})_{\iii\in J}$ is dominated and strongly irreducible.
\end{lemma}

If $V,W \in \RP$, then the \emph{projection} $\proj_V^W \colon \R^2 \to V$ is the linear map such that $\proj_V^W|_V=\mathrm{Id}|_V$ and $\ker(\proj_V^W)=W$. The \emph{orthogonal projection} $\proj_V^{V^\bot}$ onto the subspace $V$ is denoted by $\proj_V$. Note that $(\proj_{V}A)^\top = A^\top \proj_{V}$ and hence,
\begin{equation} \label{eq:transpose-projection}
  \|A^\top|V^\bot\| = \|\proj_{V^\bot}A\|
\end{equation}
for all $A \in \GL_2(\R)$ and $V \in \RP$. Recall that a $2 \times 2$-matrix $A$ has rank one if and only if there exist $v,w \in \R^2 \setminus \{(0,0)\}$ such that $A = vw^\top$ with $\im(A)=\linspan(v)$ and $\ker(A)=\linspan(w)^\bot$. It is easy to see, consult e.g.\ \cite[Lemma 2.1]{KaenmakiNissinen-preprint}, that in such a case,
\begin{equation} \label{eq:rank-one-projection-nilpotent}
  A =
  \begin{cases}
    \langle v,w \rangle\proj_{\im(A)}^{\ker(A)}, &\text{if $A$ is not nilpotent}, \\
    |v||w|R\proj_{\ker(A)^\perp}, &\text{if $A$ is nilpotent},
  \end{cases}
\end{equation}
where $R \in O(2)$ is a rotation by an angle $\pi/2$. In particular, $A(X)$ is bi-Lipschitz equivalent to $\proj_{\ker(A)^\perp}(X)$ for all $X \subset \R^2$. The following lemma guarantees that nilpotent matrices do not appear in the dominated case.

\begin{lemma} \label{thm:nilpotent}
  If $\A \in \GL_2(\R)^N$ is dominated, then the closure of $\{cA_\iii : c \in \R \text{ and }\iii \in \Sigma_*\}$ does not contain non-zero nilpotent elements. In other words, rank one matrices in the above closure are all projections.
\end{lemma}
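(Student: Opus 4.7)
The plan is to argue by contradiction: suppose the closure contains a nonzero nilpotent $M$, and pick sequences $c_n \in \R$, $\iii_n \in \Sigma_*$ with $c_n A_{\iii_n} \to M$. Then $M$ has rank one and, being a nonzero nilpotent, satisfies $\im(M) = \ker(M)$. The length $|\iii_n|$ must tend to infinity: otherwise, by passing to a subsequence, $A_{\iii_n}$ would be eventually equal to some fixed invertible $A_{\iii^*}$, forcing $M$ to be a nonzero scalar multiple of it and hence invertible, contradicting rank one. So $|\iii_n|\to\infty$, and \eqref{eq:domconst} yields $\alpha_2(A_{\iii_n})/\alpha_1(A_{\iii_n}) \to 0$.

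Next, combine Lemma \ref{thm:bochi-morris} with \eqref{eq:transpose-projection}: for every $V \in X_F$ and every $\iii \in \Sigma_*$ we have $\|\proj_{V^\perp} A_\iii\| = \|A_\iii^\top | V^\perp\| \ge D^{-1}\|A_\iii\|$. After normalizing so that $\|c_n A_{\iii_n}\| \to \|M\|$, passing to the limit gives
\begin{equation*}
  \|\proj_{V^\perp} M\| \ge D^{-1}\|M\| > 0 \quad \text{for every } V \in X_F.
\end{equation*}
If we knew that $\im(M) \in X_F$, then choosing $V = \im(M)$ would force $\proj_{V^\perp} M = 0$, because $M$ maps $\R^2$ into $V$, producing the desired contradiction.

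The crux is therefore to show $\im(M) \in X_F$. Expanding $c_n A_{\iii_n}$ in its singular value decomposition, the minor-axis contribution has norm at most $|c_n|\alpha_2(A_{\iii_n}) \to 0$, so the leading rank-one part $c_n\alpha_1(A_{\iii_n}) u_1^{(n)}(v_1^{(n)})^\top$ converges to $M = vw^\top$; reading off directions in $\RP$ gives $u_1(A_{\iii_n}) \to \linspan(v) = \im(M)$ and $v_1(A_{\iii_n}) \to \linspan(w) = \ker(M)^\perp$. Nilpotency forces $v \perp w$, so $\linspan(w) = \im(M)^\perp$ and hence $v_2(A_{\iii_n}) = v_1(A_{\iii_n})^\perp \to \im(M)$. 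To place this limit inside $X_F$, for each $n$ choose $\jjj_n \in \Sigma$ with $\jjj_n|_{|\iii_n|} = \overleftarrow{\iii_n}$ and set $V_n := \Pi(\jjj_n) \in X_F$. The second estimate of Lemma \ref{thm:bochi-morris} applied to $\jjj_n$ at level $|\iii_n|$ yields $\|A_{\iii_n}|V_n\| \le D\alpha_2(A_{\iii_n})$, and decomposing a unit vector of $V_n$ against the right singular basis of $A_{\iii_n}$ converts this into the projective bound $d(V_n, v_2(A_{\iii_n})) = O(\alpha_2(A_{\iii_n})/\alpha_1(A_{\iii_n})) \to 0$. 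Since $X_F$ is closed, $\im(M) = \lim v_2(A_{\iii_n}) = \lim V_n \in X_F$, as required.

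I expect the last step to be the main technical point: using the second estimate of Lemma \ref{thm:bochi-morris} and the singular value decomposition to show that the projections $\Pi(\jjj) \in X_F$ approximate the minor right singular line of the corresponding product with exponential accuracy. The remainder is routine bookkeeping with the singular value decomposition, Lemma \ref{thm:bochi-morris}, the identity \eqref{eq:transpose-projection}, and the domination bound \eqref{eq:domconst}.
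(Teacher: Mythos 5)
Your proposal is correct, but it takes a genuinely different route from the paper. The paper's proof is a squaring argument: after normalizing $c_n=\|A_{\iii_n}\|$ (up to a nonzero factor), it considers $A_{\iii_n}^2=A_{\iii_n\iii_n}$, which again lies in the semigroup; by domination $A_{\iii_n}^2/\|A_{\iii_n}^2\|$ subconverges to a rank-one matrix, while the quasi-multiplicativity $\|A_{\iii_n}\|^2\le C\|A_{\iii_n}^2\|$ from \cite[Corollary 2.4]{BaranyKaenmakiMorris2018} forces the same sequence to converge to a constant times $P^2=0$ --- a contradiction, with no mention of $X_F$ at all. You instead locate the image of the limit inside the Furstenberg set: using the singular value decomposition, $|\iii_n|\to\infty$, \eqref{eq:domconst}, and the identity $\im(M)=\ker(M)$ for a nonzero nilpotent, you show $v_2(A_{\iii_n})\to\im(M)$ projectively; the second estimate of Lemma \ref{thm:bochi-morris}, applied to $\jjj_n\in\Sigma$ with $\jjj_n|_{|\iii_n|}=\overleftarrow{\iii_n}$, pins $\Pi(\jjj_n)\in X_F$ within angle $O(\alpha_2(A_{\iii_n})/\alpha_1(A_{\iii_n}))$ of $v_2(A_{\iii_n})$, so compactness of $X_F$ gives $\im(M)\in X_F$; and then the first estimate of Lemma \ref{thm:bochi-morris} together with \eqref{eq:transpose-projection} yields $\|\proj_{\im(M)^\perp}M\|\ge D^{-1}\|M\|>0$, contradicting $\proj_{\im(M)^\perp}M=0$. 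All the checkable steps go through (the angle bound from $\|A_{\iii_n}|V_n\|\le D\alpha_2(A_{\iii_n})$ is exactly as you describe, and nilpotency does force the two singular directions' limits to be orthogonal), and there is no circularity since Lemma \ref{thm:bochi-morris}, \eqref{eq:transpose-projection} and \eqref{eq:furstenberg-directions-dominated2} all precede the statement; you even make explicit the case of bounded $|\iii_n|$, which the paper leaves implicit. The trade-off: the paper's argument is shorter and needs only quasi-multiplicativity of the norm, whereas yours is more geometric and gives a little extra information as a byproduct, namely that the minor right-singular directions of long products accumulate only on $X_F$, so the kernel (here equal to the image) of any rank-one limit of normalized products is a Furstenberg direction.
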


\begin{proof}
  Let us assume that there exists a non-zero nilpotent matrix $P$ in the closure of $\{cA_\iii : c \in \R \text{ and }\iii \in \Sigma_*\}$. By definitions, $P^2=0$ and there exists sequences $(\iii_n)_{n \in \N}$ of finite words in $\Sigma_*$ and $(c_n)_{n \in \N}$ of non-zero real numbers such that $c_n A_{\iii_n} \to P$ as $n \to \infty$. The domination guarantees that, by possibly going through a sub-sequence, $A_{\iii_n}/\|A_{\iii_n}\|$ converges to a rank one matrix and so $\lim_{n\to\infty}c_n/\|A_{\iii_n}\|\in\R\setminus\{0\}$. Thus, without loss of generality, we may assume that $c_n = \|A_{\iii_n}\|$.

  The domination guarantees that, by possibly taking another sub-sequence, also $A_{\iii_n}^2/\|A_{\iii_n}^2\|$ converges to a rank one matrix $Q$. By \cite[Corollary 2.4]{BaranyKaenmakiMorris2018}, there exists a constant $C \ge 1$ such that $\|A_{\iii_n}^2\| \le \|A_{\iii_n}\|^2 \le C\|A_{\iii_n}^2\|$ for all $n \in \N$. Hence, $A_{\iii_n}^2/\|A_{\iii_n}^2\|$ must converge to a constant times $P^2$. Since $P^2=0$, this contradicts for $Q$ being rank one.
\end{proof}

A dominated tuple $\A$ is not necessarily irreducible and an irreducible tuple $\A$ is not necessarily dominated. For example, consider a tuple of diagonal matrices and a tuple of diagonal and antidiagonal matrices. The following lemma shows that together the properties imply strong irreducibility.

\begin{lemma} \label{thm:irreducible-dominated}
  If $\A \in \GL_2(\R)^N$ is dominated and irreducible, then $\A$ is strongly irreducible.
\end{lemma}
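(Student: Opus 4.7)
The plan is to invoke the classification of strictly affine tuples in Lemma \ref{thm:A-classification} and rule out the two alternatives that fail strong irreducibility. Since $\A$ is dominated it is strictly affine (as noted at the start of the Domination subsection), so exactly one of the three cases of Lemma \ref{thm:A-classification} must hold.

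Irreducibility immediately excludes case (3), in which the matrices share a common invariant line. It therefore suffices to rule out case (2), where, in some basis, every matrix in $\A$ is diagonal or antidiagonal and at least one of them, say $A_i$, is antidiagonal.

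The key observation is that the square of any $2 \times 2$ antidiagonal matrix is a scalar multiple of the identity: writing $A_i = \bigl(\begin{smallmatrix} 0 & a \\ b & 0 \end{smallmatrix}\bigr)$ in the distinguished basis gives $A_i^2 = ab \cdot \Id$. Consequently $A_i^{2n} = (ab)^n \Id$ is scalar for every $n \in \N$, which forces $\alpha_1(A_i^{2n}) = \alpha_2(A_i^{2n})$. Applying the domination bound \eqref{eq:domconst} to the word $\iii = i\cdots i \in \Sigma_{2n}$ yields $\alpha_1(A_i^{2n}) = \alpha_2(A_i^{2n}) \leq C \tau^{2n}\alpha_1(A_i^{2n})$, i.e.\ $1 \leq C \tau^{2n}$, contradicting $0 < \tau < 1$ for $n$ large enough. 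Hence case (2) is also impossible, leaving only case (1).

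The main conceptual obstacle will be identifying where antidiagonality clashes with domination; once one notices that \eqref{eq:domconst} applied along the single letter $i$ already detects the scalar iterate $A_i^{2n}$, the contradiction is immediate. In particular, no information about the remaining generators, nor about the specific values of $C$ and $\tau$, is required — only the existence of some $\tau < 1$ that works uniformly over $\Sigma_*$. It is also worth noting that the argument uses domination only along a periodic orbit, so no appeal to the strongly invariant multicone characterisation from \cite{BochiGourmelon2009} is needed.
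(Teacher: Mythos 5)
Your proof is correct, and it follows the same skeleton as the paper's: invoke Lemma \ref{thm:A-classification} (legitimate, since domination gives strict affinity), discard case (3) by irreducibility, and kill case (2) using the fact that the square of an antidiagonal matrix is a scalar multiple of the identity. The only divergence is the final contradiction: the paper takes a strongly invariant multicone $\CC$ from the Bochi--Gourmelon characterisation of domination and notes that $A^2\CC=\CC$ is incompatible with $A^2\CC\subset\CC^o$, whereas you apply the singular-value inequality \eqref{eq:domconst} along the periodic word $i\cdots i$, where $\alpha_1(A_i^{2n})=\alpha_2(A_i^{2n})$ forces $1\le C\tau^{2n}$. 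Your version is marginally more elementary in that it uses only the definition of domination rather than the multicone theorem, but both arguments are equally short and rest on the same key observation.
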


\begin{proof}
  Since $\mathsf{A}$ is dominated, it contains only proximal elements and is hence strictly affine; see \cite[Corollary 2.4]{BaranyKaenmakiMorris2018}. Therefore, as $\A$ is irreducible, it suffices to show that the condition (2) in Lemma \ref{thm:A-classification} does not hold. Let $\CC \subset \RP$ be a strongly invariant multicone for $\A$. If $\A$ contains an antidiagonal matrix $A$, then $A^2$ is a constant times the identity matrix and $A^2\CC = \CC$ which is a contradiction. Hence, $\A$ is strongly irreducible.
\end{proof}

The following lemma studies dominated and reducible tuples for which the set of Furstenberg directions is non-trivial.

\begin{lemma} \label{thm:need-for-bhr-prop66}
  If $\A = (A_1,\ldots,A_N) \in \GL_2(\R)^N$ is dominated and reducible such that $X_F$ is not a singleton, then, possibly after a change of basis,
  \begin{equation} \label{eq:triangform}
    A_i =
    \begin{pmatrix}
      a_i & b_i \\
      0   & d_i
    \end{pmatrix}
  \end{equation}
  with $0 < |d_i| < |a_i| < 1$ for all $i \in \{1,\ldots,N\}$, and the matrices are not simultaneously diagonalizable.
\end{lemma}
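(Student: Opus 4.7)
The plan is to derive the upper triangular form first and then verify the two claimed properties in order. Since a dominated tuple is strictly affine by \cite[Corollary~2.4]{BaranyKaenmakiMorris2018}, Lemma \ref{thm:A-classification} applies, and reducibility places us in case (3); thus in some basis every $A_i$ is upper triangular, $A_i = \begin{pmatrix} a_i & b_i \\ 0 & d_i \end{pmatrix}$, with $a_i, d_i \neq 0$ from invertibility and $|a_i|,|d_i|<1$ from the underlying contraction hypothesis of the self-affine setting. In this basis the eigenvalues of $A_\iii$ are $a_\iii = \prod_k a_{i_k}$ and $d_\iii = \prod_k d_{i_k}$, with common invariant line $V_0$ equal to the first coordinate axis.

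To obtain $|d_i|<|a_i|$, I would first show that $\varphi(i) := \log|a_i| - \log|d_i|$ has a common sign over $i \in \{1,\ldots,N\}$. Every $2 \times 2$ matrix $A$ satisfies $\alpha_1(A) \ge \rho(A)$ and $\alpha_2(A) \le \rho_2(A)$, where $\rho \ge \rho_2$ are the eigenvalue moduli; for dominated cocycles the reverse comparisons hold uniformly, $\alpha_1(A_\iii) \asymp \rho(A_\iii)$ and $\alpha_2(A_\iii) \asymp \rho_2(A_\iii)$, because vectors in the strongly invariant multicone of Bochi--Gourmelon are expanded at rate $\asymp \rho(A_\iii)$ and meet the top singular direction at a bounded angle. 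Combining this equivalence with the singular gap $\alpha_2(A_\iii)/\alpha_1(A_\iii) \le C\tau^n$ forces $|\varphi(\iii)| \ge c|\iii|$ for a uniform $c > 0$. Mixed signs among the $\varphi(i)$'s would then contradict this: choosing $i_1, i_2$ with $\varphi(i_1) > 0 > \varphi(i_2)$ and $\lambda = -\varphi(i_2)/(\varphi(i_1) - \varphi(i_2)) \in (0,1)$, the words $\iii_n$ consisting of $\lfloor \lambda n \rfloor$ copies of $i_1$ followed by $n - \lfloor \lambda n \rfloor$ copies of $i_2$ would have length $n$ but satisfy $|\varphi(\iii_n)| = O(1)$.

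To select the positive sign and conclude $0<|d_i|<|a_i|<1$, I would rule out $\varphi(i) < 0$ for all $i$ using the hypothesis that $X_F$ is not a singleton. In that case every $A_{\overleftarrow{\iii}}^{-1}$ is upper triangular with diagonal $(1/a_\iii, 1/d_\iii)$ whose first entry dominates in absolute value, so its top eigendirection is $V_0$. Normalising $c A_{\overleftarrow{\iii}}^{-1}$ and passing to $|\iii| \to \infty$ thus produces only rank one accumulation points with image $V_0$, forcing $X_F = \{V_0\}$ to be a singleton. An analogous rank one argument handles non-diagonalisability: if all $A_i$ were simultaneously diagonal in some common basis, then each $A_{\overleftarrow{\iii}}^{-1}$ would be diagonal in that basis with a uniformly dominating coordinate axis (again by the constant-sign step), and $X_F$ would reduce to that single axis, again contradicting the non-singleton hypothesis.

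The essential technical input is the equivalence $\alpha_1 \asymp \rho$, $\alpha_2 \asymp \rho_2$ for dominated cocycles; it bridges the singular gap appearing in the definition of domination with the eigenvalue gap used in the sign argument. This equivalence is a standard consequence of the existence of a strongly invariant multicone in the works of Bochi--Gourmelon and Bochi--Morris cited above, and once it is available the remainder of the argument reduces to elementary linear algebra together with the behaviour of rank one limits of $A_{\overleftarrow{\iii}}^{-1}$.
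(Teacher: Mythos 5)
Your argument is correct, and its core mechanism differs from the paper's. Both proofs start identically, invoking Lemma \ref{thm:A-classification}(3) (via strict affinity from \cite[Corollary 2.4]{BaranyKaenmakiMorris2018}) to get the triangular form, and both take $|a_i|<1$ from the ambient contractivity of the self-affine setting rather than from the stated hypotheses. The divergence is in how the inequality $|d_i|<|a_i|$ is forced: the paper argues qualitatively with the strongly invariant multicone $\CC$ itself, showing that a letter with $|d_i|\ge|a_i|$ drives projective orbits to a direction that must lie in $\CC^o$, which is incompatible with strong invariance unless $|d_i|>|a_i|$ for \emph{all} $i$, and that uniform case is then excluded because it would make $X_F$ a singleton; you instead use the quantitative consequence of domination $\alpha_1(A_\iii)\asymp\rho(A_\iii)$, $\alpha_2(A_\iii)\asymp\rho_2(A_\iii)$ to convert the singular-value gap into a lower bound $|\log|a_\iii|-\log|d_\iii||\ge c|\iii|$, and rule out mixed signs by an averaging argument along words $i_1^{\lfloor\lambda n\rfloor}i_2^{n-\lfloor\lambda n\rfloor}$. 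Your endgame (excluding the sign $|a_i|<|d_i|$ for all $i$, and non-diagonalizability) is the same use of the hypothesis that $X_F$ is not a singleton as in the paper, but you supply the computation the paper only asserts: rank-one accumulation points of $\{cA_{\overleftarrow{\iii}}^{-1}\}$ are upper triangular with vanishing bottom-right entry, hence have image $\linspan(e_1)$, so $X_F$ collapses. Two small points worth making explicit there: the image claim follows from the divergence of the diagonal ratio $|a_\iii/d_\iii|^{-1}$ (not merely from the top eigendirection of each $A_{\overleftarrow{\iii}}^{-1}$ being $\linspan(e_1)$, which for a single matrix would not control the image of its normalization), and rank-one limits only occur along words with $|\iii|\to\infty$ while $X_F\ne\emptyset$ under domination, so the collapse genuinely contradicts the non-singleton assumption; both are routine. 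The trade-off: the paper's cone argument is shorter, while your route is more quantitative and makes the sign selection and the diagonalizability exclusion fully explicit, at the cost of importing the standard spectral-radius comparison for dominated cocycles.
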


\begin{proof}
  If $X$ is reducible, then, by Lemma \ref{thm:A-classification}(3), the matrices $A_i$ are simultaneously upper triangular in some basis and have the form \eqref{eq:triangform}. If there exists $i\in\{1,\ldots,N\}$ such that $|d_i|\geq|a_i|$, then it is easy to see that for any subspace $W\in\RP$, we have $A_i^nW\to\mathrm{span}(e_1)$ as $n \to \infty$, where $e_1 = (1,0)$. Thus, since the matrices are dominated, any strongly invariant multicone $\CC$ must contain $\mathrm{span}(e_1)$ as an interior point. But such a cone cannot be strongly invariant for matrices of the form \eqref{eq:triangform} unless $|d_i|>|a_i|$ for all $i \in \{1,\ldots,N\}$, which contradicts the assumption that $X_F$ is not a singleton. Hence, $|d_i|<|a_i|$ for all $i \in \{1,\ldots,N\}$. Finally, as $X_F$ is not a singleton, the matrices cannot be simultaneously diagonalisable.
\end{proof}

\subsection{Equilibrium states} \label{sec:eq-states}

For each $A \in \GL_2(\R)$ and $s \ge 0$ we define the \emph{singular value function} by setting
\begin{equation*}
  \fii^s(A) =
  \begin{cases}
    \|A\|^s = \alpha_1(A)^s, &\text{if } 0 \le s \le 1, \\
    \|A\|^{2-s}|\det(A)|^{s-1} = \alpha_1(A)\alpha_2(A)^{s-1}, &\text{if } 1 < s \le 2, \\
    |\det(A)|^{s/2} = \alpha_1(A)^{s/2}\alpha_2(A)^{s/2}, &\text{if } s > 2.
  \end{cases}
\end{equation*}
The value $\fii^s(A)$ represents a measurement of the $s$-dimensional volume of the ellipse $A(B(0,1))$. Since the determinant is multiplicative and the operator norm is sub-multiplicative, we have $\varphi^s(AB) \leq \varphi^s(A)\varphi^s(B)$ for all $A,B \in \GL_2(\R)$ and $s \ge 0$. For each $\A \in \GL_2(\R)^N$ and $s \ge 0$ we define the \emph{pressure} by setting
\begin{equation*}
  P(\A,s) = \lim_{n \to \infty} \frac{1}{n}\log\sum_{\iii \in \Sigma_n} \fii^s(A_\iii).
\end{equation*}
As the singular value function is sub-multiplicative, the sequence $(\log\sum_{\iii \in \Sigma_n} \fii^s(A_\iii))_{n \in \N}$ is sub-additive and hence, the limit above exists by Fekete's lemma. It is also easy to see that the pressure $P(\A,s)$ is continuous and strictly decreasing as a function of $s$ with $P(\A,0) \ge 0$ and $\lim_{s \to \infty}P(\A,s) = -\infty$. We may thus define the \emph{affinity dimension} by setting $\dimaff(\A)$ to be the unique $s \ge 0$ for which $P(\A,s)=0$.

\begin{lemma} \label{thm:equilibrium-state-existence}
  If $\mathsf{A} \in \GL_2(\R)^N$ is dominated or irreducible, then there exists $\mu_K \in \MM_\sigma(\Sigma)$ such that
  \begin{equation*}
    C^{-1}\fii^s(A_\iii) \le \mu_K([\iii]) \le C\fii^s(A_\iii)
  \end{equation*}
  for all $\iii \in \Sigma_*$, where $s = \dimaff(\A)$.
\end{lemma}

\begin{proof}
  If $\A$ is irreducible, then the existence of the claimed measure $\mu_K \in \MM_\sigma(\Sigma)$ follows immediately from \cite[Proposition 1.2 and \S 3]{FengKaenmaki2011}. If $\A$ is reducible but dominated, then the claim follows by \cite[Proposition 2.6]{BaranyKaenmakiMorris2018}.
\end{proof}

The measure $\mu_K \in \MM_\sigma(\Sigma)$ in Lemma \ref{thm:equilibrium-state-existence} is called the \emph{equilibrium state} as its canonical projection on the self-affine set $X$ is the natural canditate to attain the maximum in \eqref{eq:variational-principle}. Equilibrium states exist also without domination or irreducibility but in this case, they satisfy sligthly weaker properties than described in Lemma \ref{thm:equilibrium-state-existence} and are not necessarily unique; see \cite{Kaenmaki2004} and \cite{FengKaenmaki2011}.

Suppose that $\mathsf{A} \in \GL_2(\R)^N$ is dominated. For each $s \ge 0$ define a function $g_s \colon \Sigma \to \R$ by setting
\begin{equation*}
  g_s(\iii) =
  \begin{cases}
    \log\|A_{\iii|_1}^\top|\Pi(\sigma\iii)^\perp\|^s, &\text{if } 0 \le s \le 1, \\
    \log\|A_{\iii|_1}^\top|\Pi(\sigma\iii)^\perp\|^{2-s}|\det(A_{\iii|_1})|^{s-1}, &\text{if } 1 < s \le 2, \\
    \log|\det(A_{\iii|_1})|^{s/2}, &\text{if } s > 2,
  \end{cases}
\end{equation*}
for all $\iii \in \Sigma$, where $\Pi$ is as in \eqref{eq:furstenberg-directions-dominated2}. Notice that $g_s$ is H\"older continuous. The \emph{Perron-Frobenius operator} $\LL$ for $s$ is the positive linear operator defined by setting
\begin{equation*}
  \mathcal{L}f(\iii) = \sum_{i=1}^N \exp(g_s(i\iii)) f(i\iii)
\end{equation*}
for all continuous functions $f \colon \Sigma \to \R$. Observe that, by Lemma \ref{thm:bochi-morris}, there exists a constant $D \ge 1$ such that
\begin{equation*}
  \log\|A_{\overleftarrow{\iii|_n}}^\top\| - \log D \le \log\|A_{\overleftarrow{\iii|_n}}^\top|\Pi(\sigma^n\iii)^\bot\| = \sum_{k=0}^{n-1} \log\|A_{\sigma^k\iii|_1}^\top|\Pi(\sigma^{k+1}\iii)^\bot\|
\end{equation*}
and hence, the \emph{Birkhoff sum} of $g_s$, $\sum_{k=0}^{n-1} g_s(\sigma^k\iii)$, satisfies
\begin{equation} \label{eq:birkhoff-sum}
  \log\fii^s(A_{\overleftarrow{\iii|_n}}^\top) - \log D \le \sum_{k=0}^{n-1} g_s(\sigma^k\iii) \le \log\fii^s(A_{\overleftarrow{\iii|_n}}^\top)
\end{equation}
for all $\iii \in \Sigma$ and $n \in \N$. The following lemma is a simple consequence of the classical Ruelle's Perron-Frobenius Theorem.

\begin{lemma} \label{thm:perron-frobenius}
  If $\mathsf{A} \in \GL_2(\R)^N$ is dominated and $\LL$ is the Perron-Frobenius operator for $\dimaff(\mathsf{A})$, then there exist a unique continuous function $h \colon \Sigma \to (0,\infty)$ and a unique Borel probability measure $\nu$ on $\Sigma$ such that
  \begin{equation*}
    \mathcal{L}h = h, \quad \int_{\Sigma} h(\iii) \dd\nu(\iii) = 1,
  \end{equation*}
  and
  \begin{equation*}
    \lim_{n \to \infty} \sup_{\iii \in \Sigma} \biggl|\LL^nf(\iii) - h(\iii) \int_{\Sigma} f(\jjj) \dd\nu(\jjj)\biggr| = 0
  \end{equation*}
  for all continuous functions $f \colon \Sigma \to \R$.
\end{lemma}

\begin{proof}
  Write $s = \dimaff(\A)$. By \cite[Theorem 1.7]{Bowen}, there exist a unique continuous function $h \colon \Sigma \to (0,\infty)$ and a unique Borel probability measure $\nu$ on $\Sigma$ such that
  \begin{equation*}
    \mathcal{L}h = \lambda h, \quad \mathcal{L}^*\nu = \lambda\nu, \quad \int_{\Sigma} h(\iii) \dd\nu(\iii) = 1,
  \end{equation*}
  and
  \begin{equation*}
    \lim_{n \to \infty} \sup_{\iii \in \Sigma} \biggl|\lambda^{-n}\LL^nf(\iii) - h(\iii) \int_{\Sigma} f(\jjj) \dd\nu(\jjj)\biggr| = 0
  \end{equation*}
  for all continuous functions $f \colon \Sigma \to \R$, where $\lambda = \LL^*\nu(\Sigma) > 0$ and
  \begin{equation*}
    \log\lambda = \lim_{n \to \infty} \frac{1}{n}\log\sum_{\iii \in \Sigma_n} \exp\sup_{\jjj \in [\iii]} \biggl(\sum_{k=1}^{n-1}g_s(\sigma^k\jjj)\biggr).
  \end{equation*}
  Therefore, by \eqref{eq:birkhoff-sum} and the choice of $s$, we have $\lambda = \exp(P(\A^\top,s)) = \exp(P(\A,s)) = 1$ which completes the proof.
\end{proof}

\subsection{Self-affine set} \label{sec:self-affine-def}
We consider a tuple $\Phi = (A_1+v_1,\ldots,A_N+v_N)$ of contractive invertible affine self-maps on $\R^2$, where we have written $A+v$ to denote the affine map $x \mapsto Ax+v$ defined on $\R^2$ for all $2 \times 2$ matrices $A$ and translation vectors $v \in \R^2$. Such a tuple $\Phi$ is called an \emph{affine iterated function system}. We also write $\fii_i = A_i+v_i$ for all $i \in \{1,\ldots,N\}$ and $\fii_\iii = \fii_{i_1} \circ \cdots \circ \fii_{i_n}$ for all $\iii = i_1 \cdots i_n \in \Sigma_n$ and $n \in \N$. Note that the associated tuple of matrices $(A_1,\ldots,A_N)$ is an element of $\GL_2(\R)^N$ and satisfies $\max_{i \in \{1,\ldots,N\}}\|A_i\|<1$. It is a classical result of Hutchinson \cite{Hutchinson1981} that for each affine iterated function system there exists a unique non-empty compact set $X \subset \R^2$, called the \emph{self-affine set}, such that
\begin{equation} \label{eq:self-affine-set-def}
  X = \bigcup_{i=1}^N \fii_i(X).
\end{equation}
We use the convention that whenever we speak about a self-affine set $X$, then it is automatically accompanied with a tuple of affine maps which defines it. This makes it possible to write that e.g.\ ``$X$ is dominated'' which obviously then means that ``the associated tuple $\A$ of matrices is dominated''. Similarly, by $\dimaff(X)$ we mean the affinity dimension $\dimaff(\A)$ defined in \S \ref{sec:eq-states}.

We are interested in understanding the geometry of self-affine sets. Relying on \eqref{eq:self-affine-set-def}, the self-affine set $X$ can naturally be covered by the sets $\fii_\iii(B)$, where $B$ is a ball containing $X$. Observe that each ellipse $\fii_\iii(B)$ can be covered by one ball of radius $\alpha_1(A_\iii)\diam(B)$ or by $\alpha_1(A_\iii)/\alpha_2(A_\iii)$ many balls of radius $\alpha_2(A_\iii)\diam(B)$. This motivates us to study the limiting behavior of the sums $\sum_{\iii \in \Sigma_n} \fii^s(A_\iii)$ and indeed, it is straightforward to see that $\dimh(X) \le \min\{2,\dimaff(X)\}$.

Every affine iterated function system is associated with the \emph{canonical projection} $\pi\colon \Sigma \to X$ which is defined by $\pi(\iii) = \sum_{n=1}^\infty A_{\iii|_{n-1}} v_{i_n}$ for all $\iii = i_1i_2\cdots \in \Sigma$. It is easy to see that $\pi$ is continuous and the image of $\Sigma$ is the self-affine set, $\pi(\Sigma) = X$. Separation conditions allow simple interplay between $\Sigma$ and $X$. We say that $X$ satisfies the \emph{strong separation condition} if $\fii_i(X) \cap \fii_j(X) = \emptyset$ whenever $i \ne j$. In this case, we have
\begin{equation} \label{eq:SSC}
  \delta = \min_{i \ne j}\dist(\fii_i(X),\fii_j(X)) > 0.
\end{equation}
As $\pi([\iii]) = \fii_\iii(X)$ for all $\iii \in \Sigma_*$, the strong separation condition is characterized by the requirement that the canonical projection is one-to-one. We say that $X$ satisfies the \emph{open set condition} if there exists an open set $U \subset \R^2$ such that $\fii_i(U) \subset U$ for all $i \in \{1,\ldots,N\}$ and $\fii_i(U) \cap \fii_j(U) = \emptyset$ whenever $i \ne j$. If such a set $U$ also intersects $X$, then we say that $X$ satisfies the \emph{strong open set condition}. Observe that the strong separation condition implies the strong open set condition.

Let us first survey known results for self-similar sets which are a special case of self-affine sets. If $(\lambda_1O_1+v_1,\ldots,\lambda_NO_N+v_N)$, where $0<\lambda_i<1$ and $O_i \in O(2)$ for all $i \in \{1,\ldots,N\}$, is a tuple of contractive similarities on $\R^2$, then we call the associated self-affine set $X$ \emph{self-similar}. In this case, the affinity dimension is called \emph{similarity dimension} and we denote it by $\dimsim(X)$. Notice that $\dimsim(X)$ is the unique $s \ge 0$ for which $\sum_{i=1}^N \lambda_i^s = 1$. Let us endow the group of all affine maps with the topology of pointwise convergence: the distance $|\fii-\psi|$ between two affine maps $\fii$ and $\psi$ is the supremum of $|\fii(x)-\psi(x)|$ over the unit ball. Define
\begin{equation*}
  \Sigma(x,r) = \{ \iii \in \Sigma_* : \diam(\fii_\iii(X)) \le r < \diam(\fii_{\iii^-}(X)) \text{ and } \fii_\iii(X) \cap B(x,r) \ne \emptyset \}
\end{equation*}
for all $x \in \R^2$ and $r>0$.

\begin{theorem} \label{thm:self-similar-osc-equiv}
  If $X$ is a planar self-similar set, then the following conditions are equivalent:
  \begin{enumerate}
    \item $X$ satisfies the open set condition,
    \item $X$ satisfies the strong open set condition,
    \item $\sup \{\#\Sigma(x,r) : x \in X \text{ and } r>0 \} < \infty$,
    \item the identity is not in the closure of $\{\fii_\iii^{-1} \circ \fii_\jjj : \iii,\jjj \in \Sigma_* \text{ such that }\iii\ne\jjj\}$,
    \item there is $\eta>0$ such that $|\fii_\iii-\fii_\jjj| \ge \eta\diam(\fii_\iii(X))$ for all $\iii,\jjj \in \Sigma_*$ with $\iii\ne\jjj$,
    \item $\HH^s(X)>0$ where $s=\dimsim(X)$,
    \item $X$ is Ahlfors $s$-regular where $s = \dimsim(X)$,
  \end{enumerate}
\end{theorem}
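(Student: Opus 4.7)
The plan is to reduce the stated equivalences to the classical theory of self-similar iterated function systems, where most implications are established results of Hutchinson, Bandt--Graf, and Schief. I would organise the argument as a cycle (1) $\Rightarrow$ (2) $\Rightarrow$ (3) $\Rightarrow$ (4) $\Leftrightarrow$ (5) $\Rightarrow$ (1) for the geometric conditions, combined with the chain (1) $\Rightarrow$ (7) $\Rightarrow$ (6) $\Rightarrow$ (1) for the measure-theoretic ones. The implication (7) $\Rightarrow$ (6) is immediate from Lemma \ref{thm:ahlfors-implications}, while (1) $\Rightarrow$ (7) is Hutchinson's foundational result that under OSC the self-similar measure $\HH^s|_X$ with $s = \dimsim(X)$ is Ahlfors $s$-regular. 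The reverse (6) $\Rightarrow$ (1) is the deepest input, due to Schief: positivity of $\HH^{\dimsim(X)}(X)$ alone suffices to produce a witness open set. Schief also supplies (1) $\Rightarrow$ (2), automatically upgrading OSC to SOSC for planar self-similar sets.

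For (2) $\Rightarrow$ (3) I would exploit that the invariant open set $U$ with $U \cap X \ne \emptyset$ yields images $\{\fii_\iii(U) : \iii \in \Sigma(x,r)\}$ that are pairwise disjoint, each contained in a ball of radius comparable to $r$ around $x$, and each of area comparable to $r^2$; a planar area count then bounds $\#\Sigma(x,r)$ uniformly. For (3) $\Rightarrow$ (4) I would argue contrapositively: if the identity is a limit of compositions $\fii_{\iii_n}^{-1} \circ \fii_{\jjj_n}$ with $\iii_n \ne \jjj_n$, one extracts families of cylinders at comparable scales with arbitrarily small separation, forcing the counts $\#\Sigma(x_n, r_n)$ to tend to infinity.

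The equivalence (4) $\Leftrightarrow$ (5) is the quantitative separation characterisation of Bandt--Graf: the uniform lower bound $|\fii_\iii - \fii_\jjj| \ge \eta \diam(\fii_\iii(X))$ is equivalent to the identity being bounded away from $\{\fii_\iii^{-1} \circ \fii_\jjj : \iii \ne \jjj\}$, since a single composition close to the identity can be iterated to produce a subsequence converging to it. The implication (5) $\Rightarrow$ (1) is obtained, again following Bandt--Graf, by taking the required open set to be a suitably thick tubular neighbourhood of $X$, with thickness calibrated by $\eta$; invariance and disjointness follow from the quantitative separation.

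The main obstacle is Schief's implication (6) $\Rightarrow$ (1), which admits no elementary proof: it relies on a density-point analysis for $\HH^s$ combined with an induction over the symbolic tree that isolates a point of $X$ whose local structure forces an invariant open set. Given this input and Bandt--Graf's open set construction, the remaining implications are bookkeeping, with only mild care needed to verify the planar area count in (2) $\Rightarrow$ (3) and the unboundedness in (3) $\Rightarrow$ (4).
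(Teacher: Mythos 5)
Your overall architecture (Hutchinson for (1) $\Rightarrow$ (7), Lemma \ref{thm:ahlfors-implications} for (7) $\Rightarrow$ (6), Schief for recovering the open set condition from positive measure, Bandt--Graf for the separation characterisations) matches the paper's citation-based proof, and you in fact do more than the paper by spelling out how (3) fits in. However, there is a genuine gap in your step (5) $\Rightarrow$ (1). Condition (5) is a separation statement about the \emph{maps} $\fii_\iii$, not about the cylinder \emph{sets}, and it is perfectly compatible with touching first-level pieces: for the system $(x\mapsto x/2,\ x\mapsto (x+1)/2)$ with attractor $X=[0,1]$ one has $\HH^1(X)>0$, hence (4) and (5) hold, yet $\fii_1(X)\cap\fii_2(X)\neq\emptyset$. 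For such a system \emph{no} tubular neighbourhood $U_\rho=\{x:\dist(x,X)<\rho\}$ can serve as the OSC open set, because $\fii_1(U_\rho)$ and $\fii_2(U_\rho)$ are neighbourhoods of intersecting sets and always overlap, no matter how the thickness is ``calibrated by $\eta$''. Bandt--Graf do not construct an open set from (5) at all; what they prove is the equivalence (4) $\Leftrightarrow$ (5) $\Leftrightarrow$ (6), and the passage back to an open set is exactly Schief's theorem, whose witness set is built from carefully chosen balls pushed forward through the cylinder structure, not from a uniform neighbourhood of the whole attractor. As written, your cycle therefore has no valid edge from (4)/(5) back to the remaining conditions, so the equivalence is not closed.

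The repair is exactly the paper's route: replace your (5) $\Rightarrow$ (1) by Bandt--Graf's (5) $\Rightarrow$ (6) and then Schief's (6) $\Rightarrow$ (2) $\Rightarrow$ (1). Two smaller points: your (3) $\Rightarrow$ (4) sketch (``extract families of cylinders \ldots forcing the counts to tend to infinity'') needs the standard but nontrivial iteration/pigeonhole argument to turn one near-identity composition $\fii_\iii^{-1}\circ\fii_\jjj$ into unboundedly many comparable cylinders near a single point (and the exact-overlap case $\fii_\iii=\fii_\jjj$, $\iii\neq\jjj$, which condition (4) also excludes, should be treated separately); and your area count for (2) $\Rightarrow$ (3) is fine, needing only a ball inside the open set $U$, so it works already under (1).
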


\begin{proof}
  Notice that (2) $\Rightarrow$ (1) is a triviality and (7) $\Rightarrow$ (6) follows from Lemma \ref{thm:ahlfors-implications}. Hutchinson \cite[\S 5.3]{Hutchinson1981} proved the implication (1) $\Rightarrow$ (3) $\Rightarrow$ (7), Bandt and Graf \cite{BandtGraf1992} showed that (6) $\Leftrightarrow$ (4) $\Leftrightarrow$ (5), and finally, Schief \cite[Theorem 2.1]{Schief1994} verified the remaining implication (6) $\Rightarrow$ (2).
\end{proof}

Recall that if $X$ is a self-similar set, then $\HH^s(X)<\infty$ for $s=\dimh(X)$, regardless of any separation conditions; see \cite[Theorem 4]{Falconer1989}. We say that $X$ satisfies the \emph{weak separation condition} if
\begin{equation*}
  \sup \{\#\Phi(x,r) : x \in X \text{ and } r>0 \} < \infty,
\end{equation*}
where
\begin{equation*}
  \Phi(x,r) = \{ \fii_\iii : \diam(\fii_\iii(X)) \le r < \diam(\fii_{\iii^-}(X)) \text{ and } \fii_\iii(X) \cap B(x,r) \ne \emptyset \}
\end{equation*}
for all $x \in \R^2$ and $r>0$. This notion was introduced by Lau and Ngai \cite{LauNgai1999}, and the above equivalent formulation is due to Zerner \cite{Zerner1996}. The open set condition holds if and only if the weak separation condition is satisfied and $\fii_\iii \ne \fii_\jjj$ for all $\iii,\jjj \in \Sigma_*$ with $\iii \ne \jjj$.

\begin{theorem} \label{thm:self-similar-wsc-equiv}
  If $X$ is a planar self-similar set, then the following conditions are equivalent:
  \begin{enumerate}
    \item $X$ satisfies the weak separation condition,
    \item the identity is not an accumulation point of $\{\fii_\iii^{-1} \circ \fii_\jjj : \iii,\jjj \in \Sigma_* \text{ such that }\iii\ne\jjj\}$,
    \item there is $\eta>0$ such that $|\fii_\iii-\fii_\jjj| \ge \eta\diam(\fii_\iii(X))$ for all $\iii,\jjj \in \Sigma_*$ with $\fii_\iii\ne\fii_\jjj$.
  \end{enumerate}
  Furthermore, the following conditions follow from the above conditions and, if $X$ is not contained in a line and $\dimh(X) \le 1$, or alternatively, if $\dimh(X) < 1$, then all the conditions are equivalent:
  \begin{enumerate}[resume]
    \item $\HH^s(X)>0$ where $s=\dimh(X)$,
    \item $X$ is Ahlfors regular,
    \item $\diml(X) = \dimh(X) = \dima(X)$.
  \end{enumerate}
\end{theorem}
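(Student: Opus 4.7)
The plan is to derive (1) $\Leftrightarrow$ (2) $\Leftrightarrow$ (3) from the classical reformulations of the weak separation condition, and then to obtain (4)--(6) as consequences, closing the full equivalence under the dimensional restriction by a tangent-set argument.

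\emph{Equivalences among (1), (2), (3).} Following the classical arguments of Zerner and Lau--Ngai, the implication (1) $\Rightarrow$ (2) is a pigeonhole: if $\fii_{\iii_k}^{-1}\circ\fii_{\jjj_k}\to \mathrm{id}$ along pairs with $\fii_{\iii_k}\ne\fii_{\jjj_k}$, then for any fixed $\jjj$ arbitrarily many cylinders $\fii_{\iii_k}(X)$ of comparable diameter would cluster into any fixed neighbourhood of $\fii_\jjj(X)$, contradicting (1). Conversely, given $x\in X$ and scale $r$, the distinct maps counted by $\#\Phi(x,r)$ produce elements $\fii_\iii^{-1}\circ\fii_\jjj$ in a bounded piece of the similarity group, which by (2) cannot accumulate to the identity, forcing a uniform bound and hence (1). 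The equivalence (2) $\Leftrightarrow$ (3) is a direct translation, since for similitudes $\fii_\iii^{-1}\circ\fii_\jjj$ being close to the identity is equivalent to $|\fii_\iii-\fii_\jjj|$ being small relative to $\diam(\fii_\iii(X))$.

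\emph{(1) implies (4), (5), (6).} For (1) $\Rightarrow$ (4) I would invoke Zerner's theorem that the weak separation condition implies $\HH^s(X)>0$ at $s=\dimh(X)$; the proof compares the self-similar measure with weights $p_i=\lambda_i^s$ to $\HH^s|_X$ by using WSC to bound the number of overlapping cylinders in every ball. The same bound promotes the natural measure to an Ahlfors $s$-regular one, yielding (5), and then (5) $\Rightarrow$ (6) is precisely Lemma \ref{thm:ahlfors-implications}.

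\emph{Converse under the dimensional hypothesis.} The decisive input is a tangent-set statement for planar self-similar sets in the spirit of Fraser--Henderson--Olson--Robinson: if WSC fails, then iterating approximate coincidences $\fii_\iii^{-1}\circ\fii_\jjj\to\mathrm{id}$ against the self-similar structure produces a weak tangent set of $X$ that contains a full line segment, so that $\dima(X)\ge 1$ by Lemma \ref{thm:KOR}. Under $\dimh(X)<1$ this contradicts (6), and via Lemma \ref{thm:ahlfors-implications} also (5) and (4). In the borderline case $\dimh(X)=1$ with $X$ not contained in a line, one uses that the segment must lie transversely to $X$ within the tangent: the self-similar scaling then populates a neighbourhood of the segment with copies of $X$, forcing $\dima(X)>1=\dimh(X)$, which again contradicts (6). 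The main obstacle is this last strict-inequality step, requiring one to carefully exploit that $X$ is not contained in any line so as to upgrade $\dima(X)\ge 1$ obtained from the tangent to a strict inequality when $\dimh(X)=1$.
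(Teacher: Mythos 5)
Your route coincides with the paper's: the equivalences (1) $\Leftrightarrow$ (2) $\Leftrightarrow$ (3) go back to the Zerner/Lau--Ngai-type arguments (the paper cites \cite[Theorem 3.2]{AngelevskaKaenmakiTroscheit2020}); the implication (1) $\Rightarrow$ (4) is Zerner's positivity result (\cite[Proposition 3.3]{AngelevskaKaenmakiTroscheit2020}); (5) $\Rightarrow$ (6) is Lemma \ref{thm:ahlfors-implications}; and the converse uses exactly the two external inputs the paper cites, namely Fraser--Henderson--Olson--Robinson (failure of (2) forces $\dima(X)\ge 1$) and Garc\'ia's strengthening to $\dima(X)>1$ when $X$ is not contained in a line. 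The ``main obstacle'' you flag at the end is precisely \cite[Theorem 1.4]{Garcia2020}; your one-sentence sketch of how the line segment in the tangent should be ``transverse'' and force strict inequality is far from a proof (Garc\'ia's argument is substantial), but as an identification of the required input it is correct, and the paper itself simply cites it.

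There is, however, one genuine gap: condition (4) is never tied back into the cycle. You prove (1) $\Rightarrow$ (4), and in the converse you assert that the contradiction with (6) propagates ``via Lemma \ref{thm:ahlfors-implications} also to (5) and (4)''. The lemma gives Ahlfors regularity $\Rightarrow$ (6), hence the failure of (6) implies the failure of (5); but it gives nothing of the form ``failure of (6) implies failure of (4)''. For that you need (4) $\Rightarrow$ (5), i.e.\ that a planar self-similar set with $\HH^s(X)>0$ at $s=\dimh(X)$ is Ahlfors regular. This is true but not free: it rests on Falconer's theorem \cite[Theorem 4]{Falconer1989} that $\HH^s(X)<\infty$ holds at $s=\dimh(X)$ for every self-similar set regardless of separation, combined with the standard argument that a self-similar set with positive and finite $\HH^s$-measure is Ahlfors $s$-regular; the paper covers this by citing \cite[Theorem 3.1]{AngelevskaKaenmakiTroscheit2020} for (4) $\Leftrightarrow$ (5). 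As written, your argument only shows that (4) follows from (1)--(3) and that (1), (2), (3), (5), (6) are mutually equivalent under the dimensional hypothesis, leaving (4) dangling.
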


\begin{proof}
  It follows from Angelevska, K\"aenm\"aki, and Troscheit \cite[Theorem 3.2]{AngelevskaKaenmakiTroscheit2020} that (1) $\Leftrightarrow$ (2) $\Leftrightarrow$ (3). Furthermore, by \cite[Theorem 3.1]{AngelevskaKaenmakiTroscheit2020}, we have (4) $\Leftrightarrow$ (5). Note also that \cite[Proposition 3.3]{AngelevskaKaenmakiTroscheit2020} verifies the implication (1) $\Rightarrow$ (4). The implication (5) $\Rightarrow$ (6) follows immediately from Lemma \ref{thm:ahlfors-implications}. Finally, Fraser, Henderson, Olson, and Robinson \cite[Theorems 3.1 and 3.2]{FraserHendersonOlsonRobinson2015} proved that if $X$ does not satisfy (2), then $\dima(X) \ge 1$, and Garc\'ia \cite[Theorem 1.4]{Garcia2020} demostrated that if, in addition, $X$ is not contained in a line, then $\dima(X) > 1$. Therefore, under the mentioned extra assumptions, we have the implication (6) $\Rightarrow$ (2).
\end{proof}

The assumption $\dimh(X) \le 1$ in the above theorem is essential: a slight modification of \cite[Proposition 3.3]{FarkasFraser2015} shows that for each $1 < s \le 2$ there exists a planar Ahlfors $s$-regular self-similar set not satisfying the weak separation condition. Furthermore, a line is an Ahlfors $1$-regular set and it can be expressed as a self-similar set not satisfying the weak separation condition. This shows that none of the conditions in the second group imply the conditions in the first group without the extra assumption.

Let us next survey dimension results for a special case of self-affine sets, Bedford-McMullen carpets, which are constructed by affine maps sharing a common diagonal matrix as a linear part. Let $q > p \ge 2$ and $N \in \{2,\ldots,pq\}$ be integers, and $I \subset \{0,\ldots,p-1\} \times \{0,\ldots,q-1\}$ a set of $N$ elements. A \emph{Bedford-McMullen carpet} is the self-affine set $X \subset [0,1]^2$ associated to a tuple $(\fii_1,\ldots,\fii_N)$ of affine maps which all have the same linear part $\diag(\tfrac{1}{p},\tfrac{1}{q})$ and the translation part is from the set $\{(\tfrac{j}{p},\tfrac{k}{q}) \in [0,1]^2 : (j,k) \in I\}$. We assume that each map in the tuple appears there only once. Write $n_j = \#\{k : (j,k) \in I\}$ to denote the number of sets $\fii_i([0,1)^2)$ the vertical line $\{(\tfrac{j}{p},y) : y \in \R\}$ intersects. We say that the Bedford-McMullen carpet $X$ has \emph{uniform vertical fibers} if there is $n \in \N$ such that $n_j = n$ for all $j$ with $n_j \ne 0$.

\begin{theorem} \label{thm:BM-carpet-regular}
  If $X$ is a Bedford-McMullen carpet, then the following conditions are equivalent:
  \begin{enumerate}
    \item $X$ has uniform vertical fibers,
    \item $\HH^s(X) < \infty$ where $s = \dimh(X)$,
    \item $X$ is Ahlfors regular,
    \item ${\diml(X)=}\dimh(X)=\dima(X)$.
  \end{enumerate}
\end{theorem}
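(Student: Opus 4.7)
The plan is to combine known explicit formulas for the Hausdorff, Assouad, and lower dimensions of Bedford-McMullen carpets with Peres's theorem on the finiteness of the Hausdorff measure at the critical dimension. Write $M = \#\{j : n_j > 0\}$ for the number of nonempty columns and set $n_{\min} = \min_{j : n_j > 0} n_j$ and $n_{\max} = \max_j n_j$. McMullen's classical formula gives
\[
\dimh(X) = \log_p\Bigl(\sum_{j} n_j^{\log_q p}\Bigr),
\]
while Mackay and Fraser have shown respectively that
\[
\dima(X) = \log_p M + \log_q n_{\max}, \qquad \diml(X) = \log_p M + \log_q n_{\min}.
\]
The uniform vertical fiber condition is precisely $n_{\min} = n_{\max}$.

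Three of the implications come for free. Both (3) $\Rightarrow$ (2) and (3) $\Rightarrow$ (4) follow directly from Lemma \ref{thm:ahlfors-implications}. For (4) $\Rightarrow$ (1), the equality $\diml(X) = \dima(X)$ forces $\log_q n_{\min} = \log_q n_{\max}$, i.e.\ uniform fibers. For (2) $\Rightarrow$ (1), I invoke Peres's theorem, which asserts that a Bedford-McMullen carpet with $\HH^{\dimh(X)}(X) < \infty$ must have uniform vertical fibers.

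The substantive remaining step is (1) $\Rightarrow$ (3). When each of the $M$ nonempty columns contains exactly $n$ squares we have $N = Mn$ and $s := \log_p M + \log_q n = \dimh(X)$. The plan is to take the uniform Bernoulli measure $\mu$ on $\Sigma$ with weights $p_i = 1/N$ and push it forward by the canonical projection $\pi$. Using the standard approximate-square partition of $X$ at scale $q^{-k}$, obtained by refining a cylinder $[\iii]$ of symbol-length $k$ by column-choices in positions $k+1,\ldots,k'$ where $k' = \lceil k\log_p q\rceil$, a direct count shows that each approximate square has $\mu$-mass equal to $n^{k'-k} N^{-k'} = M^{-k'} n^{-k}$, which is a constant multiple of $(q^{-k})^s$ precisely because of the uniform-fiber assumption. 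A standard covering argument, exploiting that a ball of radius $q^{-k}$ intersects a uniformly bounded number of approximate squares at that scale, then yields $C^{-1}r^s \le \mu(B(x,r)) \le C r^s$ for all $x \in X$ and $0 < r < \diam(X)$. The main obstacle is the bookkeeping with approximate squares, but this calculation is classical in the Bedford-McMullen theory and presents no essential difficulty once the correct measure is identified.
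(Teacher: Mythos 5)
Your proposal is correct and follows essentially the same route as the paper, which likewise deduces (3)$\Rightarrow$(2),(4) from Lemma \ref{thm:ahlfors-implications}, (2)$\Rightarrow$(1) from Peres, (4)$\Rightarrow$(1) from the Mackay--Fraser dimension formulas, and (1)$\Rightarrow$(3) from McMullen. The only difference is that you sketch McMullen's uniform-Bernoulli/approximate-square computation for (1)$\Rightarrow$(3) instead of citing it, and that sketch is the standard, correct argument.
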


\begin{proof}
  The implication (1) $\Rightarrow$ (3) follows from McMullen \cite{McMullen1984}. Lemma \ref{thm:ahlfors-implications} shows (3) $\Rightarrow$ (2) and (3) $\Rightarrow$ (4). Finally, Peres \cite[Theorem 1]{Peres1994} has shown the implication (2) $\Rightarrow$ (1) and Fraser \cite[Corollary 2.14]{Fraser14}, extending the result of Mackay \cite[Theorem 1.1]{Mackay2011}, proved the implication (4) $\Rightarrow$ (1).
\end{proof}

Let us then turn to the general self-affine case. Recall that the set of all irreducible tuples $\A \in \GL_2(\R)^N$ is open, dense, and full Lebesgue measure in $\GL_2(\R)^N$. In fact, the set of all reducible tuples $\A \in \GL_2(\R)^N$ is a finite union of $(4N-1)$-dimensional algebraic varieties; see \cite[Propositions 3.4 and 3.6]{KaenmakiLi2017}. Recall also that the set of all dominated tuples $\A \in \GL_2(\R)^N$ is open in $\GL_2(\R)^N$.

Theorem \ref{thm:self-similar-osc-equiv} shows that on self-similar sets the open set condition and the strong open set condition are equivalent. On strictly affine strongly irreducible planar self-affine sets, the open set condition is not a sufficient assumption for any meaningful dimension result; see \cite[Example 5.5]{MorrisShmerkin2019}. Nevertheless, the strong open set condition still has a role. The following breakthrough result is proven by B\'ar\'any, Hochman, and Rapaport \cite[Theorems 1.1 and 7.1]{BHR}:

\begin{theorem} \label{thm:BHR}
  If $X$ is a strictly affine strongly irreducible planar self-affine set satisfying the strong open set condition, then
  \begin{align*}
    \dimh(X) &= \min\{2,\dimaff(X)\}, \\
    \dimh(\proj_{V^\bot}(X)) &= \min\{1,\dimaff(X)\}
  \end{align*}
  for all $V \in \RP$.
\end{theorem}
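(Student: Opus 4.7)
The plan is to apply the variational principle \eqref{eq:variational-principle} with a carefully chosen measure on $\Sigma$, then push it forward to $X$ and to each line to obtain the claimed dimensions. The natural candidate is the equilibrium state $\mu_K \in \MM_\sigma(\Sigma)$ of Lemma~\ref{thm:perron-frobenius} at $s = \dimaff(X)$; let $\nu = \pi_*\mu_K$ where $\pi \colon \Sigma \to X$ is the canonical projection. The aim is to show $\dim(\nu) = \min\{2,s\}$ and $\dim((\proj_{V^\bot})_*\nu) = \min\{1,s\}$ for every $V \in \RP$. The Gibbs property of Lemma~\ref{thm:perron-frobenius} together with the Furstenberg--Kesten theorem yields an entropy-Lyapunov identity relating $h_{\mu_K}(\sigma)$, the Lyapunov exponents $\chi_1 \ge \chi_2$ of $\A$ with respect to $\mu_K$, and $s$.

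The first main step is to establish a Ledrappier--Young-type formula for self-affine measures. Under strict affinity and strong irreducibility, for $\mu_K$-a.e.\ $\iii \in \Sigma$,
\begin{equation*}
  \dim \nu = \dim (\proj_{\vartheta_2(\iii)^\bot})_* \nu + \frac{h_{\mu_K}(\sigma) - \chi_1 \dim (\proj_{\vartheta_2(\iii)^\bot})_* \nu}{\chi_2},
\end{equation*}
which for planar self-affine measures is proved in \cite{BaranyKaenmaki2017} and \cite{Feng2019-preprint}. Combining this with the Gibbs identity shows that $\dim \nu$ attains $\min\{2,s\}$ precisely when the generic line projection $(\proj_{\vartheta_2(\iii)^\bot})_*\nu$ attains the maximal value $\min\{1, h_{\mu_K}(\sigma)/\chi_1\}$, and the form of the singular value function forces $h_{\mu_K}(\sigma)/\chi_1 \ge \min\{1,s\}$. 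Hence both parts of the theorem reduce to the single claim that every orthogonal projection of $\nu$ attains maximal Hausdorff dimension.

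To establish this projection statement for every $V \in \RP$, one analyses $(\proj_{V^\bot})_*\nu$ as a self-similar-like measure on $\R$. By \eqref{eq:transpose-projection} and Lemma~\ref{thm:bochi-morris}, the projected cylinder $\proj_{V^\bot}\fii_\iii(X)$ has length comparable to $\|A_\iii^\top|V^\bot\|$, and by Lemma~\ref{thm:furstenberg-delta} together with \eqref{eq:furstenberg-delta-perp} the directions $A_{\overleftarrow{\iii|_n}}^\top V^\bot$ encountered during renormalization sample the non-atomic, positive-dimensional Furstenberg measure $\mu_F^\perp$. Hochman's inverse theorem on the entropy of convolutions then forces maximal Hausdorff dimension for $(\proj_{V^\bot})_*\nu$, unless the projected cylinders super-exponentially concentrate at some scale; such concentration contradicts either the strong separation \eqref{eq:SSC} or the non-atomicity of $\mu_F^\perp$.

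The principal obstacle is upgrading this Hochman-type argument from an ``almost every $V$'' statement to a statement valid for \emph{every} $V \in \RP$. The strategy is to establish a quantitative, $V$-uniform non-concentration estimate for cylinder measures, exploiting positivity of $\dim(\mu_F^\perp)$ to ensure that the family of projected cylinder configurations never degenerates in any direction. One must rule out that any single $V$ witnesses the alignment of infinitely many cylinders along $V$; this is where strict affinity, strong irreducibility, and the strong open set condition combine through the Gibbs structure of $\mu_K$ to exclude all pathological directions simultaneously, rather than merely generically.
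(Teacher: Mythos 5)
This statement is not proved in the paper at all: it is quoted verbatim from B\'ar\'any, Hochman, and Rapaport \cite[Theorems 1.1 and 7.1]{BHR}, so the ``paper's own proof'' is a citation. Your proposal is therefore best read as an outline of the argument in \cite{BHR} itself, and as an outline it is essentially faithful: the use of the equilibrium state $\mu_K$ from Lemma \ref{thm:perron-frobenius} together with \eqref{eq:variational-principle}, the reduction via the Ledrappier--Young formula of \cite{BaranyKaenmaki2017,Feng2019-preprint} to the statement that every orthogonal projection of $\pi_*\mu_K$ has maximal dimension, and the appeal to Hochman's inverse theorem for entropy of convolutions are exactly the skeleton of that paper.

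As a proof, however, the proposal has a genuine gap precisely where you locate ``the principal obstacle.'' The passage from an almost-every-$V$ statement to all $V \in \RP$ is the main technical achievement of \cite{BHR} (occupying most of their Sections 5--7), and your final paragraph describes a \emph{strategy} for it --- a ``quantitative, $V$-uniform non-concentration estimate'' --- without carrying it out. In particular, nothing in the sketch explains how the exceptional set of directions is actually eliminated; in \cite{BHR} this requires showing that the projected dimension function interacts with the action of the transposed matrices on $\RP$ and with the positive-dimensional Furstenberg measure in a way that forces any exceptional direction to propagate to a set of directions of positive $\mu_F$-measure, contradicting the generic statement. Likewise, the role of the strong open set condition (ruling out exact overlaps so that the entropy of the projected cylinder configurations genuinely grows across scales) is asserted but not verified, and the claimed non-concentration ``contradicts either the strong separation \eqref{eq:SSC} or the non-atomicity of $\mu_F^\perp$'' is not an argument one can check. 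So the proposal correctly identifies the architecture of the known proof but does not constitute an independent proof; if the intention was merely to invoke the result, the correct move is simply to cite \cite{BHR}, as the paper does.
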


Recall that if a planar self-affine set $X$ is dominated, then, by \cite[Corollary 2.4]{BaranyKaenmakiMorris2018}, it is strictly affine. Therefore, by Lemma \ref{thm:irreducible-dominated}, dominated irreducible planar self-affine sets satisfy the hypothesis of Theorem \ref{thm:BHR}. Note that if $X$ is irreducible, then $X_F$ is not a singleton. It turns out that, under domination, the assumption that $X_F$ is not a singleton is enough. Indeed, by recalling Lemma \ref{thm:need-for-bhr-prop66}, we may rely on \cite[Proposition 6.6]{BHR}\footnote{Note that the formulation of \cite[Proposition 6.6]{BHR} has a small mistake: the proposition should exclude the span of $(1,0)$, not the span of $(0,1)$.} to arrive at the following theorem:

\begin{theorem} \label{thm:BHR2}
  If $X$ is a dominated planar self-affine set satisfying the strong open set condition such that $X_F$ is not a singleton, then
  \begin{align*}
    \dimh(X) &= \min\{2,\dimaff(X)\}, \\
    \dimh(\proj_{V^\bot}(X)) &= \min\{1,\dimaff(X)\}
  \end{align*}
  for all $V \in \RP \setminus \II$, where $\II = \{W \in \RP : W = A_iW \text{ for all } i \in \{1,\ldots,N\} \}$ and contains at most one element.
\end{theorem}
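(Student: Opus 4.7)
The plan is to split into two cases according to whether the associated tuple $\A = (A_1,\ldots,A_N)$ is irreducible or reducible, and in each case appeal to a version of the B\'ar\'any--Hochman--Rapaport machinery with $\II$ identified explicitly.

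First I would observe that, since $\A$ is dominated, \cite[Corollary 2.4]{BaranyKaenmakiMorris2018} shows that $\A$ is strictly affine. This is needed in both cases. I would also note at the outset that $\II$ contains at most one element: a common invariant line would be fixed by each $A_i$, and two common invariant lines in $\RP$ would force the $A_i$ to be simultaneously diagonalizable in the basis given by those two lines. In the irreducible case $\II = \emptyset$ by definition. In the reducible case, Lemma \ref{thm:need-for-bhr-prop66} provides, possibly after a change of basis, the form \eqref{eq:triangform} with $0 < |d_i| < |a_i| < 1$ and non-simultaneous diagonalizability; thus $\linspan(e_1)$ is invariant and, by non-diagonalizability, it is the unique common invariant line, so $\II = \{\linspan(e_1)\}$.

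In the irreducible case, Lemma \ref{thm:irreducible-dominated} upgrades irreducibility to strong irreducibility, so the hypotheses of Theorem \ref{thm:BHR} are satisfied and the conclusion follows for every $V \in \RP = \RP \setminus \II$. In the reducible case, I would invoke the triangular representation from Lemma \ref{thm:need-for-bhr-prop66} to match the framework of \cite[Proposition 6.6]{BHR}, which gives exactly the formulas $\dimh(X) = \min\{2,\dimaff(X)\}$ and $\dimh(\proj_{V^\perp}(X)) = \min\{1,\dimaff(X)\}$ for every direction $V$ different from the exceptional invariant line, i.e.\ every $V \in \RP \setminus \II$. Here one has to be careful with the (typo-corrected) formulation mentioned in the footnote: in our basis the distinguished direction is $\linspan(e_1)$, and this is precisely the generator of $\II$.

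The main obstacle I expect is bookkeeping rather than content. Concretely, one must verify that (i) the hypotheses of \cite[Proposition 6.6]{BHR} are genuinely implied by ``dominated, strong open set condition, $X_F$ not a singleton'' after the change of basis of Lemma \ref{thm:need-for-bhr-prop66}, and (ii) the exceptional direction in \cite[Proposition 6.6]{BHR} coincides with the unique element of $\II$ in our statement. For (i), the condition $|d_i|<|a_i|<1$ from Lemma \ref{thm:need-for-bhr-prop66} encodes the domination with strongly invariant multicone containing $\linspan(e_1)^\perp$ in its complement, and non-simultaneous diagonalizability is what makes $X_F$ non-trivial (equivalently, produces a non-trivial Furstenberg measure on $\RP$); these are exactly the hypotheses needed. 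For (ii), the invariant direction $\linspan(e_1)$ is precisely the attracting fixed direction for the iterates of any $A_i$, which is the direction singled out as exceptional in \cite[Proposition 6.6]{BHR}. Combining the two cases yields the theorem.
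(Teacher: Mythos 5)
Your proposal is correct and matches the paper's own argument: the paper likewise derives the theorem by combining Lemma \ref{thm:irreducible-dominated} with Theorem \ref{thm:BHR} in the irreducible case and Lemma \ref{thm:need-for-bhr-prop66} with \cite[Proposition 6.6]{BHR} in the reducible case, with the exceptional direction $\II=\{\linspan(e_1)\}$ identified exactly as you describe. Your additional bookkeeping (uniqueness of the common invariant line via non-simultaneous diagonalizability, and matching $\linspan(e_1)$ with the typo-corrected exceptional direction of \cite[Proposition 6.6]{BHR}) is sound and in fact slightly more explicit than the paper's presentation.
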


We remark that Hochman and Rapaport \cite{HochmanRapaport2021} have recently generalized the above results. They showed that the strong open set condition can be replaced by exponential separation, a separation condition which allows overlapping. Our standing assumption is the strong separation condition and therefore Theorems \ref{thm:BHR} and \ref{thm:BHR2} suffice for us.

\section{Main results} \label{sec:main-results}

As explained in the introduction, our goal is to extract finer geometric information in the setting of Theorems \ref{thm:BHR} and \ref{thm:BHR2}, in analogy with known phenomena for self-similar sets and Bedford-McMullen carpets. We first state the main implications, then derive the corollaries announced in the introduction. Our first observation in this direction follows immediately from the following lemma.

\begin{lemma} \label{thm:FK}
  If $X$ is a dominated or irreducible planar self-affine set, then $\HH^s(X)<\infty$ where $s = \dimaff(X)$.
\end{lemma}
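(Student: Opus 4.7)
The plan is to build good covers of $X$ directly from the iterated function system structure and control the resulting sum using the equilibrium state provided by Lemma \ref{thm:perron-frobenius}. Write $s=\dimaff(X)$ and fix a closed ball $B = B(x_0,r)\supset X$ so that $\fii_\iii(B)$ is an ellipse with semi-axes $\alpha_1(A_\iii)r$ and $\alpha_2(A_\iii)r$ containing $\fii_\iii(X)$. For any $n\in\N$, the identity $X=\bigcup_{\iii\in\Sigma_n}\fii_\iii(X)$ gives the cover $\{\fii_\iii(B):\iii\in\Sigma_n\}$. Since each $\fii_i$ is a contraction, $\alpha_1(A_\iii)\leq(\max_i\alpha_1(A_i))^n\to 0$ and hence also $\alpha_2(A_\iii)\leq\alpha_1(A_\iii)\to 0$ uniformly in $\iii\in\Sigma_n$, so the mesh of these covers tends to zero.

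The case $s>2$ is trivial since $X\subset\R^2$ forces $\HH^s(X)=0$. For $0\leq s\leq 1$ I would simply bound
\begin{equation*}
  \HH^s_{\delta_n}(X) \leq \sum_{\iii\in\Sigma_n}\diam(\fii_\iii(B))^s \leq (2r)^s\sum_{\iii\in\Sigma_n}\alpha_1(A_\iii)^s = (2r)^s\sum_{\iii\in\Sigma_n}\fii^s(A_\iii),
\end{equation*}
where $\delta_n=2r\max_{\iii\in\Sigma_n}\alpha_1(A_\iii)\to 0$. For $1<s\leq 2$ each ellipse $\fii_\iii(B)$ can be covered by at most $\lceil\alpha_1(A_\iii)/\alpha_2(A_\iii)\rceil+1\lesssim \alpha_1(A_\iii)/\alpha_2(A_\iii)$ balls of radius $\alpha_2(A_\iii)r$ (the ratio $\alpha_1/\alpha_2\geq 1$ absorbs the $+1$), so
\begin{equation*}
  \HH^s_{\delta_n}(X) \lesssim r^s\sum_{\iii\in\Sigma_n}\frac{\alpha_1(A_\iii)}{\alpha_2(A_\iii)}\alpha_2(A_\iii)^s = r^s\sum_{\iii\in\Sigma_n}\alpha_1(A_\iii)\alpha_2(A_\iii)^{s-1} = r^s\sum_{\iii\in\Sigma_n}\fii^s(A_\iii),
\end{equation*}
with $\delta_n=2r\max_{\iii\in\Sigma_n}\alpha_2(A_\iii)\to 0$. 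Hence in every case it suffices to uniformly bound $\sum_{\iii\in\Sigma_n}\fii^s(A_\iii)$ in $n$.

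Here the hypothesis enters: under either domination or irreducibility, Lemma \ref{thm:perron-frobenius} furnishes an ergodic equilibrium state $\mu_K\in\MM_\sigma(\Sigma)$ and a constant $C\geq 1$ such that $\mu_K([\iii])\geq C^{-1}\fii^s(A_\iii)$ for every $\iii\in\Sigma_*$. Summing over the partition $\Sigma_n=\{[\iii]:\iii\in\Sigma_n\}$ yields
\begin{equation*}
  \sum_{\iii\in\Sigma_n}\fii^s(A_\iii) \leq C\sum_{\iii\in\Sigma_n}\mu_K([\iii]) = C,
\end{equation*}
a bound independent of $n$. Letting $n\to\infty$ in the two displayed inequalities above then gives $\HH^s(X)\leq C'r^s<\infty$, as desired.

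The argument is quite routine once Lemma \ref{thm:perron-frobenius} is in hand; the only place where a little care is needed is the cover count for $1<s\leq 2$ (choosing the correct number of balls of radius proportional to $\alpha_2(A_\iii)$ to produce exactly the exponents appearing in $\fii^s$), and confirming that the mesh $\delta_n$ tends to zero uniformly, which follows from $\alpha_2(A_\iii)\leq\alpha_1(A_\iii)\to 0$.
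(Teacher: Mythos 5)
Your proof is correct and follows essentially the same route as the paper: cover $X$ by the ellipses $\fii_\iii(B)$, count balls of radius $\alpha_1(A_\iii)$ or $\alpha_2(A_\iii)$ to produce $\sum_{\iii\in\Sigma_n}\fii^s(A_\iii)$, and bound this sum uniformly by comparing $\fii^s(A_\iii)$ with $\mu_K([\iii])$ via Lemma \ref{thm:perron-frobenius}. The only difference is that you spell out the covering count case by case, which the paper leaves implicit.
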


\begin{proof}
  To prove the first claim, let $B$ be a closed ball containing $X$ and $\iii \in \Sigma_*$. To cover the ellipsis $\fii_\iii(B)$, we need approximately one ball of radius $\alpha_1(A_\iii)$ or $\alpha_1(A_\iii)/\alpha_2(A_\iii)$ many balls of radius $\alpha_2(A_\iii)$. Thus, by the definitions of the Hausdorff measure and the singular value function, there exists a constant $c>0$ such that
  \begin{equation*}
    \HH^s(X) \le c\lim_{n \to \infty} \sum_{\iii \in \Sigma_n} \fii^s(A_\iii).
  \end{equation*}
  By Lemma \ref{thm:equilibrium-state-existence}, there exist a measure $\mu_K \in \MM_\sigma(\Sigma)$ and a constant $C \ge 1$ such that
  \begin{equation*}
    C^{-1}\fii^s(A_\iii) \le \mu_K([\iii]) \le C\fii^s(A_\iii)
  \end{equation*}
  for all $\iii \in \Sigma_*$. The claim follows.
\end{proof}

Let us now begin to state the theorems which were concisely summarized in the beginning of the introduction. Our first result determines the lower dimension of self-affine sets. It will be proved in Section \ref{sec:lower}. We emphasize that the result does not assume domination.

\begin{theorem} \label{thm:diml}
	If $X$ is a strictly affine strongly irreducible planar self-affine set satisfying the strong separation condition, then $\diml(X) = \min\{ 1, \dimh(X) \}$.
\end{theorem}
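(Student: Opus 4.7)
The plan is to apply Lemma \ref{thm:FHKY}, which identifies $\diml(X)$ with $\min\{\dimh(T) : T \in \Tan(X)\}$, so the task reduces to two assertions: (a) every weak tangent $T \in \Tan(X)$ has $\dimh(T) \ge \min\{1,\dimh(X)\}$, and (b) some weak tangent $T$ has $\dimh(T) \le \min\{1,\dimh(X)\}$. Since $\diml(X) \le \dimh(X)$ is automatic, for (b) it is enough to produce one tangent of dimension at most $1$. The key external inputs will be Theorem \ref{thm:BHR} (which equates $\dimh(\proj_{V^\bot}(X))$ with $\min\{1,\dimh(X)\}$ for every $V \in \RP$) and the SSC (which gives a positive separation $\delta$ between level-one cylinders).

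For (a), let $T \in \Tan(X)$ be realised by $x_n \in X$ and $r_n \downarrow 0$, write $x_n = \pi\jjj_n$, and stop at the first depth $k_n$ with $\alpha_1(A_{\jjj_n|_{k_n}}) < cr_n$; let $\iii_n = \jjj_n|_{k_n}$. For a suitable constant $c$ one gets $\alpha_1(A_{\iii_n}) \asymp r_n$ and $\fii_{\iii_n}(X) \subset B(x_n,r_n)$, so the rescaled images $M_{x_n,r_n}\circ\fii_{\iii_n}(X)$ sit inside $B(0,1)$. Pass to a subsequence and distinguish two cases by the eccentricity $\alpha_2(A_{\iii_n})/\alpha_1(A_{\iii_n})$. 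If this ratio stays bounded below, then the linear parts of $M_{x_n,r_n}\circ\fii_{\iii_n}$ have both singular values of order $1$, so the maps converge to an invertible affine map $\Psi$ and $\Psi(X) \subset T$, giving $\dimh(T) \ge \dimh(X)$. If the ratio tends to zero, the singular value decomposition shows that the linear parts converge to a rank-one map of the form $c\cdot U \proj_{V_\infty^\bot}$ for some $V_\infty \in \RP$; so $T$ contains a bi-Lipschitz copy of $\proj_{V_\infty^\bot}(X)$, which by Theorem \ref{thm:BHR} has dimension $\min\{1,\dimh(X)\}$. Either way, $\dimh(T) \ge \min\{1,\dimh(X)\}$.

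For (b), strict affinity gives some $\iii_0 \in \Sigma_*$ with $A_{\iii_0}$ proximal; passing to the iterated system $\{\fii_\jjj : \jjj \in \Sigma_{|\iii_0|}\}$, which has the same attractor and preserves SSC, I may assume $\iii_0 = i_0$ is a single symbol. Write $A = A_{i_0}$, with real eigenvalues $|\lambda_1|>|\lambda_2|$ and eigenlines $V, W \in \RP$, and let $x^*$ be the fixed point of $\fii_{i_0}$. Consider $T \in \Tan(X)$ obtained as a subsequential Hausdorff limit of $M_{x^*,r_n}(X)\cap B(0,1)$ with $r_n = C\|A^n\|$ for large $C$. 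Any $y_n \in X$ with $|y_n-x^*| \le r_n$ has $y_n = \fii_{i_0^{m_n}}(y'_n)$ where $y'_n \in X \setminus \fii_{i_0}(X)$, so $|y'_n-x^*| \ge \delta$ by SSC; using the spectral decomposition $A^{m_n} = \lambda_1^{m_n}P_V + \lambda_2^{m_n}P_W$ one writes
\begin{equation*}
  \frac{y_n - x^*}{r_n} = \frac{\lambda_1^{m_n}}{C\|A^n\|}\,P_V(y'_n - x^*) + \frac{\lambda_2^{m_n}}{C\|A^n\|}\,P_W(y'_n - x^*).
\end{equation*}
The term $\fii_{i_0^n}(X)$ itself (i.e.\ $m_n = n$) rescales to a bi-Lipschitz image of $\proj_V(X - x^*)$, which by Theorem \ref{thm:BHR} has Hausdorff dimension $\min\{1,\dimh(X)\}\le 1$. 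For other $m_n$, the boundedness $|z_n| \le 1$ forces the $V$-component of $y'_n - x^*$ to decay exponentially in $n - m_n$, so that in the limit the residual contributions collapse onto the $W$-direction, i.e.\ onto at most a bi-Lipschitz image of a projection of $X$ along $V^\bot$. Thus $T$ is contained in a finite union of bi-Lipschitz images of projections of $X$, and Theorem \ref{thm:BHR} yields $\dimh(T) \le \min\{1,\dimh(X)\} \le 1$, completing (b).

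The main obstacle is the bookkeeping in (b): controlling the slivers arising from cylinder pieces $\fii_\jjj(X)$ with $\jjj \ne i_0^n$ that nevertheless touch $B(x^*,r_n)$. The strict inequality $|\lambda_2| < |\lambda_1|$ and the quantitative separation $\delta$ from SSC together with the spectral splitting show that such slivers, after rescaling, live inside an exponentially thin neighbourhood of the line $\R\cdot W$ through the origin, and so contribute only a projection-type piece of dimension at most $\min\{1,\dimh(X)\}$. Once this geometric localisation is established, Theorem \ref{thm:BHR} closes both halves of the argument uniformly.
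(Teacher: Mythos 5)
Your part (a) is fine and is essentially the paper's own lower bound: stopping the coding at the scale $r_n$ and taking a subsequential limit of $M_{x_n,r_n}\circ\fii_{\iii_n}$ is exactly how the paper produces the map $G_1$ in Lemma \ref{lem:embedding}, after which the dichotomy rank two / rank one together with \eqref{eq:rank-one-projection-nilpotent} and Theorem \ref{thm:BHR} gives Proposition \ref{lem:lblow}. (Minor imprecision: in the rank-one case the limit is the oblique projection onto $V$ along $W$, bi-Lipschitz to an orthogonal projection; this does not affect anything.)

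Part (b), however, has a genuine gap at its crucial step. Blowing up at the fixed point $x^*$ of the proximal map at scales $r_n=C\|A^n\|$, the points of $X\cap B(x^*,r_n)$ are $\fii_{i_0^{m}}(y')$ with $y'\notin\fii_{i_0}(X)$, and the relevant generations $m$ range over roughly $[\theta n, n]$ with $\theta=\log|\lambda_1|/\log|\lambda_2|\in(0,1)$. Writing things in the eigenbasis, the rescaled piece coming from generation $m$ is the image of $X-x^*$ under $\mathrm{diag}\bigl(\lambda_1^{m-n}/C,\;\lambda_2^{m}\lambda_1^{-n}/C\bigr)$ (up to the oblique change of basis). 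You correctly note that membership in $B(0,1)$ forces $|P_V(y'-x^*)|\lesssim|\lambda_1|^{n-m}$, but the conclusion you draw — that these slivers lie in an exponentially thin neighbourhood of $\R\cdot W$, so that $T$ is contained in finitely many bi-Lipschitz images of projections of $X$ — does not follow and is false in general. The $V$-coordinate is rescaled by exactly the compensating factor $|\lambda_1|^{-(n-m)}$, so after rescaling it fills an order-one range; and at the critical generations $m\approx\theta n$ the factor $|\lambda_2|^{m}/\|A^n\|$ multiplying the $W$-coordinate is of order one as well. Thus the contribution of these generations is, in the limit, a unit-scale horizontal expansion of $X$ intersected with a shrinking strip around the weak eigenline $x^*+W$, with the $W$-coordinate kept at its original scale: a comb-like set contained in (an interval)$\,\times\,$(a copy of the slice $X\cap(x^*+W)$), whose Hausdorff dimension can be as large as $1+\dimh(X\cap(x^*+W))$. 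This is precisely the mechanism by which Lemma \ref{lem:teethofcomb} and Theorem \ref{thm:dima} produce weak tangents of dimension exceeding one, so your particular tangent is not guaranteed to have dimension at most one, and nothing in your argument rules this out. The paper's upper bound (Proposition \ref{lem:ublow}) is built differently exactly to avoid this transverse clutter: using the strong separation condition and convex-hull geometry (Lemmas \ref{lem:linseg}--\ref{lem:almostcone}) it finds an extreme point $y\in X$ for a Furstenberg-typical direction $V$ such that $X$ avoids the region $H(y,V,\delta,\eps)$, and then magnifies at $y$ by the weak norm $\|A_{\overleftarrow{\iii|_n}}|V\|$, using Lemma \ref{lem:goto0} to collapse the two boundary directions of the cone onto a single line, so the whole tangent lies in a line. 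To repair your route you would need an additional argument controlling the critical-generation pieces, or a different choice of base point and scales along the lines of the paper.
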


We remark that a Bedford-McMullen carpet $X$ with $\dimh(X) \le 1$ not having uniform vertical fibers serves as a counter-example for the above result in the reducible case; see Theorem \ref{thm:BM-carpet-regular} and, more precisely, \cite[Corollary 2.14]{Fraser14}. Recalling Lemma \ref{thm:ahlfors-implications}, it follows from Theorem \ref{thm:diml} that if $\dimh(X)>1$, then $X$ is not Ahlfors regular.

The following theorem determines the Assouad dimension of self-affine sets and it will be proved in Section \ref{sec:assouad-large}. It generalizes the result of B\'ar\'any, K\"aenm\"aki, and Rossi \cite[Theorem 3.2]{BaranyKaenmakiRossi2021} which uses a projection condition, a very restrictive assumption to guarantee that the projection of the self-affine set is a line segment for sufficiently many directions, to overcome several technical difficulties. Recall also the result of Fraser \cite[Theorem 2.12]{Fraser14} for self-affine carpets.

\begin{theorem} \label{thm:dima}
	If $X$ is a dominated planar self-affine set satisfying the strong separation condition such that $\dimh(X) \ge 1$ and $X_F$ is not a singleton, then
  \begin{equation*}
    \dima(X) = 1 + \max_{\atop{x\in X}{V\in X_F}}\dimh(X\cap(V+x)) < 2.
  \end{equation*}
\end{theorem}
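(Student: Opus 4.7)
\emph{Proof plan.} The main engine is Lemma~\ref{thm:KOR}, by which $\dima(X)=\max\{\dimh(T):T\in\Tan(X)\}$. Under domination, Lemma~\ref{thm:bochi-morris} guarantees that every cylinder $\fii_\iii(X)$ is contained in a highly anisotropic ellipse whose axes align, up to controlled error, with a fixed Furstenberg direction and its perpendicular. Zooming into $X$ at a point $x\in X$ at scale $\alpha_2(A_\iii)$ for an appropriate $\iii$ should therefore reveal, after an affine renormalisation, a slice of $X$ in a Furstenberg direction stretched out as a product with an interval in the perpendicular direction. The goal is to make this heuristic rigorous from both sides.

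\emph{Lower bound.} Fix $V\in X_F$ and $x\in X$. Using the description \eqref{eq:furstenberg-directions-dominated}--\eqref{eq:furstenberg-directions-dominated2} of $X_F$ together with $x\in X$, pick finite words $\iii_n\in\Sigma_*$ with $|\iii_n|\to\infty$ such that $x\in\fii_{\iii_n}(X)$ and such that the ellipse $A_{\iii_n}(B(0,1))$ has its minor axis pointing in a direction converging to $V$. Set $r_n=\alpha_2(A_{\iii_n})$. Then $M_{x,r_n}(\fii_{\iii_n}(X))$ is an affine copy of $X$ which is unit-scaled in direction $V$ and stretched by $\alpha_1(A_{\iii_n})/\alpha_2(A_{\iii_n})\to\infty$ in direction $V^\perp$ (with errors quantified by Lemmas~\ref{lem:goto0} and~\ref{thm:bochi-morris}). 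Pass to a subsequence so that $M_{x,r_n}(X)\cap B(0,1)$ converges in Hausdorff distance to a weak tangent $T$; the strong separation condition prevents incompatible cylinders $\fii_\jjj(X)$ from contributing to the limit. A careful inspection of the limit yields that $T$ contains a bi-Lipschitz image of $[-c,c]\times(X\cap(V+x))$ for some $c>0$, and the product dimension inequality gives $\dimh(T)\ge 1+\dimh(X\cap(V+x))$. Supping over $V\in X_F$ and $x\in X$ proves the lower bound.

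\emph{Upper bound and strict inequality.} Given any $T\in\Tan(X)$ realised through sequences $x_n\in X$ and $r_n\downarrow 0$, the strong separation condition allows us to select $\iii_n\in\Sigma_*$ with $x_n\in\fii_{\iii_n}(X)$ and $\alpha_2(A_{\iii_n})\asymp r_n$. By Lemma~\ref{thm:bochi-morris}, the minor-axis direction of $A_{\iii_n}(B(0,1))$ stays in a small neighbourhood of the compact set $X_F$, so after passing to a subsequence we may assume this direction converges to some $V\in X_F$ and $\fii_{\iii_n}^{-1}(x_n)\to y\in X$. The same anisotropic stretching analysis confines $T$ inside a bi-Lipschitz image of $[-c,c]\times(X\cap(V+y))$, hence $\dimh(T)\le 1+\dimh(X\cap(V+y))$. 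Combined with the lower bound, this establishes the claimed equality. The strict inequality $\dima(X)<2$ follows by noting that a slice with $\dimh(X\cap(V+x))=1$ would, via the product structure in the corresponding weak tangent together with Lemma~\ref{thm:FK} (giving $\HH^{\dimaff(X)}(X)<\infty$) and Theorem~\ref{thm:BHR2} (the equality $\dimh(X)=\min\{2,\dimaff(X)\}$), propagate through the self-affine structure to force $\dimaff(X)=2$ and positive planar Lebesgue measure of $X$, incompatible with the strong separation condition under our hypotheses.

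\emph{Main obstacle.} The principal technical challenge lies in the lower bound: ensuring that the Hausdorff limit genuinely contains the full product $[-c,c]\times(X\cap(V+x))$, i.e.\ that a continuum of translates of the slice---not merely one copy---fills out the entire interval in direction $V^\perp$. This rests on (i) choosing $\iii_n$ so that $x$ sits quantitatively in the ``interior'' of $\fii_{\iii_n}(X)$ at scale $r_n$, using the strong separation gap \eqref{eq:SSC} together with an iteration argument; and (ii) upgrading directional convergence of the minor axis of $A_{\iii_n}(B(0,1))$ into Hausdorff convergence of the magnified sets, with quantitative control provided by Lemma~\ref{thm:bochi-morris}. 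The assumption that $X_F$ is not a singleton supplies the flexibility to steer the cylinders so that $\iii_n$ can be chosen independently of $V$ and $x$, and is essential for producing weak tangents witnessing any chosen Furstenberg slice.
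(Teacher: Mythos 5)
Your overall strategy (realise the bound on weak tangents via Lemma~\ref{thm:KOR}, zoom in along cylinders aligned with a Furstenberg direction at scale $\alpha_2$) is the right starting point, but the central claim of your lower bound --- that the weak tangent contains a bi-Lipschitz image of the product $[-c,c]\times(X\cap(V+x))$ --- is not justified and is false in general. When you renormalise $\fii_{\iii_n}(X)$ by $\alpha_2(A_{\iii_n})^{-1}$, the limit is a comb-like set, not a product: the slice $X\cap(V+x)$ does embed into the tangent along a line $L$, but the transverse ``teeth'' through the points of the tangent are limits of renormalised sub-cylinders, i.e.\ rank-one affine images $P(X)$ which are bi-Lipschitz copies of \emph{projections} of $X$ (Lemma~\ref{thm:nilpotent} and \eqref{eq:rank-one-projection-nilpotent}). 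These have Hausdorff dimension $\min\{1,\dimh(X)\}=1$ by Theorem~\ref{thm:BHR2}, but they need not contain any interval, and nothing forces the fibre over a given point of the slice to fill out a segment $[-c,c]$. This is exactly what the paper's Lemma~\ref{lem:teethofcomb} establishes instead: a tangent $T$ and transversal lines $W\ne L$ with $\dimh(T\cap L)\ge\dimh(X\cap(V+x))$ and $\dimh(T\cap(W+y))=1$ for \emph{every} $y\in T$ (the last part requiring a separate argument, via Lemma~\ref{lem:embedding} and Lemma~\ref{lem:smallslice}, that the direction $W$ does not depend on $y$). One then cannot invoke a product-dimension inequality; the paper instead puts an $s$-Frostman measure on $T\cap L$ and applies Marstrand's slicing theorem to deduce $\dimh(T)\ge 1+s$. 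Your sketch is missing this mechanism entirely, and the ``careful inspection of the limit'' hides the whole difficulty.

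The strict inequality $\dima(X)<2$ is a second genuine gap. What is needed is a \emph{uniform} bound $\sup_{x,V}\dimh(X\cap(V+x))\le s<1$; the paper's Lemma~\ref{lem:smallslice} obtains this by an explicit covering argument on the lengths $\HH^1(\fii_\iii(U)\cap(V+x))$ combined with H\"older's inequality, producing an exponent $s<1$ depending only on $N$, $\delta$ and $\diam(X)$. Your proposed contradiction (``a slice of dimension $1$ would force $\dimaff(X)=2$ and positive Lebesgue measure'') does not follow from anything in the paper --- a single dimension-one slice implies nothing about the affinity dimension, and in any case ruling out slices of dimension exactly $1$ would not prevent the supremum from equalling $1$. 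Finally, for the upper bound your claimed confinement of an arbitrary tangent inside a product of an interval with a \emph{single} slice is likewise unsubstantiated (a window of radius $r_n$ meets a whole strip of fibres, not one); the paper simply quotes \cite[Proposition 2.4 and Theorem 5.2]{BaranyKaenmakiRossi2021} for this direction, which is an acceptable route but not the one you describe.
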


In the following example, we show that, under the assumptions of Theorem \ref{thm:dima}, it is possible to have $\diml(X) < \dimh(X) = \dimaff(X) < \dima(X)$. This observation answers one of the open questions posed in \cite[Question 17.5.2]{Fraser2020}. The example strongly relies on known results on Bedford-McMullen carpets and, to our knowledge, it is currently the only example demonstrating $\dimaff(X) < \dima(X)$ in the case $\dimh(X) \ge 1$ under irreducibility. If $\dimh(X) < 1$, then this phenomenon is studied in more detail in Corollary \ref{thm:affinity-assouad}.

\begin{example} \label{ex:dimaff-dima}
  Let $q > p \ge 2$ and $N \in \{2,\ldots,pq\}$ be integers, and $I \subset \{0,\ldots,p-1\} \times \{0,\ldots,q-1\}$ a set of $N$ elements. Let $A = \diag(\tfrac{1}{p},\tfrac{1}{q})$ and $B_\eps \in \GL_2(\R)$ be a matrix with positive entries such that $\|B_\eps\| < \eps$ for all $\eps > 0$. It follows that $\fii^s(B_\eps) \downarrow 0$ as $\eps \downarrow 0$ for all $s \ge 0$. Furthermore, the tuple $(A,B_\eps) \in \GL_2(\R)^2$ is dominated and irreducible for all $\eps > 0$. Write $\A = (A,\ldots,A) \in \GL_2(\R)^N$ and note that $P(\A,s) = \log(N\fii^s(A))$. Hence,
  \begin{equation*}
    \dimaff(\A) =
    \begin{cases}
      \frac{\log N}{\log p}, &\text{if } N \in \{2,\ldots,p\}, \\
      1 + \frac{\log N/p}{\log q}, &\text{if } N \in \{p+1,\ldots,pq\}.
    \end{cases}
  \end{equation*}
  Observe also that if $\A_\eps = (A,\ldots,A,B_\eps) \in \GL_2(\R)^{N+1}$, then
  \begin{align*}
    P(\A_\eps,s) &= \lim_{n \to \infty} \frac{1}{n}\log\sum_{k=0}^n \sum_{1 \le i_1 < \cdots < i_k \le n} N^{n-k} \fii^s(A^{i_1-1}B_\eps A^{i_2-i_1-1}B_\eps \cdots B_\eps A^{n-i_k-1}) \\
    &\le \lim_{n \to \infty} \frac{1}{n}\log\sum_{k=0}^n \sum_{1 \le i_1 < \cdots < i_k \le n} N^{n-k} \fii^s(A)^{n-k} \fii^s(B_\eps)^k \\
    &= \log(N\fii^s(A)+\fii^s(B_\eps)).
  \end{align*}
  Therefore, if $s(\eps)$ is such that $\log(N\fii^{s(\eps)}(A)+\fii^{s(\eps)}(B_{\eps})) = 0$, we see that $\dimaff(\A) \le \dimaff(\A_\eps) \le s(\eps)$ for all $\eps>0$ and $s(\eps) \downarrow \dimaff(\A)$ as $\eps \downarrow 0$.

  Recall that the Bedford-McMullen carpet is the self-affine set $X \subset [0,1]^2$ associated to a tuple $(\fii_1,\ldots,\fii_N)$ of affine maps which all have the same linear part $A$ and the translation part is from the set $\{(\tfrac{j}{p},\tfrac{k}{q}) \in [0,1]^2 : (j,k) \in I\}$. We assume that $X$ satisfies the strong separation condition. Write $n_j = \#\{k : (j,k) \in I\}$ to denote the number of sets $\fii_i([0,1)^2)$ the vertical line $\{(\tfrac{j}{p},y) : y \in \R\}$ intersects. By \cite[Theorem 1.1]{Mackay2011}, we have
  \begin{equation*}
    \dima(X) = \frac{\log \#\{j \in \{1,\ldots,p\} : n_j \ne 0\}}{\log p} + \max_{j \in \{1,\ldots,p\}} \frac{\log n_j}{\log q}.
  \end{equation*}
  For example, if $q=5$, $p=4$, and $N=5$, then, by choosing the translation vectors such that $n_1=3$, $n_2=0$, and $n_3=1=n_4$, we have $1 < \dimaff(\A) < \dima(X)$.
  Observe that several other choices also lead to these strict inequalities. We may now choose $\eps>0$ such that $s(\eps) < \dima(X)$.

  Define a contractive affine map $\fii_{N+1} \colon \R^2 \to \R^2$ by setting $\fii_{N+1}(x) = B_\eps x + v$, where $v \in \R^2$ is chosen such that $\fii_{N+1}([0,1]^2) \cap \bigcup_{i=1}^N \fii_i([0,1]^2) = \emptyset$. Let $X'$ be the dominated irreducible planar self-affine set satisfying the strong separation condition associated to the tuple $(\fii_1,\ldots,\fii_N,\fii_{N+1})$. By Theorems \ref{thm:BHR} and \ref{thm:diml}, we have $\dimh(X') = \dimaff(\A_\eps) \ge \dimaff(\A) > 1 = \diml(X')$. Therefore, as $X \subset X'$,
  \begin{equation*}
    \diml(X') < \dimh(X') = \dimaff(\A_\eps) \le s(\eps) < \dima(X) \le \dima(X')
  \end{equation*}
  as wished.
\end{example}

We say that a strictly affine planar self-affine set $X$ satisfies a \emph{projective open set condition} if there is an open and bounded non-empty set $U \subset \R^2$ such that $\fii_i(U) \subset U$ for all $i \in \{1,\ldots,N\}$ and $\proj_{V^\bot}(\fii_i(U)) \cap \proj_{V^\bot}(\fii_j(U)) = \emptyset$ for all $V \in X_F$ whenever $i \ne j$. Inspired by Theorem \ref{thm:self-similar-osc-equiv}, we say that $X$ satisfies a \emph{projective separation} if there exists $\eta>0$ such that for every $V \in X_F$ and $\iii,\jjj \in \Sigma_*$ with $\iii\ne\jjj$ it holds that
\begin{equation} \label{eq:POSC}
  |\proj_{V^\perp} \circ \fii_\iii-\proj_{V^\perp} \circ \fii_\jjj| \ge \eta\diam(\proj_{V^\perp}(\fii_\iii(X))).
\end{equation}
In Propositions \ref{thm:POSC-PSC} and \ref{lem:missing}, we show that the projective open set condition implies the projective separation. One has to be careful when using \eqref{eq:POSC} as, for example, a Bedford-McMullen carpet can be contained in a line parallel to the sole element of $X_F$ in which case the right hand-side in \eqref{eq:POSC} is zero. Nevertheless, if $X$ and $X_F$ are not singletons, then there always exists $V \in X_F$ such that $\diam(\proj_{V^\perp}(X)) > 0$. In fact, in \eqref{eq:posdiam}, we will see that if $X$ is dominated with at least two points and $X_F$ is not a singleton, then the right hand-side in \eqref{eq:POSC} is uniformly bounded away from zero. We also define
\begin{equation} \label{eq:POSC-Sigma-def}
\begin{split}
  \Sigma(V,x,r) = \{\iii\in\Sigma_* : \;&\diam(\proj_{V^\perp}(\fii_\iii(X))) < r \le \diam(\proj_{V^\perp}(\fii_{\iii^-}(X))) \\
  &\text{ and }\proj_{V^\perp}(\fii_\iii(X)) \cap B(\proj_{V^\perp}(x),r) \ne \emptyset\}
\end{split}
\end{equation}
for all $V \in X_F$, $x \in X$, and $r>0$. The projective separation is characterized in the following theorem. The result is analogous to Theorems \ref{thm:self-similar-osc-equiv} and \ref{thm:self-similar-wsc-equiv} in the self-similar case. Its proof is given in Sections \ref{sec:assouad-small} and \ref{sec:hausdorff-measure}.

\begin{theorem} \label{thm:ahl}
	If $X$ is a dominated planar self-affine set satisfying the strong separation condition such that $X_F$ is not a singleton and $\dimh(X) \le 1$, then the following conditions are equivalent:
	\begin{enumerate}
    \item\label{(1)} $X$ satisfies the projective separation.
    \item\label{(11)} $\sup\{\#\Sigma(V,x,r) : V \in X_F, \text{ } x \in X, \text{ and } r>0\} < \infty$,
    \item\label{(4)} $X$ is Ahlfors regular,
		\item\label{(2)} $\HH^{s}(X)>0$ where $s=\dimh(X)$,
    \item\label{(3)} $\min_{V \in X_F} \HH^{s}_\infty(\proj_{V^\perp}(X)) > 0$ where $s=\dimh(X)$,
    \item\label{(5)} $\proj_{V^\perp}(X)$ is Ahlfors regular for all $V \in X_F$.
	\end{enumerate}
  Furthermore, if $\dimh(X)<1$, then the following condition can be added to the list:
  \begin{enumerate}[resume]
    \item\label{(6)} $\diml(X)=\dimh(X)=\dima(X)$.
  \end{enumerate}
\end{theorem}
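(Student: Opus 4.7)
The plan is to close the six-step cycle
\[(4)\Rightarrow (5)\Rightarrow (3)\Rightarrow (2)\Rightarrow (1)\Rightarrow (11)\Rightarrow (4),\]
and to link (6) to the cycle separately in the case $\dimh(X)<1$. Several arrows are short. The implication $(5)\Rightarrow (3)$ uses that an Ahlfors $s$-regular compact set $Y$ has $\HH^s_\infty(Y)\asymp\diam(Y)^s$, and $\diam(\proj_{V^\perp}(X))$ is bounded below uniformly in $V\in X_F$ by compactness of $X_F$ (from \eqref{eq:furstenberg-directions-dominated}) together with the assumption that $X$ is not a single point. The implication $(3)\Rightarrow (2)$ uses that $\proj_{V^\perp}$ is $1$-Lipschitz, so $\HH^s_\infty(X)\ge\HH^s_\infty(\proj_{V^\perp}(X))>0$, combined with Lemma~\ref{thm:FK}. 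The implication $(1)\Rightarrow (11)$ is a packing argument in $V^\perp$: given (1), two different $\iii,\jjj\in\Sigma(V,x,r)$ have projected images of diameter comparable to $r$ (by Lemma~\ref{thm:bochi-morris}) containing points separated by $\eta\cdot r$, so only $O(\eta^{-1})$ such iterates fit into a scale-$r$ interval. Finally, $(4)\Rightarrow (2)$ and, when $s<1$, $(4)\Rightarrow (6)$ follow directly from Lemma~\ref{thm:ahlfors-implications}.

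The substantive implications $(11)\Rightarrow (4)$ and $(4)\Rightarrow (5)$ are both handled via the equilibrium state $\mu_K$ of Lemma~\ref{thm:perron-frobenius}, which satisfies $\mu_K([\iii])\asymp\|A_\iii\|^s$ since $s\le 1$ gives $\fii^s=\|\cdot\|^s$. For $(11)\Rightarrow (4)$, I would push $\mu_K$ forward to $\pi_*\mu_K$ on $X$ and bound $\pi_*\mu_K(B(x,r))\le Cr^s$ by (11) combined with Lemma~\ref{thm:bochi-morris}, obtaining the matching lower bound from a single cylinder $\iii$ with $\|A_\iii\|\asymp r$ and $\fii_\iii(X)\subset B(x,r)$. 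For $(4)\Rightarrow (5)$, once $X$ is Ahlfors $s$-regular, uniqueness of $\mu_K$ and the variational principle \eqref{eq:variational-principle} identify $\pi_*\mu_K\asymp\HH^s|_X$, and projecting along $V\in X_F$ yields an Ahlfors $s$-regular measure on $\proj_{V^\perp}(X)$ via Lemma~\ref{thm:bochi-morris} (which converts $\|A_\iii\|$ to the diameter of the projected cylinder uniformly in $V\in X_F$).

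The equivalence of (6) with the rest in the case $\dimh(X)<1$ uses Lemmas~\ref{thm:KOR} and~\ref{thm:FHKY}: $\diml(X)=\dima(X)=s$ forces every $T\in\Tan(X)$ to have Hausdorff dimension $s$. Under domination, every weak tangent is, by Lemma~\ref{thm:nilpotent} and \eqref{eq:rank-one-projection-nilpotent}, bi-Lipschitz equivalent to a projection of a self-affine reparametrization of $X$, so uniformity of tangent dimension promotes to the uniform ball-counting estimate of (4). The hypothesis $s<1$ rules out the degenerate case where a tangent could be a full line segment, an issue that arises in the self-similar setting of Theorem~\ref{thm:self-similar-wsc-equiv} precisely at $s=1$.

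The main obstacle is $(2)\Rightarrow (1)$, a self-affine analogue of Schief's theorem. Positive Hausdorff measure on $X$ does not propagate cleanly to quantitative separation of projected iterates because, under domination, the cylinder ellipsoids are highly eccentric and their short axes can cause close overlap in projection even when $\HH^s(X)>0$. My approach would be to first establish $\pi_*\mu_K\asymp\HH^s|_X$, using uniqueness of $\mu_K$ (Lemma~\ref{thm:perron-frobenius}) together with the exact-dimensionality inputs cited after \eqref{eq:variational-principle} and the compactness hypothesis, and then extract the POSC from the cylinder-separation information encoded in $\mu_K$ by integrating against the Furstenberg-measure machinery of Lemma~\ref{thm:furstenberg-delta} and \eqref{eq:furstenberg-delta-perp}, along the Furstenberg directions parameterized by \eqref{eq:furstenberg-directions-dominated}.
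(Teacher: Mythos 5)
Your implication graph is a legitimate reordering, and several of your arrows are fine and essentially the paper's: \eqref{(11)}~$\Rightarrow$~\eqref{(4)} is exactly Theorem~\ref{thm:posc-regular}, and \eqref{(5)}~$\Rightarrow$~\eqref{(3)}~$\Rightarrow$~\eqref{(2)} and \eqref{(4)}~$\Rightarrow$~\eqref{(2)} are as easy as you say. But the three implications that carry the actual weight of the theorem are not proved. First, \eqref{(2)}~$\Rightarrow$~\eqref{(1)}: you correctly identify this as the main obstacle and then offer no mechanism; ``extract the POSC from the cylinder-separation information encoded in $\mu_K$'' is not an argument. The paper never proves this arrow directly. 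It goes \eqref{(2)}~$\Rightarrow$~\eqref{(3)} via Theorem~\ref{lem:lowb} (a covering estimate $\HH^s(X)\le c\max_{\iii}H(\iii)$ combined with the dichotomy of Lemma~\ref{prop:important}, which rests on the exact eigenfunction identity $H=h\int H\,\mathrm{d}\nu$ for the Hausdorff content of projections), then \eqref{(3)}~$\Rightarrow$~\eqref{(5)}~$\Rightarrow$~\eqref{(4)} via Theorem~\ref{thm:kaenmaki-ahlfors}, and only returns to \eqref{(1)} through the contrapositive of Theorem~\ref{thm:dima2}: if the POSC fails, Lemmas~\ref{lem:key} and~\ref{lem:totangent} produce a weak tangent containing a line segment, forcing $\dima(X)\ge 1$ and contradicting \eqref{(6)}. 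Second, your \eqref{(4)}~$\Rightarrow$~\eqref{(5)} is unsound as stated: pushing an Ahlfors $s$-regular measure through $\proj_{V^\perp}$ does not yield an Ahlfors $s$-regular measure, because the preimage of a ball is a strip, and bounding the measure of a strip by $Cr^s$ is precisely the counting condition \eqref{(11)} that the theorem is trying to establish; Lemma~\ref{thm:bochi-morris} converts norms to diameters but says nothing about how many cylinders crowd into one strip. Third, your treatment of \eqref{(6)} asserts that equality of the lower, Hausdorff and Assouad dimensions ``promotes to the uniform ball-counting estimate of \eqref{(4)}''; this is false as a general principle, and the paper's own example following Lemma~\ref{thm:FHKY} (a Cantor-type set with $\diml=\dimh=\dima$ and vanishing Hausdorff measure in its dimension) is a counterexample to exactly this promotion. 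The only route the paper has from \eqref{(6)} back into the cycle is again Theorem~\ref{thm:dima2}, whose proof is the technical heart of the section and is entirely absent from your proposal.

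A smaller but genuine gap is \eqref{(1)}~$\Rightarrow$~\eqref{(11)}. The POSC provides, for each pair $\iii\ne\jjj$, \emph{one} point $x\in X$ (depending on the pair) at which the projections differ by $\eta\diam(\proj_{V^\perp}(\fii_\iii(X)))$; this does not separate the sets $\proj_{V^\perp}(\fii_\iii(X))$ from one another, so ``only $O(\eta^{-1})$ iterates fit into a scale-$r$ interval'' does not follow --- all the projected images could coincide as subsets of an interval while the maps differ. The correct argument (Theorem~\ref{thm:posc-sigma}) is the Bandt--Graf-type device of evaluating each $\proj_{V^\perp}\circ\fii_\lll$ on a fixed finite $\gamma$-net $z_1,\dots,z_k$ of $X$ and packing the resulting vectors $\xi_\lll\in(\R^2)^k$ in a ball of radius $2\sqrt{k}r$; the bound is $(2\sqrt{k}/\gamma)^{2k}$, not $O(\eta^{-1})$. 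In summary, the easy arrows are right, but each of the substantive implications either lacks an argument or relies on a step that fails.
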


If the self-affine set $X$ has positive Hausdorff measure, then Theorem \ref{thm:ahl} guarantees that there are no exact overlaps in the projections onto orthogonal complements of the Furstenberg directions. In other words, if under the assumptions of Theorem \ref{thm:ahl} it holds that $\HH^s(X)>0$ for $s=\dimh(X)$, then $\proj_{V^\perp}\fii_\iii \ne \proj_{V^\perp}\fii_\jjj$ for all $V \in X_F$ and $\iii,\jjj \in \Sigma_*$ with $\iii \ne \jjj$.

\begin{theorem}\label{thm:dima2}
	If $X$ is a dominated planar self-affine set satisfying the strong separation condition, but not the projective separation, such that $X_F$ is not a singleton, then $\dima(X) \ge 1$.
\end{theorem}

It would be interesting to know if the above theorem can be improved to show $\dima(X) > 1$. The theorem would then be analogous to the result of Garc\'ia \cite[Theorem 1.4]{Garcia2020} in the self-similar case; recall also Theorem \ref{thm:self-similar-wsc-equiv}.

Our final result, Theorem \ref{thm:baire-typical}, shows that in a topological sense typical self-affine sets satisfy the assumptions of Theorem \ref{thm:dima2}. The proof of Theorem \ref{thm:baire-typical} will be postponed until Section \ref{sec:assouad-affinity}. Let $\A = (A_1,\ldots,A_N) \in \GL_2(\R)^N$ and consider the affine iterated function systems $\Phi_{\mathsf{v}} = (A_1+v_1,\ldots,A_N+v_N)$ parametrized by the translation vector $\mathsf{v} = (v_1,\ldots,v_N) \in (\R^2)^N$. Let $\pi_{\mathsf{v}} \colon \Sigma \to X_{\mathsf{v}}$ be the associated canonical projection onto the self-affine set $X_{\mathsf{v}}$. If $\A$ is strictly affine, fix $\delta>0$ and define
\begin{equation} \label{eq:baire-N-definition}
\begin{split}
  \NN(\A) = \NN_\delta(\A) = \{\mathsf{v} \in (\R^2)^N : \;&X_{\mathsf{v}} \text{ satisfies the strong separation condition } \\
  &\text{with } \delta>0 \text{ in \eqref{eq:SSC} and there are } V \in X_F \\
  &\text{and } \iii,\jjj \in \Sigma \text{ with } \iii|_1 \ne \jjj|_1 \text{ such that}  \\
  &\proj_{V^\bot}(\pi_{\mathsf{v}}(\iii)) = \proj_{V^\bot}(\pi_{\mathsf{v}}(\jjj))\}.
\end{split}
\end{equation}
Since the same $\delta>0$ works for all $\mathsf{v} \in \NN(\A)$ in \eqref{eq:SSC}, it is easy to see that $\NN(\A)$ is complete. Recall that a \emph{residual} set is an intersection of countably many sets with dense interiors.

\begin{theorem} \label{thm:baire-typical}
  If $\A \in \GL_2(\R)^N$ is strictly affine such that $\max_{i \in \{1,\ldots,N\}} \|A_i\|<\frac12$, then there exists a residual set $\RR(\A) \subset \NN(\A)$ such that for each $\mathsf{v} \in \RR(\A)$ the planar self-affine set $X_{\mathsf{v}}$ does not satisfy the projective separation.
\end{theorem}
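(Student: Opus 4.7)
The plan is to run a Baire category argument in the complete metric space $\NN(\A)$. For each $k \in \N$, set
\[
  U_k = \bigl\{\mathsf{v}\in\NN(\A) : \exists V\in X_F,\; \iii,\jjj\in\Sigma_*,\; \iii\ne\jjj,\; \sup_{x\in X_{\mathsf{v}}}|\proj_{V^\bot}(\fii^{\mathsf{v}}_\iii(x)-\fii^{\mathsf{v}}_\jjj(x))| < \tfrac{1}{k}\diam(\proj_{V^\bot}\fii^{\mathsf{v}}_\iii(X_{\mathsf{v}}))\bigr\}.
\]
A point $\mathsf{v}\in\NN(\A)$ fails the projective open set condition precisely when $\mathsf{v}\in\bigcap_k U_k$, so the claim reduces to showing each $U_k$ is open and dense in $\NN(\A)$; then $\RR(\A)=\bigcap_k U_k$ is the desired residual set by Baire's theorem.

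Openness is the straightforward half. If $(V,\iii,\jjj)$ is a witness at $\mathsf{v}_0\in U_k$, the uniform separation constant $\delta>0$ hard-wired into the definition of $\NN(\A)$ implies that $X_{\mathsf{v}}$ varies continuously in the Hausdorff metric with $\mathsf{v}$, while the translations $v^{\mathsf{v}}_\iii, v^{\mathsf{v}}_\jjj$ are affine in $\mathsf{v}$ (the linear parts $A_\iii,A_\jjj$ being fixed). Both sides of the defining strict inequality are therefore continuous in $\mathsf{v}$, so the inequality persists on a neighborhood; taking the union over all witnesses shows $U_k$ is open.

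Density is where the real work lies. Given $\mathsf{v}_0 \in \NN(\A)$ and $\epsilon>0$, by definition there are $V_0\in X_F$ and $\iii_0,\jjj_0\in\Sigma$ with $\iii_0|_1\ne\jjj_0|_1$ and $\proj_{V_0^\bot}(\pi_{\mathsf{v}_0}(\iii_0)-\pi_{\mathsf{v}_0}(\jjj_0))=0$. The strategy is to produce $\mathsf{v}_1$ within $\epsilon$ of $\mathsf{v}_0$ satisfying two things simultaneously: first, some finite cylinder pair $(\iii,\jjj)$ with $\iii\ne\jjj$ and some $V\in X_F$ witnesses POSC failure at level $1/k$; and second, the perturbation preserves an infinite collision (either the original one or a newly produced one) so that $\mathsf{v}_1\in\NN(\A)$. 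To find the cylinder pair, I would use compactness of the projective image of the normalized semigroup $\{A_\iii/\|A_\iii\| : \iii\in\Sigma_*\}$ in $M_2(\R)$, combined with the equidistribution of $\log\|A_\iii\|$ for $\iii$ of equal length around $|\iii|$ times the top Lyapunov exponent, to locate $\iii,\jjj\in\Sigma_*$ with $|\iii|=|\jjj|$ and $\|A_\iii-A_\jjj\|/\|A_\iii\|$ below any preassigned threshold by a pigeonhole argument. Lemma~\ref{thm:bochi-morris} then converts this into the same quantitative bound for $\|\proj_{V^\bot}(A_\iii-A_\jjj)\|/\|\proj_{V^\bot} A_\iii\|$ for every $V\in X_F$. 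With such $(\iii,\jjj,V)$ fixed, the residual obstruction to POSC failure is the translation defect $\proj_{V^\bot}(v^{\mathsf{v}}_\iii-v^{\mathsf{v}}_\jjj)$, an affine scalar functional of $\mathsf{v}$; perturbing $\mathsf{v}_0$ in a direction normal to the collision-preserving hyperplane $\{\mathsf{v} : \proj_{V_0^\bot}(\pi_{\mathsf{v}}(\iii_0)-\pi_{\mathsf{v}}(\jjj_0))=0\}$ and inside the level set of the defect functional delivers $\mathsf{v}_1$.

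The hard part will be making the density step quantitatively work: controlling the size of the perturbation required to zero the translation defect while simultaneously (i) staying in the collision-preserving hyperplane (to remain in $\NN(\A)$) and (ii) respecting the uniform SSC gap $\delta$. This is exactly where the hypothesis $\max_i\|A_i\|<\frac12$ is needed. It provides sufficient clearance between first-level cylinder pieces $\fii_i(X_{\mathsf{v}})$ so that $O(\epsilon)$ perturbations of the translation vectors cannot collapse the separation, and it gives enough freedom in the affine dependence $\mathsf{v}\mapsto v^{\mathsf{v}}_\iii$ on each coordinate $v_i$ to solve the one-dimensional defect equation within an $\epsilon$-ball; if the direct perturbation threatens to destroy the original collision, one instead accepts the loss of $(V_0,\iii_0,\jjj_0)$ and uses the finite-cylinder coincidence just created to upgrade to a new infinite collision (by iterating any fixed map past the coincidence), which again lies in $\NN(\A)$. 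Putting these pieces together yields $\mathsf{v}_1\in U_k\cap\NN(\A)$ with $|\mathsf{v}_1-\mathsf{v}_0|<\epsilon$ and completes the density, and hence the Baire argument.
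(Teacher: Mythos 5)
The Baire skeleton you set up (open sets indexed by $1/k$, openness easy, density the crux) is the same as the paper's, and your openness argument is fine. The genuine gap is in the density step. Your plan is to pigeonhole the matrix semigroup to find $\iii,\jjj\in\Sigma_*$ with $A_\iii\approx A_\jjj$ in relative norm and then ``zero the translation defect'' $\proj_{V^\bot}(v^{\mathsf{v}}_\iii-v^{\mathsf{v}}_\jjj)$ by a perturbation. But for such a pair the defect at $\mathsf{v}_0$ is in general of order $1$ (comparable to the separation between the cylinders $\fii_\iii(X)$ and $\fii_\jjj(X)$, hence to $\diam(X)$), while density only allows you a perturbation of size $\eps$; since the dependence of the defect on the translations is Lipschitz with constant of order $1$, you simply cannot reach the zero level set from an arbitrary $\mathsf{v}_0\in\NN(\A)$ by an $\eps$-move. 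Invoking $\max_i\|A_i\|<\tfrac12$ does not repair this: in the paper that hypothesis only feeds the transversality lemma (Lemma \ref{lem:trans}), i.e.\ a \emph{lower} bound on the derivative of the projected distance with respect to a first-letter translation coordinate; it provides no mechanism for making the defect small to begin with. (Your direction bookkeeping is also backwards: to stay in $\NN(\A)$ you would move \emph{inside} the collision-preserving set and \emph{transverse} to the level sets of the defect functional, not the other way around. And the ``equidistribution of $\log\|A_\iii\|$ around $n$ times the top Lyapunov exponent'' is not true as stated, though the pigeonhole could be patched; that is not the main problem.)

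What is missing is precisely how the paper exploits the exact collision built into $\NN(\A)$: given $W\in X_F$ and $\hhh,\lll\in\Sigma$ with $\hhh|_1\ne\lll|_1$ and $\proj_{W^\bot}(\pi_{\mathsf{v}}(\hhh))=\proj_{W^\bot}(\pi_{\mathsf{v}}(\lll))$, one chooses $n$ so large that $\proj_{W^\bot}(\fii_{\hhh|_n}(X))$ and $\proj_{W^\bot}(\fii_{\lll|_n}(X))$ are $\eps$-close in Hausdorff distance, inserts a direction-aligning word $\kkk=\overleftarrow{\fff|_m}$ with $\Pi(\fff)=W$, and takes the \emph{swapped} words $\iii=\hhh|_n\kkk\lll|_n\kkk$ and $\jjj=\lll|_n\kkk\hhh|_n\kkk$. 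For this pair the linear parts are automatically comparable in the $W^\bot$ projection and, crucially, the projected translation defect is already $O(\eps)$ relative to $\|A_\iii^\top|W^\bot\|$, because the fixed points of $\fii_\iii$ and $\fii_\jjj$ sit in the two nearly coinciding projected cylinders. Only then does Lemma \ref{lem:trans} (where $\max_i\|A_i\|<\tfrac12$ enters) allow an $O(\eps)$ perturbation of the translations that makes the projections of these two fixed points agree exactly, which simultaneously keeps the perturbed vector in $\NN(\A)$ (the new collision has distinct first letters) and yields the quantitative POSC-failure witness. Without an analogue of this swapping construction, your density argument does not go through.
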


Let us next prove Corollaries \ref{thm:ahlfors-equiv}--\ref{thm:affinity-assouad} by relying on the above stated results:

\begin{proof}[Proof of Corollary \ref{thm:ahlfors-equiv}]
  Since $X$ is irreducible and dominated, it is also strongly irreducible; see Lemma \ref{thm:irreducible-dominated}. Therefore, it follows from Theorem \ref{thm:diml} and Lemma \ref{thm:ahlfors-implications} that if $s = \dimh(X) > 1$, then $X$ is not Ahlfors $s$-regular. Recalling Lemma \ref{thm:ahlfors-implications}, we have thus proven that if $X$ is Ahlfors $s$-regular, then $0 \le s \le 1$ and $0<\HH^s(X)<\infty$. Conversely, as $X$ is strongly irreducible and dominated, Theorem \ref{thm:ahl} shows that $\HH^s(X)>0$ where $s=\dimh(X) \le 1$ implies that $X$ is Ahlfors $s$-regular.
\end{proof}

\begin{proof}[Proof of Corollary \ref{thm:assouad-proj}]
  By \eqref{eq:orponen}, we have $\dima(\proj_{V^\bot}(X)) \ge \min\{1,\dima(X)\}$ for all $V \in \RP \setminus E$, where the set $E \subset \RP$ satisfies $\dimh(E)=0$. If $\dima(X) \ge 1$, then, as $\dima(\proj_{V^\bot}(X)) \le 1$ for all $V \in \RP$, the claim is a direct consequence of this result. We may thus assume that $\dimh(X) \le 1$. If $X$ is not Ahlfors regular, then Theorem \ref{thm:ahl} implies that $X$ does not satisfy the projective separation. Therefore, by Theorem \ref{thm:dima2}, we have $\dima(X) \ge 1$ and we have shown the first claim. Furthermore, if $X$ is Ahlfors regular, then, by Theorem \ref{thm:ahl}, $\proj_{V^\bot}(X)$ is Ahlfors regular for all $V \in X_F$. Therefore, Lemma \ref{thm:ahlfors-implications} and the fact that the Hausdorff dimension cannot increase under Lipschitz maps, gives $\dima(\proj_{V^\bot}(X)) = \dimh(\proj_{V^\bot}(X)) \le \min\{1,\dimh(X)\} \le \min\{1,\dima(X)\}$ for all $V \in X_F$ and finishes the proof.
\end{proof}

\begin{proof}[Proof of Corollary \ref{thm:slices}]
  This is a direct consequence of Theorem \ref{thm:dima}.
\end{proof}

\begin{proof}[Proof of Corollary \ref{thm:affinity-assouad}]
  Let $\A \in \GL_2(\R)^N$ be dominated such that $\max_{i \in \{1,\ldots,N\}} \|A_i\|<\frac12$, $X_F$ is not a singleton, and $\dimaff(\A)<1$. Let $\mathsf{v} \in (\R^2)^N$ be a translation vector such that $X_{\mathsf{v}}$ satisfies the strong separation condition. If $X_{\mathsf{v}}$ satisfies the projective separation, then, by Theorem \ref{thm:ahl}, $X_{\mathsf{v}}$ is Ahlfors regular. On the other hand, if $X_{\mathsf{v}}$ does not satisfy the projective separation, then, by Theorem \ref{thm:dima2}, $\dimaff(\A) < 1 \le \dima(X_{\mathsf{v}})$. By Theorem \ref{thm:baire-typical}, there is a residual set $\RR(\A) \subset \NN(\A)$ such that for every $\mathsf{v} \in \RR(\A)$ the associated planar self-affine set $X_{\mathsf{v}}$ satisfies the strong separation condition, but not the projective separation. Therefore, $\dimaff(\A) < 1 \le \dima(X_{\mathsf{v}})$ for all $\mathsf{v} \in \RR(\A)$. As $\RR(\A)$ is residual, we see that $\RR(\A) = \bigcap_{q \in \N} \RR_q(\A)$, where $\RR_q(\A)^o$ is dense in $\NN(\A)$. If $\RR(\A)$ is countable, say $\RR(\A) = \{\mathsf{v}_1,\mathsf{v}_2,\ldots\}$, then we have $\bigcap_{q,k \in \N} \RR_q(\A)^o \cap (\NN(\A) \setminus \{\mathsf{v}_k\}) = \emptyset$ contradicting the Baire category theorem. Therefore, $\RR(\A)$ is uncountable.
  \begin{figure}[t]
  \begin{tikzpicture}[scale=2.8]
    \draw [->] (-1.5,0) -- (1.5,0);
    \draw [->] (0,-1.2) -- (0,1.2);
    \draw (0,0) circle (1);
    \begin{scope}[shift={(-0.45,-0.5)}]
      \filldraw[black!10, rotate=70] (0,0) ellipse (0.32 and 0.15);
      \draw[rotate=70] (0,0) ellipse (0.32 and 0.15);
    \end{scope}
    \begin{scope}[shift={(0.05,0.2)}]
      \filldraw[black!10, rotate=15] (0,0) ellipse (0.3 and 0.1);
      \draw[rotate=15] (0,0) ellipse (0.3 and 0.1);
    \end{scope}
    \begin{scope}[shift={(0.55,0.55)}]
      \filldraw[black!10, rotate=50] (0,0) ellipse (0.2 and 0.1);
      \draw[rotate=50] (0,0) ellipse (0.2 and 0.1);
    \end{scope}
    \draw[dashed] (-0.63,{-0.81-0.4}) -- (-0.63,-0.19) -- ({-0.27+0.4},-0.19);
    \draw[dashed] ({-0.63-0.4},-0.81) -- (-0.27,-0.81) -- (-0.27,{-0.19+0.4});

    \draw[dashed] (-0.24,{0.075-0.4}) -- (-0.24,0.33) -- ({0.34+0.4},0.33);
    \draw[dashed] ({-0.24-0.4},0.075) -- (0.34,0.075) -- (0.34,{0.33+0.4});

    \draw[dashed] (0.4,{0.38-0.4}) -- (0.4,0.72) -- ({0.7+0.4},0.72);
    \draw[dashed] ({0.4-0.4},0.38) -- (0.7,0.38) -- (0.7,{0.72+0.4});
  \end{tikzpicture}
  \caption{Schematic construction used in the proof of Corollary \ref{thm:affinity-assouad}. The first-level pieces are arranged so that projected overlaps are uniformly excluded in Furstenberg directions, which yields projective separation and then Ahlfors regularity via Theorem \ref{thm:ahl}.} \label{fig:ahlfors}
  \end{figure}

  It remains to show that such self-affine sets with projective separation have non-empty interior in the set of parameters formed by the translation vectors and elements of the matrices. Fix a dominated tuple $\A=(A_1,\ldots,A_N) \in \GL_2(\R)^N$ and let $\CC \subset \RP$ be a strongly invariant multicone for $\A$. Let us choose the tuple $\mathsf{v}=(v_1,\ldots,v_N)$ of translation vectors such that $v_i\in B(0,1)^o$ and $\mathrm{span}(v_i-v_j)\in\CC^o$ whenever $i \ne j$. Thus, by choosing sufficiently small positive constants $c_1,\ldots,c_N$ and defining $\fii_i \colon \R^2 \to \R^2$ by setting $\varphi_i(x)=c_iA_ix+v_i$, we have $\varphi_i(B(0,1))\subset B(0,1)^o$ for all $i \in \{1,\ldots,N\}$ and $\varphi_i(x)-\varphi_j(y)\in\CC^o$ for all $x,y \in B(0,1)$ whenever $i \ne j$. We consider the self-affine set associated to $(\fii_1,\ldots,\fii_N)$. It is evident that the mentioned properties are open in the set of parameters. Moreover, since $X_F\subset\RP\setminus\CC$ by compactness, there exists a constant $\eta>0$ such that
  \begin{equation*}
    |\proj_{V^\perp}(\varphi_i(x))-\proj_{V^\perp}(\varphi_j(y))| \ge \eta
  \end{equation*}
  for all $V\in X_F$, $i,j \in \{1,\ldots,N\}$ with $i \ne j$, and $x,y \in X$. The projective separation \eqref{eq:POSC} follows. For illustration in the case where all the matrices have positive entries, see Figure \ref{fig:ahlfors}.
\end{proof}

\section{Lower dimension of self-affine sets} \label{sec:lower}

In this section, we prove Theorem \ref{thm:diml}. The proof relies on analysis on weak tangent sets and it is worth emphasizing that it does not require domination. The upper bound is done in Proposition \ref{lem:ublow} below. The main idea is to find more and more narrow parts of the self-affine set which eventually result in a weak tangent set contained in a line. Proposition \ref{lem:lblow} below gives the lower bound. Since, by Theorem \ref{thm:BHR}, the dimension of a self-affine set is preserved under projections, the task there is to compare weak tangent sets to projections. Let us denote the convex hull of a set $A \subset \R^2$ by $\conv(A)$ and the boundary of $A$ by $\partial A$. We say that a set $A \subset \R^2$ has \emph{positive length} if $\HH^1(A) > 0$.

\begin{lemma}\label{lem:linseg}
	If $X$ is a planar self-affine set satisfying the strong separation condition, then $\partial\conv(X)$ contains at most countably many line segments with positive length.
\end{lemma}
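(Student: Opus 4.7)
The plan is a short exercise in planar convex geometry, and it turns out that neither the self-affine structure nor the strong separation condition is actually needed.

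First I would dispose of degenerate cases. If $X$ is a single point or is contained in a line, then $\conv(X)$ is a point or a segment and the claim is immediate; so from now on I may assume that $\conv(X)$ has non-empty interior in $\R^2$, whence $\partial\conv(X)$ is a closed rectifiable Jordan curve of finite length $P\le\pi\diam(X)$.

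The key reduction is to \emph{maximal} segments. For any line $\ell\subset\R^2$, convexity of $\conv(X)$ forces $\partial\conv(X)\cap\ell$ to be either empty, a single point, a pair of points, or one closed segment. Hence every line segment $L\subset\partial\conv(X)$ of positive length is contained in a unique maximal line segment $L^\ast:=\partial\conv(X)\cap\ell_L$, where $\ell_L$ is the line through $L$. Distinct maximal segments lie on different lines and so meet in at most one point.

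I would then finish with a perimeter count: ordering the maximal segments along $\partial\conv(X)$, their lengths sum to at most $P$, so for each $n\in\N$ at most $nP$ of them have length $\ge 1/n$. Summing over $n$ gives a countable family of maximal line segments in $\partial\conv(X)$, and every positive-length segment sits inside one of them.

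The only real obstacle is semantic: the statement must be read as counting these maximal collinear arcs, because a single flat face of $\partial\conv(X)$ already carries a continuum of positive-length subsegments. Once that convention is in place, nothing further is required.
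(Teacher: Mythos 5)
Your proof is correct, and it takes a genuinely different route from the paper's. Both arguments first make the same (necessary) semantic reduction to \emph{maximal} segments: the paper formalizes this by counting the set $\mathcal{I}$ of pairs $(V,t)$ with $V\in\RP$, $t\in V^\perp$, for which the line $V+t$ meets $\partial\conv(X)$ in a set of positive length, which is exactly your family of maximal collinear arcs, one per supporting line. Where you diverge is in how countability is extracted. The paper fixes a reference direction $W$, handles the at most two maximal segments perpendicular to $W$ separately, and then assigns to every remaining maximal segment a rational number in its projection onto $W$; since the boundary of a convex body is the union of an upper and a lower graph over $W$, each rational is assigned to at most two segments, giving countability. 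You instead invoke rectifiability of the boundary of a planar convex body ($P\le\pi\diam(X)$) and observe that the maximal segments are sub-arcs with pairwise intersections of $\HH^1$-measure zero, so at most $nP$ of them can have length $\ge 1/n$. Your argument is more quantitative (it bounds the number of segments of each length scale), at the cost of invoking rectifiability of convex boundaries; the paper's projection-to-$\Q$ argument is softer and needs only the graph structure of $\partial\conv(X)$ over a line. Both proofs, as you note, use neither the self-affine structure nor the strong separation condition, which in the paper are merely standing hypotheses carried along from the context in which the lemma is applied.
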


\begin{proof}
Write $S(V,t) = (V+t)\cap\partial\conv(X)$ for all $t \in V^\perp$ and $V \in \RP$. Let
\begin{equation}\label{eq:lineseg}
  \mathcal{I} = \{(V,t)\in\RP\times\R^2 : \HH^1(S(V,t)) > 0 \text{ and } t \in V^\perp\}
\end{equation}
and observe that if $(V,t) \in \mathcal{I}$, then, by the convexity of $\conv(X)$, $S(V,t)$ is a proper line segment in $\partial\conv(X)$. Let us show that $\mathcal{I}$ is a countable set. If $W\in\RP$ is the $x$-axis, then, again by the convexity of $\conv(X)$, it is easy to see that there are at most two points $t_1,t_2 \in W$ such that $(W^\perp,t_1)\in\mathcal{I}$ and $(W^\perp,t_2)\in\mathcal{I}$. It is thus enough to show that $\mathcal{I}'=\{(V,t)\in\mathcal{I} : V\neq W^\perp\}$ is a countable set.

Observe that $\proj_W(S(V,t))$ has positive length for all $(V,t)\in\mathcal{I}'$. Thus, for every $(V,t)\in\mathcal{I}'$ there exists $q\in\Q$ such that $q\in\proj_W(S(V,t))$. Moreover, using the convexity of $\conv(X)$ and the definition of $\mathcal{I}'$ once more, for every $q\in\Q$ there are at most two $(V_1,t_1),(V_2,t_2)\in\mathcal{I}'$ such that $q\in\proj_W(S(V_j,t_j))$ for both $j\in \{1,2\}$. Hence, $\#\mathcal{I}'$ is indeed at most countable.
\end{proof}

\begin{lemma}\label{lem:seppoint}
	If $X$ is a planar self-affine set satisfying the strong separation condition, then for every except possibly countably many $V\in\RP$ there exists $y\in X$ such that $(V+y)\cap X=\{y\}$ and $X\setminus\{y\}$ is contained in one of the open half-planes defined by $V+y$.
\end{lemma}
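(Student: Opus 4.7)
The plan is to use the convex hull analysis from Lemma \ref{lem:linseg}. For each $V\in\RP$, consider the orthogonal projection $\proj_{V^\perp}(X)$, which is a compact subset of the one-dimensional subspace $V^\perp$. Let $t^+(V)$ and $t^-(V)$ denote points in $V^\perp$ where $\proj_{V^\perp}$ attains its maximum and minimum value on $X$, respectively. The corresponding affine lines
\begin{equation*}
  L^\pm(V) = V + t^\pm(V)
\end{equation*}
are then supporting lines of $\conv(X)$: all of $\conv(X)$ lies in one closed half-plane bounded by each $L^\pm(V)$. By compactness of $X$, the intersection $L^\pm(V)\cap X$ is non-empty. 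Any $y\in L^+(V)\cap X$ (or $L^-(V)\cap X$) automatically has the property that $X\setminus\{y\}$ is contained in one of the open half-planes determined by $V+y = L^\pm(V)$; the candidates for the point $y$ in the statement are therefore the points in these intersections.

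Call $V\in\RP$ \emph{bad} if both $L^+(V)\cap X$ and $L^-(V)\cap X$ contain at least two distinct points. If $V$ is not bad, then for one of the signs $\pm$ we have $L^\pm(V)\cap X = \{y\}$ for some $y\in X$, and this $y$ satisfies the requirement of the lemma. It suffices then to show that there are at most countably many bad directions.

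Suppose $V$ is bad. Since $L^\pm(V)$ are supporting lines,
\begin{equation*}
  L^\pm(V) \cap \conv(X) \subset \partial\conv(X),
\end{equation*}
and each of these intersections is convex. As $L^\pm(V)\cap X$ has at least two points, $L^\pm(V)\cap\conv(X)$ is a line segment of positive length contained in $\partial\conv(X)$. In the notation of \eqref{eq:lineseg}, both $(V,t^+(V))$ and $(V,t^-(V))$ belong to the set $\mathcal{I}$. By Lemma \ref{lem:linseg}, $\mathcal{I}$ is at most countable, so the projection of $\mathcal{I}$ onto the first coordinate $\RP$ is at most countable. Hence the set of bad $V$ is countable, which finishes the proof.

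The main (very minor) point to verify is that a supporting line meeting $X$ in at least two points forces the intersection with $\conv(X)$ to have positive length; this is immediate from convexity and the fact that $X\subset\conv(X)$, so no genuine obstacle arises. The key observation is simply that the set of directions lost is indexed by a subset of the countable family $\mathcal{I}$ produced by Lemma \ref{lem:linseg}.
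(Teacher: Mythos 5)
Your proposal is correct and follows essentially the same route as the paper: both arguments take the extreme supporting line of $\conv(X)$ in direction $V$ and observe that a non-singleton intersection with $X$ produces a positive-length segment of $\partial\conv(X)$ parallel to $V$, which by Lemma \ref{lem:linseg} can happen for only countably many $V$. The only cosmetic difference is that you phrase the exclusion via the contrapositive (counting the ``bad'' directions) while the paper fixes $V$ outside the countable exceptional set and derives a contradiction.
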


\begin{proof}
  Let $\mathcal{S}=\{V\in\RP : (V+t)\cap\partial\conv(X)$ has positive length for some $t \in V^\perp\}$ and notice that $\mathcal{S}$ is at most countable by Lemma \ref{lem:linseg}. Fix $V\in\RP\setminus\mathcal{S}$ and let $v\in V^\perp$ be such that $|v|=1$. Since the set $\proj_{V^\perp}(X)$ is compact, there exist unique $t_1,t_2\in\R$ such that $\proj_{V^\perp}(X)\subset\{tv : t_1\leq t\leq t_2\}$ and $t_1v, t_2v \in \proj_{V^\perp}(X)$. In particular, $\{tv : t_1\leq t\leq t_2\}=\proj_{V^\perp}(\conv(X))$. To finish the proof, it suffices to show that $\proj_{V^\perp}^{-1}(t_1v)\cap X$ is a singleton.
	
	Suppose to the contrary that there are $x_1, x_2\in\proj_{V^\perp}^{-1}(t_1v)\cap X$ such that $x_1 \ne x_2$. But then $x_1,x_2\in\partial\conv(X)$ and in particular, the line segment connecting $x_1,x_2$ must be also contained in $\partial\conv(X)$. Since this line segment is parallel to $V$, this is a contradiction as $V\notin\mathcal{S}$.
\end{proof}

For $x \in \R^2$, $V \in \RP$, $v \in V$ such that $|v|=1$, and $0 \le \delta \le 1$, we set
\begin{align*}
  C(x,v,\delta) &= \{y \in \R^2 : (y-x) \cdot v > \sqrt{1-\delta^2}|y-x|\}, \\
  H(x,V,\delta,\eps) &= C(x+\eps v,v,\delta) \cup C(x-\eps v,-v,\delta).
\end{align*}
For illustration, see Figure \ref{fig:almostcone}. Observe that $\bigcap_{\delta>0} H(x,V,\delta,\eps) = (V+x) \setminus B(x,\eps)$.

\begin{figure}[t]
\begin{tikzpicture}[scale=1.0]
\begin{scope}[rotate=10]
  \draw[fill=black!10!white, draw=white] (4,1) -- (1,0) -- (4,-1);
  \draw[fill=black!10!white, draw=white] (-4,1) -- (-1,0) -- (-4,-1);
  \draw[dotted] (-5,0) -- (5,0) node [right] {$V+x$};
  \filldraw (0,0) circle (1pt) node [below=3] {$x$};
  \draw (4,1) -- (1,0) -- (4,-1);
  \draw (-4,1) -- (-1,0) -- (-4,-1);
  \node at (0.5,0.3) {$\eps$};
  \node at (4.3,0.5) {$\delta$};
\end{scope}
\end{tikzpicture}
\caption{Illustration of $H(x,V,\delta,\eps)$ as the union of two opposite cones around the line $V+x$, separated from $x$ by the offset $\eps$. As $\delta \downarrow 0$, these cones collapse to the punctured line $(V+x)\setminus B(x,\eps)$.} \label{fig:almostcone}
\end{figure}

\begin{lemma}\label{lem:almostcone}
  If $X$ is a planar self-affine set satisfying the strong separation condition, then there exists an at most countable set $\mathcal{S} \subset \RP$ such that for every $V\in\RP\setminus \mathcal{S}$ there exist $y\in X$ such that for every $\eps>0$ there is $\Delta>0$ such that $X \subset \R^2 \setminus H(y,V,\delta,\varepsilon)$ for all $0<\delta<\Delta$.
\end{lemma}

\begin{proof}
  Let $\mathcal{S}$ be the at most countable subset of $\RP$ for which the conclusion of Lemma \ref{lem:seppoint} fails. Let us argue by contradiction that there exists $V\in\RP\setminus\mathcal{S}$ such that for every $y \in X$ there is $\eps>0$ so that for every  $\Delta>0$ there is $0<\delta<\Delta$ for which $X \cap H(y,V,\delta,\eps)\neq\emptyset$. Observe that this implies $X \cap H(y,V,\Delta,\eps)\neq\emptyset$. Let $y \in X$ be the point given by Lemma~\ref{lem:seppoint} such that $(V+y)\cap X=\{y\}$ and $\eps>0$ as above. 
  
  For each $n \in \N$ let us now choose a point $x_n \in X\cap H(y,V,\tfrac{1}{n},\eps)$. By the compactness of $X$, going into a subsequence if required, the sequence $(x_n)_{n \in \N}$ converges to a point $x \in X$. As the sequence of sets $(H(y,V,\tfrac{1}{n},\eps))_{n \in \N}$ is decreasing, we must have $x\in\bigcap_{n \in \N} H(y,V,\tfrac{1}{n},\eps) = (V+y) \setminus B(y,\varepsilon)$. Therefore, $y \ne x \in (V+y) \cap X$ which is a contradiction.
\end{proof}

We remark that in general it is not possible to choose $\eps=0$ above as a planar self-affine set can be contained for example in a parabola; see \cite{BandtKravchenko2011,FengKaenmaki2018}. The following proposition gives the upper bound in Theorem~\ref{thm:diml}.

\begin{proposition} \label{lem:ublow}
	 If $X$ is a strictly affine strongly irreducible planar self-affine set satisfying the strong separation condition, then $\diml(X)\leq \min\{ 1, \dimh(X) \}$.
\end{proposition}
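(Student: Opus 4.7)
Since $\diml(X)\le\dimh(X)$ holds for every set, the substantive content is the bound $\diml(X)\le 1$. By Lemma \ref{thm:FHKY} this reduces to exhibiting a weak tangent set $T\in\Tan(X)$ with $\dimh(T)\le 1$, and the natural strategy is to produce $T$ contained in a single line. Throughout, I will refer to $X$ as ``narrow'' in some region if $X$ is trapped in a thin strip there.

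The starting point is Lemma \ref{lem:almostcone}, which for every generic direction $V\in\RP$ (outside the countable exceptional set) and every $\eps>0$ produces a ``pinch point'' $y_\eps\in X$ and an aperture $\delta_\eps>0$ with $X\cap H(y_\eps,V,\delta_\eps,\eps)=\emptyset$. Geometrically, $X$ is squeezed away from the double cone emanating from $y_\eps\pm\eps v$, so $X$ is narrow along the line $V+y_\eps$ at scale $\eps$. However, applying the naive magnification $M_{y_\eps,\eps}$ to $X$ places the apexes of the rescaled cone on $\partial B(0,1)$ and yields only a weak tangent trapped in the bowtie-shaped complement of a cone rather than in a line, so this cone-avoidance by itself is not enough.

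To amplify the narrowness into a genuine line-tangent, I would combine Lemma \ref{lem:almostcone} with the self-affine structure. By strict affinity and strong irreducibility, Lemma \ref{thm:furstenberg-delta} together with Lemma \ref{lem:goto0} furnishes sequences of finite words $\iii_n\in\Sigma_*$ with $\alpha_1(A_{\iii_n})/\alpha_2(A_{\iii_n})\to\infty$ and the major-axis direction of $A_{\iii_n}$ converging to the chosen direction $V$. Each $\fii_{\iii_n}(X)$ is then a highly elongated copy of $X$ concentrated around a line parallel to $V$ and contains the inherited pinch point $p_n=\fii_{\iii_n}(y_\eps)$. I would magnify $X$ at $p_n$ with scale $r_n=\alpha_1(A_{\iii_n})\diam(X)$; after passing to a Hausdorff subsequential limit, the image $M_{p_n,r_n}(\fii_{\iii_n}(X))=A_{\iii_n}(X-y_\eps)/r_n$ converges to a rank-one projection of $X-y_\eps$ onto the line through the origin in direction $V$, which is contained in a single line.

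The main obstacle is to control the contribution of the other pieces $\fii_\jjj(X)$ with $\jjj\ne\iii_n$. The strong separation condition gives $\dist(\fii_\jjj(X),\fii_{\iii_n}(X))\ge\alpha_2(A_{\iii_n\wedge\jjj})\,\delta_{\mathrm{SSC}}$, so siblings sharing only a short common prefix with $\iii_n$ are pushed far outside $B(0,1)$ after magnification by $1/r_n$. Siblings with long common prefix of length $k$ may nevertheless intrude into the unit ball, and my plan to handle them is to apply Lemma \ref{lem:almostcone} inside each intermediate affine copy $\fii_{\iii_n|_k}(X)$, which itself is affinely equivalent to $X$ and hence still satisfies the strong separation condition. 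This traps every intruder in a narrow strip whose orientation is dictated by the major-axis direction of $A_{\iii_n|_k}$, and the Furstenberg dynamics encoded in Lemma \ref{thm:furstenberg-delta} guarantees that these directions asymptotically coincide with $V$ as $k\to\infty$. A careful bookkeeping of transverse positions should then show that every contribution to the limit tangent lies in the same line through the origin in direction $V$, so that $T$ is itself contained in that line. This yields $\dimh(T)\le 1$ and hence $\diml(X)\le 1$ by Lemma \ref{thm:FHKY}, completing the proof of Proposition \ref{lem:ublow}.
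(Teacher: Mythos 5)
Your overall strategy --- reduce via Lemma \ref{thm:FHKY} to exhibiting a weak tangent contained in a line, and build it from the pinch point of Lemma \ref{lem:almostcone} together with the Furstenberg direction machinery --- is the same as the paper's, but there is a genuine gap at the step you yourself flag as the main obstacle. You magnify at the \emph{large} scale $r_n=\alpha_1(A_{\iii_n})\diam(X)$, the diameter of the cylinder; the paper magnifies at the \emph{small} scale $\|A_{\overleftarrow{\iii|_n}}|V\|\,d$, comparable to the cylinder's width. This difference is decisive. At scale $\alpha_1$ the window around $p_n$ unavoidably meets many sibling cylinders $\fii_{\iii_n|_k j\cdots}(X)$ (strong separation only pushes them $\alpha_2(A_{\iii_n|_k})\delta$ away), and your remedy --- applying Lemma \ref{lem:almostcone} inside each intermediate copy $\fii_{\iii_n|_k}(X)$ --- does not confine them. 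First, the conclusion of that lemma is that $X$ \emph{avoids} a cone-neighbourhood of the line $V+y$ away from $y$; it does not place $X$ in a narrow strip, so ``traps every intruder in a narrow strip'' reverses the logic of the lemma. Second, the avoided cone for the level-$k$ copy is anchored at $\fii_{\iii_n|_k}(y_\eps)$, which lies at distance up to $\alpha_1(A_{\iii_n|_k})\gg\alpha_1(A_{\iii_n})$ from your base point $p_n$, hence far outside the unit window after rescaling, where it carries essentially no information. Third, even if each intruder did collapse onto a line in the common limiting direction, those lines would be parallel translates at distinct transverse offsets --- exactly the ``comb'' tangents of Lemma \ref{lem:teethofcomb} --- and Theorem \ref{thm:dima} shows such tangents generically have dimension $1+\sup_{x}\dimh(X\cap(V+x))>1$. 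The ``careful bookkeeping of transverse positions'' you defer to is therefore precisely where the argument fails: there is no reason for all contributions to lie on one line, and in general they do not.

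What makes the paper's argument work is that, zooming to the scale of the width with the separation constant $d$ built into both the scale and the radius of the window, one effectively sees only the single cylinder through the pinch point; the containment $X\subset\R^2\setminus H(y,V,\delta,\eps)$ then transports under the anisotropic map $A_{\overleftarrow{\iii|_n}}$ into a $2\eps/d$-neighbourhood of a line, because the two boundary directions of $H$ have images merging to a common line by Lemma \ref{lem:goto0}, and a diagonal limit over $\eps\downarrow 0$ yields a tangent genuinely contained in a line. Two smaller inaccuracies: Lemma \ref{thm:furstenberg-delta} gives that $V=\vartheta_2(\iii)$ is the exceptional (most contracted) direction of $A_{\overleftarrow{\iii|_n}}$ on $\RP$, with all other directions mapped to a common limit --- not that ``the major-axis direction of $A_{\iii_n}$ converges to $V$''; and you must also ensure the direction supplied by the Furstenberg measure avoids the countable exceptional set of Lemma \ref{lem:almostcone}, which the paper gets from the non-atomicity of $\mu_F$.
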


\begin{proof}
  Since $\diml(X) \le \dimh(X)$, it is enough to show that $\diml(X) \le 1$. Therefore, by Lemma \ref{thm:FHKY}, it suffices to show that there exists a weak tangent set $T$ such that $\dimh(T) \le 1$. Let $J\subset\Sigma_*$ be the finite subset defined in Lemma~\ref{lem:subsystem} such that the tuple $(A_{\iii})_{\iii\in J}$ is dominated and strongly irreducible, and denote the set of its Furstenberg directions by $\widetilde{X}_F\subset X_F$. Let $\widetilde{\Pi}\colon J^\N\to\widetilde{X}_F$ be the canonical projection for the tuple $(A_{\iii})_{\iii\in J}$ defined in \eqref{eq:furstenberg-directions-dominated2}. We embed $J^\N$ into $\Sigma$ in the natural way.
  


  Let $\mathcal{S} \subset \RP$ be the at most countable exceptional set of Lemma \ref{lem:almostcone}. Since $\widetilde{X}_F$ is perfect, it is uncountable, and so, there exists $\iii\in J^\N\subset\Sigma$ such that $V:=\widetilde{\Pi}(\iii)\in\widetilde{X}_F\setminus\mathcal{S}$. Let $n_k$ be the sequence such that $\sigma^{n_k}\iii\in J^\N$. By Lemma~\ref{thm:bochi-morris}, there exists a constant $C>0$ such that for every $k\in\N$
  \begin{equation}\label{eq:compare}
    D^{-1}\alpha_2(A_{\overleftarrow{\iii|_{n_k}}})\leq\|A_{\overleftarrow{\iii|_{n_k}}}|V\|\leq D\alpha_2(A_{\overleftarrow{\iii|_{n_k}}}),
  \end{equation}
  moreover, by Lemma~\ref{lem:goto0}, 
  \begin{equation}\label{eq:compare2}
    \lim_{k\to\infty}\sphericalangle(A_{\overleftarrow{\iii|_{n_k}}}W_1,A_{\overleftarrow{\iii|_{n_k}}}W_2)=0
  \end{equation}
  for all $W_1,W_2\in\RP\setminus\{V\}$. Let $y\in X$ be as in Lemma \ref{lem:almostcone}. Let $d>0$ be the constant of \eqref{eq:SSC} and write
  \begin{equation} \label{eq:neigh}
  \begin{split} 
    T_k &= M_{\fii_{\overleftarrow{\iii|_{n_k}}}(y),\alpha_2(A_{\overleftarrow{\iii|_{n_k}}})d}(X) \cap B(0,1) \\
    &= M_{\fii_{\overleftarrow{\iii|_{n_k}}}(y),\alpha_2(A_{\overleftarrow{\iii|_{n_k}}})d}(X \cap B(\fii_{\overleftarrow{\iii|_{n_k}}}(y),\alpha_2(A_{\overleftarrow{\iii|_{n_k}}})d)) 
  \end{split}
  \end{equation}
  for all $n \in \N$. Relying on the strong separation condition, we have
  \begin{equation*}
    X \cap B(\fii_{\overleftarrow{\iii|_{n_k}}}(y),\alpha_2(A_{\overleftarrow{\iii|_{n_k}}})d) = \fii_{\overleftarrow{\iii|_{n_k}}}(X) \cap B(\fii_{\overleftarrow{\iii|_{n_k}}}(y),\alpha_2(A_{\overleftarrow{\iii|_{n_k}}})d).
  \end{equation*}
  By going into a sub-sequence, if necessary, we see that there is $T \in \Tan(X)$ such that $T_k \to T$ in Hausdorff distance.

  Let $\eps>0$. By Lemma \ref{lem:almostcone}, there exists $\delta>0$ such that
  \begin{equation*}
    X \subset \R^2 \setminus H(y,V,\delta,\varepsilon)
  \end{equation*}
  and therefore,
  \begin{equation} \label{eq:neigh3}
    \fii_{\overleftarrow{\iii|_{n_k}}}(X) \subset \fii_{\overleftarrow{\iii|_{n_k}}}(y) + A_{\overleftarrow{\iii|_{n_k}}}(\R^2 \setminus H(0,V,\delta,\eps)).
  \end{equation}
  Hence, by \eqref{eq:neigh} and \eqref{eq:neigh3}, we have
  \begin{equation} \label{eq:neigh2}
  \begin{split}
    T_n & = M_{\fii_{\overleftarrow{\iii|_{n_k}}}(y),\alpha_2(A_{\overleftarrow{\iii|_{n_k}}})d}(\fii_{\overleftarrow{\iii|_{n_k}}}(X) \cap B(\fii_{\overleftarrow{\iii|_{n_k}}}(y),\alpha_2(A_{\overleftarrow{\iii|_{n_k}}})d)) \\
    &\subset \frac{1}{\alpha_2(A_{\overleftarrow{\iii|_{n_k}}})d} A_{\overleftarrow{\iii|_{n_k}}}(\R^2 \setminus H(0,V,\delta,\eps)) \cap B(0,1).
  \end{split}
  \end{equation}
  Let $W_1, W_2\in\RP \setminus \{V\}$ be parallel to the lines appearing in $\partial H(0,V,\delta,\eps)$.  By \eqref{eq:compare2}, there is $W\in\RP$ such that, by possibly going into a sub-sequence once more, $A_{\overleftarrow{\iii|_{n_k}}} W_1 \to W$ and $A_{\overleftarrow{\iii|_{n_k}}} W_2 \to W$ as $n \to \infty$. Therefore, by \eqref{eq:compare} and \eqref{eq:neigh2}, $T$ is contained in a $\frac{2D\eps}{d}$-neighbourhood of $W$. Since $\eps>0$ was arbitrary, we see that there exists a subspace $W'$ such that $T$ is contained in $W'$ which completes the proof.
\end{proof}

By a rank of an affine map, we mean the rank of its linear part. We use the same convention also for the kernel and image. Let us state a useful lemma which is not stated explicitly but is contained in the proof of \cite[Theorem~5.2]{BaranyKaenmakiRossi2021}.

\begin{lemma} \label{lem:embedding}
  If $X$ is a planar self-affine set satisfying the strong separation condition, then for every $T\in\Tan(X)$ there exist affine maps $G_1,G_2\colon \R^2 \to \R^2$ having rank at least one such that $G_1(X)\subset T$ and $G_2(T) \subset X$. In particular, if $X$ is dominated, then $G_1$ and $G_2$ both have rank one and $\im(G_2) \in X_F$.
\end{lemma}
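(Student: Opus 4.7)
The plan is to produce both maps as sub-sequential limits of suitably normalised compositions of the rescalings $M_{x_n,r_n}$ with cylinder maps $\fii_\iii$. Fix sequences $(x_n)\subset X$ and $r_n\downarrow 0$ witnessing $T$, write $x_n=\pi(\iii_n)$ for some $\iii_n\in\Sigma$, let $\delta>0$ be the separation constant from \eqref{eq:SSC}, and set $\lambda_{\min}=\min_i\alpha_2(A_i)>0$.

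For $G_1$, let $k_n$ be the least integer with $\|A_{\iii_n|_{k_n}}\|\diam(X)\le r_n/2$ and put $\Psi_n=M_{x_n,r_n}\circ\fii_{\iii_n|_{k_n}}$. The elementary bound $\|A_{\iii_n|_{k_n}}\|\ge\lambda_{\min}\|A_{\iii_n|_{k_n-1}}\|$ combined with the minimality of $k_n$ confines $\|A_{\iii_n|_{k_n}}\|/r_n$ to a fixed positive bounded interval, and since $x_n\in\fii_{\iii_n|_{k_n}}(X)$ the translation $\Psi_n(0)$ is uniformly bounded. Moreover $0\in\Psi_n(X)$ and $\diam(\Psi_n(X))\le 1/2$, so $\Psi_n(X)\subset M_{x_n,r_n}(X)\cap B(0,1)$. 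After passing to a subsequence, $\Psi_n$ converges uniformly on $X$ to an affine map $G_1$ whose linear part has operator norm at least $\lambda_{\min}/(2\diam(X))$, hence has rank at least one; since $\Psi_n(X)\to G_1(X)$ in Hausdorff distance and $M_{x_n,r_n}(X)\cap B(0,1)\to T$, we deduce $G_1(X)\subset T$.

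For $G_2$, let $m_n$ be the least integer with $\alpha_2(A_{\iii_n|_{m_n}})\le r_n/\delta$. The crucial step, and the only place where the strong separation is used in an essential way, is the inclusion $X\cap B(x_n,r_n)\subset\fii_{\iii_n|_{m_n}}(X)$: if a point $y\in X\cap B(x_n,r_n)$ did not lie in $\fii_{\iii_n|_{m_n}}(X)$, then $y$ and $x_n$ would belong to distinct sibling cylinders at some level $m'+1\le m_n$, and the standard bound $\dist(\fii_{\iii_n|_{m'}i}(X),\fii_{\iii_n|_{m'}j}(X))\ge\alpha_2(A_{\iii_n|_{m'}})\delta$ for $i\ne j$ together with $\alpha_2(A_{\iii_n|_{m'}})\ge\alpha_2(A_{\iii_n|_{m_n-1}})>r_n/\delta$ would force $|y-x_n|>r_n$. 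Minimality also gives $\alpha_2(A_{\iii_n|_{m_n}})\ge\lambda_{\min}r_n/\delta$, so the affine map $G_n=\fii_{\iii_n|_{m_n}}^{-1}\circ M_{x_n,r_n}^{-1}$ has linear part of norm $r_n/\alpha_2(A_{\iii_n|_{m_n}})\in[\delta,\delta/\lambda_{\min}]$ and translation $\fii_{\iii_n|_{m_n}}^{-1}(x_n)\in X$ uniformly bounded. From $M_{x_n,r_n}(X)\cap B(0,1)=M_{x_n,r_n}(X\cap B(x_n,r_n))$ and the above inclusion, $G_n(M_{x_n,r_n}(X)\cap B(0,1))\subset X$. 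A further subsequence produces an affine limit $G_2$ of rank at least one; uniform convergence of $G_n$ on $B(0,1)$ combined with $M_{x_n,r_n}(X)\cap B(0,1)\to T$ yields $G_n(M_{x_n,r_n}(X)\cap B(0,1))\to G_2(T)$ in Hausdorff distance, and the closedness of $X$ gives $G_2(T)\subset X$.

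The main obstacle is the combinatorial inclusion $X\cap B(x_n,r_n)\subset\fii_{\iii_n|_{m_n}}(X)$ underlying the $G_2$ construction, which both motivates the slightly asymmetric scale choice $\alpha_2(A_{\iii_n|_{m_n}})\approx r_n/\delta$ and is the sole genuinely self-affine ingredient; once it is in hand, everything else reduces to extracting convergent subsequences of affine maps with uniformly controlled linear and translation parts.
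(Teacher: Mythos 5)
Your proof is correct and follows essentially the same route as the paper: there $G_1$ and $G_2$ are obtained as accumulation points of $M_{\pi\iii_k,r_k}\circ\varphi_{\iii_k|_{n_k}}$ and $\varphi_{\iii_k|_{m_k}}^{-1}\circ M_{\pi\iii_k,r_k}^{-1}$, with stopping times defined by $\alpha_1(A_{\iii_k|_{n_k}})\le r_k<\alpha_1(A_{\iii_k|_{n_k-1}})$ and $\alpha_2(A_{\iii_k|_{m_k}})\le r_k<\alpha_2(A_{\iii_k|_{m_k-1}})$, the remaining details being deferred to the cited reference. Your stopping scales differ only by harmless constants, and the combinatorial inclusion $X\cap B(x_n,r_n)\subset\fii_{\iii_n|_{m_n}}(X)$ via the strong separation constant is exactly the detail the paper outsources.
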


\begin{proof}
  Let $(\iii_k)_{k \in \N}$ be a sequence of infinite words in $\Sigma$ and $(r_k)_{k \in \N}$ be a sequence of positive real numbers converging to zero such that $M_{\pi(\iii_k),r_k}(X) \cap B(0,1) \to T$ in Hausdorff distance. For each $k \in \N$, choose $n_k \in \N$ such that $\alpha_1(A_{\iii_k|_{n_k}}) \leq r_k < \alpha_1(A_{\iii_k|_{n_k-1}})$. The affine map $G_1$ is now obtained as an accumulation point of the sequence $M_{\pi(\iii_k),r_k}\circ\varphi_{\iii_k|_{n_k}}$. Similarly, for each $k \in \N$, choose $m_k \in \N$ such that $\alpha_2(A_{\iii_k|_{m_k}}) \leq r_k < \alpha_2(A_{\iii_k|_{m_k-1}})$. The affine map $G_2$ is now obtained as an accumulation point of the sequence $\varphi_{\iii_k|_{m_k}}^{-1} \circ M_{\pi(\iii_k),r_k}^{-1}$. To see that $G_1(X)\subset T$ and $G_2(T) \subset X$, consult the proof of \cite[Theorem~5.2]{BaranyKaenmakiRossi2021}.

  If $X$ is dominated, then by the definition of domination, $G_1$ and $G_2$ are of rank one. To see that $\im(G_2) \in X_F$, follow the proof of Lemma \ref{thm:nilpotent}.
\end{proof}

Finally, the following proposition gives the lower bound in Theorem \ref{thm:diml}.

\begin{proposition} \label{lem:lblow}
	 If $X$ is a strongly irreducible planar self-affine set satisfying the strong separation condition, then $\diml(X)\geq \min\{ 1, \dimh(X) \}$.
\end{proposition}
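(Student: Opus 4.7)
The plan is to apply Lemma \ref{thm:FHKY} to realize $\diml(X)$ as the minimum of $\dimh(T)$ over the collection $\Tan(X)$ of weak tangent sets. Thus it suffices to prove that every $T \in \Tan(X)$ satisfies $\dimh(T) \geq \min\{1, \dimh(X)\}$.

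To estimate $\dimh(T)$ from below, I would invoke Lemma \ref{lem:embedding}, which provides an affine map $G_1 \colon \R^2 \to \R^2$ of rank at least one such that $G_1(X) \subset T$, whence $\dimh(T) \geq \dimh(G_1(X))$. The argument then splits into two cases depending on the rank of $G_1$. If $G_1$ has rank two, then $G_1$ is bi-Lipschitz and therefore $\dimh(G_1(X)) = \dimh(X) \geq \min\{1, \dimh(X)\}$, as desired.

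If instead $G_1$ has rank one, let $A$ denote its linear part and write $V = \ker(A)$. By the dichotomy in \eqref{eq:rank-one-projection-nilpotent}, regardless of whether $A$ is nilpotent, the image $A(X)$ is bi-Lipschitz equivalent to $\proj_{V^\perp}(X)$, so $\dimh(G_1(X)) = \dimh(\proj_{V^\perp}(X))$. Since the strong separation condition implies the strong open set condition and the ambient hypothesis of Theorem \ref{thm:diml} supplies strict affinity, Theorem \ref{thm:BHR} applies and gives $\dimh(\proj_{V^\perp}(X)) = \min\{1, \dimaff(X)\}$. As $\dimaff(X) \geq \dimh(X)$, this yields $\min\{1, \dimaff(X)\} \geq \min\{1, \dimh(X)\}$, completing the estimate in this case.

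The main subtlety is the rank-one case when $A$ is nilpotent: then $A(X)$ is not a translate of an orthogonal projection but a rotated one, so one could initially worry that $A(X)$ cannot be identified with a projection of $X$. This potential obstacle is already accounted for in \eqref{eq:rank-one-projection-nilpotent}, which treats both sub-cases uniformly by reducing them to $\proj_{\ker(A)^\perp}(X)$ up to bi-Lipschitz equivalence. Combining the two rank cases and invoking Lemma \ref{thm:FHKY} then finishes the proof.
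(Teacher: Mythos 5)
Your proposal is correct and follows essentially the same route as the paper: reduce via Lemma \ref{thm:FHKY} to bounding $\dimh(T)$ for every weak tangent $T$, embed $G(X)\subset T$ using Lemma \ref{lem:embedding}, split on $\rank(G)$, identify the rank-one image with $\proj_{\ker(G)^\perp}(X)$ via \eqref{eq:rank-one-projection-nilpotent}, and conclude with Theorem \ref{thm:BHR}. Your explicit remark that the nilpotent sub-case is harmless and that strict affinity (needed for Theorem \ref{thm:BHR}) comes from the ambient hypotheses is a welcome clarification of points the paper leaves implicit.
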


\begin{proof}
  By Theorem \ref{thm:BHR} and Lemma \ref{thm:FHKY}, it is enough to show that for every $T \in \Tan(X)$ there exists $V \in \RP$ such that
  \begin{equation*}
    \dimh(T) \ge \dimh(\proj_{V^\bot}(X)).
  \end{equation*}
  To that end, fix $T \in \Tan(X)$. By Lemma \ref{lem:embedding}, there exists an affine map $G\colon \R^2 \to \R^2$ with $\rank(G) \ge 1$ such that $G(X) \subset T$. If $\rank(G)=2$, then $\dimh(T) \ge \dimh(X) \ge \dimh(\proj_{V^\bot}(X))$ for any $V\in\RP$. If $\rank(G)=1$, then the linear part of $G$ is a projection as described in \eqref{eq:rank-one-projection-nilpotent}. In particular, $G(X)$ and $\proj_{\ker(G)^\perp}(X)$ are bi-Lipschitz equivalent. Thus, if $V = \ker(G)$, then $\dimh(T) \geq \dimh(G(X)) = \dimh(\proj_{V^\bot}(X))$ also in this case.
\end{proof}

\section{Assouad dimension of self-affine sets having large Hausdorff dimension} \label{sec:assouad-large}

In this section, we prove Theorem \ref{thm:dima}. By Lemma \ref{thm:KOR}, the task is to estimate the Hausdorff dimension of weak tangent sets. The central ingredients of the proof are Theorem \ref{thm:BHR2} and the Marstrand's slicing theorem. Let us, for the sake of illustration, assume that there exists a weak tangent set $T \in \Tan(X)$ such that $T = L \times W$ with $W$ and $L$ being compact subsets of the reals. If it can be shown that for each $V \in X_F$ and $x \in X$ the set $L$ contains an affine copy of $X \cap (V+x)$ and $W$ contains a projected copy of $X$, then Theorem \ref{thm:BHR2} gives $\dimh(W)=1$ and, consequently, $\dima(X) \ge \dimh(T) \ge 1 + \dimh(X \cap (V+x))$. However, the existence of weak tangent sets having such product structure is not true in general. Instead, we show that they have a quasi-product structure, that is, there exist distinct $W,L \in \RP$ so that $T \cap L$ contains an affine copy of $X \cap (V+x)$ and $T \cap (W+y)$ contains a projected copy of $X$ for all $y \in T$. The proof can then be concluded by the Marstrand's slicing theorem.

We begin with an auxiliary lemma which shows that all the slices of $X$ have dimension strictly smaller than one.

\begin{lemma} \label{lem:smallslice}
  If $X$ is a planar self-affine set satisfying the strong separation condition, then
  \begin{equation*}
    \sup_{\atop{x\in X}{V\in \RP}} \dimh(X\cap(V+x)) < 1.
  \end{equation*}
\end{lemma}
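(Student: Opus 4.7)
Argue by contradiction: assume $\sup_{(V,x)\in\RP\times X}\dimh(X\cap(V+x))=1$, so for each $\eps>0$ there is a line $L_\eps$ meeting $X$ with $\dimh(X\cap L_\eps)>1-\eps$. The plan is to use the self-affine recursion together with the gaps furnished by the strong separation condition to build a weak tangent set of $X$ containing a line segment, contradicting the total disconnectedness of $X$.

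First, from the decomposition $X\cap L=\bigsqcup_{i=1}^N\fii_i(X\cap\fii_i^{-1}(L))$, which is disjoint (in fact the pieces are separated on $L$ by at least the strong separation constant $\delta>0$), the Hausdorff dimension of the slice equals the maximum over $i$ of the Hausdorff dimension of the sub-slice $X\cap\fii_i^{-1}(L)$. Iterated pigeonhole produces, for every $n\in\N$, a word $\iii_n\in\Sigma_n$ such that $\fii_{\iii_n}^{-1}(L_\eps)$ meets $X$ and $\dimh(X\cap\fii_{\iii_n}^{-1}(L_\eps))\ge\dimh(X\cap L_\eps)>1-\eps$.

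Next, pick $y_n\in\fii_{\iii_n}(X)\cap L_\eps$ and magnify at scale $r_n=\|A_{\iii_n}\|$ via $M_{y_n,r_n}$. The strong separation condition induces uniform separations between level-$n$ pieces of the slice (comparable to $\alpha_2(A_{\iii_n})\delta$); after division by $r_n$, these become scale-invariant lower bounds on the covering numbers of the rescaled slice, and such lower bounds are stable under Hausdorff convergence of compact sets (unlike Hausdorff dimension itself). Extracting a Hausdorff limit yields a weak tangent $T_\eps\in\Tan(X)$ and a line $\ell_\eps\subset B(0,1)$ with $\ldimm(T_\eps\cap\ell_\eps)\ge 1-\eps$. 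A diagonal argument in $\eps\downarrow 0$, relying on the compactness of $\Tan(X)$ in the Hausdorff metric, then produces $T\in\Tan(X)$ and a line $\ell$ such that $T\cap\ell$ carries a subset of full Hausdorff dimension $1$, and, by a further iteration of the tangent construction inside $T$, even positive $\HH^1$-measure.

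Finally, by Lemma~\ref{lem:embedding}, $T$ contains an affine image $G(X)$ with $\rank(G)\ge 1$, so the set $T\cap\ell$ descends under $G$ to either a slice of $X$ (if $\rank(G)=2$) or a projection of one (if $\rank(G)=1$), each of which would then carry positive $\HH^1$-measure on a line. However, the strong separation condition makes $X$ totally disconnected, since any connected subset of $X$ must lie in every cylinder $\fii_{\iii|_n}(X)$ and thus be a single point; this property is inherited by $G(X)$, which cannot therefore carry positive length on a line. The crux is the middle step: the Hausdorff dimension of slices is not upper semi-continuous under Hausdorff convergence, and the strong separation condition is used precisely to work instead with lower box-dimension estimates, which do pass to the limit because of the uniform scale-invariant separations at every scale.
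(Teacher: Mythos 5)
Your plan has two independent breakdowns, either of which is fatal. The first is the limiting step. Hausdorff dimension and Hausdorff measure do not pass to Hausdorff limits of compact sets in the direction you need, and the quantitative data you propose to carry along degenerates exactly where uniformity is required: the slices $X\cap L_\eps$ satisfy $\dimh>1-\eps$ only with constants (Hausdorff contents at exponent $1-\eps$, covering-number constants) over which you have no control as $\eps\downarrow 0$, so the diagonal limit $T\cap\ell$ may perfectly well have dimension strictly below $1$, let alone positive $\HH^1$-measure. The ``uniform scale-invariant separations'' you invoke are also not scale-invariant: the gaps between the level-$(n{+}1)$ subpieces inside $\fii_{\iii_n}(X)$ are of order $\alpha_2(A_{\iii_n})\delta$, so after magnifying by $r_n=\|A_{\iii_n}\|=\alpha_1(A_{\iii_n})$ they are of order $\alpha_2(A_{\iii_n})/\alpha_1(A_{\iii_n})$, which tends to $0$ for genuinely affine systems; nor do lower box-counting bounds for the whole slice at absolute scales localize to the balls $B(y_n,r_n)$ you magnify. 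In effect, to push the compactness argument through you would need a quantitative estimate on slices that is uniform in $(V,x)$ --- which is precisely the content of the lemma being proved, so the argument is circular where it is not merely unproved.

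The second breakdown is the intended contradiction. Total disconnectedness does not prevent a subset of a line from having positive $\HH^1$-measure: a fat Cantor set is totally disconnected and has positive length. So even if you did produce a slice of $X$ (or of $G(T)\subset X$) carrying positive $\HH^1$-measure, nothing is contradicted at that point; only an honest line segment inside $X$ itself would clash with total disconnectedness, and your construction does not validly produce one. Note also that a line segment inside a \emph{weak tangent} of $X$ is no contradiction at all --- this paper constructs exactly such tangents for strongly separated self-affine sets (see Lemma~\ref{lem:teethofcomb} and Theorem~\ref{thm:loweronassuad}), which is the very mechanism by which $\dima(X)$ exceeds $\dimh(X)$ here; and even after pulling a segment back into $X$ via the map $G_2$ of Lemma~\ref{lem:embedding} one must still exclude the degenerate case where the segment direction equals $\ker(G_2)$. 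The paper's actual proof needs none of this machinery: it covers the slice by the sets $\fii_\iii(U)\cap(V+x)$, where $U$ is a $\tfrac{\delta}{3}$-neighbourhood of $X$, observes that each time the line passes from a cylinder into $M\ge 2$ of its children it must spend a fixed proportion of its $\HH^1$-length on the gaps, and converts this proportional loss via H\"older's inequality into the submultiplicative estimate $\sum_{\text{children}}\HH^1(\cdot)^s\le\HH^1(\text{parent})^s$ for an explicit uniform $s<1$, giving $\HH^s(X\cap(V+x))<\infty$ for all $(V,x)$ simultaneously. You should aim for a direct uniform estimate of this kind rather than a compactness argument.
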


\begin{proof}
  Relying on the strong separation condition, let $\delta>0$ be as in \eqref{eq:SSC}. Then by compactness, let $U$ be the open set formed by the union of finitely many open $\frac{\delta}{3}$-balls centered at $X$ such that $X\subset U$. Note that $U$ has finitely many connected components. Define
  \begin{equation*}
    \Sigma_n(V,x)=\{\iii\in\Sigma_n : \varphi_\iii(X)\cap(V+x) \ne \emptyset\}
  \end{equation*}
  for all $x \in X$, $V \in \RP$, and $n \in \N$. Note that $\fii_\iii(U) \cap (V+x)$ consists of line segments where the number of line segments is bounded above by the number of the connected components of $U$ for all $\iii \in \Sigma_n(V,x)$. Fix $x \in X$ and $V \in \RP$. It is easy to see that, by the strong separation condition,
  \begin{equation} \label{eq:comp1}
    \HH^1(\fii_\iii(U) \cap (V+x)) \ge \frac{2\delta\|A_{\iii}|A_\iii^{-1}V\|}{3} = \frac{2\delta}{3\|A_\iii^{-1}|V\|}
  \end{equation}
  and
  \begin{equation} \label{eq:comp2}
    \HH^1(\fii_\iii(U) \cap (V+x)) \le \diam(\varphi_\iii(U)\cap(V+x)) \le \frac{3\diam(X)+2\delta}{3\|A_\iii^{-1}|V\|}
  \end{equation}
  for all $\iii \in \Sigma_n(V,x)$. Note that \eqref{eq:comp1} and \eqref{eq:comp2} together give
  \begin{equation} \label{eq:comp5}
    \HH^1(\fii_\iii(U) \cap (V+x)) \ge \frac{2\delta}{3\diam(X)+2\delta} \diam(\varphi_\iii(U)\cap(V+x))
  \end{equation}
  for all $\iii \in \Sigma_n(V,x)$.

  Write $M_\iii = \#\{j \in \{1,\ldots,N\} : \iii j \in \Sigma_{n+1}(V,x)\} \ge 1$ for all $\iii \in \Sigma_n(V,x)$ and notice that, by the strong separation condition and \eqref{eq:comp2},
  \begin{align*}
    \sum_{\iii j \in \Sigma_{n+1}(V,x)} \HH^1(\fii_{\iii j}(U) \cap (V+x)) &\le \HH^1(\fii_\iii(U) \cap (V+x)) - \frac{(M_\iii-1)\delta}{3\|A_\iii^{-1}|V\|} \\
    &\le \biggl( 1 - \frac{(M_\iii-1)\delta}{3\diam(X)+2\delta} \biggr) \HH^1(\fii_\iii(U) \cap (V+x))
  \end{align*}
  for all $\iii \in \Sigma_n(V,x)$. Observe that
  \begin{equation*}
    0<(M_\iii-1)\delta\leq \diam(X)+2\delta<3\diam(X)+2\delta.
  \end{equation*}
  Therefore, by H\"older's inequality, we have
  \begin{equation} \label{eq:comp4}
  \begin{split}
    \sum_{\iii j \in \Sigma_{n+1}(V,x)} &\HH^1(\fii_{\iii j}(U) \cap (V+x))^s \\
    &\le M_\iii^{1-s} \biggl( \sum_{\iii j \in \Sigma_{n+1}(V,x)} \HH^1(\fii_{\iii j}(U) \cap (V+x)) \biggr)^s \\
    &\le M_\iii^{1-s} \biggl( 1 - \frac{(M_\iii-1)\delta}{3\diam(X)+2\delta} \biggr)^s \HH^1(\fii_\iii(U) \cap (V+x))^s
  \end{split}
  \end{equation}
  for all $\iii \in \Sigma_n(V,x)$ and $0 \le s \le 1$. For each $M \in \{1,\ldots,N\}$ define a function $f_M \colon [0,1] \to \R$ by setting
  \begin{equation*}
    f_M(s) = M^{1-s} \biggl( 1 - \frac{(M-1)\delta}{3\diam(X)+2\delta} \biggr)^s.
  \end{equation*}
  If $M=1$, then $f_M \equiv 1$. If $M \ge 2$, then it is easy to see that $f_M$ is continuous and strictly decreasing with $f_M(0)=M \ge 2$ and $f_M(1)<1$. Hence for every $M \in \{2,\ldots,N\}$ there exists a unique $0 < s_M < 1$ such that $f_M(s_M)=1$.

  Choosing now $\max_{M \in \{2,\ldots,N\}} s_M \le s < 1$, the estimate \eqref{eq:comp4} gives
  \begin{equation*}
    \sum_{\iii j \in \Sigma_{n+1}(V,x)} \HH^1(\fii_{\iii j}(U) \cap (V+x))^s \le \HH^1(\fii_\iii(U) \cap (V+x))^s.
  \end{equation*}
  Thus, by induction,
  \begin{equation} \label{eq:comp3}
  \begin{split}
    \sum_{\jjj\in\Sigma_{n+1}(V,x)} &\HH^1(\fii_\jjj(U) \cap (V+x))^s = \sum_{\iii\in\Sigma_n(V,x)} \sum_{\iii j \in \Sigma_{n+1}(V,x)} \HH^1(\fii_{\iii j}(U) \cap (V+x))^s \\
    &\hspace{-2em}\le \sum_{\iii\in\Sigma_n(V,x)} \HH^1(\fii_\iii(U) \cap (V+x))^s \le \cdots \le \sum_{i\in\Sigma_1(V,x)} \HH^1(\fii_i(U) \cap (V+x))^s.
  \end{split}
  \end{equation}
  So, by considering the natural cover $\{\varphi_\iii(U)\cap(V+x)\}_{\iii\in\Sigma_{n}(V,x)}$ of $X\cap(V+x)$, the estimates \eqref{eq:comp5} and \eqref{eq:comp3} yield
  \begin{align*}
    \mathcal{H}^{s}(X\cap(V+x)) &\le \lim_{n\to\infty}\sum_{\iii\in\Sigma_n(V,x)}\diam(\varphi_\iii(U)\cap(V+x))^s \\
    &\le \biggl( 1+\frac{3\diam(X)}{2\delta} \biggr)^s \lim_{n\to\infty}\sum_{\iii\in\Sigma_n(V,x)} \HH^1(\fii_\iii(U) \cap (V+x))^s \\
    &\le \biggl( 1+\frac{3\diam(X)}{2\delta} \biggr)^s \sum_{i\in\Sigma_1(V,x)} \HH^1(\fii_i(U) \cap (V+x))^s < \infty.
  \end{align*}
  This implies $\dimh(X\cap(V+x)) \le s < 1$. As the upper bound $s$ for the dimension does not depend on $x \in X$ or $V \in \RP$, we have finished the proof.
\end{proof}

The following lemma examines the slices and establishes the quasi-product structure of weak tangent sets. This allows us to determine the dimension of the self-affine set.

\begin{lemma}\label{lem:teethofcomb}
  If $X$ is a dominated planar self-affine set satisfying the strong separation condition such that $\dimh(X) \ge 1$ and $X_F$ is not a singleton, then for every $x \in X$ and $V \in X_F$ there exist $T \in \Tan(X)$, $W\in\RP$, and $L \in X_F$ with $W \ne L$ such that
  \begin{align*}
    &\dimh(T \cap L) \ge \dimh(X \cap (V+x)), \\
    &\dimh(T\cap(W+y)) = 1
  \end{align*}
  for all $y \in T$.
\end{lemma}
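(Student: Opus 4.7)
The plan is to construct the weak tangent set $T$ explicitly by magnifying $X$ along a carefully chosen sequence of centers and scales adapted to the Furstenberg direction $V$, exploiting both the dominated singular-value geometry and the slice structure at the point $x$.

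First, I would use Lemma~\ref{thm:furstenberg-delta} to select a sequence of finite words $\iii_n \in \Sigma_*$ with $|\iii_n| \to \infty$ such that $(A_{\overleftarrow{\iii_n}}^{-1})_\ast \mu_F \to \delta_V$ in the weak$^\ast$ topology, where $\mu_F$ is the Furstenberg measure of some fully supported Bernoulli measure on $\Sigma$. Writing $B_n = A_{\overleftarrow{\iii_n}}$, Lemma~\ref{thm:bochi-morris} (second part applied along $\iii$ with $\Pi(\iii) = V$, after extracting an appropriate subsequence) shows that the input singular basis of $B_n$ aligns with $(V^\perp, V)$, while (after a further subsequence) the output singular basis converges to some orthogonal pair giving $W, L \in \RP$ with $W = L^\perp$ and in particular $W \ne L$.

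Next, I would use a standard Frostman-type density argument on the compact slice to pick a point $z \in X \cap (V+x)$ with the local regularity $\dimh((X \cap (V+x)) \cap B(z,r)) = \dimh(X \cap (V+x))$ for every $r > 0$. I would then consider the magnifications $M_{y_n, r_n}$ centered at $y_n = \fii_{\overleftarrow{\iii_n}}(z) \in X$ with scale $r_n = \alpha_2(B_n)$, and pass to a Hausdorff-convergent subsequence to obtain $T \in \Tan(X)$.

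The strong separation condition propagates through the dominated cylinders via Lemma~\ref{thm:bochi-morris}: level-$|\iii_n|$ cylinders other than $\overleftarrow{\iii_n}$ lie at distance at least comparable to $\delta \alpha_2(A_{\overleftarrow{\iii_n}|_k})$ from $y_n$, where $k$ is the length of the common prefix, and this distance exceeds $R r_n$ for some fixed small $R > 0$ once $n$ is large. Hence $T \cap B(0,R)$ reduces to the limit of $r_n^{-1} B_n(X - z) \cap B(0, R)$. Expressing points of $X - z$ in the input singular basis of $B_n$, the map $r_n^{-1} B_n$ stretches the $V^\perp$-component by $\alpha_1(B_n)/\alpha_2(B_n) \to \infty$ in direction $W$ and preserves the $V$-component in direction $L$. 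The ball constraint collapses the $V^\perp$-component to $0$ in the limit, forcing the surviving $L$-coordinates to lie in the localised slice $(X \cap (V+x)) \cap B(z, R)$, while the extreme aspect ratio combined with the self-affine structure of $X$ populates full intervals in the $W$-direction over each such $L$-coordinate.

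This realises $T \cap B(0, R)$ as a "comb" whose spine in direction $L$ is a bi-Lipschitz copy of $(X \cap (V+x)) \cap B(z, R)$ (hence has Hausdorff dimension $\dimh(X \cap (V+x))$ by the choice of $z$), and every slice $T \cap (W + y)$ through a point $y \in T$ is a line segment of Hausdorff dimension $1$. I expect the main obstacle to lie precisely in the last piece of the previous step: rigorously showing that the $W$-direction fills out to full line segments rather than a Cantor-like set. This will require carefully combining the uniform aspect-ratio blow-up $\alpha_1/\alpha_2 \to \infty$ with the recursive cylinder structure of $X$, using the hypothesis $\dimh(X) \ge 1$ to ensure that the $V^\perp$-thickness of $X$ near the slice remains non-trivial at the relevant scales.
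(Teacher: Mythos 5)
Your first half — magnifying along cylinders adapted to $V$ at a scale comparable to the contraction along $V$, so that the slice $X\cap(V+x)$ survives (up to bounded distortion) as a ``spine'' $T\cap L$ of the tangent — is essentially the paper's construction (the paper takes $\iii\in\Sigma$ with $\Pi(\iii)=V$ and scale $\|A_{\overleftarrow{\iii|_k}}|V\|$, which via Lemma \ref{thm:bochi-morris} is comparable to your $\alpha_2$; choosing the words only through the weak$^*$ convergence of Lemma \ref{thm:furstenberg-delta} makes that comparability less automatic, but this is minor). The genuine gap is in the ``teeth''. You claim that the blow-up ``populates full intervals in the $W$-direction'', i.e.\ that every slice $T\cap(W+y)$ is a line segment, and you propose to prove this from the aspect-ratio blow-up together with the hypothesis $\dimh(X)\ge 1$. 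This cannot work: what the construction actually produces through a point $y\in T$ is a rank-one affine limit $P$ of suitably rescaled maps $\fii_{\jjj_k|_{m_k}}$, with $y\in P(X)\subset T$ and $P(X)$ contained in the line $W+y$ where $W=\im(P)$; by Lemma \ref{thm:nilpotent} and \eqref{eq:rank-one-projection-nilpotent}, $P(X)$ is bi-Lipschitz equivalent to $\proj_{\ker(P)^\perp}(X)$. A projection of a set of Hausdorff dimension $\ge 1$ need not contain any interval (it need not even have positive Lebesgue measure), so ``$V^\perp$-thickness'' gives you neither segments nor, by itself, dimension one. The missing ingredient is the projection theorem of B\'ar\'any--Hochman--Rapaport (Theorem \ref{thm:BHR2}, available here because $X$ is dominated, $X_F$ is not a singleton and the strong separation condition holds), which yields $\dimh(\proj_{\ker(P)^\perp}(X))=\min\{1,\dimaff(X)\}=1$; without invoking it, the equality $\dimh(T\cap(W+y))=1$ does not follow.

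Two further points are not addressed in your proposal. First, the lemma requires a single $W$ that works for \emph{all} $y\in T$; a priori the rank-one limits at different points $y$ could have different images. The paper rules this out by mapping $T$ back into $X$ with an affine map of rank $\ge 1$ (Lemma \ref{lem:embedding}) and observing that two transverse directions of dimension-one slices inside $T$ would force a slice of $X$ itself of dimension $1$, contradicting Lemma \ref{lem:smallslice}; your fixed choice of $W$ as a limit of singular directions of $B_n$ does not come with this uniformity. Second, your identification $W=L^\perp$ is unjustified (and not needed): in the paper $W=\im(P)$ lies in the interior of the strongly invariant multicone $\CC$ while $L\in X_F\subset\overline{\RP\setminus\CC}$, which is how $W\ne L$ is obtained.
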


\begin{proof}
  Let us first define the weak tangent set $T \in \Tan(X)$. Let $x \in X$ and $V \in X_F$. By \eqref{eq:furstenberg-directions-dominated2}, there exists $\iii\in\Sigma$ such that $\Pi(\iii)=V$. By Lemma~\ref{thm:bochi-morris}, this will ensure that there exists $D>0$ such that $\alpha_2(A_{\overleftarrow{\iii|_k}})\leq\|A_{\overleftarrow{\iii|_k}}|V\|\leq D\alpha_2(A_{\overleftarrow{\iii|_k}})$. Let $\delta = \delta'/D$, where $\delta'>0$ is as in \eqref{eq:SSC}. Write
  \begin{equation*}
 	  T_k = M_{\fii_{\overleftarrow{\iii|_k}}(x),\|A_{\overleftarrow{\iii|_k}}|V\|\delta} \circ \fii_{\overleftarrow{\iii|_k}}(X) \cap B(0,1)=M_{\fii_{\overleftarrow{\iii|_k}}(x),\|A_{\overleftarrow{\iii|_k}}|V\|\delta}(X) \cap B(0,1)
  \end{equation*}
  for all $k \in \N$, where the second equality follows by the strong separation condition. By going into a sub-sequence $n_k$, if necessary, we see that there is $T \in \Tan(X)$ such that $T_{n_k} \to T$ in Hausdorff distance.

  Let us then show that there exists $L \in X_F$ and an invertible affine map $g \colon V+x \to L$ such that $g(X \cap (V+x)) \subset T \cap L$ and that for every $y \in T$ there exists $W \in \RP$ such that $W \ne L$ and $\dimh(T\cap(W+y))=1$. Fix $y \in T$ and note that, possibly passing through a sub-sequence, there exists a sequence $(\jjj_k)_{k \in \N}$ of words in $\Sigma$ such that $\jjj_k \in [\overleftarrow{\iii|_{n_k}}]$ for all $k\in\N$ and  $M_{\fii_{\overleftarrow{\iii|_{n_k}}}(x),\|A_{\overleftarrow{\iii|_{n_k}}}|V\|\delta}(\pi(\jjj_k)) \to y$. Let $m_k \ge n_k$ be the unique integer such that
  \begin{equation}\label{eq:unnumbered}
 	  \alpha_1(A_{\jjj_k|_{m_k}}) \le \|A_{\overleftarrow{\iii|_{n_k}}}|V\|\delta \le \alpha_1(A_{\jjj_k|_{m_k-1}}).
  \end{equation}
  By the convergence, there exists a constant $C > 0$ such that for every $k$,
  \begin{equation*}
    \biggl| \frac{A_{\jjj_k|_{m_k}}\pi(\sigma^{m_k}\jjj_k) + v_{\jjj_k|_{m_k}} - \fii_{\overleftarrow{\iii|_{n_k}}}(x)}{\|A_{\overleftarrow{\iii|_{n_k}}}|V\|\delta} \biggr| \le C,
  \end{equation*}
  where we have written $\fii_{\jjj_k|_{m_k}}(\pi(\sigma^{m_k}\jjj_k)) = A_{\jjj_k|_{m_k}}\pi(\sigma^{m_k}\jjj_k) + v_{\jjj_k|_{m_k}}$, it follows from \eqref{eq:unnumbered} that
  \begin{equation} \label{eq:unnumbered2}
    \biggl| \frac{v_{\jjj_k|_{m_k}} - \fii_{\overleftarrow{\iii|_{n_k}}}(x)}{\|A_{\overleftarrow{\iii|_{n_k}}}|V\|\delta} \biggr| \le C + \frac{\|A_{\jjj_k|_{m_k}}\| |\pi(\sigma^{m_k}\jjj_k)|}{\|A_{\overleftarrow{\iii|_{n_k}}}|V\|\delta} \le C + \diam(X).
  \end{equation}
  Then, possibly passing again through a sub-sequence, there exist $L \in \RP$ and affine maps $g$ and $P$ such that
  \begin{equation} \label{eq:affine-P}
  \begin{split}
 		M_{\fii_{\overleftarrow{\iii|_{n_k}}}(x),\|A_{\overleftarrow{\iii|_{n_k}}}|V\|\delta} \circ \fii_{\overleftarrow{\iii|_{n_k}}}(V+x) = A_{\overleftarrow{\iii|_{n_k}}}V &\to L, \\
        M_{\fii_{\overleftarrow{\iii|_{n_k}}}(x),\|A_{\overleftarrow{\iii|_{n_k}}}|V\|\delta} \circ \fii_{\overleftarrow{\iii|_{n_k}}}\Big|_{V+x} &\to g, \\
 		M_{\fii_{\overleftarrow{\iii|_{n_k}}}(x),\|A_{\overleftarrow{\iii|_{n_k}}}|V\|\delta} \circ \fii_{\jjj_k|_{m_k}} &\to P.
 	\end{split}
  \end{equation}
  In the last convergence, we used \eqref{eq:unnumbered2} to see that the associated translation vectors remain bounded. Furthermore, since $V \in X_F$, we have $L \in X_F$. Also, by compactness, we see that $g(X \cap (V+x)) \subset T \cap L$ and $y \in P(X)\cap B(0,1)  \subset T$. Let us show that $\rank(P)=1$. Observe that
  \begin{equation} \label{eq:this}
  \begin{split}
	  \frac{\alpha_2(A_{\jjj_k|_{m_k}})}{\alpha_1(A_{\jjj_k|_{m_k}})} &\le \frac{\|A_{\overleftarrow{\iii|_{n_k}}}|V\|\delta\alpha_1(A_{\sigma^{n_k}\jjj_k|_{m_k}})}{\alpha_1(A_{\jjj_k|_{m_k}})} \\
    &\le \Bigl( \min_{i \in \{1,\ldots,N\}} \alpha_2(A_i) \Bigr)^{-1} \alpha_1(A_{\sigma^{n_k}\jjj_k|_{m_k}})\delta,
  \end{split}
  \end{equation}
  where in the second inequality we used \eqref{eq:unnumbered}. If the sequence $(|\sigma^{n_k}\jjj_k|_{m_k}|)_{k \in \N}$ of natural numbers was bounded by some $K\in\N$, then
  \begin{equation*}
  \begin{split}
    \alpha_1(A_{\overleftarrow{\iii|_{n_k}}})\min_i\alpha_2(A_i)^K &\leq \alpha_1(A_{\overleftarrow{\iii|_{n_k}}})\alpha_2(A_{\sigma^{n_k}\jjj_k|_{m_k}}) \leq \alpha_1(A_{\overleftarrow{\iii|_{n_k}}}A_{\sigma^{n_k}\jjj_k|_{m_k}}) \\
    &= \alpha_1(A_{\jjj_k|_{m_k}}) \leq \|A_{\overleftarrow{\iii|_{n_k}}}|V\|\delta \leq D\delta\alpha_2(A_{\overleftarrow{\iii|_{n_k}}}),
  \end{split}
  \end{equation*}
  where the last inequality follows from Lemma~\ref{thm:bochi-morris}. As this contradicts with \eqref{eq:domconst}, the sequence $(|\sigma^{n_k}\jjj_k|_{m_k}|)_{k \in \N}$ must be unbounded. Therefore, it follows from \eqref{eq:this} that
  \begin{equation*}
    \frac{\alpha_2(A_{\jjj_k|_{m_k}})}{\alpha_1(A_{\jjj_k|_{m_k}})} \to 0
  \end{equation*}
  as $k \to \infty$. Hence $\det(\|A_{\overleftarrow{\iii|_{n_k}}}|V\|^{-1} A_{\jjj_k|_{m_k}}) \to 0$ as $k \to \infty$ and $\rank(P)=1$. By Lemma~\ref{thm:nilpotent} and \eqref{eq:rank-one-projection-nilpotent}, we see that the linear part of $P$ is a constant times $\proj_{\im(P)}^{\ker(P)}$. Let us choose $W = \im(P)$ and show that $W \ne L$ and $\dimh(T\cap(W+y))=1$. Relying on domination, let $\CC \subset \RP$ be a strongly invariant multicone. Fix $Q \in \CC$ and notice that, as $A_{\jjj_k|_{m_k}}\CC \subset \CC^o$ for all $k \in \N$, by going into a sub-sequence, if necessary, $A_{\jjj_k|_{m_k}}Q \to \im(P) \in \CC^o$. Since, by \eqref{eq:furstenberg-directions-dominated}, $\CC \cap X_F = \emptyset$, we see that $W \ne L$. Furthermore, since $P(X)$ and $\proj_{\ker(P)^\perp}(X)$ are bi-Lipschitz equivalent by \eqref{eq:rank-one-projection-nilpotent}, the assumption $\dimh(X) \ge 1$, Lemma \ref{thm:irreducible-dominated}, and Theorem \ref{thm:BHR2} give
  \begin{equation*}
  \begin{split}
    \dimh(T \cap (W+y)) &\ge \dimh(P(X)\cap B(0,1)) \\
    &\ge \dimh(P(\varphi_{\kkk}(X)))= \dimh(\proj_{A_{\overleftarrow{\kkk}}^{\top}\ker(P)^\perp}(X)) = 1,
  \end{split}
  \end{equation*}
  where in the second inequality, as $P(X)\cap B(0,1) \ne \emptyset$, we have chosen $\kkk\in\Sigma_*$ such that $P(\varphi_{\kkk}(X)) \subset P(X)\cap B(0,1)$.

  To finish the proof, let us show that $W \in \RP$ does not depend on the choice of $y \in T$. Suppose to the contrary that there exist $y_1,y_2 \in T$ such that the associated lines $W_1, W_2 \in \RP$ satisfy $W_1 \ne W_2$. Let $P_1$ and $P_2$ be the affine maps associated to $y_1$ and $y_2$ defined in \eqref{eq:affine-P}, respectively. By Lemma~\ref{lem:embedding}, there exists an affine map $G \colon \R^2 \to \R^2$ such that $\rank(G) = 1$ and $G(T) \subset X$. Recall that, by \eqref{eq:rank-one-projection-nilpotent}, $G$ is bi-Lipschitz equivalent to $\proj_{\ker(G)^\perp}$. If $i \in \{1,2\}$ is such that $W_i \ne \ker(G)$, then, by Theorem \ref{thm:BHR2},
  \begin{align*}
    \dimh(X \cap (GW_i+Gy_i)) &\ge \dimh(GP_i(X) \cap (GW_i+Gy_i)) \\
    &= \dimh(G\proj_{W_i}(X)) = 1.
  \end{align*}
  Thus, there exists a slice of $X$ with dimension one, which is impossible by Lemma \ref{lem:smallslice}.
\end{proof}

We are now ready to prove Theorem \ref{thm:dima}.

\begin{proof}[Proof of Theorem \ref{thm:dima}]
  Let us first prove that
  \begin{equation} \label{eq:dima-lower-bound}
    \dima(X) \ge 1 + \sup_{\atop{z\in X}{P\in X_F}}\dimh(X\cap(P+z)).
  \end{equation}
  Fix $\eps>0$ and choose $x \in X$ and $V \in X_F$ be such that
  \begin{equation} \label{eq:dima-est1}
    \dimh(X \cap (V+x)) \ge \sup_{\atop{z\in X}{P\in X_F}}\dimh(X\cap(P+z)) - \eps.
  \end{equation}
  By Lemma \ref{lem:teethofcomb}, there exist $T \in \Tan(X)$ and $W,L\in\RP$ with $W \ne L$ such that
  \begin{align*}
    &\dimh(T \cap L) \ge \dimh(X \cap (V+x)),\\
    &\dimh(T\cap(W+y)) = 1
  \end{align*}
  for all $y \in T$. Notice that if $\dimh(T \cap L) = 0$, then trivially $\dimh(T) \ge 1 = 1 + \dimh(X \cap (V+x))$. Let us therefore assume that $0<s<\dimh(T \cap L)$. Relying on Frostman's lemma, see e.g.\ \cite[Theorem 8.8]{Mattila1995}, let $\mu$ be a Borel probability measure on $T \cap L$ such that for some constant $C \ge 1$ it holds that $\mu(B(y,r)) \le Cr^s$ for all $y \in T \cap L$ and $r>0$. Now the Marstrand's slicing theorem \cite[Theorem 3.3.1]{BishopPeres2017} implies that
  \begin{equation*}
    1 = \dimh(T \cap (W+y)) \le \dimh(T) - s
  \end{equation*}
  for $\mu$-almost all $y \in T \cap L$. Therefore, by letting $s \uparrow \dimh(T \cap L)$, we get
  \begin{equation} \label{eq:dima-est2}
    \dimh(T) \ge 1 + \dimh(T \cap L) \ge 1 + \dimh(X \cap (V+x)).
  \end{equation}
  By Lemma \ref{thm:KOR}, \eqref{eq:dima-est2}, and \eqref{eq:dima-est1}, we thus have
  \begin{equation*}
    \dima(X) \ge \dimh(T) \ge 1 + \sup_{\atop{z\in X}{P\in X_F}}\dimh(X\cap(P+z)) - \eps
  \end{equation*}
  and \eqref{eq:dima-lower-bound} follows by letting $\eps \downarrow 0$.

  To show that the claimed quantity is an upper bound for $\dima(X)$ and the associated maximum exists, recall Lemma \ref{thm:KOR} and let $T \in \Tan(X)$ be such that $\dimh(T) = \dima(X)$. By Lemma~\ref{lem:embedding}, there exists an affine map $G \colon \R^2 \to \R^2$ such that $\rank(G) = 1$, $G(T) \subset X$, and $\im(G) \in X_F$. Recall that, by \eqref{eq:rank-one-projection-nilpotent}, $G$ is bi-Lipschitz equivalent to $\proj_{\ker(G)^\perp}$. Let $x \in X$ be such that $G(T) \subset \im(G)+x$ and observe that
  \begin{align*}
    \sup_{\atop{z\in X}{P\in X_F}}\dimh(X\cap(P+z)) &\ge \dimh(X \cap (\im(G)+x)) \ge \dimh(X \cap G(T)) \\ 
    &= \dimh(G(T)) = \dimh(\proj_{\ker(G)^\perp}(T)).
  \end{align*}
  Since $\dimh(T) \le \dimh(\proj_{\ker(G)^\perp}(T) \times \R) \le \dimh(\proj_{\ker(G)^\perp}(T))+1$, we furthermore have
  \begin{equation*}
    \dimh(\proj_{\ker(G)^\perp}(T)) \ge \dimh(T)-1 = \dima(X)-1.
  \end{equation*}
  By \eqref{eq:dima-lower-bound}, we now have an equality throughout. The proof is finished by recalling that the upper bound is strictly smaller than $2$ by Lemma \ref{lem:smallslice}.
\end{proof}

\section{Assouad dimension of self-affine sets having small Hausdorff dimension} \label{sec:assouad-small}

In this section, we show that the projective open set condition implies the projective separation, and prove Theorem \ref{thm:dima2} and part of the claims in Theorem \ref{thm:ahl}. In particular, we show that the conditions of Theorem \ref{thm:ahl} satisfy \eqref{(1)} $\Leftrightarrow$ \eqref{(11)} $\Rightarrow$ \eqref{(4)} $\Rightarrow$ \eqref{(6)} and, if the Hausdorff dimension is strictly smaller than one, also \eqref{(6)} $\Rightarrow$ \eqref{(1)}.

Proofs of these implications are structured below as follows: The implication \eqref{(1)} $\Rightarrow$ \eqref{(11)} is shown in Theorem \ref{thm:posc-sigma} and, by recalling Theorem \ref{thm:BHR2}, Theorem \ref{thm:posc-regular} verifies the implication \eqref{(11)} $\Rightarrow$ \eqref{(4)}. The implication \eqref{(4)} $\Rightarrow$ \eqref{(6)} follows from Lemma \ref{thm:ahlfors-implications}. Proposition \ref{thm:POSC-PSC} below shows that the projective open set condition implies \eqref{(11)}, Proposition \ref{lem:missing} verifies the implication \eqref{(11)} $\Rightarrow$ \eqref{(1)} and finally, Theorem \ref{thm:loweronassuad} proves Theorem \ref{thm:dima2} and also the implication \eqref{(6)} $\Rightarrow$ \eqref{(1)}.

Covering arguments are at the core of the proofs of Proposition \ref{thm:POSC-PSC}, Theorem \ref{thm:posc-sigma}, and Theorem \ref{thm:posc-regular}. Proving Theorem \ref{thm:loweronassuad} is more complicated and it requires two auxiliary lemmas. The purpose of Lemma \ref{lem:key} is to show the existence of distinct maps whose projections are approximately at a given distance on a certain small set. Applying this inductively in Lemma \ref{lem:totangent}, it is possible to find small scales containing as many equally distributed points of $X$ as wished. This shows that a line segment appears as a subset of a weak tangent set which, by Lemma \ref{thm:KOR}, implies $\dima(X) \ge 1$ as claimed. The difficulty in the proof is not only that we have to keep track of different directions but also ensure that the points we define will eventually approximate a line segment. Proposition \ref{lem:missing} is another consequence of Lemma \ref{lem:totangent}.

\begin{proposition} \label{thm:POSC-PSC}
  If $X$ is a dominated planar self-affine set satisfying the projective open set condition, then
  \begin{equation*}
    \sup\{\#\Sigma(V,x,r) : V \in X_F, \; x \in X, \text{ and } r>0\} < \infty,
  \end{equation*}
  where $\Sigma(V,x,r)$ is as in \eqref{eq:POSC-Sigma-def}.
\end{proposition}

\begin{proof}
  Let $U \subset \R^2$ be the open set in the projective open set condition and choose an open ball $B \subset U$. Fix $V \in X_F$ and let us first show that
  \begin{equation} \label{eq:posc-iteration}
    \proj_{V^\bot}(\fii_\iii(U)) \cap \proj_{V^\bot}(\fii_\jjj(U)) = \emptyset
  \end{equation}
  for all $\iii,\jjj \in \Sigma_*$ with $[\iii] \cap [\jjj] = \emptyset$. Notice that if $\hhh = \iii \land \jjj$ and $n=|\hhh|+1$, then $\iii|_n = \hhh i$ and $\jjj|_n = \hhh j$ where $i \ne j$.
  Recalling that
  \begin{equation*}
    \proj_{V^\bot}(\fii_\hhh(x)) = \pm\|A_\hhh^\top|V^\bot\| |\proj_{A_\hhh^\top V^\bot}(x)|v^\bot + \proj_{V^\bot}(\fii_\hhh(0)),
  \end{equation*}
  where $v^\bot$ is a unit vector in $V^\bot$ and $\pm$ indicates that the equality holds with one of the signs, we define a mapping $g \colon V^\bot \to V^\bot$ by setting
  \begin{equation*}
    g(x) = \pm\|A_\hhh^\top|V^\bot\|x + |\proj_{V^\bot}(\fii_\hhh(0))|v^{\bot}
  \end{equation*}
  for all $x \in V^\bot$, where $v^\bot$ is a unit vector in $V^\bot$. If there is $y \in \proj_{V^\bot}(\fii_\iii(U)) \cap \proj_{V^\bot}(\fii_\jjj(U))$, then $g^{-1}(y) \in \proj_{V^\bot}(\fii_i(U)) \cap \proj_{V^\bot}(\fii_j(U))$ and the claim follows.

  As $\A$ is dominated, Lemma \ref{thm:bochi-morris} shows that there exists a constant $D \ge 1$ such that
  \begin{equation} \label{eq:POSC-bochi-this-time}
    \|A_\iii^\top|V^\perp\| \le \alpha_1(A_\iii) \le D\|A_\iii^\top|V^\perp\|
  \end{equation}
  for all $\iii\in\Sigma_*$ and $V \in X_F$. Write $\kappa = \min_{i \in \{1,\ldots,N\}} \alpha_2(A_i)$ and observe that, by \eqref{eq:POSC-bochi-this-time}, $\|A_\iii^\top|V^\perp\| \ge D^{-1}\alpha_1(A_\iii) \ge \kappa D^{-1}\alpha_1(A_{\iii^-}) \ge \kappa D^{-1}\|A_{\iii^-}^\top|V^\perp\|$. Fix $x \in X$ and $r>0$. Since the diameter of a ball is invariant under projections and orthogonal projections are $1$-Lipschitz, we get
  \begin{align*}
    \HH^1(\proj_{V^\bot}(\fii_\iii(U))) &\ge \diam(\proj_{V^\bot}(\fii_\iii(B))) = \|A_\iii^\top|V^\bot\|\diam(\proj_{A_\iii^\top V^\bot}(B)) \\
    &\ge \kappa D^{-1}\|A_{\iii^-}^\top|V^\perp\|\diam(\proj_{A_\iii^\top V^\bot}(B)) \\
    &= \kappa D^{-1}\frac{\diam(\proj_{A_\iii^\top V^\bot}(B))}{\diam(\proj_{A_{\iii^-}^\top V^\bot}(X))} \diam(\proj_{V^\bot}(\fii_{\iii^-}(X))) \\
    &\ge \kappa D^{-1}\frac{\diam(B)}{\diam(X)} r
  \end{align*}
  for all $\iii \in \Sigma(V,x,r)$, we have, by \eqref{eq:posc-iteration},
  \begin{align*}
    4r &= \HH^1(B(x,2r)) \ge \HH^1\biggl( \bigcup_{\iii \in \Sigma(V,x,r)} \proj_{V^\bot}(\fii_\iii(U)) \biggr) \\
    &= \sum_{\iii \in \Sigma(V,x,r)} \HH^1(\proj_{V^\bot}(\fii_\iii(U))) \ge \kappa D^{-1}\frac{\diam(B)}{\diam(X)} r \#\Sigma(V,x,r)
  \end{align*}
  and
  \begin{equation*}
    \#\Sigma(V,x,r) \le \frac{4D\diam(X)}{\kappa\diam(B)}.
  \end{equation*}
  As the upper bound does not depend on $V$, $x$, or $r$, we have finished the proof.
\end{proof}

\begin{theorem} \label{thm:posc-sigma}
  If $X$ is a dominated planar self-affine set satisfying the projective separation, then there exists a constant $C \ge 1$ such that
  \begin{equation} \label{eq:POSC-regular1}
    \sup\{\#\Sigma(V,x,r) : x \in X \text{ and } r>0\} \le C\frac{\exp(C\diam(\proj_{V^\bot}(X))^{-2})}{\diam(\proj_{V^\bot}(X))}
  \end{equation}
  for all $V \in X_F$ with $\diam(\proj_{V^\bot}(X))>0$, where $\Sigma(V,x,r)$ is as in \eqref{eq:POSC-Sigma-def}. In particular, if $X_F$ is not a singleton, then $\sup\{\#\Sigma(V,x,r) : V \in X_F, \; x \in X, \text{ and } r>0\} < \infty$.
\end{theorem}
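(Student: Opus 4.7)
The approach is a packing argument for the at most three-dimensional family of affine maps $\psi_\iii := \proj_{V^\perp} \circ \fii_\iii \colon \R^2 \to V^\perp$ indexed by $\iii \in \Sigma(V,x,r)$. First, for $\iii \in \Sigma(V,x,r)$, the interval $\psi_\iii(X) = \proj_{V^\perp}(\fii_\iii(X))$ has diameter $s_\iii < r$ and meets $B(\proj_{V^\perp}(x),r)$, hence lies in $B(\proj_{V^\perp}(x),2r)$ and therefore $\|\psi_\iii - \psi_\jjj\|_{\ell^\infty(X)} \le 4r$ for any two indices. Second, the projective open set condition supplies, for distinct $\iii,\jjj \in \Sigma(V,x,r)$, a point $y \in X$ with $|\psi_\iii(y) - \psi_\jjj(y)| \ge \eta s_\iii$. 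The task then reduces to lower bounding $s_\iii$ in terms of $r$ and $\diam(\proj_{V^\perp}(X))$, after which a volume argument in the finite-dimensional space of restrictions $\psi|_X$ (parametrised by values at three affinely independent points of $X$, or two distinct points if $X$ lies in a line) produces the packing bound of the form claimed.

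For the lower bound on $s_\iii$, I would use the identity
\[ s_\iii = \|A_\iii^\top|V^\perp\| \cdot \diam(\proj_{(A_\iii^{-1}V)^\perp}(X)), \]
which holds because $\ker(\proj_{V^\perp}A_\iii) = A_\iii^{-1}V$. The crucial projective observation is that $A_\iii^{-1}V \in X_F$ whenever $V \in X_F$: writing $V = \Pi(\kkk)$ for some $\kkk \in \Sigma$ via $X_F = \bigcup_{\kkk \in \Sigma}\Pi(\kkk)$, a straightforward reindexing of the defining intersection, together with the strong invariance inclusion $A_i^{-1}\overline{\RP \setminus \CC} \subset \overline{\RP \setminus \CC}$ guaranteeing that the intersection in the definition of $\Pi$ is over a decreasing family, identifies $A_\iii^{-1}\Pi(\kkk)$ with $\Pi(\overleftarrow{\iii}\kkk) \in X_F$. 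Lemma~\ref{thm:bochi-morris} then gives $\|A_\iii^\top|V^\perp\| \asymp \alpha_1(A_\iii)$ for $V \in X_F$, and the domination estimate $\alpha_1(A_\iii) \ge \min_i\alpha_2(A_i)\cdot\alpha_1(A_{\iii^-})$ combined with $s_{\iii^-} \ge r$ forces $\alpha_1(A_\iii) \gtrsim r/\diam(X)$. Substituting back yields $s_\iii \gtrsim r \cdot \diam(\proj_{(A_\iii^{-1}V)^\perp}(X))/\diam(X)$, and the inclusion $A_\iii^{-1}V \in X_F$ allows the last diameter to be bounded below in terms of $\diam(\proj_{V^\perp}(X))$ via compactness of $X_F$ and continuity of $W \mapsto \diam(\proj_{W^\perp}(X))$.

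For the ``In particular'' statement it suffices to show $\inf_{V \in X_F}\diam(\proj_{V^\perp}(X)) > 0$ when $X_F$ is not a singleton. By compactness of $X_F$ and continuity, this reduces to $\diam(\proj_{V^\perp}(X)) > 0$ for every $V \in X_F$, i.e.\ that $X$ lies in no line parallel to a direction in $X_F$. In the irreducible case this is automatic; in the reducible case, Lemma~\ref{thm:need-for-bhr-prop66} puts the matrices into upper triangular form with $|d_i| < |a_i| < 1$, so the only common invariant line direction is the $a_i$-eigenvector, whereas the attractors of $A_{\overleftarrow{\kkk}}^{-1}$ that constitute $X_F$ are of the (non-horizontal) $d_i^{-1}$-eigenvector type, and hence $X_F$ avoids $X$'s line direction. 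The main obstacle is tracking the quantitative $V$-dependence sharply enough to produce the precise factor $\exp(\diam(\proj_{V^\perp}(X))^{-2})/\diam(\proj_{V^\perp}(X))$: one must quantify the worst-case value of $\diam(\proj_{(A_\iii^{-1}V)^\perp}(X))$ as $A_\iii^{-1}V$ ranges over the orbit of $V$ in $X_F$, and then convert this into the stated bound via the packing count.
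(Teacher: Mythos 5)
Your outline shares the paper's starting point (the POSC separation together with Lemma~\ref{thm:bochi-morris} and domination to compare $\|A_\iii^\top|V^\perp\|$ with $\alpha_1(A_\iii)$ and hence with $r$), but the packing mechanism is genuinely different. The paper's difficulty is exactly the one you isolate: the separation point supplied by \eqref{eq:POSC} depends on the pair $\jjj,\kkk$. The paper resolves it by showing the separation persists on a ball of radius $\sim\eta\,\diam(\proj_{V^\perp}(X))$ around that point, covering $X$ by $k\sim\diam(\proj_{V^\perp}(X))^{-2}$ such balls with fixed centres $z_1,\dots,z_k$, and packing the tuples $(\proj_{V^\perp}(\fii_\jjj(z_1)),\dots,\proj_{V^\perp}(\fii_\jjj(z_k)))$; the number $k$ of coordinates enters the exponent, which is where the factor $\exp(\diam(\proj_{V^\perp}(X))^{-2})$ comes from. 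You instead exploit that $\proj_{V^\perp}\circ\fii_\iii-\proj_{V^\perp}\circ\fii_\jjj$ is affine and transfer the separation from the pair-dependent point to three fixed affinely independent points of $X$ via bounded barycentric coordinates, then pack in a space of fixed dimension. That transfer is valid and, if completed, actually yields a \emph{polynomial} bound in $\eta^{-1}$ and the reciprocal of the relevant projected diameter, which is stronger than \eqref{eq:POSC-regular1}. Your treatment of the ``in particular'' part (reduction to $X$ not lying in a line with direction in $X_F$, settled via Lemma~\ref{thm:A-classification}/Lemma~\ref{thm:need-for-bhr-prop66}) is also sound, though the paper's two-line contradiction using $A_i^{-1}V\ne V$ is more direct.

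The gap is the one you flag yourself, and it is real for the statement as written. Your lower bound $s_\iii\gtrsim r\,\diam(\proj_{(A_\iii^{-1}V)^\perp}(X))/\diam(X)$ requires a lower bound on $\diam(\proj_{W^\perp}(X))$ for $W=A_\iii^{-1}V$ ranging over the orbit of $V$ in $X_F$. Compactness of $X_F$ and continuity of $W\mapsto\diam(\proj_{W^\perp}(X))$ only give you $\inf_{W\in X_F}\diam(\proj_{W^\perp}(X))$, which could vanish (and whose positivity is precisely the content of the ``in particular'' clause, requiring $X_F$ not a singleton); they do not bound the orbit values in terms of $\diam(\proj_{V^\perp}(X))$ for the particular $V$ in the first claim. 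So your argument delivers the uniform finiteness $\sup\{\#\Sigma(V,x,r)\}<\infty$ when $X_F$ is not a singleton (and also the first claim in the degenerate case where $X_F=\{V\}$ is invariant), but not the bound \eqref{eq:POSC-regular1} with a constant depending only on $\diam(\proj_{V^\perp}(X))$. To be fair, the paper's own passage from \eqref{eq:POSC} to \eqref{eq:POSC-regular2} quietly identifies $\diam(\proj_{V^\perp}(\fii_\jjj(X)))$ with $\|A_\jjj^\top|V^\perp\|\diam(\proj_{V^\perp}(X))$, which is the same issue in disguise; since only the ``in particular'' conclusion is used elsewhere in the paper, your route suffices for all downstream applications, but you should either restrict the quantitative claim to the non-singleton case or supply the missing uniform lower bound on the orbit diameters.
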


\begin{proof}
  Let us first assume that $V \in X_F$ is such that $\diam(\proj_{V^\perp}(X)) > 0$. Recall that, by \eqref{eq:transpose-projection}, $\|\proj_{V^\perp}A_\iii\|=\|A_\iii^\top|V^\perp\|$ for all $\iii \in \Sigma_*$. By the projective separation \eqref{eq:POSC}, there is $\eta>0$ such that for every $V \in X_F$, $x \in X$, $r>0$, and $\jjj,\kkk \in \Sigma(V,x,r)$ with $\jjj\ne\kkk$ there is $z \in X$ such that
  \begin{equation} \label{eq:POSC-regular2}
    |\proj_{V^\perp}(\fii_\jjj(z)-\fii_\kkk(z))| \ge \eta\diam(\proj_{V^\perp}(X))\|A_\jjj^\top|V^\perp\|.
  \end{equation}
  As $\A$ is dominated, Lemma \ref{thm:bochi-morris} shows that there exists a constant $D \ge 1$ such that
  \begin{equation} \label{eq:POSC-morris}
    \|A_\iii^\top|V^\perp\| \le \alpha_1(A_\iii) \le D\|A_\iii^\top|V^\perp\|
  \end{equation}
  for all $\iii\in\Sigma_*$ and $V \in X_F$.

  Fix $x \in X$ and $r>0$. Let $\jjj,\kkk \in \Sigma(V,x,r)$ with $\jjj\ne\kkk$ and $z \in X$ be such that \eqref{eq:POSC-regular2} holds, and choose $y \in B(z,\gamma\diam(\proj_{V^\perp}(X)))$, where $\gamma = \kappa\eta/6D$ and $\kappa = \min_{i \in \{1,\ldots,N\}} \alpha_2(A_i)$. Observe that, by \eqref{eq:POSC-morris}, $\|A_\jjj^\top|V^\perp\| \ge D^{-1}\alpha_1(A_\jjj) \ge \kappa D^{-1}\alpha_1(A_{\jjj^-}) \ge \kappa D^{-1}\|A_{\jjj^-}^\top|V^\perp\|$ and hence, by \eqref{eq:POSC-regular2}, the triangle inequality, and \eqref{eq:POSC-Sigma-def},
  \begin{equation} \label{eq:POSC-regular3}
  \begin{split}
    |\proj_{V^\perp}(\varphi_{\jjj}(y)&-\varphi_{\kkk}(y))| \ge |\proj_{V^\perp}(\varphi_{\jjj}(z)-\varphi_{\kkk}(z))| - |\proj_{V^\perp}(A_\jjj-A_\kkk)(z-y)| \\
    &\ge \eta\diam(\proj_{V^\perp}(X))\|A_\jjj^\top|V^\perp\| - (\|A_\jjj^\top|V^\perp\|+\|A_\kkk^\top|V^\perp\|)|z-y| \\
    &\ge (\kappa D^{-1}\eta-2\gamma)r = 4\gamma r.
  \end{split}
  \end{equation}
  As $X$ is compact, there are finitely many points $z_1,\ldots,z_k \in X$ such that
  \begin{equation*}
    X \subset \bigcup_{i=1}^k B(z_i,\gamma\diam(\proj_{V^\perp}(X))).
  \end{equation*}
  By a simple volume argument, the points can be chosen such that
  \begin{equation} \label{POSC-volume-k}
    k \le C\biggl( \frac{\diam(X)}{\gamma\diam(\proj_{V^\bot}(X))} \biggr)^2,
  \end{equation}
  where $C \ge 1$ does not depend on $V$. Therefore, for every $\jjj,\kkk \in \Sigma(V,x,r)$ with $\jjj\ne\kkk$ and the associated $z \in X$ satisfying \eqref{eq:POSC-regular2} there is $i \in \{1,\ldots,k\}$ such that $z_i \in B(x,\gamma\diam(\proj_{V^\perp}(X)))$. Writing $\xi_\lll = (\proj_{V^\perp}(\fii_\lll(z_1)),\ldots,\proj_{V^\perp}(\fii_\lll(z_k))) \in (\R^2)^k$ for all $\lll \in \Sigma_*$, we see that \eqref{eq:POSC-regular3} gives $|\xi_\jjj-\xi_\kkk| \ge 4\gamma r$ and hence,
  \begin{equation} \label{eq:POSC-regular4}
    B(\xi_\jjj,\gamma r) \cap B(\xi_\kkk,\gamma r) = \emptyset
  \end{equation}
  for all $\jjj,\kkk \in \Sigma(V,x,r)$ with $\jjj\ne\kkk$. On the other hand, if $\jjj \in \Sigma(V,x,r)$, then
  \begin{equation*}
    |\proj_{V^\perp}(x)-\proj_{V^\perp}(\varphi_\jjj(z))| \le 2r
  \end{equation*}
  for all $z \in X$, and so $|\proj_{V^\perp}(x)-\xi_\jjj| \le 2\sqrt{k}r$. It follows that
  \begin{equation}  \label{eq:POSC-regular5}
    \bigcup_{\jjj \in \Sigma(V,x,r)} B(\xi_\jjj,\gamma r) \subset B((\proj_{V^\perp}(x),\dots,\proj_{V^\perp}(x)),2\sqrt{k}r).
  \end{equation}
  Therefore, by \eqref{eq:POSC-regular4}, \eqref{eq:POSC-regular5}, and \eqref{POSC-volume-k}, we see that
  \begin{align*}
    \#\Sigma(V,x,r) &\le \biggl( \frac{2\sqrt{k}}{\gamma} \biggr)^{2k} = \biggl( \frac{12D\sqrt{k}}{\kappa\eta} \biggr)^{2k} = \frac{12D\sqrt{k}\exp(2k)}{\kappa\eta} \\
    &\le \frac{12D\sqrt{C}\diam(X)\exp(2C\diam(X)^2\diam(\proj_{V^\bot}(X))^{-2})}{\kappa\eta\diam(\proj_{V^\bot}(X))}.
  \end{align*}
  As the upper bound does not depend on $x \in X$ nor $r>0$, we have shown \eqref{eq:POSC-regular1}.

  To show the second claim, let us assume that $X_F$ is not a singleton. By \eqref{eq:POSC-regular1}, it suffices to show that
  \begin{equation}\label{eq:posdiam}
    \inf_{V \in X_F}\diam(\proj_{V^\perp}(X)) > 0.
  \end{equation}
  If this was not the case, then, by the continuity of $V\mapsto \diam(\proj_{V^\perp}(X))$ and the compactness of $X_F$, there is $V \in X_F$ such that $\diam(\proj_{V^\perp}(X)) = 0$. It follows that there exist $x,y \in X$ such that $X \subset V+x$ and $y \ne x$. Furthermore, since $X_F$ is not a singleton, there exists $i \in \{1,\ldots,N\}$ such that $A_i^{-1}V\neq V$. Note that $\varphi_i(x)\neq\varphi_i(y)$ and since $\varphi_i(X)\subset X\subset V+x$, we have $\varphi_i(x)-\varphi_i(y)\in V$. But then $x-y=A_i^{-1}(\varphi_i(x)-\varphi_i(y))\notin V$, which is a contradiction.
\end{proof}

\begin{theorem} \label{thm:posc-regular}
  If $X$ is a dominated planar self-affine set such that $s = \dimaff(X) \le 1$ and $\sup\{\#\Sigma(V,x,r) : x \in X \text{ and } r>0\} < \infty$ for some $V \in X_F$, then $X$ is Ahlfors $s$-regular.
\end{theorem}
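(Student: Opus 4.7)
The plan is to verify that the pushforward measure $\mu = \pi_*\mu_K$, where $\mu_K$ is the K\"aenm\"aki equilibrium state supplied by Lemma~\ref{thm:perron-frobenius}, is Ahlfors $s$-regular; since $X = \spt(\mu)$ this will yield the conclusion. Because $s \le 1$, we have $\fii^s(A_\iii) = \alpha_1(A_\iii)^s$ and hence $\mu_K([\iii]) \asymp \alpha_1(A_\iii)^s$. Domination combined with Lemma~\ref{thm:bochi-morris} moreover gives the uniform comparison $\alpha_1(A_\iii) \asymp \|A_\iii^\top|V^\bot\| = \|\proj_{V^\bot}A_\iii\|$ over all $\iii \in \Sigma_*$, which I will use throughout.

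For the lower bound $\mu(B(x,r)) \gtrsim r^s$, fix $x = \pi\iii$ and let $n$ be the smallest integer with $\alpha_1(A_{\iii|_n})\diam(X) \le r$. Then $\fii_{\iii|_n}(X) \subset B(x,r)$, so $\mu(B(x,r)) \ge \mu_K([\iii|_n]) \gtrsim \alpha_1(A_{\iii|_n})^s$. The submultiplicativity-type inequality $\alpha_1(A_{\iii|_n}) \ge \alpha_2(A_{i_n})\alpha_1(A_{\iii|_{n-1}})$ together with the minimality of $n$ and $\min_i \alpha_2(A_i) > 0$ then forces $\alpha_1(A_{\iii|_n}) \gtrsim r$, completing this half.

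The upper bound $\mu(B(x,r)) \lesssim r^s$ is where the covering hypothesis enters. For each $\iii \in \pi^{-1}(B(x,r))$ let $n(\iii)$ be the smallest index with $\diam(\proj_{V^\bot}\fii_{\iii|_{n(\iii)}}(X)) < r$. Since $\pi\iii \in \fii_{\iii|_{n(\iii)}}(X) \cap B(x,r)$ projects into $B(\proj_{V^\bot}(x),r)$, the word $\iii|_{n(\iii)}$ belongs to $\Sigma(V,x,r)$, so
\begin{equation*}
  \mu(B(x,r)) \le \sum_{\jjj \in \Sigma(V,x,r)}\mu_K([\jjj]) \lesssim \sum_{\jjj \in \Sigma(V,x,r)}\alpha_1(A_\jjj)^s.
\end{equation*}
Unwinding $\proj_{V^\bot}A_\jjj y = \langle y, A_\jjj^\top v\rangle v$ for unit $v \in V^\bot$ together with the identity $(A_\jjj^{-1}V)^\bot = A_\jjj^\top V^\bot$ yields the factorization
\begin{equation*}
  \diam(\proj_{V^\bot}\fii_\jjj(X)) = \|A_\jjj^\top|V^\bot\|\cdot\diam(\proj_{(A_\jjj^{-1}V)^\bot}(X)).
\end{equation*}
The invariance $\bigcup_i A_i^{-1}X_F = X_F$, read off from \eqref{eq:furstenberg-directions-dominated}, places each $(A_\jjj^{-1}V)^\bot$ into the compact set $R(X_F)$. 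Provided $d_* := \inf_{W \in R(X_F)}\diam(\proj_W(X)) > 0$, one therefore obtains $\alpha_1(A_\jjj) \lesssim \|\proj_{V^\bot}A_\jjj\| < r/d_*$, and the hypothesis $\sup_{x,r}\#\Sigma(V,x,r) < \infty$ closes the estimate.

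The main obstacle is verifying $d_* > 0$. By compactness of $X_F$ and continuity of $W \mapsto \diam(\proj_W(X))$, this reduces to showing that $X$ is not contained in any line whose direction lies in $X_F$. Any such direction would be a common invariant subspace for every $A_i$; by Lemma~\ref{thm:irreducible-dominated} the tuple $\A$ must then be reducible, and Lemma~\ref{thm:need-for-bhr-prop66} (together with a direct check when $X_F$ is a singleton) forces the shared invariant direction to be the attracting fixed line, which lies in the interior of the strongly invariant multicone $\CC$ and is therefore disjoint from $X_F \subset \overline{\RP \setminus \CC}$. Comparing the resulting 1D similarity dimension on such a hypothetical line with $\dimaff(X) = s$ via $\dimh(X) \le \dimaff(X)$ rules out the remaining configurations, giving $d_* > 0$ and completing the proof.
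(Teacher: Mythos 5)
Your proposal follows the paper's proof essentially step for step: the measure is $\pi_*\mu_K$ with $\mu_K$ the equilibrium state from Lemma~\ref{thm:perron-frobenius}, the lower bound comes from a cylinder $\fii_{\iii|_n}(X)\subset B(x,r)$ of comparable size, and the upper bound covers $\pi^{-1}(B(x,r))$ by the cylinders indexed by $\Sigma(V,x,r)$ and converts $\alpha_1(A_\jjj)^s$ into $r^s$ via Lemma~\ref{thm:bochi-morris} and the bounded-multiplicity hypothesis. The one place where you do more than the paper is the constant $d_*=\inf_{V\in X_F}\diam(\proj_{V^\perp}(X))$: the paper's proof silently absorbs exactly this quantity into an unexplained constant $\lambda$, so your identification of it as ``the main obstacle'' is accurate, and your argument that $d_*>0$ (invariant direction must lie in $\CC^o$ by Lemma~\ref{thm:need-for-bhr-prop66}, hence off $X_F$) is correct whenever $X_F$ is not a singleton, which is the only setting in which the theorem is invoked (Theorem~\ref{thm:ahl}).

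The genuine gap is the parenthetical ``direct check when $X_F$ is a singleton'' and the closing sentence about comparing a one-dimensional similarity dimension with $\dimaff(X)$: neither can be made to work, because the statement actually fails in that degenerate case. Take $A_1=A_2=\diag(\tfrac12,\tfrac14)$ with translations $(0,0)$ and $(0,\tfrac34)$. The tuple is dominated, $\dimaff=1$, and $X_F=\{\linspan(e_2)\}$, but $X$ is a Cantor set on the vertical line $\{x=0\}$, so $\proj_{V^\perp}(X)$ is a point, every $\Sigma(V,x,r)$ is empty, the hypothesis holds vacuously, and yet $X$ is Ahlfors $\tfrac12$-regular rather than Ahlfors $1$-regular. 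There is no contradiction with $\dimh(X)\le\dimaff(X)$ to be extracted here, so the ``remaining configurations'' cannot be ruled out; one must instead add a nondegeneracy hypothesis (e.g.\ that $X_F$ is not a singleton, as in Theorem~\ref{thm:ahl}, under which the second part of the proof of Theorem~\ref{thm:posc-sigma} gives $d_*>0$). Modulo restricting to that setting, your proof is complete and coincides with the paper's.
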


\begin{proof}
  By Lemma \ref{thm:equilibrium-state-existence}, there exist a measure $\mu_K \in \MM_\sigma(\Sigma)$ and a constant $C \ge 1$ such that
  \begin{equation} \label{eq:POSC-regular6}
    C^{-1}\alpha_1(A_\iii)^s \le \mu_K([\iii]) \le C\alpha_1(A_\iii)^s
  \end{equation}
  for all $\iii \in \Sigma_*$. Let $V \in X_F$ be such that $\sup\{\#\Sigma(V,x,r) : x \in X \text{ and } r>0\} < \infty$. By Lemma \ref{thm:bochi-morris}, there exists a constant $D \ge 1$ such that
  \begin{equation} \label{eq:POSC-morris-again}
    \|A_\iii^\top|V^\perp\| \le \alpha_1(A_\iii) \le D\|A_\iii^\top|V^\perp\|
  \end{equation}
  for all $\iii\in\Sigma_*$.

  Fix $x \in X$ and $0<r<\diam(X)$. Let $\iii\in\Sigma$ be such that $\pi(\iii)=x$ and choose $n \in \N$ such that $\fii_{\iii|_n}(X) \subset B(x,r)$ but $\fii_{\iii|_{n-1}}(X) \setminus B(x,r) \ne \emptyset$. Note that
  \begin{equation*}
    \kappa r \le \kappa\diam(\fii_{\iii|_{n-1}}(X)) \le \kappa\alpha_1(A_{\iii|_{n-1}}) \diam(X) \le \alpha_1(A_{\iii|_{n}}) \diam(X),
  \end{equation*}
  where $\kappa = \min_{i \in \{1,\ldots,N\}} \alpha_2(A_i)$. Therefore, by \eqref{eq:POSC-regular6},
  \begin{equation} \label{eq:ahlfors-lower-part}
  \begin{split}
    \pi_*\mu_K(B(x,r)) &\ge \pi_*\mu_K(\fii_{\iii|_n}(X)) \ge \mu_K([\iii|_n]) \\
    &\ge C^{-1}\alpha_1(A_{\iii|_n})^s \ge C^{-1} \kappa^s \diam(X)^{-s}r^s
  \end{split}
  \end{equation}
  and, by \eqref{eq:POSC-morris-again},
  \begin{align*}
    \pi_*\mu_K(B(x,r)) &\le \pi_*\mu_K(\proj_{V^\perp}^{-1}(\proj_{V^\perp}(B(x,r)))) \le \sum_{\jjj \in \Sigma(V,x,r)}\mu_K([\jjj]) \\
    &\le C\sum_{\jjj \in \Sigma(V,x,r)} \alpha_1(A_\jjj)^s \le CD^s \sum_{\jjj \in \Sigma(V,x,r)} \|\proj_{V^\perp}A_\jjj\|^s \\
    &\le CD^s \sup\{\#\Sigma(V,x,r) : x \in X \text{ and } r>0 \} \lambda^{-s} r^s,
  \end{align*}
  where $\lambda = \inf_{V \in X_F}\diam(\proj_{V^\perp}(X)) > 0$. The measure $\pi_*\mu_K$ is thus Ahlfors $s$-regular. As \eqref{eq:POSC-regular6} guarantees that the support of $\pi_*\mu_K$ is $X$, Lemma \ref{thm:ahlfors-implications} finishes the proof.
\end{proof}

Before proving Theorem \ref{thm:loweronassuad}, let us exhibit two auxiliary lemmas. The first one shows the existence of distinct maps whose projections have approximately a given small relative distance on a small open set which we next define. Relying on the strong separation condition, let $\delta>0$ be as in \eqref{eq:SSC}. If the self-affine set $X$ is not contained in a line, let $z_1,z_2,z_3\in X$ be in a general position such that $X \cap \conv(\{z_1,z_2,z_3\})^o \ne \emptyset$ and $|z_i-z_j| \le \frac{\delta}{3}$ whenever $i \ne j$. Write
\begin{equation} \label{eq:Z-set}
  Z = \conv(\{z_1,z_2,z_3\}).
\end{equation}
Bear in mind that if $X$ and $X_F$ are not singletons, then $X$ is not contained in a line.

\begin{lemma}\label{lem:key}
  If $X$ is a dominated planar self-affine set satisfying the strong separation condition, but not the projective separation, such that $X_F$ is not a singleton, then there are $\eta_0>0$ and $C \ge 1$ such that for every $0<\eta<\eta_0$ there exist $V \in X_F$ and $\iii,\jjj\in\Sigma_*$ with $\iii|_1 \ne \jjj|_1$ such that
  \begin{equation*}
    C^{-1}\eta\|A_\iii^\top|V^\perp\| \le |\proj_{V^\perp}(\fii_\iii(x)-\fii_\jjj(x))| \le C\eta\|A_\iii^\top|V^\perp\|
  \end{equation*}
  for all $x\in Z$, where $Z$ is as in \eqref{eq:Z-set}.
\end{lemma}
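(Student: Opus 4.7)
The plan is to extract from the failure of the projective open set condition a triple $(V,\iii,\jjj)$ satisfying both bounds on $Z$, first reducing to $\iii|_1\ne\jjj|_1$, then handling the automatic upper bound, and finally the delicate lower bound. For any $\eta'>0$, failure of the projective open set condition supplies $(V_0,\iii_0,\jjj_0)$ with $V_0\in X_F$, $\iii_0\ne\jjj_0$, and
\begin{equation*}
  \sup_{x\in X}|\proj_{V_0^\perp}(\fii_{\iii_0}(x)-\fii_{\jjj_0}(x))|<\eta'\diam(\proj_{V_0^\perp}(\fii_{\iii_0}(X))).
\end{equation*}
Letting $\kkk=\iii_0\wedge\jjj_0$ and writing $\iii_0=\kkk\iii$, $\jjj_0=\kkk\jjj$ with $\iii|_1\ne\jjj|_1$, a direct computation gives $|\proj_{V_0^\perp}(A_\kkk y)|=\|A_\kkk^\top|V_0^\perp\|\cdot|\proj_{W^\perp}(y)|$, where $W=A_\kkk^{-1}V_0$. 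Moreover $W\in X_F$: if $V_0=\Pi(\mathbf{m})$, then a direct manipulation of \eqref{eq:furstenberg-directions-dominated2} gives $A_\kkk^{-1}\Pi(\mathbf{m})=\Pi(\overleftarrow{\kkk}\mathbf{m})\in X_F$ (via $A_{\overleftarrow{(\overleftarrow{\kkk}\mathbf{m})|_n}}=A_{\overleftarrow{\mathbf{m}|_{n-|\kkk|}}}A_\kkk$). The common factor $\|A_\kkk^\top|V_0^\perp\|$ cancels from both sides of the failure inequality, so the same-ratio failure holds for the triple $(W,\iii,\jjj)$ and we may assume $\iii|_1\ne\jjj|_1$ from the outset. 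The upper bound then follows at once, since $\diam(\proj_{W^\perp}(\fii_\iii(X)))\le\diam(X)\,\|A_\iii^\top|W^\perp\|$ gives $|\proj_{W^\perp}(\fii_\iii(x)-\fii_\jjj(x))|\le\diam(X)\eta'\|A_\iii^\top|W^\perp\|$ for every $x\in X\supset Z$, upon choosing $\eta'$ comparable to $\eta$.

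Set $h(x)=\proj_{W^\perp}(\fii_\iii(x)-\fii_\jjj(x))$, an affine function with linear part $L=\proj_{W^\perp}(A_\iii-A_\jjj)$. Since the three vertices of $Z$ are in general position, evaluating $h(z_i)-h(z_j)$ determines $L$ and gives $\|L\|\le C_0\sup_X|h|/\diam(Z)$; consequently the variation of $h$ on $Z$ is at most $C_1\sup_X|h|$. The lower bound therefore reduces to producing some $x_0\in Z$ with $|h(x_0)|$ comparable to $\sup_X|h|$, since the triangle inequality with this variation bound then forces $|h(x)|$ to stay bounded below throughout $Z$.

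The main obstacle is that an arbitrary POSC-failing triple may have $h$ vanishing on a line inside $Z$, so the required $x_0$ need not exist for every selection. I plan to resolve this by a scale-selection argument. Appending a common suffix letter $k$ to $(\iii,\jjj)$ leaves the sup at most unchanged while multiplying the normalisation by a factor in $[\alpha_2(A_k)/\|A_k\|,\,1/\alpha_2(A_k)]$, a range bounded uniformly by domination. Hence the set of achievable ratios $\sup_X|h|/\|A_\iii^\top|W^\perp\|$ is dense, up to a uniform multiplicative gap, in $(0,\eta_0)$, and every sufficiently small target $\eta$ is realised by some triple. A compactness extraction in the space of sup-normalised affine functions on $X$---which is sequentially precompact because $X$ is not contained in a line (as $X_F$ is not a singleton)---produces a non-trivial limit affine function of sup $1$ on $X$, which cannot vanish on all of $Z$ since $Z$ has non-empty interior in $\R^2$. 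A scale-balanced selection within this family then yields, for each sufficiently small $\eta$, a triple whose $|h|/\sup_X|h|$ is uniformly bounded below on $Z$, giving the lemma's constant $C$. The subtlety is to upgrade the qualitative non-vanishing of the limit to a quantitative lower bound uniform in $\eta$; this is where the combination of the scale-selection step with the compactness argument does the real work.
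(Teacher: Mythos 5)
Your reduction to a pair with $\iii|_1\ne\jjj|_1$ by stripping the common prefix, the verification that $W=A_{\hhh\land\kkk}^{-1}L\in X_F$, and the upper bound in the direction $W$ all match the paper and are fine. The gap is in the lower bound, and it is not a technicality: your strategy of staying with the \emph{same} direction $W$ in which the projective open set condition fails cannot work. First, in the case of an exact overlap, $\proj_{W^\perp}\fii_\iii=\proj_{W^\perp}\fii_\jjj$, so $\sup_X|h|=0$; your sup-normalised family is then undefined and no lower bound of the form $C^{-1}\eta\|A_\iii^\top|W^\perp\|$ can hold for the direction $W$. This is not a degenerate corner case --- Theorem \ref{thm:baire-typical} and the set $\NN(\A)$ are built precisely around exact overlaps, so it is the main case the lemma must handle. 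Second, even when $\sup_X|h|>0$, your compactness extraction only yields a limit affine function that does not vanish \emph{identically} on $Z$; since $h$ is a real-valued affine function on $\R^2$, its zero set is a line which may well cross $Z$, and the lemma demands a pointwise lower bound at \emph{every} $x\in Z$. Knowing $|h(x_0)|\asymp\sup_X|h|$ at one point, together with a variation bound $C_1\sup_X|h|$ over $Z$ with $C_1\ge 1$, gives nothing at the other points.

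The paper's resolution, which is the idea missing from your proposal, is to keep the pair $(\iii,\jjj)$ but \emph{change the projection direction}. By the strong separation condition and $\diam(Z)\le\delta/3$ one has $|\fii_\iii(x)-\fii_\jjj(x)|\ge\delta/3$ for all $x\in Z$, so the smallness of $\proj_{W^\perp}(\fii_\iii(x)-\fii_\jjj(x))$ forces these uniformly long difference vectors to be nearly parallel to $W$, uniformly over $Z$ (estimate \eqref{eq:key-est4}). One then chooses $V\in X_F$ with $|\sin(\sphericalangle(W,V))|$ comparable to $K\eta\delta^{-1}\diam(X)\alpha_1(A_\iii)$; this is possible because $X_F$ is perfect and contains the sets $A_{\lll|_n}^{-1}X_F$, whose diameters decay geometrically like $\alpha_2(A_{\lll|_n})/\alpha_1(A_{\lll|_n})$, so every angular scale down from $\diam(X_F)$ is hit up to a bounded multiplicative factor. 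For this tilted $V$ the projection of each difference vector onto $V^\perp$ picks up a term $|\fii_\iii(x)-\fii_\jjj(x)|\,|\sin(\sphericalangle(W,V))|$ of the right order $\eta\|A_\iii^\top|V^\perp\|$, minus the small $W^\perp$-component, and with a suitable choice of the constant $K$ both the lower and the (re-verified) upper bound hold for every $x\in Z$. Without some version of this tilting step your argument cannot close.
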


\begin{proof}
  Define $\eta_0 = \frac16 \diam(X)^{-1} \delta$, where $\delta$ is as in \eqref{eq:SSC}, and choose $0<\eta<\eta_0$. Since the projective separation \eqref{eq:POSC} is not satisfied, there exist $L \in X_F$ and $\hhh,\kkk \in \Sigma_*$ with $\hhh\ne\kkk$ such that
  \begin{align*}
    |\proj_{A_{\hhh\land\kkk}^\top L^\perp}(\fii_{\iii}(x)-\fii_{\jjj}(x))| \|A_{\hhh\land\kkk}^\top|L^\perp\| &= |\proj_{L^\perp}(\fii_{\hhh}(x)-\fii_{\kkk}(x))| \\
    &\le \eta\diam(\proj_{L^\perp}(X))\|A_{\hhh}^\top|L^\perp\|,
  \end{align*}
  where $\iii = \sigma^{|\hhh\land\kkk|}(\hhh)$ and $\jjj = \sigma^{|\hhh\land\kkk|}(\kkk)$, and
  \begin{align*}
    |\proj_{A_{\hhh\land\kkk}^\top L^\perp}(\fii_{\iii}(x)-\fii_{\jjj}(x))| &\le \eta\diam(\proj_{L^\perp}(X)) \|A_{\iii}^\top|A_{\hhh\land\kkk}^\top L^\perp\| \\
    &\le \eta\diam(X)\alpha_1(A_{\iii})
  \end{align*}
  for all $x \in X$. Since $Z$ is the convex hull of $z_1,z_2,z_3\in X$ we see that for any $x \in Z$ there is a probability vector $(p_1,p_2,p_3)$ such that $x=\sum_ip_iz_i$. Hence,
  \begin{equation} \label{eq:key-est2}
  \begin{split}
    |\proj_{A_{\hhh\land\kkk}^\top L^\perp}(\fii_{\iii}(x)-\fii_{\jjj}(x))| &= \biggl|\sum_ip_i\proj_{A_{\hhh\land\kkk}^\top L^\perp}(\fii_{\iii}(z_i)-\fii_{\jjj}(z_i))\biggr| \\
    &\le \eta\diam(X)\alpha_1(A_{\iii})
  \end{split}
  \end{equation}
  for all $x \in Z$. Moreover, since $\diam(Z) \le \frac{\delta}{3}$ we have
  \begin{equation*}
    |\varphi_\iii(x)-\varphi_\jjj(x)| \ge \frac{\delta}{3}
  \end{equation*}
  for every $x\in Z$. Since
  \begin{align*}
    |\proj_{A_{\hhh\land\kkk}^\top L^\perp}(\fii_{\iii}(x)-\fii_{\jjj}(x))| &= |\fii_{\iii}(x)-\fii_{\jjj}(x)||\cos(\sphericalangle(A_{\hhh\land\kkk}^\top L^\perp,\fii_{\iii}(x)-\fii_{\jjj}(x)))| \\
    &\ge \frac{\delta}{3}|\cos(\sphericalangle(A_{\hhh\land\kkk}^\top L^\perp,\fii_{\iii}(x)-\fii_{\jjj}(x)))|,
  \end{align*}
  the estimate \eqref{eq:key-est2} implies
  \begin{equation} \label{eq:key-est4}
    |\cos(\sphericalangle(A_{\hhh\land\kkk}^\top L^\perp,\fii_{\iii}(x)-\fii_{\jjj}(x)))| \le \eta3\delta^{-1}\diam(X)\alpha_1(A_{\iii})
  \end{equation}
  for all $x \in Z$.

  Write $W = A_{\hhh\land\kkk}^{-1} L \in X_F$ and notice that $W^\perp = A_{\hhh\land\kkk}^\top L^\perp$. Recall that since $\A$ is dominated, Lemma \ref{thm:bochi-morris} implies that there exists a constant $D \ge 1$ such that $\alpha_1(A_{\iii}) \le D\|A_{\iii}^\top|V^\perp\|$ for all $V \in X_F$. Recalling that $X_F$ is compact and perfect, let $P,Q \in X_F$ be such that $\sphericalangle(P,Q) = \diam(X_F) > 0$ and write
  \begin{equation*}
    K = \frac{24D^3}{|\sin(\sphericalangle(P,Q))|\min_{i \in \{1,\ldots,N\}}\alpha_2(A_i)}.
  \end{equation*}
  Then, by \eqref{eq:key-est2}, for any $V \in X_F$ with $|\sin(\sphericalangle(W,V))| \le K\eta\delta^{-1}\diam(X)\alpha_1(A_{\iii})$ we have
  \begin{equation} \label{eq:key-est-upper-bound}
  \begin{split}
    |\proj_{V^\perp}(\fii_{\iii}&(x)-\fii_{\jjj}(x))| = |\fii_{\iii}(x)-\fii_{\jjj}(x)||\cos(\sphericalangle(V^\perp,\fii_{\iii}(x)-\fii_{\jjj}(x))| \\
    &= |\fii_{\iii}(x)-\fii_{\jjj}(x)||\cos(\sphericalangle(W^\perp,\fii_{\iii}(x)-\fii_{\jjj}(x))\cos(\sphericalangle(W,V)) \\
    &\quad\qquad\qquad\qquad\quad\pm \sin(\sphericalangle(W^\perp,\fii_{\iii}(x)-\fii_{\jjj}(x))\sin(\sphericalangle(W,V))| \\
    &\le |\proj_{W^\perp}(\fii_{\iii}(x)-\fii_{\jjj}(x))| + |\fii_{\iii}(x)-\fii_{\jjj}(x)||\sin(\sphericalangle(W,V))| \\
    &\le \eta\diam(X)\alpha_1(A_{\iii}) + K\eta\delta^{-1}\diam(X)(\diam(X)+\delta/3)\alpha_1(A_{\iii}) \\
    &\le (1+K\delta^{-1}(\diam(X)+\delta/3))\diam(X)D\eta\|A_{\iii}^\top|V^\perp\|
  \end{split}
  \end{equation}
  for all $x\in Z$ giving the claimed upper bound for any such $V$.

  Let us then show the lower bound. Since
  \begin{equation*}
    |\sin(\sphericalangle(A_\lll^{-1}P,A_\lll^{-1}Q))| = \frac{|A_\lll^{-1}v \land A_\lll^{-1}w|}{|A_\lll^{-1}v||A_\lll^{-1}w|} = \frac{|\det(A_\lll^{-1})|}{\|A_\lll^{-1}|P\|\|A_\lll^{-1}|Q\|}|\sin(\sphericalangle(P,Q))|,
  \end{equation*}
  where $v \in P$ and $w \in Q$ such that $|v|=1=|w|$, we see that
  \begin{equation} \label{eq:key-est-tmp}
  \begin{split}
    \frac{\alpha_2(A_\lll)}{\alpha_1(A_\lll)}|\sin(\sphericalangle(P,Q))| &\le |\sin(\sphericalangle(A_\lll^{-1}P,A_\lll^{-1}Q))| \\
    &\le D^2 \frac{|\det(A_\lll^{-1})|}{\alpha_1(A_\lll^{-1})^2} = D^2 \frac{\alpha_2(A_\lll)}{\alpha_1(A_\lll)}
  \end{split}
  \end{equation}
  for all $\lll \in \Sigma_*$. Let $\lll \in [\hhh\land\kkk] \subset \Sigma$ be such that $W \in A_{\lll|_n}^{-1}X_F$ for all $n \in \N$. Fix $n \in \N$ such that
  \begin{equation} \label{eq:key-est-choice}
    D^2\frac{\alpha_2(A_{\lll|_{n}})}{\alpha_1(A_{\lll|_{n}})} \le K\eta\delta^{-1}\diam(X)\alpha_1(A_{\iii}) < D^2\frac{\alpha_2(A_{\lll|_{n-1}})}{\alpha_1(A_{\lll|_{n-1}})}
  \end{equation}
  and observe that, by the fact that $\sphericalangle(A_{\lll|_n}^{-1}P,A_{\lll|_n}^{-1}Q) = \diam(A_{\lll|_n}^{-1} X_F)$, the estimates \eqref{eq:key-est-tmp} and \eqref{eq:key-est-choice} show that there exists $V \in \{A_{\lll|_n}^{-1}P,A_{\lll|_n}^{-1}Q\} \subset A_{\lll|_n}^{-1} X_F$ such that
  \begin{equation} \label{eq:key-est7}
    \frac12 \frac{\alpha_2(A_{\lll|_n})}{\alpha_1(A_{\lll|_n})}|\sin(\sphericalangle(P,Q))| \le |\sin(\sphericalangle(W,V))| \le K\eta\delta^{-1}\diam(X)\alpha_1(A_{\iii}).
  \end{equation}
  It follows that the upper bound found in \eqref{eq:key-est-upper-bound} is valid for this $V \in X_F$. As $0<\eta<\eta_0$, we have $\sqrt{1-(\eta3\delta^{-1}\diam(X)\alpha_1(A_{\iii}))^2}>\tfrac12$. Therefore, by \eqref{eq:key-est4}, \eqref{eq:key-est7}, \eqref{eq:key-est2}, and \eqref{eq:key-est-choice},
  \begin{align*}
    |\proj_{V^\perp}(\fii_{\iii}&(x)-\fii_{\jjj}(x))| = |\fii_{\iii}(x)-\fii_{\jjj}(x)||\cos(\sphericalangle(W^\perp,\fii_{\iii}(x)-\fii_{\jjj}(x))\cos(\sphericalangle(W,V)) \\
    &\quad\qquad\qquad\qquad\qquad\pm \sin(\sphericalangle(W^\perp,\fii_{\iii}(x)-\fii_{\jjj}(x))\sin(\sphericalangle(W,V))| \\
    &\ge |\fii_{\iii}(x)-\fii_{\jjj}(x)||\sin(\sphericalangle(W^\perp,\fii_{\iii}(x)-\fii_{\jjj}(x))||\sin(\sphericalangle(W,V))| \\
    &\quad\qquad\qquad\qquad\qquad- |\proj_{W^\perp}(\fii_{\iii}(x)-\fii_{\jjj}(x))||\cos(\sphericalangle(W,V))| \\
    &\ge \frac{\delta}{3} \sqrt{1-(\eta3\delta^{-1}\diam(X)\alpha_1(A_{\iii}))^2} \frac12 \frac{\alpha_2(A_{\lll|_n})}{\alpha_1(A_{\lll|_n})}|\sin(\sphericalangle(P,Q))| \\
    &\quad\qquad\qquad\qquad\qquad- \eta\diam(X)\alpha_1(A_{\iii}) \\
    &\ge \biggl(\frac{K\min_{i \in \{1,\ldots,N\}}\alpha_2(A_i)|\sin(\sphericalangle(P,Q))|}{12D^2} - D\biggr)\diam(X)\eta\|A_{\iii}^\top|V^\perp\|.
  \end{align*}
  The choice of $K$ guarantees that the coefficient obtained in the end is positive and therefore, the claimed lower bound holds for this $V \in X_F$.
\end{proof}

If $X$ is dominated and $\CC \subset \RP$ is a strongly invariant multicone, then, by \eqref{eq:furstenberg-directions-dominated}, we have $\CC \cap X_F = \emptyset$. Fix
\begin{equation} \label{L-line}
  L \in \bigcap_{n=1}^\infty\bigcup_{\iii\in\Sigma_n}A_\iii \CC \subset \CC
\end{equation}
and notice that $L$ is uniformly transverse to every $V \in X_F$. In the second auxiliary lemma, we find small scales containing a large number of equally distributed points. The purpose of $L$ is to be the line along which we equidistribute them.

\begin{lemma}\label{lem:totangent}
	If $X$ is a dominated planar self-affine set satisfying the strong separation condition, but not the projective separation, such that $X_F$ is not a singleton, then there exist $x_0 \in X$ and $C \ge 1$ so that for each $k \in \N$ there are $V_k \in X_F$, a natural number $n_k \ge k$, and finite words $\kkk_0,\ldots,\kkk_{n_k} \in \Sigma_*$ such that
	\begin{equation*}
	  C^{-1}\frac{\|A_{\kkk_0}^\top|V_k^\bot\|}{n_k} \le |\proj_{V_k^\bot}(\fii_{\kkk_{i-1}}(x_0) - \fii_{\kkk_{i}}(x_0))| \le C\frac{\|A_{\kkk_0}^\top|V_k^\bot\|}{n_k}
	\end{equation*}
	for all $i \in \{1,\ldots,n_k\}$. Furthermore, the set $\{\proj_{V_k^\bot}(\fii_{\kkk_{i-1}}(x_0) - \fii_{\kkk_{i}}(x_0)) : i \in \{1,\ldots,n_k\}\}$ is completely contained in one of the two halfplanes determined by $L^\bot$, where $L \in \RP$ is as in \eqref{L-line}.
\end{lemma}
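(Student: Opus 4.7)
The plan is to apply Lemma~\ref{lem:key} with $\eta\asymp 1/k$ to obtain a near-overlap pair of words $\iii_k,\jjj_k\in\Sigma_*$ with $\iii_k|_1\ne\jjj_k|_1$ and a direction $V_k\in X_F$ satisfying
\[
  C_0^{-1}k^{-1}\|A_{\iii_k}^\top|V_k^\perp\|\le|\proj_{V_k^\perp}(\fii_{\iii_k}(x)-\fii_{\jjj_k}(x))|\le C_0k^{-1}\|A_{\iii_k}^\top|V_k^\perp\|
\]
for $x\in Z$. Crucially, inspection of the proof of Lemma~\ref{lem:key} shows the upper bound here extends verbatim to all $x\in X$, since it follows directly from the failure of the projective open set condition (before the perturbation to the triangle $Z$). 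Fix $x_0\in X\cap Z^\circ$. The task is then to build $n_k+1$ words $\kkk_0,\ldots,\kkk_{n_k}$ whose images $\fii_{\kkk_i}(x_0)$ project onto $V_k^\perp$ as an approximately arithmetic progression at scale $\|A_{\kkk_0}^\top|V_k^\perp\|$.

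The construction I propose iterates the near-overlap pair in the free semigroup generated by $\iii_k$ and $\jjj_k$. Choosing a long prefix $\lll_k$ (whose scale $\|A_{\lll_k}^\top|V_k^\perp\|$ will serve as $\|A_{\kkk_0}^\top|V_k^\perp\|$), I would form the pool of candidates $\fii_{\lll_k\cdot\mathsf{w}}(x_0)$ for $\mathsf{w}$ ranging over words in $\{\iii_k,\jjj_k\}^{N}$ for $N$ large. A change of a single letter $\iii_k\leftrightarrow\jjj_k$ at position $j$ within $\mathsf{w}$ shifts the projection by $\proj_{V_k^\perp}(A_{\lll_k\mathsf{w}_1\cdots\mathsf{w}_{j-1}}(\fii_{\iii_k}(y)-\fii_{\jjj_k}(y)))$ with $y\in X$, and the extended upper bound of Lemma~\ref{lem:key} together with Lemma~\ref{thm:bochi-morris} (which provides $\|A^\top|V_k^\perp\|\asymp\alpha_1(A)$ uniformly for $V_k\in X_F$) bound each elementary shift by $\tilde Ck^{-1}\|A_{\lll_k\mathsf{w}_1\cdots\mathsf{w}_{j-1}}^\top|V_k^\perp\|\cdot\alpha_1(A_{\iii_k})$. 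Summing geometrically, the $2^N$ projections lie in an interval of length $\asymp\|A_{\kkk_0}^\top|V_k^\perp\|$ in $V_k^\perp$. Pigeon-holing into $n_k$ sub-intervals of length $\|A_{\kkk_0}^\top|V_k^\perp\|/n_k$ and sorting the selected candidates then yields the required $\kkk_0,\ldots,\kkk_{n_k}$. The halfplane claim is automatic: after sorting, consecutive differences in $V_k^\perp$ have constant sign, and since $L\in\CC^\circ$ lies in the invariant multicone while $V_k\in X_F\subset\RP\setminus\CC$ we have $L\ne V_k$, so the ray containing these differences sits in a single halfplane of $L^\perp$.

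The main obstacle is the \emph{lower} bound on consecutive gaps, equivalently the genuine density of the candidate pool at scale $\|A_{\kkk_0}^\top|V_k^\perp\|/n_k$. While Lemma~\ref{lem:key}'s upper bound on elementary shifts is valid at any $y\in X$, its lower bound requires $y\in Z$, so interior evaluation points drifting out of $Z$ could in principle produce spuriously small shifts and cause the pool to cluster. I would resolve this by exploiting the uniform near-alignment---provided by Lemma~\ref{thm:bochi-morris} together with the backward-invariance $A_j^{-1}X_F\subset X_F$---between $V_k^\perp$ and the top-left singular direction of every intermediate matrix $A_\lll$ appearing in the telescoping. This alignment guarantees that $|\proj_{V_k^\perp}(A_\lll u)|\asymp\|A_\lll^\top|V_k^\perp\|\cdot|\proj_{V_k^\perp}(u)|$ uniformly, so the anchor lower bound at $x_0\in Z$ propagates through all iterated compositions without catastrophic cancellation, producing the required uniform density and delivering both sides of the two-sided bound with a constant $C\ge 1$ independent of $k$.
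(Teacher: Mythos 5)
Your starting point (Lemma \ref{lem:key} with $\eta\asymp 1/k$) and the overall strategy of concatenating near-overlapping blocks to produce a chain of points with comparable gaps do match the paper. However, the core of your construction --- iterating a \emph{single} pair $(\iii_k,\jjj_k)$ in a free semigroup and pigeonholing --- has a genuine gap, located exactly where you place your ``main obstacle''. The relevant identity is $\proj_{V^\perp}(Au)=\pm\|A^\top|V^\perp\|\proj_{A^\top V^\perp}(u)$, so your elementary shift equals $\|A_{\lll}^\top|V_k^\perp\|\,|\proj_{(A_{\lll}^{-1}V_k)^\perp}(\fii_{\iii_k}(y)-\fii_{\jjj_k}(y))|$ for the relevant prefix $\lll$. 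The vector $\fii_{\iii_k}(y)-\fii_{\jjj_k}(y)$ has length at least $\delta/3$ but projection of order $k^{-1}\alpha_1(A_{\iii_k})$ onto $V_k^\perp$, i.e.\ it is nearly parallel to $V_k$; meanwhile $A_{\lll}^{-1}V_k$ is merely some element of $X_F$, a set of positive diameter, with no reason to stay within angular distance $o(k^{-1}\alpha_1(A_{\iii_k}))$ of the critical direction. Hence the shift can be of order $\|A_{\lll}^\top|V_k^\perp\|$ rather than $k^{-1}\|A_{\lll}^\top|V_k^\perp\|\alpha_1(A_{\iii_k})$, in both directions: neither your upper nor your lower bound on elementary shifts survives the conjugation. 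The comparison $|\proj_{V_k^\perp}(A_\lll u)|\asymp\|A_\lll^\top|V_k^\perp\|\,|\proj_{V_k^\perp}(u)|$ that you invoke is false precisely for such nearly-parallel $u$; Lemma \ref{thm:bochi-morris} only says that $V_k^\perp$ is uniformly far from the most-contracted direction of $A_\lll^\top$, not that $A_\lll^\top V_k^\perp$ stays near $V_k^\perp$ at the required (extremely fine) angular scale.

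A second, independent problem: even granting correct shift estimates, flipping the letter at position $j$ produces a shift proportional to $\|A_{\lll_k\mathsf{w}_1\cdots\mathsf{w}_{j-1}}^\top|V_k^\perp\|$, which decays geometrically in $j$. The $2^N$ candidate projections therefore form a Cantor-like set with geometrically separated scales, not an approximate arithmetic progression, and pigeonholing into $n_k$ equal subintervals cannot guarantee that all $n_k$ consecutive gaps are $\asymp\|A_{\kkk_0}^\top|V_k^\perp\|/n_k$; generically most subintervals are empty. The paper's proof repairs both defects at once: before the $i$-th near-overlap block it inserts a word $\ddd_{n_{k_i}}$ with $A_{\ddd_{n_{k_i}}}^{-1}(X_F)\subset B(V_{n_{k_i}},r_{n_{k_i}})$, which refocuses the drifting direction into a ball where the factor-$2$ comparison \eqref{eq:totangent3} holds; it uses a \emph{different} pair $(\iii_{n_{k_i}},\jjj_{n_{k_i}})$ with closeness parameter $1/n_{k_i}$ at each level $i$, calibrated via \eqref{eq:indneed} against the accumulated contraction of the prefix so that every gap lands at the single scale $\|A_{\kkk_0}^\top|V^\bot\|/n_K$; and it inserts $\fff|_{l_i}$ to return the evaluation point to $Z$, where the lower bound of Lemma \ref{lem:key} is actually available. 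Your observation that the upper bound of Lemma \ref{lem:key} extends to all $x\in X$ is correct but does not substitute for these steps.
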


\begin{proof}
  Let $Z$ be as in \eqref{eq:Z-set}. Recalling that $X \cap Z^o \ne \emptyset$, we choose $x_0 \in X \cap Z^o$ and let $\fff\in\Sigma$ be such that $\pi(\fff)=x_0$. By Lemma~\ref{lem:key}, there exist $n_0 \in \N$ and $C \ge 1$ such that for every $n \ge n_0$ there exist $V_n\in X_F$ and $\iii_{n},\jjj_{n}\in\Sigma_*$ with $\iii_n|_1 \ne \jjj_n|_1$ such that
  \begin{equation} \label{eq:totangent1}
    C^{-1}\frac{\|A_{\iii_{n}}^\top|V_n^\bot\|}{n} \le |\proj_{V_n^\bot}(\varphi_{\iii_{n}}(x)-\varphi_{\jjj_{n}}(x))|\leq C\frac{\|A_{\iii_{n}}^\top|V_n^\bot\|}{n}
  \end{equation}
  for all $x \in Z$. Notice also that for every $n \in \N$, by the continuity of the projection $V \mapsto \proj_{V^\bot}(\varphi_{\iii_{n}}(x)-\varphi_{\jjj_{n}}(x))$ and the compactness of $Z$, there exists $r_n>0$ such that
  \begin{equation} \label{eq:totangent3}
    \frac12\leq\frac{|\proj_{V_{n}^\bot}(\varphi_{\iii_{n}}(x)-\varphi_{\jjj_{n}}(x))|}{|\proj_{W^\bot}(\varphi_{\iii_{n}}(x)-\varphi_{\jjj_{n}}(x))|}\leq 2
  \end{equation}
  for all $x \in Z$ and $W \in B(V_{n},r_n)$.

  Recalling \eqref{eq:furstenberg-directions-dominated2}, let $\ddd_n \in \Sigma_*$ be such that
  \begin{equation*}
    A_{\ddd_n}^{-1}(X_F) \subset B(V_{n},r_{n}).
  \end{equation*}
  Since $Z$ is connected, $\proj_{V_n^\bot}(\varphi_{\iii_{n}}(x)-\varphi_{\jjj_{n}}(x))$ is contained in the same halfplane determined by $L^\bot$ for all $x \in Z$. Hence, there exists a strictly increasing sequence $(n_k)_{k \in \N}$ of natural numbers such that the points $\proj_{V_{n_k}^\bot}(\varphi_{\iii_{n_k}}(x)-\varphi_{\jjj_{n_k}}(x))$ are contained in the same halfplane determined by $L^\bot$ for all $x \in Z$ and $k\in\N$. Fix $K \in \N$ and write $M = n_K$. Let $k_1,\ldots,k_M \in \N$ and $l_2,\ldots,l_M \in \N$ be arbitrary. We will specify these numbers later. Let us define words $\kkk_0,\ldots,\kkk_M \in \Sigma_*$ by setting
  \begin{equation}\label{eq:kwords}
  \begin{split}
	  \kkk_0 &= \ddd_{n_{k_M}}\iii_{n_{k_{M}}}\fff|_{l_M}\cdots\ddd_{n_{k_2}}\iii_{n_{k_2}}\fff|_{l_2}\ddd_{n_{k_1}}\iii_{n_{k_1}}, \\
	  &\;\;\vdots \\
	  \kkk_i &= \ddd_{n_{k_M}}\iii_{n_{k_{M}}}\fff|_{l_M}\cdots\ddd_{n_{k_{i+1}}}\iii_{n_{k_{i+1}}}\fff|_{l_{i+1}}\ddd_{n_{k_i}}\jjj_{n_{k_i}}\fff|_{l_i}\cdots\ddd_{n_{k_1}}\jjj_{n_{k_1}}, \\
	  &\;\;\vdots \\
	  \kkk_{M} &= \ddd_{n_{k_M}}\jjj_{n_{k_M}}\fff|_{l_M}\cdots\ddd_{n_{k_2}}\jjj_{n_{k_2}}\fff|_{l_2}\ddd_{n_{k_1}}\jjj_{n_{k_1}}.
\end{split}
\end{equation}
  To simplify notation, write
  \begin{align*}
    \lll_i &= \ddd_{n_{k_K}}\iii_{n_{k_{M}}}\fff|_{l_M}\cdots\ddd_{n_{k_{i+1}}}\iii_{n_{k_{i+1}}}\fff|_{l_{i+1}}\ddd_{n_{k_{i}}}, \\
    \hhh_i &= \iii_{n_{k_{i}}}\fff|_{l_i}\cdots\ddd_{n_{k_1}}\jjj_{n_{k_1}}, \\
    \ggg_i &= \jjj_{n_{k_{i}}}\fff|_{l_i}\cdots\ddd_{n_{k_1}}\jjj_{n_{k_1}},
  \end{align*}
  and observe that then
  \begin{equation} \label{eq:totangent2}
    |\proj_{V_{n_{K}}^\bot}(\fii_{\kkk_{i-1}}(x_0) - \fii_{\kkk_{i}}(x_0))| = |\proj_{A_{\lll_i}^{\top} V_{n_{K}}^\bot}(\fii_{\hhh_i}(x_0) - \fii_{\ggg_i}(x_0))|\|A_{\lll_i}^\top|V_{n_{K}}^\bot\|
  \end{equation}
  for all $i \in \{1,\ldots,M\}$. Without loss of generality, we may assume that $A_{\lll_i}^\top|V_{n_K}^\bot$ is orientation preserving. Indeed, adding an extra symbol in the front of $\ddd_{n_{k_i}}$ leaves the properties of $\ddd_{n_{k_i}}$ defined above unchanged but may change the orientation of $A_{\lll_i}^\top|V_{n_K}^\bot$ if necessary. Thus, $\proj_{V_{n_{K}}^\bot}(\fii_{\kkk_{i-1}}(x_0) - \fii_{\kkk_{i}}(x_0))$ is in the same halfplane determined by $L^\bot$ as $\proj_{A_{\lll_i}^\top V_{n_{K}}^\bot}(\fii_{\hhh_i}(x_0) - \fii_{\ggg_i}(x_0))$. Furthermore, since $A_{\lll_i}^{\top}=A_{\ddd_{n_{k_i}}}^{\top}A_{\ddd_{n_{k_M}}\iii_{n_{k_{M}}}\fff\cdots\ddd_{n_{k_{i+1}}}\iii_{n_{k_{i+1}}}\fff}^{\top}$, we have $A_{\lll_i}^{\top}V_{n_K}^\bot\in B(V_{n_{k_i}}^\bot,r_{n_{k_i}})$. Therefore, if $\fii_{\fff|_{l_i}\cdots\ddd_{n_{k_1}}\jjj_{n_{k_1}}}(x_0) \in Z$, then, by \eqref{eq:totangent2}, \eqref{eq:totangent3}, and \eqref{eq:totangent1},
  \begin{equation*}
  \begin{split}
    (2C)^{-1}\frac{\|A_{\iii_{n_{k_i}}}^{\top}|V_{n_{k_i}}^\bot\|\|A_{\lll_i}^{\top}|V_{n_{K}}^\bot\|}{n_{k_i}} &\le |\proj_{V_{n_{K}}^\bot}(\fii_{\kkk_{i-1}}(x_0) - \fii_{\kkk_{i}}(x_0))| \\
    &\le 2C\frac{\|A_{\iii_{n_{k_i}}}^{\top}|V_{n_{k_i}}^\bot\|\|A_{\lll_i}^{\top}|V_{n_{K}}^\bot\|}{n_{k_i}}
  \end{split}
  \end{equation*}
  for all $i \in \{1,\ldots,M\}$. Hence, to finish the proof, it suffices to recall Lemma \ref{thm:bochi-morris} and choose the numbers $k_1,\ldots,k_{M} \in \N$ such that
  \begin{equation}\label{eq:indneed}
  \begin{split}
    c_1\frac{\alpha_1(A_{\fff|_{l_i}\ddd_{n_{k_{i-1}}}\jjj_{n_{k_{i-1}}}\fff|_{l_{i-1}}\cdots\ddd_{n_{k_1}}\jjj_{n_{k_1}}})}{n_K} &\le \frac{1}{n_{k_i}} \\
    &\le c_2\frac{\alpha_1(A_{\fff|_{l_i}\ddd_{n_{k_{i-1}}}\jjj_{n_{k_{i-1}}}\fff|_{l_{i-1}}\cdots\ddd_{n_{k_1}}\jjj_{n_{k_1}}})}{n_K}
  \end{split}
  \end{equation}
  for some constants $c_1,c_2 > 0$ and the numbers $l_2,\ldots,l_M \in \N$ such that the points $\fii_{\fff|_{l_i}\cdots\ddd_{n_{k_1}}\jjj_{n_{k_1}}}(x_0)$ are in $Z$.

  Let us give the precise definition for the numbers $k_1,k_2,\ldots,k_{M}$ and $l_2,\ldots,l_M$. Let $k_1=K$ and choose $l_2$ to be the smallest integer such that $\varphi_{\fff|_{l_2}}(X)\subset Z$ and
  \begin{equation*}
    \Biggl[\frac{\alpha_1(A_{\fff|_{l_2+1}\ddd_{n_{k_1}}\jjj_{n_{k_1}}}^{\top})}{n_{K}}, \frac{\alpha_1(A_{\fff|_{l_2}\ddd_{n_{k_1}}\jjj_{n_{k_1}}}^{\top})}{n_{K}}\Biggr) \cap \{n_{k_1+1}^{-1},n_{k_1+2}^{-1},\ldots\} \neq \emptyset,
  \end{equation*}
  and choose $k_2$ such that $n_{k_2}$ is the largest element of the above set. We continue inductively. If $k_i$ and $l_i$ has been defined for $i \in \{2,\ldots,M-1\}$, then let $l_{i+1}$ be the smallest integer such that $\varphi_{\fff|_{l_{i+1}}}(X)\subset Z$ and the intersection
  \begin{equation*}
    \Biggl[\frac{\alpha_1(A_{\fff|_{l_{i+1}+1}\ddd_{n_{k_i}}\jjj_{n_{k_i}}\fff|_{l_i}\cdots\ddd_{n_{k_1}}\jjj_{n_{k_1}}}^{\top})}{n_{K}},\frac{\alpha_1(A_{\fff|_{l_{i+1}}\ddd_{n_{k_i}}\jjj_{n_{k_i}}\fff|_{l_i}\cdots\ddd_{n_{k_1}}\jjj_{n_{k_1}}}^{\top})}{n_{K}}\Biggr) \cap \{n_{k_i+1}^{-1},n_{k_i+2}^{-1},\ldots\}
  \end{equation*}
  is non-empty, and choose $k_{i+1}$ such that $n_{k_{i+1}}$ is the largest element of the above set. As these choices clearly satisfy \eqref{eq:indneed}, we have finished the proof.
\end{proof}

The following proposition is a straightforward consequence of Lemma~\ref{lem:totangent}.

\begin{proposition} \label{lem:missing}
  If $X$ is a dominated planar self-affine set satisfying the strong separation condition, but not the projective separation, such that $X_F$ is not a singleton, then
  \begin{equation*}
    \sup\{\#\Sigma(V,x,r) : V \in X_F, \; x \in X, \text{ and } r>0\} = \infty,
  \end{equation*}
  where $\Sigma(V,x,r)$ is as in \eqref{eq:POSC-Sigma-def}.
\end{proposition}

\begin{proof}
  Let $x_0 \in X$ be as in Lemma~\ref{lem:totangent} and let $\jjj\in\Sigma$ be such that $\pi(\jjj)=x_0$. Furthermore, for each $k\in\N$ let $V_k \in X_F$, $n_k$, and the finite words $\kkk_0,\ldots,\kkk_{n_k} \in \Sigma_*$ be as in Lemma~\ref{lem:totangent}. Recall that, by \eqref{eq:posdiam}, $\inf_{V\in X_F}\diam(\proj_{V^\bot}(X))>0$. Fix $k \in \N$ and choose $q \in \{1,\ldots,n_k\}$ such that
  \begin{equation*}
    C\frac{q}{n_k}<\inf_{V\in X_F}\diam(\proj_{V^\bot}(X))\leq C\frac{q+1}{n_k}.
  \end{equation*}
  Write $r_k = \inf_{V\in X_F}\diam(\proj_{V^\bot}(X))\|A_{\kkk_0}^\top|V_k^\bot\|>0$ and $\lll_i=\kkk_i\jjj|_{m_i}$ for all $i \in \{0,\ldots,q\}$, where $m_i$ is chosen such that
  \begin{equation*}
    \diam(\proj_{V_k^\perp}(\fii_{\lll_i}(X))) < r_k \le \diam(\proj_{V_k^\perp}(\fii_{(\lll_i)^-}(X))).
  \end{equation*}
  Since
  \begin{equation*}
    \diam(\proj_{V_k^\perp}(\fii_{\kkk_0}(X)))=\|A_{\kkk_0}^\top|V_k^\perp\|\diam(\proj_{A_{\kkk_0}^\top V_k^\perp}(X)),
  \end{equation*}
  we see that $m_0>|\kkk_0|$. We claim that $\lll_i\neq\lll_j$ for all $i,j\in\{0,\ldots,q\}$ with $i<j$. Indeed, if $\lll_i=\lll_j$ for some $i<j$, then, by the construction of the words $\kkk_0,\ldots,\kkk_{n_k}$ in \eqref{eq:kwords}, we have $m_i=m_j\leq|\kkk_i\wedge\kkk_j|<|\kkk_0\wedge\kkk_i|$ implying $m_i=m_0<|\kkk_0|$, which is a contradiction. By telescoping, we have
  \begin{align*}
    |\proj_{V_k^\bot}(\fii_{\kkk_{i}}(x_0) - \fii_{\kkk_{0}}(x_0))| &\le \sum_{j=1}^i |\proj_{V_k^\bot}(\fii_{\kkk_{i}}(x_0) - \fii_{\kkk_{i-1}}(x_0))| \le C\frac{q}{n_k}\|A_{\kkk_0}^\top|V_k^\bot\| \\
    &\le \inf_{V\in X_F}\diam(\proj_{V^\bot}(X))\|A_{\kkk_0}^\top|V_k^\bot\| = r_k.
  \end{align*}
  for all $i \in \{1,\ldots,q\}$. Thus, $\lll_i\in\Sigma(V_k,\varphi_{\kkk_0}(x_0),r_k$ for all $i \in \{1,\ldots,q\}$, and so
  \begin{equation*}
    \#\Sigma(V_k,\varphi_{\kkk_0}(x_0),r_k) \ge q \ge n_kC^{-1}\inf_{V\in X_F}\diam(\proj_{V^\bot}(X))-1.
  \end{equation*}
  Since $n_k\to\infty$ as $k\to\infty$, the claim follows.
\end{proof}

Finally, we apply Lemma \ref{lem:totangent} to show that a line segment appears as a subset of a weak tangent set.

\begin{theorem} \label{thm:loweronassuad}
  If $X$ is a dominated planar self-affine set satisfying the strong separation condition, but not the projective separation, such that $X_F$ is not a singleton, then $\dima(X) \ge 1$.
\end{theorem}

\begin{proof}
	Let $x_0\in X$ and $C \ge 1$ be as in Lemma \ref{lem:totangent}. Fix $k \in \N$ and let $V_k \in X_F$, $n_k \ge k$, and $\kkk_0,\ldots,\kkk_{n_k} \in \Sigma_*$ be as in Lemma~\ref{lem:totangent}. Write $B_k = \{\fii_{\kkk_0}(x_0),\ldots,\fii_{\kkk_{n_k}}(x_0)\}$ and let $L$ be as in \eqref{L-line}. Let $a_i^L\in L$ and $a_i^V\in V_k$ be such that $a_i^L+a_i^V=\varphi_{\kkk_{i}}(x_0)-\varphi_{\kkk_{i-1}}(x_0)$ for all $i \in \{1,\ldots,n_k\}$.	Recalling \eqref{eq:furstenberg-directions-dominated2}, let $\lll_{k} \in \Sigma$ be such that $V_k=\Pi(\lll_k)$. Let $D \ge 1$ be as in Lemma \ref{thm:bochi-morris}. Relying on the definition of domination, choose $m_k$ to be the smallest integer for which
  \begin{equation} \label{eq:loweronassouad1}
    D\alpha_2(A_{\overleftarrow{\lll_k|_{m_k}}})\diam(X)\leq\frac{1}{n_k^2}\alpha_1(A_{\overleftarrow{\lll_k|_{m_k}}})\|A_{\kkk_0}^\top|V_k^\bot\|.
  \end{equation}
  By Lemma \ref{thm:bochi-morris},
  \begin{equation} \label{eq:loweronassouad2}
    |A_{\overleftarrow{\lll_k|_{m_k}}}a_i^V| = \|A_{\overleftarrow{\lll_k|_{m_k}}}|V_k\||a_i^V| \le D\alpha_2(A_{\overleftarrow{\lll_k|_{m_k}}})\diam(X).
  \end{equation}
  Therefore, by \eqref{eq:loweronassouad2} and \eqref{eq:loweronassouad1}, we have
  \begin{equation} \label{eq:loweronassouad3}
    \biggl|\sum_{j=1}^iA_{\overleftarrow{\lll_k|_{m_k}}}a_{j}^V\biggr| \leq \frac{1}{n_k}\alpha_1(A_{\overleftarrow{\lll_k|_{m_k}}})\|A_{\kkk_0}^\top|V_k^\bot\|
  \end{equation}
  for all $i\in \{1,\ldots,n_k\}$. Clearly, each $A_{\overleftarrow{\lll_k|_{m_k}}}a_{i}^L$ is contained in the subspace $A_{\overleftarrow{\lll_k|_{m_k}}}L$ and hence, by \eqref{eq:loweronassouad3},
  \begin{equation} \label{eq:loweronassouad4}
  \begin{split}
    \dist(\varphi_{\overleftarrow{\lll_k|_{m_k}}}(\varphi_{\kkk_i}(x_0))&-\varphi_{\overleftarrow{\lll_k|_{m_k}}}(\varphi_{\kkk_0}(x_0)),A_{\overleftarrow{\lll_k|_{m_k}}}L) \\
    &\le \biggl|\varphi_{\overleftarrow{\lll_k|_{m_k}}}(\varphi_{\kkk_i}(x_0))-\varphi_{\overleftarrow{\lll_k|_{m_k}}}(\varphi_{\kkk_0}(x_0))-\sum_{j=1}^iA_{\overleftarrow{\lll_k|_{m_k}}}a_{j}^L\biggr| \\
    &= \biggl|\sum_{j=1}^iA_{\overleftarrow{\lll_k|_{m_k}}}(\varphi_{\kkk_{j}}(x_0)-\varphi_{\kkk_{j-1}}(x_0))-\sum_{j=1}^iA_{\overleftarrow{\lll_k|_{m_k}}}a_{j}^L\biggr| \\
    &=\biggl|\sum_{j=1}^iA_{\overleftarrow{\lll_k|_{m_k}}}a_{j}^V\biggr| \le \frac{1}{n_k}\alpha_1(A_{\overleftarrow{\lll_k|_{m_k}}})\|A_{\kkk_0}^\top|V_k^\bot\|.
  \end{split}
  \end{equation}
  Thus, $\varphi_{\overleftarrow{\lll_k|_{m}}}(B_k)$ is in the $\frac{1}{n_k}\alpha_1(A_{\overleftarrow{\lll_k|_{m_k}}})\|A_{\kkk_0}^\top|V_k\|$-neighbourhood of the line $A_{\overleftarrow{\lll_k|_{m_k}}}L+\varphi_{\overleftarrow{\lll_k|_{m_k}}}(\varphi_{\kkk_0}(x_0))$.

  By Lemma \ref{lem:totangent},
  \begin{equation*}
    C^{-1}\frac{\|A_{\kkk_0}^\top|V_k^\bot\|}{n_k} \le |\proj_{V_k^\bot}(\fii_{\kkk_{i-1}}(x_0) - \fii_{\kkk_{i}}(x_0))| \le C\frac{\|A_{\kkk_0}^\top|V_k^\bot\|}{n_k}
  \end{equation*}
  for all $i \in \{1,\ldots,n_k\}$. Since $L$ is uniformly transverse to every $V \in X_F$, we may, by possibly adjusting the constant $C$, replace $|\proj_{V_k^\bot}(\fii_{\kkk_{i-1}}(x_0) - \fii_{\kkk_{i}}(x_0))|$ in the above estimate by $|a_i^L|$. Therefore, since $L \in \CC$ and $\CC$ is a strongly invariant multicone, \cite[Lemma~2.2]{BochiMorris2015} implies that $\|A_\iii|L\| \ge D^{-1}\alpha_1(A_\iii)$ for all $\iii \in \Sigma_*$, and so
  \begin{equation} \label{eq:loweronassouad5}
    (CD)^{-1}\frac{\alpha_1(A_{\overleftarrow{\lll_k|_{m_k}}})\|A_{\kkk_0}^\top|V_k^\bot\|}{n_k} \le |A_{\overleftarrow{\lll_k|_{m_k}}}a_i^L| \le C\frac{\alpha_1(A_{\overleftarrow{\lll_k|_{m_k}}})\|A_{\kkk_0}^\top|V_k^\bot\|}{n_k}
  \end{equation}
  for all $i \in \{1,\ldots,n_k\}$. Let
  \begin{align*}
    T_k &= M_{\varphi_{\overleftarrow{\lll_k|_{m_k}}}(\varphi_{\kkk_0}(x_0)),\alpha_1(A_{\overleftarrow{\lll_k|_{m_k}}})\|A_{\kkk_0}^\top|V_k\|}(X)\cap B(0,1) \\
    &\supset M_{\varphi_{\overleftarrow{\lll_k|_{m_k}}}(\varphi_{\kkk_0}(x_0)),\alpha_1(A_{\overleftarrow{\lll_k|_{m_k}}})\|A_{\kkk_0}^\top|V_k\|}(\varphi_{\overleftarrow{\lll_k|_{m_k}}}(B_k))\cap B(0,1).
  \end{align*}
  Since the vectors $a_i^L$ are contained in the same halfplane determined by $L^\bot$, there exists, by possibly going through a subsequence, a weak tangent set $T$ such that $T_{k} \to T$ as $k \to \infty$ in Hausdorff distance such that, by \eqref{eq:loweronassouad4} and \eqref{eq:loweronassouad5}, $T$ contains a line segment of length at least $(CD)^{-1}$. The claim follows now from Lemma \ref{thm:KOR}.
\end{proof}

\section{Hausdorff measure of self-affine sets having small Hausdorff dimension} \label{sec:hausdorff-measure}

In this section, we conclude the proof of Theorem \ref{thm:ahl}. In Section \ref{sec:assouad-small}, we proved that the conditions of Theorem \ref{thm:ahl} satisfy \eqref{(1)} $\Leftrightarrow$ \eqref{(11)} $\Rightarrow$ \eqref{(4)} $\Rightarrow$ \eqref{(6)} and, if the Hausdorff dimension is strictly smaller than one, also \eqref{(6)} $\Rightarrow$ \eqref{(1)}. Therefore, it suffices to show that \eqref{(4)} $\Rightarrow$ \eqref{(2)} $\Rightarrow$ \eqref{(3)} $\Rightarrow$ \eqref{(5)} $\Rightarrow$ \eqref{(11)}.

Recall that the implication \eqref{(4)} $\Rightarrow$ \eqref{(2)} holds by Lemma \ref{thm:ahlfors-implications}. Proofs of the remaining claims are structured below as follows: Recalling Theorem \ref{thm:BHR2}, Theorem \ref{lem:lowb} verifies the implication \eqref{(2)} $\Rightarrow$ \eqref{(3)} and Theorem \ref{thm:kaenmaki-ahlfors} shows the implications \eqref{(3)} $\Leftrightarrow$ \eqref{(5)} $\Rightarrow$ \eqref{(11)}.

Before proving the aforementioned results, let us verify a technical lemma. Let $X$ be a dominated planar self-affine set, $s = \dimaff(X) \le 1$, and, to simplify notation, define a function $H \colon \Sigma \to [0,\infty)$ by setting
\begin{equation*}
  H(\iii) = \HH^s_\infty(\proj_{\Pi(\iii)^\perp}(X))
\end{equation*}
for all $\iii \in \Sigma$. The function $H$ measures the $s$-dimensional Hausdorff content of the projection in different directions. The fact that $H$ takes finite values follows from Lemma \ref{thm:FK}. Recall that the Perron-Frobenius operator $\LL$ for $s$ is the positive linear operator defined by setting
\begin{equation*}
  \mathcal{L}f(\iii) = \sum_{i=1}^N \|A_{i}^\top|\Pi(\iii)^\perp\|^s f(i\iii)
\end{equation*}
for all continuous functions $f \colon \Sigma \to \R$. Let the continuous function $h \colon \Sigma \to (0,\infty)$ and the Borel probability measure $\nu$ on $\Sigma$ be as in Lemma \ref{thm:perron-frobenius}. Although omitted in notation, bear in mind that $H$, $\LL$, $h$, and $\nu$ depend on $s$. The lemma links the Hausdorff content of the projections to the Perron-Frobenius operator by using techniques from the standard measure theory.

\begin{lemma} \label{prop:important}
  If $X$ is a dominated planar self-affine set with $s = \dimaff(X) \le 1$, then $H$ is continuous and
  \begin{equation} \label{eq:H-expression}
    H(\iii) = h(\iii) \int_{\Sigma} H(\jjj) \dd\nu(\jjj)
  \end{equation}
  for all $\iii \in \Sigma$. In particular, either $\max_{\iii \in \Sigma} H(\iii) = 0$ or $\min_{\iii \in \Sigma} H(\iii) > 0$.
\end{lemma}

\begin{proof}
  Let us first show that $H$ is upper semi-continuous. Fix $\jjj \in \Sigma$ and choose $\eps>0$. By the definition of the Hausdorff content, there exists a countable open cover $\{U_i\}_i$ of $\proj_{\Pi(\jjj)^\bot}(X)$ such that
  \begin{equation*}
    \sum_{i} \diam(U_i)^s \le H(\jjj) + \eps.
  \end{equation*}
  Since $\proj_{\Pi(\jjj)^\bot}(X)$ is compact, we may assume that the cover $\{U_i\}_i$ is finite. Write $\roo = \dist(\proj_{\Pi(\jjj)^\bot}(X),\R\setminus\bigcup_{i}U_i) > 0$ and $r = \arcsin(\roo/\diam(X))$. Hence, for every $\iii \in \Sigma$ with $\sphericalangle(\Pi(\iii),\Pi(\jjj)) < r$, we have $|\proj_{\Pi(\iii)^\bot}(x)-\proj_{\Pi(\jjj)^\bot}(x)| < \roo$ for all $x \in X$. This means that $\{U_i\}_i$ covers each such $\proj_{\Pi(\iii)^\bot}(X)$ and consequently,
  \begin{equation*}
    \sup_{\sphericalangle(\Pi(\jjj),\Pi(\iii)) < r} H(\iii) \le \sum_i \diam(U_i)^s \le H(\jjj) + \eps.
  \end{equation*}
  The claim follows now by letting $\roo \downarrow 0$, in which case also $r \downarrow 0$, and then $\eps \downarrow 0$.

  As $H$ is upper semi-continuous, a standard measure-theoretical argument applying Urysohn's lemma shows that for each $n \in \N$ there exists a continuous function $f_n \colon \Sigma \to \R$ such that $f_n \ge H$ and
  \begin{equation} \label{eq:usc-urysohn}
    \int_\Sigma f_n(\iii) \dd\nu(\iii) \le \int_\Sigma H(\iii) \dd\nu(\iii) + \frac{1}{n}.
  \end{equation}
  Since $\proj_V(Ax+y) = \pm\|A^\top|V\||\proj_{A^\top V}(x)|v^\bot+\proj_V(y)$ for all $V \in \RP$, $A \in \GL_2(\R)$, and $x,y \in \R^2$, where $v^\bot$ is a unit vector in $V^\bot$ and $\pm$ indicates that the equality holds with one of the signs, we have
  \begin{equation}\label{eq:mon}
    H(\iii) = \HH^s_\infty\biggl(\bigcup_{i=1}^N \proj_{\Pi(\iii)^\perp}(\varphi_i(X))\biggr) \le \sum_{i=1}^N \|A_i^\top|\Pi(\iii)^\perp\|^s H(i\iii) = \LL H(\iii)
  \end{equation}
  for all $\iii \in \Sigma$. Applying \eqref{eq:mon}, Lemma \ref{thm:perron-frobenius}, and \eqref{eq:usc-urysohn}, we see that
  \begin{equation*}
    H(\iii) \le \LL^k H(\iii) \le \LL^k f_n(\iii) \to h(\iii)\int_\Sigma f_n(\jjj) \dd\nu(\jjj) \le h(\iii)\biggl(\int_\Sigma H(\jjj) \dd\nu(\jjj) + \frac{1}{n}\biggr)
  \end{equation*}
  uniformly as $k \to \infty$ for all $\iii \in \Sigma$ and $n \in \N$. By letting $n \to \infty$, it follows that
  \begin{equation} \label{eq:monmon}
    H(\iii) \le h(\iii) \int_\Sigma H(\jjj) \dd\nu(\jjj)
  \end{equation}
  for all $\iii \in \Sigma$. To show the equality, write
  \begin{equation*}
    \Gamma_n = \biggl\{\iii\in\Sigma : H(\iii) \le h(\iii)\int_\Sigma H(\jjj)\dd\nu(\jjj)-\frac{1}{n}\biggr\}
  \end{equation*}
  for all $n \in \N$ and observe that, by the definition of $\Gamma_n$, \eqref{eq:monmon}, and Lemma \ref{thm:perron-frobenius},
  \begin{align*}
    \int_\Sigma H(\iii)&\dd\nu(\iii) = \int_{\Gamma_n} H(\iii)\dd\nu(\iii) + \int_{\Sigma \setminus \Gamma_n} H(\iii)\dd\nu(\iii) \\
    &\le \int_{\Gamma_n} h(\iii)\dd\nu(\iii) \int_\Sigma H(\jjj)\dd\nu(\jjj) - \frac{\nu(\Gamma_n)}{n} + \int_{\Sigma \setminus \Gamma_n} h(\iii)\dd\nu(\iii) \int_\Sigma H(\jjj)\dd\nu(\jjj) \\
    &= \int_{\Sigma} h(\iii)\dd\nu(\iii) \int_\Sigma H(\jjj)\dd\nu(\jjj) - \frac{\nu(\Gamma_n)}{n} = \int_\Sigma H(\jjj)\dd\nu(\jjj) - \frac{\nu(\Gamma_n)}{n}.
  \end{align*}
  It follows that $\nu(\Gamma_n)=0$ for all $n \in \N$ and consequently,
  \begin{equation*}
    \nu\biggl(\biggl\{\iii\in\Sigma : H(\iii) < h(\iii)\int_\Sigma H(\jjj)\dd\nu(\jjj)\biggr\}\biggr) = \nu\biggl(\bigcup_{n \in \N}\Gamma_n\biggr) \le \sum_{n \in \N} \nu(\Gamma_n) = 0.
  \end{equation*}
  Therefore, by \eqref{eq:monmon},
  \begin{equation} \label{eq:H=h}
    H(\iii) = h(\iii) \int_\Sigma H(\jjj) \dd\nu(\jjj)
  \end{equation}
  for $\nu$-almost all $\iii \in \Sigma$. To see that this holds for all words, fix $\iii\in\Sigma$. Since $\nu$ is fully supported, there exists a sequence $(\jjj_n)_{n \in \N}$ of words in $\Sigma$ converging to $\iii$ such that \eqref{eq:H=h} is holds for each $\jjj_n$. Thus, by the continuity of $h$, \eqref{eq:H=h}, the upper semi-continuity of $H$, and \eqref{eq:monmon},
  \begin{align*}
    h(\iii) \int_\Sigma H(\jjj) \dd\nu(\jjj) &= \lim_{n \to \infty} h(\jjj_n) \int_\Sigma H(\jjj) \dd\nu(\jjj) \\
    &= \lim_{n\to\infty}H(\jjj_n) \le H(\iii) \le h(\iii) \int_\Sigma H(\jjj) \dd\nu(\jjj)
  \end{align*}
  which finishes the proof of \eqref{eq:H-expression}. Since $h$ is continuous, \eqref{eq:H-expression} implies that also $H$ is continous, and therefore, the maximum $\max_{\iii \in \Sigma} H(\iii)$ and the minimum $\min_{\iii \in \Sigma} H(\iii)$ exist. Furthermore, since $h(\iii)>0$ for all $\iii \in \Sigma$, the dichotomy in the last claim follows immediately from the first claim.
\end{proof}

In the following theorem, we bound the Hausdorff measure above by the maximal Hausdorff content of the projections. The proof of the bound is a covering argument relying on domination. The latter claim follows as an immediate application of Lemma \ref{prop:important}.

\begin{theorem} \label{lem:lowb}
  If $X$ is a dominated planar self-affine set with $s = \dimaff(X) \le 1$, then there exists a constant $c>0$ such that
  \begin{equation*}
    \HH^s(X) \le c\max_{\iii \in \Sigma} H(\iii).
  \end{equation*}
  In particular, if $\HH^s(X)>0$, then $\min_{\iii \in \Sigma}H(\iii)>0$.
\end{theorem}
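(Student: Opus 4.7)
The plan is to establish the quantitative bound $\HH^s(X) \le c\max_\iii H(\iii)$ via a covering argument; the ``In particular'' statement then follows immediately from Lemma \ref{prop:important}, since the dichotomy provided there forces $\max H = 0$ to imply $H \equiv 0$ (hence $\HH^s(X) = 0$), whereas $\max H > 0$ forces $\inf H > 0$.

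The heart of the proof is a per-cylinder estimate
\[
  \HH^s_\infty(\fii_\jjj(X)) \le C\,\alpha_1(A_\jjj)^s \max_{\iota \in \Sigma} H(\iota)
\]
for every $\jjj \in \Sigma_*$, with $C$ uniform in $\jjj$. Fix $\jjj$ and any $\kkk \in \Sigma$, and set $V = \Pi(\overleftarrow\jjj\kkk) \in X_F$ and $W = (A_\jjj V)^\perp$. By the second part of Lemma \ref{thm:bochi-morris} applied to $\overleftarrow\jjj\kkk$, one has $\|A_\jjj|V\| \asymp \alpha_2(A_\jjj)$, so that $W^\perp = A_\jjj V / |A_\jjj V|$ is precisely the short singular direction of $\fii_\jjj(B)$. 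Combining the identity $\proj_W \circ \fii_\jjj = \pm\|A_\jjj^\top|W\|\,\proj_{A_\jjj^\top W} + (\text{translation})$ with the relation $A_\jjj^\top (A_\jjj V)^\perp = V^\perp$ shows that $\proj_W(\fii_\jjj(X))$ is an affine rescaling by $\|A_\jjj^\top|W\| \asymp \alpha_1(A_\jjj)$ (Lemma \ref{thm:bochi-morris}) of $\proj_{\Pi(\overleftarrow\jjj\kkk)^\perp}(X)$. Take an $\eps$-near optimal cover $\{U_l\}$ of the latter achieving $H(\overleftarrow\jjj\kkk)$, rescale to a cover $\{V_l\}$ of $\proj_W(\fii_\jjj(X))$, and lift each $V_l$ to the strip $\proj_W^{-1}(V_l)$. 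Because $W^\perp$ is the short singular direction, $\fii_\jjj(X) \cap \proj_W^{-1}(V_l)$ is contained in a rectangle of dimensions $|V_l| \times C\alpha_2(A_\jjj)$. Strips with $|V_l| \ge \alpha_2(A_\jjj)$ are covered individually by a single ball of diameter $\asymp|V_l|$ contributing $|V_l|^s$. Strips with $|V_l| < \alpha_2(A_\jjj)$ are consolidated in the $W$-direction into slabs of extent $\alpha_2(A_\jjj)$, each covered by one ball of diameter $\asymp\alpha_2(A_\jjj)$; the cost of this consolidation is controlled via the pointwise inequality $|V_l|^s \ge |V_l|\,\alpha_2(A_\jjj)^{s-1}$, valid for $s \le 1$ and $|V_l| \le \alpha_2(A_\jjj)$, which bounds the total slab-cost by $\sum_{|V_l|<\alpha_2(A_\jjj)} |V_l|^s$. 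Summing all contributions yields the claimed per-cylinder bound.

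To conclude, sum the per-cylinder estimate over $\jjj \in \Sigma_n$. The Perron-Frobenius identity $\LL^n h = h$ from Lemma \ref{thm:perron-frobenius} unravels to $h(\kkk) = \sum_{\jjj \in \Sigma_n} \|A_\jjj^\top|\Pi(\kkk)^\perp\|^s h(\jjj\kkk)$; since the continuous positive function $h$ is bounded above and below on the compact space $\Sigma$, this together with Lemma \ref{thm:bochi-morris} yields $\sum_{\jjj \in \Sigma_n}\alpha_1(A_\jjj)^s \le C$ uniformly in $n$. Taking $n$ large enough that each piece has diameter below a prescribed $\delta$, the resulting cover is a $C\delta$-cover of $X$ with total $s$-content at most $C' \max H$, so that $\HH^s_{C\delta}(X) \le C'\max H$; letting $\delta \downarrow 0$ delivers $\HH^s(X) \le c\max H$. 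The main obstacle is the consolidation step in the core estimate: the sub-$\alpha_2(A_\jjj)$ pieces may be scattered in $W$, and one must rearrange them so that the number of slabs remains proportional to their total length rather than their count. This rearrangement is possible precisely because $\HH^s_\infty$ allows arbitrary reorganisation of cover pieces, and the charging inequality $|V_l|^s \ge |V_l|\alpha_2(A_\jjj)^{s-1}$ requires $s \le 1$; both ingredients enter essentially.
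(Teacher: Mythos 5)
Your overall architecture --- near-optimal covers of the projections $\proj_{V^\perp}(X)$ for $V\in X_F$, pulled back through cylinders whose long axis is comparable to $\|A_\jjj^\top|V^\perp\|$, assembled via $\sum_{\jjj\in\Sigma_n}\alpha_1(A_\jjj)^s\asymp 1$, with the ``in particular'' clause delegated to Lemma \ref{prop:important} --- is the same as the paper's, and the reduction $\proj_W\circ\fii_\jjj=\pm\|A_\jjj^\top|W\|\proj_{V^\perp}+\mathrm{const}$ with $W=(A_\jjj V)^\perp$ together with the fibre bound $C\alpha_2(A_\jjj)$ is correct. The proof breaks, however, exactly at the step you flag as the main obstacle: the consolidation of the sub-$\alpha_2(A_\jjj)$ strips. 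If the near-optimal cover $\{V_l\}$ of $\proj_W(\fii_\jjj(X))$ consists of $M$ pieces of diameter $\eta\ll\alpha_2(A_\jjj)$ that are pairwise separated by more than $\alpha_2(A_\jjj)$ along $W$, then every slab of width $\alpha_2(A_\jjj)$ meets at most one piece, the number of slabs is $M$, and the slab cost is $M\alpha_2(A_\jjj)^s=(\alpha_2(A_\jjj)/\eta)^s\sum_l|V_l|^s$, unboundedly larger than $\sum_l|V_l|^s$. The charging inequality $|V_l|^s\ge|V_l|\alpha_2(A_\jjj)^{s-1}$ only pays for a slab whose pieces have total length at least a constant times $\alpha_2(A_\jjj)$, and nothing forces that; and ``reorganising'' the cover is not an available move, since the $V_l$ cover specific portions of a fixed compact set. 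Indeed the implicit general principle is false: a union of $M$ rectangles of size $\eta\times\rho$ spaced $2\rho$ apart has projection of $s$-content at most $M\eta^s\to 0$ as $\eta\downarrow 0$, while its own $s$-content stays at least $\rho^s$, so short fibres plus small projected content do not bound the content of the set itself.

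The paper escapes this by reversing the order of quantifiers. It first fixes, for finitely many directions $V_1,\dots,V_m$ whose neighbourhoods cover the compact set $X_F$, finite near-optimal covers, and records the minimal piece diameter $\roo>0$ over all of them; only then does it stop the symbolic construction, at the words $\iii$ with $\alpha_2(A_\iii)\diam(X)<\roo\,\alpha_1(A_\iii)$. At that stopping time every set $\fii_\iii(X\cap\proj_{\Pi(\iii')^\perp}^{-1}(U))$ has diameter at most $2c\,\alpha_1(A_\iii)\diam(U)$, because the fibre length is dominated by $\roo\,\alpha_1(A_\iii)\le\alpha_1(A_\iii)\diam(U)$; hence a single ball per strip suffices and no consolidation is ever needed, and the sum over the stopping set is controlled by $\mu_K$ exactly as in your final paragraph. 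To repair your argument you would need a lower bound on the cover piece sizes relative to $\alpha_2(A_\jjj)/\alpha_1(A_\jjj)$ \emph{before} choosing $\jjj$, which is precisely this rearrangement; as written, the per-cylinder estimate is not established.
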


\begin{proof}
  Let us first prove the claimed inequality. Fix $\eps>0$ and notice that for every $V \in X_F$ there exists an open cover $\UU_V$ of $\proj_{V^\bot}(X)$ such that
  \begin{equation} \label{eq:lowb1}
    \sum_{U \in \UU_V} \diam(U)^s \le \HH_\infty^s(\proj_{V^\bot}(X)) + \eps.
  \end{equation}
  Since $\proj_{V^\bot}(X)$ is compact, we may assume that $\UU_V$ is finite and $\max_{U \in \UU_V} \diam(U) \le \diam(X)$. Write $\roo_V = \min_{U \in \UU_V} \diam(U) > 0$ and $\delta_V = \dist(\proj_{V^\bot}(X), V^\bot \setminus \bigcup \UU_V) > 0$. Since $|(\proj_{V^\bot}-\proj_{W^\bot})(x)| = \sin(\sphericalangle(V,W))|x|$ for all $x \in \R^2$ and $V,W \in \RP$, and since $X_F$ is compact, there exist $m \in \N$ and $V_1,\ldots,V_m \in X_F$ such that
  \begin{equation*}
    X_F \subset \bigcup_{i=1}^m B(V_i,\arcsin(\delta_{V_i}/\diam(X))).
  \end{equation*}
  Thus, for every $V \in X_F$ there exists $i \in \{1,\ldots,m\}$ such that $|(\proj_{V^\bot}-\proj_{V_i^\bot})(x)| < \delta_{V_i}$ for all $x \in X$, and in particular, $\UU_{V_i}$ covers $\proj_{V^\perp}(X)$. Let $P(V)$ denote this $V_i$ and write $\roo = \min_{i \in \{1,\ldots,m\}} \roo_{V_i}$.

  Define
  \begin{align*}
    \Sigma(r) = \{\iii\in\Sigma_* :\;&\alpha_2(A_{\iii})\diam(X) < \roo\alpha_1(A_{\iii}) \text{ and } \alpha_1(A_\iii)\diam(X)<r \\
    &\text{but } \alpha_2(A_{\iii^-})\diam(X) \ge \roo\alpha_1(A_{\iii^-}) \text{ or } \alpha_1(A_{\iii^-})\diam(X) \ge r\}
  \end{align*}
  for all $r>0$. Note that $\{[\iii] : \iii \in \Sigma(r)\}$ is a partition of $\Sigma$ for all $r>0$. Fix $r>0$ and for each $\iii \in \Sigma(r)$ choose $\iii' \in \Sigma$ such that $\iii' = \overleftarrow{\iii}\jjj$ for some $\jjj \in \Sigma$; the estimates below are independent of this choice. Hence,
  \begin{equation*}
    \bigcup_{\iii \in \Sigma(r)} \bigcup_{U \in \UU_{P(\Pi(\iii'))}} \fii_\iii(X \cap (\proj_{\Pi(\iii')^\bot})^{-1}(U))
  \end{equation*}
  is clearly a cover of $X$. By Lemma \ref{thm:bochi-morris} and the definition of $\Sigma(r)$, we have 
  \begin{equation*}
    \|A_\iii|\Pi(\iii')\|\diam(X) \le D\alpha_2(A_\iii)\diam(X) \le D\roo\alpha_1(A_\iii),
  \end{equation*}
  where $D \ge 1$ is the constant from the lemma. Since $\|A_\iii|\Pi(\iii')^\perp\| \le \alpha_1(A_\iii)$ and $\roo \le \diam(U)$, by decomposing along $\Pi(\iii')$ and $\Pi(\iii')^\bot$ and applying the triangle inequality, we thus see that
  \begin{equation} \label{eq:lowb2}
  \begin{split}
    \diam(\fii_\iii(X &\cap (\proj_{\Pi(\iii')^\bot})^{-1}(U))) \\
    &\le \|A_{\iii}|\Pi(\iii')\|\diam(X) + \|A_{\iii}|\Pi(\iii')^\perp\|\diam(U) \\
    &\le \alpha_1(A_\iii)(D\roo+\diam(U))\le 2D\alpha_1(A_\iii)\diam(U) \le 2Dr
  \end{split}
  \end{equation}
  for all $\iii \in \Sigma(r)$. Recall also that, by Lemma \ref{thm:equilibrium-state-existence}, there exist a measure $\mu_K \in \MM_\sigma(\Sigma)$ and a constant $C \ge 1$ such that
  \begin{equation} \label{eq:yet-another-gibbs-type-measure}
    C^{-1}\alpha_1(A_\iii)^s \le \mu_K([\iii]) \le C\alpha_1(A_\iii)^s
  \end{equation}
  for all $\iii \in \Sigma_*$. Therefore, by \eqref{eq:lowb2}, \eqref{eq:lowb1}, and \eqref{eq:yet-another-gibbs-type-measure},
  \begin{align*}
    \HH^s_{2Dr}(X) &\le \sum_{\iii \in \Sigma(r)} \sum_{U \in \UU_{P(\Pi(\iii'))}} \diam(\fii_\iii(X \cap (\proj_{\Pi(\iii')^\bot})^{-1}(U)))^s \\
    &\le (2D)^s \sum_{\iii \in \Sigma(r)} \sum_{U \in \UU_{P(\Pi(\iii'))}} \alpha_1(A_\iii)^s \diam(U)^s \\
    &\le (2D)^s \sum_{\iii \in \Sigma(r)} \alpha_1(A_\iii)^s (\HH_\infty^s(\proj_{P(\Pi(\iii'))^\bot}(X)) + \eps) \\
    &\le C(2D)^s (\max_{\kkk \in \Sigma} H(\kkk) + \eps) \sum_{\iii \in \Sigma(r)} \mu_K([\iii]) \\
    &= C(2D)^s (\max_{\kkk \in \Sigma} H(\kkk) + \eps).
  \end{align*}
  The first claim follows by letting $r \downarrow 0$ and then $\eps \downarrow 0$. In particular, if $\HH^s(X)>0$, then, by the first claim, also $\max_{\iii \in \Sigma}H(\iii)>0$. By Lemma \ref{prop:important}, this implies $\min_{\iii \in \Sigma}H(\iii)>0$ and verifies the second claim.
\end{proof}

By Lemma \ref{prop:important}, the Perron-Frobenius operator can be used to study coverings. We use this observation in the following theorem to show that the Hausdorff measure and content of the projections are equal. Together with the existence of the equilibrium state, this introduces a way to prove the Ahlfors regularity of the projected self-affine set.

\begin{theorem} \label{thm:kaenmaki-ahlfors}
  If $X$ is a dominated planar self-affine set with $s = \dimaff(X) \le 1$, then $\min_{\iii \in \Sigma}H(\iii)>0$ if and only if $\proj_{V^\bot}(X)$ is Ahlfors $s$-regular for all $V \in X_F$. Furthermore, if $\min_{\iii \in \Sigma}H(\iii)>0$, then
  \begin{equation*}
    \sup\{\#\Sigma(V,x,r) : V \in X_F, \; x \in X, \text{ and } r>0\} < \infty,
  \end{equation*}
  where $\Sigma(V,x,r)$ is as in \eqref{eq:POSC-Sigma-def}.
\end{theorem}

\begin{proof}
  Let us first assume that $\min_{\iii \in \Sigma}H(\iii)>0$ and show the measure $\HH^s|_{\proj_{V^\bot}(X)}$ Ahlfors $s$-regular for all $V \in X_F$. To show the required upper estimate in \eqref{eq:ahlfors-regularity-def}, fix $\delta>0$ and choose $n \in \N$ such that $\diam(\fii_\jjj(X))<\delta$ for all $\jjj \in \Sigma_n$. By Lemmas \ref{prop:important} and \ref{thm:perron-frobenius}, we have
  \begin{equation} \label{eq:kaenmaki-ahlfors0}
  \begin{split}
    \HH^s_\delta(\proj_{\Pi(\iii)^\perp}(X)) &= \HH^s_\delta\biggl(\bigcup_{\jjj\in\Sigma_n}\proj_{\Pi(\iii)^\perp}(\varphi_\jjj(X))\biggr) \\
    &\le \sum_{\jjj\in\Sigma_n}\HH^s_\infty(\proj_{\Pi(\iii)^\perp}(\varphi_\jjj(X))) \\
    &= \sum_{\jjj\in\Sigma_n}\|A_\jjj^\top|\Pi(\iii)^\perp\|^s H(\overleftarrow{\jjj}\iii) \\
    &= \LL^n H(\iii) = \LL^n h(\iii) \int_{\Sigma} H(\jjj) \dd\nu(\jjj) \\
    &= h(\iii) \int_{\Sigma} H(\jjj) \dd\nu(\jjj) = H(\iii).
  \end{split}
  \end{equation}
  By letting $\delta \downarrow 0$, we see that
  \begin{equation} \label{eq:kaenmaki-ahlfors1}
    \HH^s(\proj_{\Pi(\iii)^\bot}(X)) = H(\iii)
  \end{equation}
  for all $\iii\in\Sigma$. In particular, it follows from \eqref{eq:kaenmaki-ahlfors1} that
  \begin{equation} \label{eq:kaenmaki-ahlfors2}
  \begin{split}
    \HH^s(\proj_{\Pi(\iii)^\bot}&(X) \cap B(\proj_{\Pi(\iii)^\bot}(x),r)) \\
    &= \HH^s(\proj_{\Pi(\iii)^\bot}(X)) - \HH^s(\proj_{\Pi(\iii)^\bot}(X) \setminus B(\proj_{\Pi(\iii)^\bot}(x),r)) \\
    &\le H(\iii) - \HH_\infty^s(\proj_{\Pi(\iii)^\bot}(X) \setminus B(\proj_{\Pi(\iii)^\bot}(x),r)) \\
    &\le \HH_\infty^s(\proj_{\Pi(\iii)^\bot}(X) \cap B(\proj_{\Pi(\iii)^\bot}(x),r)) \le (2r)^s
  \end{split}
  \end{equation}
  for all $x \in X$, $r>0$, and $\iii \in \Sigma$.

  To show the measure $\HH^s|_{\proj_{V^\bot}(X)}$ satisfies the required lower estimate in \eqref{eq:ahlfors-regularity-def}, fix $\hhh, \kkk \in \Sigma_*$ such that $[\hhh] \cap [\kkk] = \emptyset$ and write $n = \max\{|\hhh|,|\kkk|\}$. Let $\Sigma_n' = \{\hhh,\kkk\} \cup \{\jjj \in \Sigma_n : [\jjj] \cap [\hhh] = \emptyset \text{ and } [\jjj] \cap [\kkk] = \emptyset\}$. Relying on \eqref{eq:kaenmaki-ahlfors1} and \eqref{eq:kaenmaki-ahlfors0}, we get
  \begin{align*}
    \HH^s&(\proj_{\Pi(\iii)^\bot}(X)) = \HH^s\biggl(\bigcup_{\jjj \in \Sigma_n'} \proj_{\Pi(\iii)^\bot}(\fii_\jjj(X))\biggr) \\
    &\le \sum_{\jjj \in \Sigma_n'} \HH^s(\proj_{\Pi(\iii)^\bot}(\fii_\jjj(X))) - \HH^s(\proj_{\Pi(\iii)^\bot}(\fii_\hhh(X)) \cap \proj_{\Pi(\iii)^\bot}(\fii_\kkk(X))) \\
    &\le \LL^n H(\iii) - \HH^s(\proj_{\Pi(\iii)^\bot}(\fii_\hhh(X)) \cap \proj_{\Pi(\iii)^\bot}(\fii_\kkk(X))) \\
    &= \HH^s(\proj_{\Pi(\iii)^\bot}(X)) - \HH^s(\proj_{\Pi(\iii)^\bot}(\fii_\hhh(X)) \cap \proj_{\Pi(\iii)^\bot}(\fii_\kkk(X))).
  \end{align*}
  It follows that
  \begin{equation} \label{eq:kaenmaki-ahlfors3}
    \HH^s(\proj_{\Pi(\iii)^\bot}(\fii_\hhh(X)) \cap \proj_{\Pi(\iii)^\bot}(\fii_\kkk(X))) = 0
  \end{equation}
  for all $\hhh, \kkk \in \Sigma_*$ with $[\hhh] \cap [\kkk] = \emptyset$ and $\iii \in \Sigma$. Recall that, by Lemma \ref{thm:equilibrium-state-existence}, there exist a measure $\mu_K \in \MM_\sigma(\Sigma)$ and a constant $C \ge 1$ such that
  \begin{equation} \label{eq:yet-another-gibbs-type-measure2}
    C^{-1}\alpha_1(A_\iii)^s \le \mu_K([\iii]) \le C\alpha_1(A_\iii)^s
  \end{equation}
  for all $\iii \in \Sigma_*$. Fix $V \in X_F$ and notice first that, by \eqref{eq:ahlfors-lower-part} and \eqref{eq:kaenmaki-ahlfors2}, there exists a constant $c > 0$ such that
  \begin{equation} \label{eq:kaenmaki-ahlfors4}
    \pi_*\mu_K(B(x,r)) \ge cr^s \quad \text{and} \quad \HH^s|_{\proj_{V^\bot}(X)}(B(x,r)) \le 2^sr^s
  \end{equation}
  for all $x \in X$ and $0<r<\diam(X)$. Let $\Sigma(V,x,r)$ be as in \eqref{eq:POSC-Sigma-def} for all $x \in X$ and $r>0$. Observe that, by \eqref{eq:kaenmaki-ahlfors4}, \eqref{eq:yet-another-gibbs-type-measure2}, Lemma \ref{thm:bochi-morris}, \eqref{eq:kaenmaki-ahlfors1}, and \eqref{eq:kaenmaki-ahlfors2}, we have
  \begin{equation}\label{eq:extra}
  \begin{split}
    c\min_{\kkk \in \Sigma}H(\kkk) r^s &\le \min_{\kkk \in \Sigma}H(\kkk) \pi_*\mu_K(B(x,r)) \\
    &\le \min_{\kkk \in \Sigma}H(\kkk) \sum_{\jjj \in \Sigma(V,x,r)} \mu_K([\jjj]) \le C\min_{\kkk \in \Sigma}H(\kkk) \sum_{\jjj \in \Sigma(V,x,r)} \|A_\jjj\|^s \\
    &\le CD^s \min_{\kkk \in \Sigma}H(\kkk) \sum_{\jjj \in \Sigma(V,x,r)} \|A_\jjj^\top|V^\bot\|^s \\
    &\le CD^s \sum_{\jjj \in \Sigma(V,x,r)} \|A_\jjj^\top|V^\bot\|^s \HH^s(\proj_{A_\jjj^\top V^\bot}(X)) \\
    &\le CD^s \sum_{\jjj \in \Sigma(V,x,r)} \HH^s(\proj_{V^\bot}(\fii_\jjj(X))) \\
    &= CD^s \HH^s\biggl(\bigcup_{\jjj \in \Sigma(V,x,r)}\proj_{V^\bot}(\fii_\jjj(X))\biggr) \\
    &\le CD^s \HH^s(\proj_{V^\bot}(X) \cap B(\proj_{V^\bot}(x),2r)) \le C(4D)^sr^s
    \end{split}
  \end{equation}
  for all $x \in X$ and $0<r<\diam(X)$. Since $\min_{\kkk \in \Sigma}H(\kkk)>0$, we have shown the measure $\HH^s|_{\proj_{V^\bot}(X)}$ Ahlfors $s$-regular.

  Let us next assume that $\proj_{V^\bot}(X)$ is Ahlfors $s$-regular for all $V \in X_F$ and show that $\min_{\iii \in \Sigma}H(\iii)>0$. Recall that, by Lemma \ref{thm:ahlfors-implications}, $\HH^s(\proj_{\Pi(\iii)^\bot}(X))>0$ for all $\iii \in \Sigma$. If $\min_{\iii \in \Sigma}H(\iii)=0$, then, by Lemma \ref{prop:important}, we have $\max_{\iii \in \Sigma}H(\iii)=0$. Recalling \eqref{eq:kaenmaki-ahlfors1}, we thus have $\mathcal{H}^s(\proj_{\Pi(\iii)^\perp}(X))=H(\iii)=0$ for all $\iii\in\Sigma$ which is a contradiction.

  Finally, let us prove that $\min_{\iii \in \Sigma}H(\iii)>0$ implies
  \begin{equation*}
    \sup\{\#\Sigma(V,x,r) : V \in X_F, \; x \in X, \text{ and } r>0\} < \infty.
  \end{equation*}
  Observe that
  \begin{align*}
    r &\le \diam(\proj_{V^\perp}(\fii_{\jjj^-}(X))) \\ 
    &= \|A_{\jjj^-}^\top|V^\perp\|\diam(\proj_{A_{\jjj^-}^\top V^\perp}(X)) \le \|A_\jjj^\top|V^\perp\|\frac{\diam(X)}{\min_{j \in \{1,\ldots,N\}}\alpha_2(A_j)}
  \end{align*}
  for all $\jjj \in \Sigma(V,x,r)$ and so, by \eqref{eq:extra}, we have
  \begin{equation*}
    \#\Sigma(V,x,r)r^s\biggl(\frac{\min_{j \in \{1,\ldots,N\}}\alpha_2(A_j)}{\diam(X)}\biggr)^s\leq\sum_{\jjj \in \Sigma(V,x,r)} \|A_\jjj^\top|V^\bot\|^s\leq \frac{4^s}{ \min_{\kkk \in \Sigma}H(\kkk)}r^s.
  \end{equation*}
  Therefore,
  \begin{equation*}
    \#\Sigma(V,x,r) \le \frac{4^s\diam(X)^s}{\min_{j \in \{1,\ldots,N\}}\alpha_2(A_j)^s\min_{\kkk \in \Sigma}H(\kkk)}
  \end{equation*}
  for all $V \in X_F$, $x\in X$ and $r>0$, finishing the proof.
\end{proof}

\section{Assouad and affinity dimensions of self-affine sets} \label{sec:assouad-affinity}

In this section, we prove Theorem \ref{thm:baire-typical}. We first establish a transversality lemma that gives local parameter perturbations with controlled projected distances, and then combine this with a Baire-category argument to obtain a residual failure of projective separation. The lemma below treats the transversality step for a fixed affine iterated function system $\Phi_{\mathsf{v}}$ and a fixed direction $w \in S^1$. Recall that $\theta \otimes w = (\theta_1 w,\ldots,\theta_N w) \in (\R^2)^N$ is the Kronecker product for $\theta = (\theta_1,\ldots,\theta_N) \in \R^N$.

\begin{lemma}\label{lem:trans}
  If $\A = (A_1,\ldots,A_N) \in \GL_2(\R)^N$ is strictly affine such that $\max_{\{1,\ldots,N\}} \|A_i\| < \frac12$, $\mathsf{v} = (v_1,\ldots,v_N) \in (\R^2)^N$, and $w \in S^1$, then there exists $\delta > 0$ such that
  \begin{equation*}
    \frac{\mathrm{d}}{\mathrm{d}\tau_{\iii|_1}} |\proj_{\linspan(w)^\bot}(\pi_{\mathsf{v}+\tau \otimes w}(\iii)) - \proj_{\linspan(w)^\bot}(\pi_{\mathsf{v}+\tau \otimes w}(\jjj))| \Big|_{\tau=\theta} > \delta
  \end{equation*}
  for all $\theta = (\theta_1,\ldots,\theta_N) \in \R^N$ and $\iii, \jjj \in \Sigma$ with $\iii|_1 \ne \jjj|_1$.
\end{lemma}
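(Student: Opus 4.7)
The plan is to exploit the affine dependence of $\pi_{\mathsf{v}+\tau\otimes w}$ on $\tau$. Expanding the canonical projection gives
\begin{equation*}
  \pi_{\mathsf{v}+\tau\otimes w}(\iii) = \sum_{n=1}^\infty A_{\iii|_{n-1}}(v_{i_n} + \tau_{i_n}w) = \pi_\mathsf{v}(\iii) + \sum_{k=1}^N \tau_k \sum_{n:\, i_n = k} A_{\iii|_{n-1}}w,
\end{equation*}
so, writing $P := \proj_{\linspan(w)^\bot}$, the map $D(\tau) := P\pi_{\mathsf{v}+\tau\otimes w}(\iii) - P\pi_{\mathsf{v}+\tau\otimes w}(\jjj)$ is affine in $\tau$ and takes values in the one-dimensional space $\linspan(w)^\bot$. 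In particular $\partial D/\partial \tau_{\iii|_1}$ is a constant independent of $\theta$, and the convex piecewise-linear function $\tau_{\iii|_1}\mapsto |D(\tau)|$ has one-sided derivative at every point of absolute value equal to $|\partial D/\partial \tau_{\iii|_1}|$. Hence the lemma reduces to a uniform positive lower bound on this constant.

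A direct computation yields
\begin{equation*}
  \frac{\partial D}{\partial \tau_{\iii|_1}} = \sum_{n:\, i_n = \iii|_1} PA_{\iii|_{n-1}}w - \sum_{n:\, j_n = \iii|_1} PA_{\jjj|_{n-1}}w.
\end{equation*}
Since $i_1 = \iii|_1$, the $n = 1$ term of the first sum equals $Pw = 0$, and since $j_1 \ne \iii|_1$ the second sum begins at $n \geq 2$. I would then locate the smallest index $m \geq 2$ at which the symbol $\iii|_1$ reappears (in $\iii$ or in $\jjj$) and isolate the corresponding term $PA_{\iii|_{m-1}}w$ or $PA_{\jjj|_{m-1}}w$ as the leading contribution. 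Using $\max_i\|A_i\| < \tfrac12$ one has $\|A_{\iii|_{n-1}}\| \leq 2^{-(n-1)}$, so the tails past index $m$ in both sums are controlled by the convergent geometric series $\sum_{n \geq m+1} 2^{-(n-1)} = 2^{-m}$.

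The key difficulty will be in extracting a uniform positive lower bound for this leading term. Because $w \in S^1$ is a unit vector and $\A$ is strictly affine---so that the matrix products act non-trivially on $\RP$---the projection $PA_{\iii|_{m-1}}w$ cannot be arbitrarily small relative to $\|A_{\iii|_{m-1}}\|$, and coupling this with the geometric gap afforded by $\|A_i\| < \tfrac12$ produces the desired $\delta > 0$. The subtlest case will be when $\iii|_1$ reappears only deep in $\iii$ but frequently in $\jjj$ with cancelling signs; here the quantitative tail bound $2^{-m}$ is what guarantees that no such cancellation can entirely annihilate the partial derivative.
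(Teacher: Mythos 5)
Your reduction to a uniform lower bound on the constant vector $\partial D/\partial\tau_{\iii|_1}$ is fine, and your observation that the $n=1$ term contributes $Pw=0$ is correct under a literal reading of the statement --- but that observation is fatal to your strategy rather than a technicality to be engineered around. If the symbol $\iii|_1$ occurs nowhere in $\iii$ beyond position $1$ and nowhere in $\jjj$ at all (take $N=2$, $\iii=1222\cdots$, $\jjj=222\cdots$), both sums in your formula for $\partial D/\partial\tau_{\iii|_1}$ reduce to the vanishing $n=1$ term, so the derivative is exactly zero and no $\delta>0$ can be extracted from recurrences of the symbol. Even when a first recurrence at some index $m\ge 2$ exists, your proposed leading term $PA_{\iii|_{m-1}}w$ is not bounded below in terms of $\|A_{\iii|_{m-1}}\|$: it vanishes whenever $A_{\iii|_{m-1}}w\in\linspan(w)$, which strict affinity does not preclude. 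It also tends to $0$ as $m\to\infty$ in any case, and the competing tail from $\jjj$ can begin at the same index $m$ with the same order of magnitude $2^{-(m-1)}$ (note also $\sum_{n\ge m+1}2^{-(n-1)}=2^{-(m-1)}$, not $2^{-m}$). So no bound independent of $\iii$ and $\jjj$ can come from the terms with $n\ge 2$.

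The paper's argument has the opposite structure: the uniform bound comes entirely from the $n=1$ term, and the terms with $n\ge 2$ are the error. In the paper's displayed computation the first term of $\proj(\pi_{\mathsf{v}+\theta\otimes w}(\iii))$ contributes $\theta_{i_1}$ on the nose, i.e.\ the perturbation direction is taken to survive the projection (the projection acts as $\proj_{\linspan(w)}$ on the perturbation; the $\perp$ in the statement must be read accordingly, and your literal reading in fact exposes this inconsistency). With that reading the derivative with respect to $\tau_{\iii|_1}$ equals $1$ plus a remainder supported on $n\ge 2$, crudely dominated by $\sum_{n\ge2}\|A_{\iii|_{n-1}}\|+\sum_{n\ge2}\|A_{\jjj|_{n-1}}\|$, which is less than $1$ once $\max_i\|A_i\|$ is small; this is a genuine transversality argument in which the identity-matrix term is the main term and everything else is a geometric-series remainder. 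Your proposal inverts this structure, attempts to source the lower bound from terms that can vanish identically, and therefore cannot be completed.
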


\begin{proof}
	Write $W = \linspan(w) \in \RP$ and $\iii = i_1i_2\cdots$ and $\jjj = j_1j_2\cdots$. It is easy to see that
	\begin{align*}
    \proj_{W^\perp}(\pi_{\mathsf{v}+\theta \otimes w}(\iii)) &= \proj_{W^\perp}\biggl(\sum_{n=1}^{\infty} A_{\iii|_{n-1}}(v_{i_n} + \theta_{i_n}w)\biggr) \\
	  &= \theta_{i_1}+\proj_{W^\perp}(v_{i_1}) + \sum_{n=1}^{\infty}\|A_{\iii|_{n-1}}^\top|W^\perp\|\proj_{A_{\iii|_{n-1}}^\top W^\perp}(v_{i_n}+\theta_{i_n}w)
	\end{align*}
	Hence,
  \begin{equation*}
    \frac{\mathrm{d}}{\mathrm{d}\theta_{j}} \proj_{W^\perp}(\pi_{\mathsf{v}+\theta \otimes w}(\iii)) = \delta_{i_1,j} + \sum_{n=1}^{\infty}\|A_{\iii|_{n-1}}^\top |W^\perp\|\proj_{A_{\iii|_{n-1}}^\top W^\perp}(w)\delta_{i_n,j}
  \end{equation*}
  for all $j \in \{1,\ldots,N\}$, where $\delta_{i,j} = 1$, if $i=j$, and $\delta_{i,j} = 0$, if $i \ne j$. Since $\max_{\{1,\ldots,N\}} \|A_i\| < \frac12$, it follows that
  \begin{align*}
    \frac{\mathrm{d}}{\mathrm{d}\tau_{i_1}} |\proj_{W^\bot}&(\pi_{\mathsf{v}+\tau \otimes w}(\iii)) - \proj_{W^\bot}(\pi_{\mathsf{v}+\tau \otimes w}(\jjj))| \Big|_{\tau=\theta} \\
    &= \biggl|\sum_{n=1}^{\infty}\|A_{\iii|_{n-1}}^\top |W^\perp\|\proj_{A_{\iii|_{n-1}}^\top W^\perp}(w)\delta_{i_n,j} \\
    &\qquad\qquad\qquad\qquad- \sum_{n=1}^{\infty}\|A_{\jjj|_{n-1}}^\top |W^\perp\|\proj_{A_{\jjj|_{n-1}}^\top W^\perp}(w)\delta_{i_n,j}\biggr| \\
    &\ge 1 - \sum_{n=1}^{\infty}\|A_{\iii|_{n-1}}^\top\| - \sum_{n=1}^{\infty}\|A_{\jjj|_{n-1}}^\top\| \ge \frac{1-2\max_i\|A_i\|}{1-\max_i\|A_i\|}>0
  \end{align*}
  as claimed.
\end{proof}

We are now ready to prove Theorem \ref{thm:baire-typical}.

\begin{proof}[Proof of Theorem \ref{thm:baire-typical}]
  To simplify notation, we assume that $X \subset B(0,1)$. Let $\NN(\A)$ be as in \eqref{eq:baire-N-definition}. For each $\mathsf{v} = (v_1,\ldots,v_N) \in (\R^2)^N$ we write $\fii_i^{\mathsf{v}} = A_i + v_i$ and $\fii_\iii^{\mathsf{v}} = \fii_{i_1}^{\mathsf{v}} \circ \cdots \circ \fii_{i_n}^{\mathsf{v}}$ for all $\iii = i_1\cdots i_n \in \Sigma_n$ and $n \in \N$. It is easy to see that, for every $q \in \N$, $W \in X_F$, and $\iii,\jjj\in\Sigma_*$ with $\iii|_1 \ne \jjj|_1$, the set
  \begin{equation*}
    \{\mathsf{v} \in \NN(\A) : \max_{x\in B(0,1)}|\proj_{W^\perp}(\varphi_\iii^{\mathsf{v}}(x)-\varphi_\jjj^{\mathsf{v}}(x))| < 5q^{-1}\|A_\iii^\top |W^\perp\|\}
  \end{equation*}
  is an open subset of $\NN(\A)$. Therefore, the set
  \begin{equation*}
    \RR_q(\A) = \bigcup_{W \in X_F} \bigcup_{\atop{\iii,\jjj \in \Sigma_*}{\iii|_1 \ne \jjj|_1}} \{\mathsf{v} \in \NN(\A) : \max_{x\in B(0,1)}|\proj_{W^\perp}(\varphi_\iii^{\mathsf{v}}(x)-\varphi_\jjj^{\mathsf{v}}(x))| < 5q^{-1}\|A_\iii^\top |W^\perp\|\}
  \end{equation*}
  is open and, recalling the definition of the projective separation \eqref{eq:POSC}, it is enough to show that $\RR_q(\A)$ is dense in $\NN(\A)$ for all $q \in \N$ since then the set $\RR(\A) = \bigcap_{q \in \N} \RR_q(\A)$ is residual.

  Fix $\mathsf{v} \in \NN(\A)$ and $\eps>0$. By the definition of $\NN(\A)$, there are $W \in X_F$ and $\hhh,\lll \in \Sigma$ with $\hhh|_1 \ne \lll|_1$ such that $\proj_{W^\bot}(\pi_{\mathsf{v}}(\hhh)) = \proj_{W^\bot}(\pi_{\mathsf{v}}(\lll))$. We may thus choose $n \in \N$ large enough so that $d_H(\proj_{W^\bot}(\varphi_{\hhh|_n}^{\mathsf{v}}(X)),\proj_{W^\bot}(\varphi_{\lll|_n}^{\mathsf{v}}(X)))<\varepsilon$, where $d_H$ denotes the Hausdorff distance. Note that there is a constant $c>0$ such that $\min\{|x-y|,|x+y|\} \le c\sphericalangle(\linspan(x),\linspan(y))$ for all $x,y \in S^1$. Following \eqref{eq:key-est-tmp} and recalling the definition of domination, we see that there are constants $D,C \ge 1$ and $0<\tau<1$ such that
  \begin{equation} \label{eq:boundonproj}
  \begin{split}
    \max_{x\in B(0,1)}|\proj_{ A^\top_{\overleftarrow{\kkk}} W^\perp}(x) &- \proj_{A^\top_{\overleftarrow{\kkk}}V^\perp}(x)| \le \|\proj_{A^\top_{\overleftarrow{\kkk}} W^\perp}-\proj_{A^\top_{\overleftarrow{\kkk}} V^\perp}\| \\
    &\le c\sphericalangle(A^\top_{\overleftarrow{\kkk}} W^\perp,A^\top_{\overleftarrow{\kkk}} V^\perp) \le cD\frac{\alpha_2(A_{\overleftarrow{\kkk}})}{\alpha_1(A_{\overleftarrow{\kkk}})} \le cDC\tau^{|\kkk|}.
  \end{split}
  \end{equation}
  for all $\kkk \in \Sigma_*$ and $V \in X_F$. Recalling \eqref{eq:furstenberg-directions-dominated2}, let $\fff \in \Sigma$ be such that $W = \Pi(\fff)$. Fix $q \in \N$, choose $m \in \N$ large enough so that $\max\{1+cDC\tau^m,(1+cD^2C\tau^m)/(1-cD^2C\tau^m)\} < 1+q^{-1}$, and write $\kkk = \overleftarrow{\fff|_m}$. Observe also that
  \begin{equation*}
    |\|A_\ggg|W\|-\|A_\ggg|V\|| \le c\|A_\ggg\|\sphericalangle(V,W)
  \end{equation*}
  and hence, by Lemma \ref{thm:bochi-morris},
  \begin{equation}\label{eq:boundonsecond}
    \biggl|\frac{\|A_\ggg|W\|}{\|A_\ggg|V\|}-1\biggr| \le \frac{c\|A_\ggg\|\sphericalangle(V,W)}{\|A_{\ggg}|V\|} \leq cD\sphericalangle(V,W).
  \end{equation}
  for all $\ggg \in \Sigma_*$ and $V \in X_F$. Writing $\iii = \hhh|_n\kkk\lll|_n\kkk$ and $\jjj = \lll|_n\kkk\hhh|_n\kkk$, let $y_\iii$ and $y_\jjj$ be the fixed points of $\varphi_\iii^{\mathsf{v}}$ and $\varphi_\jjj^{\mathsf{v}}$, respectively, and $v_{\iii}=(\mathrm{Id}-A_{\iii})y_{\iii}$ and $v_\jjj=(\mathrm{Id}-A_\jjj)y_\jjj$ be the translation vectors of $\varphi_\iii^{\mathsf{v}}$ and $\varphi_\jjj^{\mathsf{v}}$, respectively. Simple algebraic manipulations show that
  \begin{equation}\label{eq:boundthird}
  \begin{split}
    \proj_{W^\perp}(\varphi_\iii^{\mathsf{w}}(x)-\varphi_\jjj^{\mathsf{w}}(x)) &= \|A_{\iii}^\top |W^\perp\|(\proj_{A_{\iii}^\top W^\perp}(x)-\proj_{A_{\jjj}^\top W^\perp}(x)) \\
    &\qquad+\|A_{\iii}^\top|W^\perp\|\biggl(1-\frac{\|A_{\jjj}^\top|W^\perp\|}{\|A_{\iii}^\top|W^\perp\|}\biggr)\proj_{A_{\jjj}^\top W^\perp}(x)\\
    &\qquad+\proj_{W^\perp}(w_{\iii})-\proj_{W^\perp}(w_\jjj)
  \end{split}
  \end{equation}
  for all $\mathsf{w} = (w_1,\ldots,w_N) \in (\R^2)^N$ and $x \in B(0,1)$, where $w_\iii = \sum_{k=1}^{|\iii|} A_{\iii|_{k-1}}w_{i_k}$ and $w_\jjj = \sum_{k=1}^{|\jjj|} A_{\jjj|_{k-1}}w_{j_k}$ are the translation vectors of $\varphi_\iii^{\mathsf{w}}$ and $\varphi_\jjj^{\mathsf{w}}$, respectively. Furthermore, by using \eqref{eq:boundonsecond}, \eqref{eq:boundonproj}, and recalling the choice of $m$,
  \begin{equation} \label{eq:one-more-bound}
  \begin{split}
    \frac{\|A_{\jjj}^\top|W^\perp\|}{\|A_{\iii}^\top|W^\perp\|} &= \frac{\|A_{\kkk}^\top A_{\hhh|_n}^\top |A_{\kkk}^\top A_{\lll|_n}^\top W^\perp\|\|A_{\kkk}^\top A_{\lll|_n}^\top|W^\perp\|}{\|A_{\kkk}^\top A_{\lll|_n}^\top |A_{\kkk}^\top A_{\hhh|_n}^\top W^\perp\|\|A_{\kkk}^\top A_{\hhh|_n}^\top |W^\perp\|} \\
    &\le \frac{1+cD\sphericalangle(W^\perp,A_{\kkk}^\top A_{\lll|_n}^\top W^\perp)}{1-cD\sphericalangle(W^\perp,A_{\kkk}^\top A_{\hhh|_n}^\top W^\perp)} \le \frac{1+cD^2C\tau^m}{1-cD^2C\tau^m} < 1+q^{-1}.
  \end{split}
  \end{equation}
  Let $\delta>0$ be as in Lemma \ref{lem:trans}. By Lemma \ref{lem:trans}, there exists $\mathsf{w} = (w_1,\ldots,w_N) \in (\R^2)^N$ in the $\eps\delta^{-1}$-neighborhood of $\mathsf{v} \in \NN(\A)$ such that
  \begin{equation*}
    \proj_{W^\bot}(z_\iii) = \proj_{W^\bot}(z_\jjj),
  \end{equation*}
  where $z_\iii$ and $z_\jjj$ are the fixed points of $\varphi_\iii^{\mathsf{w}}$ and $\varphi_\jjj^{\mathsf{w}}$, respectively, and $w_{\iii}=(\mathrm{Id}-A_{\iii})z_{\iii}$ and $w_\jjj=(\mathrm{Id}-A_\jjj)z_\jjj$ are the translation vectors of $\varphi_\iii^{\mathsf{w}}$ and $\varphi_\jjj^{\mathsf{w}}$, respectively. Hence, by \eqref{eq:boundonproj} and \eqref{eq:one-more-bound},
  \begin{equation} \label{eq:yet-another-bound}
  \begin{split}
    |\proj_{W^\perp}(w_\iii)&-\proj_{W^\perp}(w_\jjj)| \\
    &= |\|A_{\iii}^\top |W^\perp\|\proj_{A_{\iii}^\top W^\perp}(z_\iii)-\|A_{\jjj}^\top|W^\perp\|\proj_{A_{\jjj}^\top W^\perp}(z_\jjj)| \\
    &\le (\|A_{\iii}^\top|W^\perp\|+\|A_{\jjj}^\top|W^\perp\|)cDC\tau^m \\
    &\,\qquad\qquad\qquad+ |\|A_{\iii}^\top|W^\perp\|\proj_{W^\perp}(z_\iii) - \|A_{\jjj}^\top|W^\perp\|\proj_{W^\perp}(z_\jjj)| \\
    &\le \|A_{\iii}^\top|W^\perp\|\biggl(\biggl(1+\frac{\|A_{\jjj}^\top|W^\perp\|}{\|A_{\iii}^\top|W^\perp\|}\biggr)cDC\tau^m + \biggl|1-\frac{\|A_{\jjj}^\top|W^\perp\|}{\|A_{\iii}^\top|W^\perp\|}\biggr|\biggr)\\
    &\le 3q^{-1}\|A_{\iii}^\top|W^\perp\|.
  \end{split}
  \end{equation}
  Thus, by (\ref{eq:boundonproj}), (\ref{eq:boundthird}), and \eqref{eq:yet-another-bound}, we see that
  \begin{equation*}
    \max_{x\in B(0,1)}|\proj_{W^\perp}(\varphi_\iii^{\mathsf{w}}(x)-\varphi_\jjj^{\mathsf{w}}(x))| < 5q^{-1}\|A_{\iii}^\top|W^\perp\|
  \end{equation*}
  and consequently, $\mathsf{w} \in \RR_q(\A)$.
\end{proof}

\begin{ack}
  Bal\'azs B\'ar\'any acknowledges support from the grants NKFI K123782, NKFI FK134251, NKFI K142169, and the grant NKFI KKP144059 ``Fractal geometry and applications''. Han Yu was financially supported by Leverhulme Trust (ECF-2023-186), University of Warwick, University of Cambridge, and the Corpus Christi College, Cambridge. Han Yu has also received funding from the European Research Council (ERC) under the European Union's Horizon 2020 research and innovation programme (grant agreement No.\ 803711). 
\end{ack}

\bibliographystyle{abbrv}
\bibliography{Bibliography}

\end{document}